\newtheorem{theorem}{Theorem}
\newtheorem{lemma}[theorem]{Lemma}
\newtheorem{proposition}[theorem]{Proposition}
\newtheorem{remark}[theorem]{Remark}
\newcommand{\dd}{\mathrm{d}}
\newcommand{\bbR}{\mathbb{R}}
\newcommand{\0}{{\delta}}
\newcommand{\1}{{\eta}}
\def\beq{\begin{equation}}
\def\eeq{\end{equation}}
\def\beqs{\begin{equation*}}
\def\eeqs{\end{equation*}}
\def\bal#1\eal{\begin{align}#1\end{align}}
\def\bals#1\eals{\begin{align*}#1\end{align*}}
\def\bsp#1\esp{\begin{split}#1\end{split}}
\def\d{{\mathrm{d}}}
\let\a=\alpha
\let\e=\varepsilon
\let\pt=\partial
\let\o=\omega
\let\th=\theta
\let\b=\beta
\numberwithin{equation}{section}
\numberwithin{theorem}{section}
\begin{document}
\date{}
\title[Diffusive Limit of  the Vlasov--Poisson--Boltzmann system]{
\large{Diffusive Limit of the Vlasov--Poisson--Boltzmann System for the Full Range of Cutoff Potentials}}

\author{Weijun Wu$^*$, Fujun Zhou$^\dagger$ and Yongsheng Li$^\ddagger$}

\address[Weijun Wu$^*$]{School of Network Information Security, Guangdong Police College, Guangzhou 510230, China}
\email{scutweijunwu@qq.com}

\address[Fujun Zhou$^\dagger$, Corresponding author]{School of Mathematics, South China University of Technology, Guangzhou 510640, China}
\email{fujunht@scut.edu.cn}

\address[Yongsheng Li$^\ddagger$]{School of Mathematics, South China University of Technology, Guangzhou 510640, China}
\email{yshli@scut.edu.cn}

%
%
%
%
%
%

\begin{abstract}
Diffusive limit of the Vlasov--Poisson--Boltzmann system with cutoff soft potentials $-3<\gamma<0$ in the perturbative framework
around global Maxwellian still remains open.
By introducing a new weighted $H_{x,v}^2$--$W_{x,v}^{2, \infty}$ approach with time decay, we solve this problem
for the full range of cutoff potentials $-3<\gamma\leq 1$. The core of this approach lies in the interplay between the velocity
weighted $H_{x,v}^2$ energy estimate with time decay and the time-velocity weighted $W_{x,v}^{2,\infty}$ estimate with time decay
for the Vlasov--Poisson--Boltzmann system, which leads to the uniform estimate with respect to the Knudsen number $\e\in (0,1]$ globally in time.
As a result, global strong solution is constructed and incompressible Navier--Stokes--Fourier--Poisson limit is rigorously justified
 for both hard and soft potentials. Meanwhile, this uniform estimate with respect to $\e\in (0,1]$ also
 yields optimal $L^2$ time decay rate and $L^\infty$ time decay rate for the Vlasov--Poisson--Boltzmann
system and its incompressible Navier--Stokes--Fourier--Poisson limit. This newly introduced weighted $H_{x,v}^2$--$W_{x,v}^{2, \infty}$ approach with
 time decay is flexible and robust, as it can deal with both optimal time decay problems and hydrodynamic limit problems
 in a unified framework for the Boltzmann equation as well as the Vlasov--Poisson--Boltzmann system for the full range of cutoff
 potentials. It is also expected to shed some light on the
 more challenging hydrodynamic limit of the Landau equation and the Vlasov--Poisson--Landau system.\\[3mm]
  {\em Mathematics Subject Classification (2020)}:  35Q20; 35Q83 \\[1mm]
  {\em Keywords}: Vlasov--Poisson--Boltzmann system; hydrodynamic limit;
   Navier--Stokes--Fourier--Poisson system; optimal time decay rate

\end{abstract}

%
%
\maketitle

\tableofcontents

\section{Introduction}

\subsection{Description of the Problem}
\hspace*{\fill}

In this paper, we intend to investigate hydrodynamic limit of the Vlasov--Poisson--Boltzmann  (briefly VPB for short)
system in the incompressible diffusive regime
\beq\label{GG1}
 \left\{
\begin{array}{ll}
\displaystyle \e\pt_tF^\e+v\cdot\nabla_xF^\e- \e\nabla_x\phi^\e\cdot\nabla_v F^\e =\frac{1}{\e}Q(F^\e,F^\e),\,\quad\qquad\qquad     \\[2mm]
\displaystyle -\e\Delta_x\phi^\e(t,x)=\int_{\mathbb{R}^3}F^\e \d v-1, \quad
\lim_{|x|\rightarrow \infty}\phi^\e(t,x)=0, \quad\qquad\qquad\\[2mm]
\displaystyle F^\e(0,x,v)=F^\e_0(x,v),\quad\qquad\qquad
\end{array}
\right.
\eeq
which describes the dynamics of an electron gas in a constant ion background.
More precisely, $F^{\e}(t,x,v)\geq 0$ represents the density of rarefied gas at time $t\geq0$, velocity $v\in \mathbb{R}^{3}$
and position $x\in \mathbb{R}^{3}$, with initial data $F^\e_0(x,v)$.
The self-consistent electric potential $\phi^\e(t, x)$ is coupled with the density function $F^{\e}(t,x,v)$ through the
Poisson equation in (\ref{GG1}), where the constant ion background charge is scaled to $1$. The Boltzmann collision operator $Q(G_1, G_2)$ describing
the binary elastic collision between particles takes the form
\begin{align*}
  Q(G_1,G_2)(v)&:=\iint_{\bbR^3\times \mathbb{S}^2}
   |v-v_*|^{\gamma}q_0(\theta) \big[G_1(v')G_2(v_*')-G_1(v)G_2(v_*)\big] \d v_*\d \omega,
\end{align*}
where $v'=v-[(v-v_*)\cdot \omega]\omega$ and $v_*'=v_*+[(v-v_*)\cdot \omega]\omega$ by the conservation of momentum and energy
$$
v'+v_*'=v+v_*, \quad |v'|^2+|v_*'|^2=|v|^2+|v_*|^2.
$$
The collision kernel $ |v-v_*|^{\gamma}q_0(\theta)$, depending only on
$|v-v_*|$ and $\cos\theta=\omega\cdot\frac{v-v_*}{|v-v_*|}$, satisfies $-3<\gamma\leq 1$ and the Grad's
angular cutoff assumption
$$
0\leq q_0(\theta)\leq C|\cos\theta|.
$$
The exponent $\gamma$ is determined by the potential of intermolecular forces, and is classified into the
soft potential case when $-3<\gamma<0$ and the hard potential case when $0\leq \gamma\leq 1$. In particular, the hard potential
case includes the hard sphere model with $\gamma=1, q_0(\theta)=C|\cos\theta|$ and the Maxwell model with $\gamma=0$.
For soft potentials, the case $-2\leq \gamma<0$ is called the moderately soft potential, while $-3<\gamma<-2$ is called the
very soft potential \cite{V}.
 The  positive parameter $\e\in (0,1]$ is the Knudsen number, which equals to the ratio of the mean free path to the macroscopic length scale.

 Global existence and long time behavior of solutions to the standard VPB system, i.e. (\ref{GG1})
 with $\e=1$, have been paid a lot of attention. In the framework of renormalized solutions, Lions \cite{Lion1993} constructed
 global large solutions for initial value problem, and Mischler \cite{Mischler2000} established global large solutions for
 initial and boundary value problem. In the perturbative framework  around global Maxwellian, Guo \cite{Guo2001}
 firstly constructed global existence of classical solutions  in the whole space $\mathbb{R}^3$ for moderately soft
 potentials $-2< \gamma< 0$ and Maxwell model $\gamma=0$, and established
 global existence and exponential decay of classical solutions in periodic domain $\mathbb{T}^3$ for hard sphere model $\gamma=1$ in \cite{guo2002cpam,Guo2003}.
 Duan, Yang and Zhao \cite{ DYZ2002, DYZ2003} established global existence and optimal time decay estimate of
 classical solutions in the whole space  $\mathbb{R}^3$ for hard potentials $0\leq\gamma\leq1$ and moderately soft potentials $-2\leq\gamma<0$, respectively.
 Then the analysis was extended to the full range of cutoff soft potentials $-3<\gamma<0$ by \cite{XXZ2017}.
 Recently, Cao, Kim and Lee \cite{CKL2019} established global strong solution and exponential time decay rate for
 diffusive boundary problem with hard sphere model $\gamma=1$, and Li and Wang studied the in-flow boundary problem with soft potentials
 $-3<\gamma<0$. For more related works on the standard VPB system, we refer to
 \cite{DD23,DS11, DY10, DL-13, LYZ2016, wang2013JDE, Yangyu2011CMP, YYZ-06, YZ-06} and the references cited therein.

 Hydrodynamic limit of the VPB system also received an extensive research.
 In the framework of renormalized solutions, Ars\'{e}nio and Saint-Raymond \cite{AS2019} showed that renormalized solutions
 of the VPB system (\ref{GG1}) converged to dissipative solutions of the incompressible
 Navier--Stokes--Fourier--Poisson (NSFP for short) system. In the perturbative framework,
 Guo, Jiang and Luo \cite{GJL2020} justified the incompressible NSFP limit of classical solutions for hard sphere
 model $\gamma=1$, Li, Yang and Zhong \cite{LYZ2020} established decay estimate and incompressible NSFP limit for
 classical solutions by spectral analysis for hard sphere model $\gamma=1$.
 In addition, Guo and Jang \cite{guo2010cmp} studied the compressible Euler--Poisson limit for the
 VPB system in hyperbolic regime by truncated Hilbert expansion \cite{Ca} for hard sphere model $\gamma=1$.
 For more related works on hydrodynamic limit of kinetic equations, we refer
 the readers to \cite{BGL91, BGL93, BU91, CIP, DEL, Esposito2018, GS,   GJJ, GHW21, GX2020, J2009, JL2019, LM, S, WZL, WZL-2}
 and  the references cited therein.

 Unfortunately, hydrodynamic limit of the VPB system (\ref{GG1}) with soft potentials $-3<\gamma<0$ in the perturbative framework
 is still left open. Inspired by the $L^2$--$L^\infty$ approach for weak solutions
 \cite{guo2010arma,guo2010cmp} and the higher order energy approach for classical solutions \cite{guo2002cpam,Guo2003,guo2006},
 we introduce a new weighted $H_{x,v}^2$--$W_{x,v}^{2, \infty}$ approach with time decay for strong solutions to solve this problem
 for both hard potentials $0\leq \gamma\leq 1$ and soft potentials $-3<\gamma<0$.
 The spirit of this approach lies in the interplay between the velocity weighted $H_{x,v}^2$ energy estimate with time decay and the
 time-velocity weighted $W_{x,v}^{2,\infty}$ estimate with time decay for the VPB system (\ref{GG1}), which
 eventually leads to the uniform estimate with respect to the Knudsen number $\e\in (0,1]$ globally in time. Then global existence of
 strong solutions is established and convergence to the incompressible NSFP system is rigorously justified
 for the full range of collision potentials $-3<\gamma\leq 1$. Meanwhile, such uniform estimate with respect to $\e\in (0,1]$ also
 yields the optimal $L^2$ time decay rate as well as the optimal $L^\infty$ time decay rate for the VPB system (\ref{GG1}) and its
 incompressible NSFP limit for  the full range of collision potentials, that is, $(1+t)^{-\frac{3}{4}}$ time decay rate
 in $L^2$ space and $(1+t)^{-\frac{5}{4}}$ time decay rate in $L^\infty$ space.


\subsection{Notations}
\hspace*{\fill}

Before stating the main results of this paper, we need some notations.

 For notation simplicity, we use $X \lesssim Y$ to denote $X \leq CY$, where $C$ is
a constant independent of $X$, $Y$ and $\e$. We also use the notation $X\sim Y$ to represent $X\lesssim Y$
and $Y\lesssim X$. The notation $X \ll 1$ means that $X$ is a constant small enough.
Throughout the paper, $C$ denotes a generic positive constant independent of $\e$.
The multi-indexs $\alpha = [\alpha_1, \alpha_2, \alpha_3]\in\mathbb{N}^3$ and
$\b = [\b_1, \b_2, \b_3]\in\mathbb{N}^3$
 will be used to record spatial and velocity derivatives, respectively.
 We  denote the $\alpha$-th order
 space partial derivatives by $\pt_x^\alpha=\partial_{x_1}^{\alpha_1}\partial_{x_2}^{\alpha_2}\partial_{x_3}^{\alpha_3},$
 and the $\b$-th order
 velocity partial derivatives by $\pt_v^\b=\partial_{v_1}^{\b_1}\partial_{v_2}^{\b_2}\partial_{v_3}^{\b_3}.$
 And  $\partial^\a_\b=\partial^\a_x\partial^\b_v=\partial^{\a_1}_{x_1}\partial^{\a_2}_{x_2}\partial^{\a_3}_{x_3}
 \partial^{\b_1}_{v_1}\partial^{\b_2}_{v_2}\partial^{\b_3}_{v_3}$ stands for the mixed space-velocity derivative.
In addition, the notation $\nabla_x^k=\partial^\a_x$ will be used when $\b=0, |\a|=k$. The length of $\a$
 is denoted by $|\a|=\a_1+\a_2+\a_3.$

We use $\|\cdot\|_{L^q_{x,v}}$  to denote the $L_{x,v}^q$-norm on the variables
$(x, v) \in \mathbb{R}^3\times \mathbb{R}^3$ for $1\leq q \leq \infty$.  For an integer $m \geq 1$, we also
employ $\|\cdot\|_{W^{m,q}_{x,v}}$ to represent  the norm of the Sobolev space
$W^{m,q}(\mathbb{R}^3\times \mathbb{R}^3)$. In particular, if $q=2$ then $\|\cdot\|_{H^{m}_{x,v}}$ stands for
the norm of $H^{m}(\mathbb{R}^3\times \mathbb{R}^3)=W^{m,2}(\mathbb{R}^3\times \mathbb{R}^3)$. Similarly,
$\|\cdot\|_{L^q_{x}}$,  $\|\cdot\|_{H^{m}_{x}}$,  $\|\cdot\|_{W^{m,q}_{x}}$ and
 $\|\cdot\|_{H^{m}_{x}L^2_v}$  stand for
 the norms of the function spaces $L^q(\mathbb{R}^3_x)$, $H^{m}(\mathbb{R}^3_x)$,
 $W^{m,q}(\mathbb{R}^3_x)$ and $L^{2}(\mathbb{R}^3_v,H^m(\mathbb{R}^3_x))$, respectively.
Let $\langle \cdot\;,\;\cdot\rangle_{L^2_{x}}$  and $\langle \cdot\;,\;\cdot\rangle_{L^2_{x, v}}$ be
 the standard  inner product in $L^2_x$ and $L^2_{x, v}$, respectively.
We also define the weighted  $L^2_{x,v}$, $H^1_{x}L^2_{v}$   and $H^2_{x,v}$ space endowed with the norms
\bals
\|g\|^2_{L^2_{x,v}(\nu)}&= \iint_{\mathbb{R}^3\times \mathbb{R}^{3}}\nu(v)|g(x,v)|^2\dd x \dd v,\quad
\|g\|^2_{H^1_{x}L^2_{ v}(\nu)}= \sum_{|\a|\leq1}\|\partial^\a_x g\|^2_{L^2_{x,v}(\nu)},\\
\|g\|^2_{H^2_{x,v}(\nu)}&=  \sum_{|\a|+|\b|\leq2}\|\partial^\a_\b g\|^2_{L^2_{x,v}(\nu)},
\eals
respectively, where $\nu=\nu(v)$ will be given in (\ref{nu-def}).
Moreover,
define the Fourier transform of $f$ with respect to the space variable  $x$ as
\[\widehat{f}(k):=  {(2\pi)^{-\frac{3}{2}}} \int_{\bbR^3}f(x)e^{-ix\cdot k}\d x.\]

\subsection{Main Results}
\hspace*{\fill}

To investigate hydrodynamic limit and optimal time decay rate of global strong solutions $(F^{\e}, \phi^{\e})$ to the VPB system
(\ref{GG1}), we write
\beq\label{eq:zhankaif}
\begin{split}
&F^{\e}(t,x,v)=\mu+\e\sqrt{\mu}f^{\e}(t,x,v),
\\  &F^{\e}_0(x,v)=\mu+\e\sqrt{\mu}f^{\e}_0(x,v)
\end{split}
\eeq
with the fluctuations $f^{\e}$ and $f^{\e}_0$, where
\begin{equation*}
 \mu\equiv \mu(v):=(2\pi)^{-\frac{3}{2}}{e}^{-\frac{|v|^2}{2}}
\end{equation*}
is the global Maxwellian.
 By substituting (\ref{eq:zhankaif}) into (\ref{GG1}),
we get the following equivalent perturbation VPB system

\beq\label{eq:f}
 \left\{
\begin{array}{ll}
\displaystyle \pt_t f^\e +\frac{1}{\e}v\cdot \nabla_x f^\e+\frac{1}{\e}v\cdot\nabla_x\phi^\e\sqrt{\mu}   +\frac{1}{\e^2}Lf^\e
=\frac{\nabla_x \phi^\e\cdot \nabla_v( \sqrt{\mu}f^\e )}{\sqrt{\mu}}
 + \frac{1}{\e}\Gamma(f^\e,f^\e),\,     \\[2mm]
\displaystyle -\Delta_x\phi^\e= \int_{\mathbb{R}^3}\sqrt{\mu}f^\e \d v,\quad\lim_{|x|\rightarrow \infty}\phi^\e(t,x)=0, \\[2mm]
\displaystyle f^\e(0,x,v)=f^\e_0(x,v).
\end{array}
\right.
\eeq
 Here
$$
\Gamma (f,g):=\frac{1}{\sqrt{\mu}}Q(\sqrt{\mu}f,\sqrt{\mu}g)
$$
is the nonlinear Boltzmann collision operator,
\bals
&L f:=-\frac{1}{\sqrt{\mu}}\Big[Q(\mu,\sqrt{\mu}f)+Q(\sqrt{\mu}f,\mu)\Big]:=\nu f -Kf,
\eals
 is the linearized Boltzmann operator
 and
\begin{equation}\label{nu-def}
\begin{split}
\nu\equiv \nu(v)&:=\frac{1}{\sqrt{\mu}}Q_-(\sqrt{\mu},\mu)=
\iint_{\mathbb{R}^3\times\mathbb{S}^2}|v-v_*|^{\gamma} q_0(\theta)\mu(v_*)\mathrm{d}\omega \mathrm{d}v_*,\\
Kf&:=\frac{1}{\sqrt{\mu}}\Big[Q_+(\mu,\sqrt{\mu}f)
+Q_+(\sqrt{\mu}f,\mu)-Q_-(\mu,\sqrt{\mu}f)\Big],
\end{split} \end{equation}
where $\nu=\nu(v)$ is called the collision frequency and $K$ is a compact operator on $L^2(\mathbb{R}_v^3)$.
 The null space of $L$, which we denote by $\mathrm{N}(L)$, is a five-dimensional subspace of $L^2(\mathbb{R}^3_v)$
$$
\mathrm{N}(L) = \text{span} \Big \{ \sqrt{\mu},  \ v\sqrt{\mu} ,  \  \frac{|v|^{2}-3}{2}\sqrt{\mu} \Big \}.
$$
 The orthogonal projection of $f$ onto the null space $\mathrm{N}(L)$ is denoted by
\begin{equation}\label{Pabc}
\mathbf{P}f \  = \  \Big(\rho_f   + u_f\cdot v   + \th_f \frac{|v|^{2}-3}{2} \Big)\sqrt{\mu},
 \end{equation}
 where $\rho_f, u_f, \th_f$ represent the hydrodynamic density, velocity and temperature
 of $f$, respectively. Let $(\mathbf{I}-\mathbf{P})f=f-\mathbf{P} f$ stand for the projection on the
 orthogonal complement of $\mathrm{N}(L)$. As usual, $\mathbf{P} f$ is called the macroscopic
part of $f$, and $(\mathbf{I}-\mathbf{P})f$ is called the microscopic part of $f$.
It is well-known that there exists a positive constant
 $\sigma_0>0$ such that
\begin{equation}\label{spectL}
\langle Lf, f\rangle_{L_{x,v}^2}\ge\sigma_0\|(\mathbf{I}-\mathbf{P})
f\|_{L_{x,v}^2(\nu)}^2.
\end{equation}

We introduce a velocity weight function for both hard and soft potentials
 \bal\label{weight-w}
 w\equiv w(v):=
 \begin{cases}
(1+|v|^2)^{\frac{1}{2}}& \text{for hard potentials}\; 0\leq \gamma\leq 1,\\[2mm]
(1+|v|^2)^{\frac{\gamma}{2}}& \text{for soft potentials}\; -3<\gamma <0.
\end{cases}
 \eal
Moreover, to overcome the singularity brought by the electric field force term
in the forthcoming $W^{2,\infty}_{x,v}$-estimate, we introduce the following time-velocity weight
for the full range of collision potentials
\beq\label{wegiht:express:2}
w_{\widetilde{\vartheta}}\equiv w_{\widetilde{\vartheta}}(v):=e^{\widetilde{\vartheta}|v|^2},
\eeq
where
\beq\label{wegiht:express:1}
\widetilde{\vartheta} \equiv\widetilde{\vartheta}(t):=\vartheta\Big(1+\frac{1}{(1+t)^{\sigma}}\Big)
\eeq
 for some constants $\vartheta> 0$ and $\sigma > 0$ to be determined later.

To state our first main result, we introduce the following temporal energy functional for the equivalent VPB system
(\ref{eq:f}) with hard potentials $0\leq \gamma\leq 1$:
\beq\label{def-energy-R-hard-sum}
\bsp
 \interleave f^\e(t) \interleave^\mathbf{h}:=\;&\| f^\e(t)\|_{H^2_{x,v}}+ \| \nabla_x \phi^\e(t)\|_{H^2_x}+\|w f^\e (t)\|_{H^1_{x}L^2_{v}},
 \\
\esp
\eeq
where $w$ is the weight function defined in (\ref{weight-w}).
 Then we present  the first main result on uniform estimate and time decay rate of global strong solutions to
the perturbation VPB system (\ref{eq:f}) for hard potentials $0\leq \gamma\leq 1$, as well as its hydrodynamic limit to the  incompressible NSFP system.

\begin{theorem}[Hard potentials]   \label{mainth2} \
Let $0\leq \gamma\leq 1$, $0<\e \leq 1$, $w_{\widetilde{\vartheta}}$ be defined in \eqref{wegiht:express:2},
and $\widetilde{\vartheta}$ be defined in \eqref{wegiht:express:1} with $0<\vartheta\ll 1$ and $\displaystyle0<\sigma\leq\frac{1}{4}$.
Assume that
$\iint_{\mathbb{R}^3\times\mathbb{R}^3}\sqrt{\mu}f^\e_0\d x\d v=0$
and there is  a sufficiently small constant $\widetilde{\delta}_0>0$ independent of $\e$ such that
\beq
\bsp
\;&\left\| f^\e_0\right\|_{H^2_{x,v}}+\left\| wf^\e_0\right\|_{H^1_{x}L^2_{v}}
 +\left\|\left(1+|x|\right)f^\e_0\right\|_{L^2_vL^1_x}
 +\e^\frac{1}{2}\left\| w_{\widetilde{\vartheta}} f^\e_0\right\|_{W^{2,\infty}_{x,v}}\leq \widetilde{\delta}_0.
\esp
\eeq
Then the perturbation VPB system \eqref{eq:f} admits a unique global strong solution
$\left(f^{\e},\nabla_{x}\phi^\e\right)$  satisfying
\bal
\bsp\label{eq:theorem2}
&\sup_{0\leq t\leq\infty}\Big\{\left(1+t\right)^{\frac{3}{4}}\interleave f^\e (t) \interleave^\mathbf{h}
+\left(1+t\right)^{\frac{5}{4}}\left\|\nabla_x f^\e(t)\right\|_{   H^1_{x}L^2_{v}}
+\left(1+t\right)^{\frac{5}{4}}\left\|\nabla_x^2\phi^\e(t)\right\|_{L^2_{x}}\Big\}\\
&+\sup_{0\leq t\leq\infty}\Big\{\left(1+t\right)^{\frac{5}{4}}
\e^{\frac{1}{2}}\left\| \left( w_{\widetilde{\vartheta} } f^\e\right)(t)\right\|_{W_{x,v}^{1,\infty}} +\left(1+t\right)^{\frac{5}{4}}\e^{\frac{3}{2}}\left\| \left( w_{\widetilde{\vartheta} } f^\e\right)(t)\right\|_{W_{x,v}^{2,\infty}} \Big\}
\leq C \widetilde{\delta}_0,
\esp
\eal
where $C>0$ is a positive constant independent of $\e$.

Moreover, assume that
there exist functions $\rho_0(x), u_0(x), \th_0(x)\in H^2_x$ such that
\bal\label{limit:initial:hard}
f_0^\e\rightarrow f_0 \;\;\text{strongly in}\;  H^{2}_{x,v} \;\text{ as }\;  \e\rightarrow 0,
\eal
 where $f_0(x, v)$ is of the form
\bals
f_0(x,v)= \Big(\rho_0(x)+u_0(x)\cdot v+\theta_0(x)\frac{|v|^2-3}{2}\Big) \sqrt{\mu}.
\eals
Then  as $\e\rightarrow 0$, there hold
\begin{equation}\label{converg-1}
\begin{split}
&f^\e\rightarrow \Big(\rho+u\cdot v+\theta\frac{|v|^2-3}{2}\Big)\sqrt{\mu}\;
\text{ weakly-}* \text{in } t\ge 0, \text{weakly in } H^2_{x,v},
\\
&\nabla_x\phi^\e\rightarrow \nabla_x\phi\quad
\text{~~~ weakly-}* \text{in } t\ge 0, \text{weakly in } H^2_{x},
\text{ strongly in } C(\mathbb{R}^+; H^{1}_{loc}(\bbR^3_x)),
\end{split}
\end{equation}
 where $\left(\rho, u, \th, \nabla_x\phi\right)\in
  L^\infty\left(\mathbb{R}^+; H^{2}(\bbR^3_x)\right)
  \cap C\left(\mathbb{R}^+; H^{1}_{loc}(\bbR^3_x)\right)
$
 and satisfies the incompressible NSFP system
\beq\label{INSFP}
 \left\{
\begin{array}{ll}
\displaystyle  \pt_t u+u\cdot\nabla_x u-\lambda\Delta_x u+\nabla_x p = \rho\nabla_x\th,\qquad\nabla_x\cdot u=0, \,\quad\qquad\qquad     \\[2mm]
\displaystyle  \pt_t\Big(\dfrac{3}{2}\th-\rho\Big)+u\cdot\nabla_x\Big(\dfrac{3}{2}\th-\rho\Big)
-\dfrac{5}{2}\kappa\Delta_x\th=0,\\[2mm]
\displaystyle   \Delta_x(\rho+\th)=\rho, \quad -\nabla_x\phi=\nabla_x(\rho+\th), \quad\qquad\qquad \\[2mm]
\displaystyle  u(0,x)=\mathcal{P}u_0(x),\quad \th(0,x)=\th_0(x),\quad \rho(0,x)=\rho_0(x)\qquad\qquad
\end{array}
\right.
\eeq
in the sense of distributions, where $\mathcal{P}$ stands for the Leray projection, and $\kappa,$ $\lambda$ and
$p$ represent the heat conductivity coefficient, the viscosity coefficient and the pressure, respectively.
\end{theorem}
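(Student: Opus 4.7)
The plan is to implement the weighted $H^2_{x,v}$--$W^{2,\infty}_{x,v}$ scheme announced in the introduction in three stages: uniform-in-$\e$ a priori estimates, propagation of the prescribed time decay, and passage to the limit. Local well-posedness of \eqref{eq:f} in the norm $\interleave f^\e\interleave^\mathbf{h}+\e^{1/2}\|w_{\widetilde\vartheta}f^\e\|_{W^{2,\infty}_{x,v}}$ is a routine contraction under the smallness of the initial data, so the core of the proof is a continuation argument based on uniform a priori bounds; once these close, global existence together with \eqref{eq:theorem2} follows by a standard connected-set argument on $[0,T]$.

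First I would establish the energy block. Applying $\pt^\a_\b$ to \eqref{eq:f} for $|\a|+|\b|\le 2$ and pairing with $\pt^\a_\b f^\e$ in $L^2_{x,v}$ or in $L^2_{x,v}(w^2)$, I invoke \eqref{spectL} to absorb the apparently singular $\e^{-2}Lf^\e$ term as $\e^{-2}\s_0\|(\mathbf{I}-\mathbf{P})f^\e\|_{L^2_{x,v}(\nu)}^2$. The macroscopic component $\mathbf{P}f^\e=(\rho_{f^\e}+u_{f^\e}\cdot v+\th_{f^\e}(|v|^2-3)/2)\sqrt{\mu}$ is controlled through a Guo--type estimate on the local conservation laws, where the Poisson coupling $-\Delta_x\phi^\e=\int\sqrt{\mu}\,f^\e\,\d v$ combined with the neutrality hypothesis $\iint\sqrt{\mu}\,f^\e_0\,\d x\,\d v=0$ (which is preserved in time) yields coercive control on $\nabla_x\phi^\e\in H^2_x$. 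To upgrade the energy bound to the declared $(1+t)^{-3/4}$ and $(1+t)^{-5/4}$ rates I would combine this inequality with a Fourier/spectral analysis of the linearized VPB semigroup $e^{-t\mathcal{L}/\e}$ in the spirit of Duan--Yang--Zhao and Li--Yang--Zhong: low frequencies decay at the heat-kernel rate from $L^2_vL^1_x$ data, which is exactly why the $\|(1+|x|)f^\e_0\|_{L^2_vL^1_x}$ hypothesis is imposed, and spatial differentiation gains an additional half power.

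Next comes the pointwise block, which I expect to be the real obstacle. Writing \eqref{eq:f} in mild form along the characteristics $\dot X=\e^{-1}V$, $\dot V=-\nabla_x\phi^\e$, I split $L=\nu-K$ and iterate Duhamel twice so that the double-$K$ contribution is $L^2$-smoothing in velocity and can be bounded by the energy norm controlled above. The dangerous term is the field contribution $\nabla_x\phi^\e\cdot\nabla_v(\sqrt{\mu}\,f^\e)/\sqrt{\mu}$, which after $\pt^\a_\b$ with $|\a|+|\b|=2$ produces factors of $v$ that ordinary polynomial weights cannot absorb. This is precisely where the exponential time-velocity weight $w_{\widetilde\vartheta}=e^{\widetilde\vartheta(t)|v|^2}$ with $\widetilde\vartheta(t)=\vartheta(1+(1+t)^{-\sigma})$ intervenes: since $\widetilde\vartheta'(t)<0$, the time derivative of $w_{\widetilde\vartheta}|f^\e|^2$ generates a favorable dissipative contribution $\widetilde\vartheta'(t)|v|^2 w_{\widetilde\vartheta}|f^\e|^2$ whose magnitude dominates the singular $|v|$ from the field term provided $\vartheta$ is small and $\sigma\le 1/4$. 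The prefactors $\e^{1/2}$ and $\e^{3/2}$ in \eqref{eq:theorem2} encode the scaling balance between this dissipation and the $\e^{-1}$ transport. Closing the $W^{2,\infty}_{x,v}$ estimate against the energy estimate, and feeding back the pointwise control of $\nabla_x\phi^\e$ into the energy inequality, is the interplay alluded to in the introduction; implementing it with matching time-decay rates on both sides is the delicate point of the argument.

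For the hydrodynamic limit I would extract weak-$*$ limits from the uniform $H^2_{x,v}$ and $H^2_x$ bounds on $(f^\e,\nabla_x\phi^\e)$. The $\e^{-2}$ microscopic dissipation yields $\|(\mathbf{I}-\mathbf{P})f^\e\|_{L^2_{t,x,v}(\nu)}=O(\e)$, forcing the limit $f$ to lie in $\mathrm{N}(L)$ and hence to take the Maxwellian form appearing in \eqref{converg-1}. Strong local compactness of $\nabla_x\phi^\e$ in $C(\bbR^+;H^1_{\mathrm{loc}}(\bbR^3_x))$ follows from the Poisson equation, the uniform $H^2_x$ bound, and equicontinuity in $t$ obtained by differentiating $\int\sqrt{\mu}\,f^\e\,\d v$ and using \eqref{eq:f}; this suffices to pass to the limit in the nonlinear coupling. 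The limiting system is then identified by testing the local conservation laws of mass, momentum and energy against divergence-free or stationary-temperature test functions and running the standard Chapman--Enskog procedure on the Burnett functions, producing the viscosity and heat-conductivity coefficients and the Boussinesq-type forcing $\rho\nabla_x\th$; the Poisson line of \eqref{INSFP} comes from passing to the limit in the $\e\Delta_x\phi^\e$ equation, while the Leray projection on $u_0$ appears because only the divergence-free part of the initial velocity survives in the incompressible limit. Strong initial-layer convergence is recovered from \eqref{limit:initial:hard}.
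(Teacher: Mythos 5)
Your proposal matches the paper's weighted $H^2_{x,v}$--$W^{2,\infty}_{x,v}$ scheme in all essential respects: micro--macro energy estimates closed by pointwise $w_\vartheta$-weighted $L^\infty$ control, a double-Duhamel $L^\infty$ bootstrap along characteristics in which the time-decaying exponential weight $w_{\widetilde\vartheta}$ supplies the extra dissipation in the effective collision frequency that absorbs the $v\cdot\nabla_x\phi^\e$ term, a linearized Fourier decay estimate fed back via Gr\"onwall, and a weak-compactness passage to the NSFP limit via Aubin--Lions--Simon. The only detail you leave implicit is that the paper runs two parallel Gr\"onwall arguments (for the full energy $\mathbf{E}^\mathbf{h}$ and the higher-order variant $\widetilde{\mathbf{E}}^\mathbf{h}$) to produce the two distinct decay exponents $3/4$ and $5/4$, and closes the a priori assumptions through the interpolation $\e\|\nabla_x^3\phi^\e\|_{L^\infty_x}\lesssim\|\nabla_x f^\e\|_{H^1_xL^2_v}^{1/2}\big(\e^2\|\nabla_x^2(w_{\widetilde\vartheta}f^\e)\|_{L^\infty_{x,v}}\big)^{1/2}$.
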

\medskip

 Then we turn to consider the more challenging soft potential case $-3<\gamma<0$.
  For this, we introduce a temporal energy functional for the VPB system (\ref{eq:f})  for the soft potential case $-3<\gamma<0$:
\beq\label{def-energy-R-soft-sum}
\bsp
\interleave f^\e (t)\interleave_{\ell}^\mathbf{s}:=\;&\sum_{|\a|+|\b|\leq2}\left\| w^{|\b|}\partial^{\a}_{\b} f^\e(t)\right\|_{L^2_{x,v}}
+ \left\| \nabla_x \phi^\e(t)\right\|_{H^2_x}
+\sum_{\substack{|\a|+|\b|\leq2\\0\leq|\a|\leq 1}}\left\| w^{|\b|-\ell}\partial^{\a}_{\b} f^\e(t)\right\|_{L^2_{x,v}}\\
\;&+\e^{\frac{1}{2}}\left\| {w}^{-\ell}\nabla_x^2f^\e(t)\right\|_{L^2_{x,v}},
\esp
\eeq
 where the weight  function $w$ is defined in \eqref{weight-w} and $\displaystyle\ell\geq\frac{1}{2}$ is a constant to be determined later.
 Note that the weight $w^{|\b|-\ell}$ depends on the order of velocity derivative $\partial^\beta_v$ and
 ${w}^{-\ell}=(1+|v|^2)^{-\frac{\ell\gamma}{2}}\geq 1$ for the soft potential case $-3<\gamma<0$.
 Moreover, the last term in (\ref{def-energy-R-soft-sum}) on the second order space derivative is treated particularly with an extra
 $\displaystyle \e^{\frac{1}{2}}$, due to the singularity brought by $\displaystyle \e^{-2}L(\mathbf{I-P})f^\e$ in weighted
 $H^2_{x,v}$ energy estimate, cf. the proof of Lemma \ref{prop:weighted:f:H^2} in Section \ref{energy-estimate}.
 Then we  state our second main result on the uniform estimate and time decay rate of global strong solutions to
 the perturbation VPB system (\ref{eq:f}) for soft potentials $-3<\gamma<0$.

\begin{theorem}[Soft potentials]    \label{mainth1} \
Let $-3<\gamma<0$, $0<\e \leq 1,$  $\ell_{0}>\frac{3}{4}$, $\ell\geq \max\big\{1,-\frac{1}{\gamma}\big\}+\frac{1}{2}+\ell_{0},$
$w_{\widetilde{\vartheta}}$ be defined in \eqref{wegiht:express:2}, and $\widetilde{\vartheta}$ be defined in \eqref{wegiht:express:1} with
$0<\vartheta\ll 1$  and $0<\sigma\leq\frac{1}{24}$.
Assume that
$\iint_{\mathbb{R}^3\times\mathbb{R}^3}\sqrt{\mu}f^\e_0\d x\d v=0$
and  there is  a sufficiently small constant $\delta_0>0$ independent of $\e$ such that
\beq
\bsp\label{eq:theorem1:initial:2}
\;&\sum_{|\a|+|\b|\leq2}\left\| w^{|\b|}\partial^{\a}_{\b} f^\e_0\right\|_{L^2_{x,v}}
+\sum_{\substack{|\a|+|\b|\leq2\\0\leq|\a|\leq1}}
\left\| w^{|\b|-(\ell+1)}\partial^{\a}_{\b} f^\e_0\right\|_{L^2_{x,v}}
+\e^\frac{1}{2}\left\| {w}^{-(\ell+1)}\nabla_x^2f^\e_0\right\|_{L^2_{x,v}}\\
\;&
 +\left\|\big(1+|x|\big)f^\e_0\right\|_{L^2_vL^1_x}
 +\e^{\frac{1}{2}}\big\|\langle v\rangle^{-\gamma(\frac{1}{2}+\ell_{0})}f^\e_0\big\|_{L^2_vL^1_x}
 +\e^\frac{1}{2}\left\| w_{\widetilde{\vartheta}} f^\e_0\right\|_{W^{2,\infty}_{x,v}}\leq \delta_0.
\esp
\eeq
Then the perturbation VPB system \eqref{eq:f} admits a unique global strong solution
$(f^{\e},\nabla_{x}\phi^\e)$  satisfying
\bal
\bsp\label{eq:theorem1}
&\sup_{0\leq t\leq\infty}\Big\{\left(1+t\right)^{\frac{3}{4}}\interleave f^\e (t) \interleave_{\ell}^\mathbf{s}
+\left(1+t\right)^{\frac{5}{4}}\left\|\nabla_x f^\e(t)\right\|_{   H^1_{x}L^2_{v}}
+\left(1+t\right)^{\frac{5}{4}}\left\|\nabla_x^2\phi^\e(t)\right\|_{L^2_{x}}\Big\}\\
&+\sup_{0\leq t\leq\infty}\Big\{\sum_{|\a|+|\b|\leq 1}\left(1+t\right)^{\frac{5}{4}-\frac{5}{8}|\b|}
\e^{\frac{1}{2}+\frac{1}{5}|\b|}\left\| \partial_{\b}^{\a}(  w_{\widetilde{\vartheta} } f^\e)(t)\right\|_{L_{x,v}^{\infty}}\Big\}\\
&+\sup_{0\leq t\leq\infty}\Big\{\sum_{|\a|+|\b|= 2}\left(1+t\right)^{\frac{5}{4}-\frac{5}{8}|\b|}
\e^{\frac{3}{2}+\frac{1}{5}|\b|}\left\| \partial_{\b}^{\a}( w_{\widetilde{\vartheta} } f^\e)(t)\right\|_{L_{x,v}^{\infty}}\Big\}
\leq C\delta_0,
\esp
\eal
where $C>0$ is a positive constant independent of $\e$.

Furthermore,  if the initial  condition \eqref{limit:initial:hard} of $f^\e_0$  holds, then
 the global solution $\left(f^\e,\nabla_x\phi^\e \right)$ of the perturbation VPB system \eqref{eq:f}  converges to
the solution of the incompressible NSFP system \eqref{INSFP}
in the sense of \eqref{converg-1}.
\end{theorem}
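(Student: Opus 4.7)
The plan is to establish global strong solutions with uniform-in-$\varepsilon$ bounds by running a continuity argument driven by the interplay between a weighted $H^2_{x,v}$ energy estimate and a time-velocity weighted $W^{2,\infty}_{x,v}$ estimate, both carrying polynomial time decay. The soft-potential regime brings two principal difficulties absent in Theorem \ref{mainth2}: the collision frequency $\nu(v)\sim\langle v\rangle^\gamma$ is degenerate at large velocities, so one cannot close estimates with a single polynomial weight and must calibrate $w^{|\beta|-\ell}$ so that weight loss from the velocity derivatives of $L$ is still controlled by the spectral gap (\ref{spectL}); and the term $\varepsilon^{-2}L(\mathbf{I}-\mathbf{P})f^\varepsilon$, when hit by $\nabla_x^2$, is singular in $\varepsilon$, which is exactly why the extra $\varepsilon^{1/2}$ rescaling on $\nabla_x^2 f^\varepsilon$ is built into $\interleave\cdot\interleave^{\mathbf{s}}_\ell$.

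First I would set up local existence for the perturbation system (\ref{eq:f}) on some interval $[0,T_\varepsilon]$ and make the a priori assumption that the left-hand side of (\ref{eq:theorem1}) is bounded by $\sqrt{\delta_0}$. The heart of the argument then consists of three families of estimates. (i) Weighted energy estimates on $\partial^\alpha_\beta f^\varepsilon$ with weight $w^{|\beta|-\ell}$, where the choice $\ell\geq\max\{1,-1/\gamma\}+\frac{1}{2}+\ell_0$ is tuned so that the weight loss from each commutator of $w^{|\beta|-\ell}$ with $v\cdot\nabla_x$ and with $K$ can be absorbed into the microscopic dissipation furnished by (\ref{spectL}); the second-order spatial term is treated separately with the $\varepsilon^{1/2}$ rescaling so that the singular $\varepsilon^{-2}$ in $L$ is tamed. (ii) Macroscopic estimates on $(\rho_{f^\varepsilon},u_{f^\varepsilon},\theta_{f^\varepsilon},\nabla_x\phi^\varepsilon)$, derived by testing the local conservation laws satisfied by $\mathbf{P}f^\varepsilon$ against test functions built from elliptic liftings of the fluid moments; this recovers, uniformly in $\varepsilon$, the first-order spatial dissipation of the macroscopic piece that is otherwise missing. (iii) Time decay, obtained by combining the $L^2_vL^1_x$ initial bound in (\ref{eq:theorem1:initial:2}) with a Duhamel representation and the spectral decay of the linearized VPB semigroup on the relevant weighted spaces, and then bootstrapping against the bilinear Boltzmann and nonlinear Poisson terms to upgrade the energy bound into the $(1+t)^{-3/4}$ and $(1+t)^{-5/4}$ rates of (\ref{eq:theorem1}).

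The $W^{2,\infty}_{x,v}$ piece I would obtain from the mild formulation of (\ref{eq:f}) multiplied through by $w_{\widetilde{\vartheta}}$. Differentiating $\pt_t+\varepsilon^{-1}v\cdot\nabla_x-\nabla_x\phi^\varepsilon\cdot\nabla_v$ against $w_{\widetilde{\vartheta}}f^\varepsilon$ generates the term $-\widetilde{\vartheta}'(t)|v|^2 w_{\widetilde{\vartheta}}f^\varepsilon$ which, thanks to the strictly decreasing profile (\ref{wegiht:express:1}) with $0<\sigma\leq\frac{1}{24}$, yields a pointwise gain in $|v|^2$ absorbing the otherwise singular contribution $\varepsilon\nabla_x\phi^\varepsilon\cdot v\,w_{\widetilde{\vartheta}}f^\varepsilon$ produced when $\nabla_v$ meets the Gaussian weight. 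Integration along characteristics, iterated once to handle the compact operator $K$ via the standard $\nu^{-1}K$ double-iteration trick, then expresses $w_{\widetilde{\vartheta}}\partial^\alpha_\beta f^\varepsilon$ at time $t$ as an initial contribution plus a remainder controlled by the weighted $H^2_{x,v}$ norm; the precise $\varepsilon$- and $(1+t)$-weights in the three supremum groups of (\ref{eq:theorem1})—in particular the $-\frac{5}{8}|\beta|$ loss in time decay and the $\frac{1}{5}|\beta|$ gain in $\varepsilon$ as $|\beta|$ grows—are forced by balancing these two contributions interpolated against the $L^2$ decay.

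The main obstacle will be closing the top-order weighted $H^2$ estimate at $|\alpha|=2$: the drift $\varepsilon^{-1}v\cdot\nabla_x\phi^\varepsilon\sqrt{\mu}$ and the nonlinear Poisson term $\nabla_x\phi^\varepsilon\cdot\nabla_v(\sqrt{\mu}f^\varepsilon)/\sqrt{\mu}$ in (\ref{eq:f}) produce contributions of size $\varepsilon^{-1}\|\nabla_x^2\phi^\varepsilon\|_{L^2_x}\|f^\varepsilon\|_{W^{1,\infty}_{x,v}}$, and the only way to digest the $\varepsilon^{-1}$ is to combine the $\varepsilon^{1/2}$ rescaling on $\nabla_x^2 f^\varepsilon$, the $\varepsilon^{1/2}$ on the $W^{2,\infty}$ piece, and the $(1+t)^{-5/4}$ decay on $\nabla_x^2\phi^\varepsilon$ inherited from the $L^2_vL^1_x$ initial datum. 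Once the uniform estimate (\ref{eq:theorem1}) is in hand, global existence follows from the continuity argument, and the limit $\varepsilon\to 0$ proceeds by weak-$\ast$ compactness on $f^\varepsilon$ in $H^2_{x,v}$ together with Aubin--Lions compactness on $\nabla_x\phi^\varepsilon$ via the Poisson equation; the weak limit $f$ is forced into $\mathrm{N}(L)$ by the $\varepsilon^{-2}L$ term, and testing (\ref{eq:f}) against $v\sqrt{\mu}$ and $\frac{|v|^2-3}{2}\sqrt{\mu}$, then passing to the limit in the nonlinear moments using the $W^{2,\infty}$ bound to control products, identifies $(\rho,u,\theta,\nabla_x\phi)$ as a distributional solution of (\ref{INSFP}) with the prescribed initial data.
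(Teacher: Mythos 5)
Your proposal follows essentially the same route as the paper: weighted $H^2_{x,v}$ energy estimates with the $w^{|\beta|-\ell}$ calibration and the $\varepsilon^{1/2}$-scaled $\nabla_x^2$ block to tame $\varepsilon^{-2}L$, a macroscopic dissipation estimate from the local conservation laws, a time-velocity weighted $W^{2,\infty}_{x,v}$ estimate obtained by the double characteristic iteration of $K$ under the decaying exponential weight $w_{\widetilde{\vartheta}}$, time decay from a Duhamel representation of the linearized semigroup against the $L^2_vL^1_x$ initial data, and the Aubin--Lions passage to the NSFP limit. One small bookkeeping slip: the linear-in-$v$ term that the time-derivative of $\widetilde{\vartheta}$ is designed to absorb is $\tfrac{v}{2}\cdot\nabla_x\phi^\varepsilon+2\widetilde{\vartheta}v\cdot\nabla_x\phi^\varepsilon$ with no $\varepsilon$ prefactor; the point is that $\nu\sim\langle v\rangle^\gamma$ is degenerate for soft potentials and cannot dominate $\mathcal{O}(|v|)$, so the $\vartheta\sigma(1+t)^{-1-\sigma}|v|^2$ gain plus the polynomial decay of $\|\nabla_x\phi^\varepsilon\|_{L^\infty_x}$ are what restore positivity of the modified collision frequency.
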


\begin{remark}\label{remark-1.6}
This newly introduced $H_{x,v}^2$--$W_{x,v}^{2, \infty}$ approach with time decay actually provides the time decay estimate
for the solution $(f^\e, \nabla_x\phi^\e)$ uniformly in $\e\in (0,1]$ for the full range of collision potentials $-3<\gamma\leq 1$, namely,
\bals
&\|f^\e(t)\|_{L^2_{x,v}}+\|\nabla_x \phi^\e(t)\|_{L^2_x}\lesssim (1+t)^{-\frac{3}{4}},\\
&\|\nabla_xf^\e(t)\|_{H^1_xL^2_v}+\|\nabla_x^2\phi^\e(t)\|_{H^1_x}+\e^{\frac{1}{2}}\| w_{\widetilde{\vartheta}} f^\e(t)\|_{L_{x,v}^{\infty}} \lesssim(1+t)^{-\frac{5}{4}},
\eals
which are optimal in the sense of Duhamel's principle. This time decay rates covers the results of \cite{DYZ2002, DYZ2003, XXZ2017},
which constructed the same optimal $L^2$ time decay rate of the standard VPB system separately for hard potentials $0\leq \gamma\leq 1$, moderately soft potentials $-2\leq \gamma<0$ and
full soft potentials $-3<\gamma<0$ in higher order energy space $H^N_{x,v}$ with $N\geq 4$.

In addition, as the hydrodynamic limit of the solution $\left(f^\e,\nabla_x\phi^\e \right)$
 to the VPB system \eqref{eq:f} in the sense of \eqref{converg-1} and \eqref{limit:10}, the solution
$(\rho,u,\th,\nabla_x \phi)$ to the incompressible NSFP system \eqref{INSFP} actually inherits the time decay rate, that is,
\bals
\bsp
\big\|(\rho,u,\th,\nabla_x \phi)(t)\big\|_{L^2_x}\lesssim (1+t)^{-\frac{3}{4}},\quad \big\|\nabla_x(\rho,u,\th,\nabla_x\phi)(t)\|_{H^1_x}\lesssim (1+t)^{-\frac{5}{4}},
\esp
\eals
which is consistent with our latest  study  for the incompressible NSFP system \eqref{INSFP} by spectral analysis and
high-low frequency decomposition \cite{GZW-2021}.
\end{remark}

\begin{remark}\label{remark-1.3}
 The weighted $H_{x,v}^2$--$W_{x,v}^{2, \infty}$ approach with time decay integrates
 the advantages of both the $L^2$--$L^\infty$ approach for weak solutions \cite{guo2010arma,guo2010cmp} and the higher order
 energy approach for classical solutions \cite{guo2002cpam,Guo2003}. In fact, this $H_{x,v}^2$--$W_{x,v}^{2, \infty}$ workspace
 is actually the space with the lowest regularity to solve the hydrodynamic limit problem of  the VPB system \eqref{eq:f} in the perturbative framework, and
 spaces with lower regularity like $L^2$--$L^\infty$ or $H^1_{x,v}$--$W^{1,\infty}_{x,v}$ fail due to the singularity brought by the nonlinear term
 $\e^{-1}\Gamma(f^\e, f^\e)$ and $(v\cdot\nabla_x\phi^\e) f^\e$. This is different from the truncated Hilbert expansion approach, cf. \cite{GZW-2021, guo2010cmp}.
 On the other hand, compared with the higher order energy approach, our weighted $H_{x,v}^2$--$W_{x,v}^{2, \infty}$ approach
 needs less regularity on the initial data.

 This weighted $H_{x,v}^2$--$W_{x,v}^{2, \infty}$ approach with time decay is flexible and robust, as it can deal with optimal time decay rate
 problems uniformly in $\e\in (0,1]$ and hydrodynamic limit problems when $\e\rightarrow 0$, in a single unified theory for both
 the Boltzmann equation and the VPB system. It is also expected to shed some light on the more challenging hydrodynamic limit
 of the Landau equation and the Vlasov--Poisson--Landau system.

\end{remark}

\begin{remark}\label{remark-initial}
 The initial conditions $\iint_{\mathbb{R}^3\times\mathbb{R}^3}\sqrt{\mu}f^\e_0\d x\d v=0$ and
 $\left\|\big(1+|x|\big)f^\e_0\right\|_{L^2_vL^1_x}<\infty$ in Theorem \ref{mainth2} and Theorem
 \ref{mainth1} are used to remove the singularity of the Poisson kernel and recover the optimal rate index
 $(1+t)^{-\frac{3}{4}}$, similarly as the standard VPB system \cite{DS11}.

 For the soft potential case, since the dissipation rate \eqref{def-disspation-R} is much weaker than the instant energy \eqref{def-energy-R},
 we have to put the extra velocity weight $\langle v\rangle^{-\gamma(\frac{1}{2}+\ell_{0})}$
 in the initial data $\e^{\frac{1}{2}}\|\langle v\rangle^{-\gamma(\frac{1}{2}+\ell_{0})}f^\e_0\|_{L^2_vL^1_x}$
 in \eqref{eq:theorem1:initial:2} to establish the desired time decay estimate
 for the nonhomogeneous linearized VPB system \eqref{linear:esti:f:decay:1}, see the proof of Lemma \ref{result:linear:estimate:decay}.
 Moreover, when considering the time decay rate for the nonlinear VPB system \eqref{eq:f},
 we need the the extra velocity weight $\langle v\rangle^{-\gamma} =w^{-1}$ in the initial data $\sum_{\substack{|\a|+|\b|\leq2\\0\leq|\a|\leq1}}
\left\| w^{|\b|-(\ell+1)}\partial^{\a}_{\b} f^\e_0\right\|_{L^2_{x,v}}$ and $\e\left\| {w}^{-(\ell+1)}\nabla_x^2f^\e_0\right\|_{L^2_{x,v}}$ in
\eqref{eq:theorem1:initial:2} to apply the time-weighted estimate on \eqref{main-result:1} and  the iterative
technique \eqref{main-result:2}, see \eqref{decay:soft:result:4} for more details.
\end{remark}

\subsection{Main Ideas and Innovations}
\hspace*{\fill}

In this subsection, we outline the main ideas and innovations proposed in this work.
Our analysis is based on the weighted  $H_{x,v}^2$--$W_{x,v}^{2, \infty}$ approach with time decay,
which consists of the velocity weighted $H_{x,v}^2$ energy estimate with time decay and the
 time-velocity weighted $W_{x,v}^{2,\infty}$-estimate with time decay for the perturbation VPB system (\ref{eq:f}), which
 eventually leads to the uniform estimate with respect to the Knudsen number $\e\in (0,1]$ globally in time.

\subsubsection{Velocity weighted $H_{x,v}^2$ energy estimate with time decay}
\hspace*{\fill}

Our velocity weighted $H_{x,v}^2$ energy estimate contains some new ingredients.
We start with the standard $L^2_{x,v}$ energy estimate for the perturbation VPB system  (\ref{eq:f})
\beqs
\begin{split}
&\frac{1}{2}\frac{\d }{\d t}\Big(\|f^\e\|_{L_{x,v}^2}^2
+\|\nabla_x \phi^\e \|_{L^2_x}^2\Big) +\frac{1}{\e^2}\|(\mathbf{I}-\mathbf{P})f^\e(t)\|^2_{L^2_{x,v}(\nu)}\\
 \lesssim\; &\underbrace{-\frac{1}{2} \iint_{\mathbb{R}^3\times\mathbb{R}^3}  v \cdot\nabla_x \phi^\e|(\mathbf{I}-\mathbf{P})f^\e|^2 \d x\d v }_{:={D}_{1}}+\underbrace{\Big\langle\frac{1}{\e}\Gamma(f^\e,f^\e), (\mathbf{I}-\mathbf{P})f^\e \Big\rangle_{L_{x,v}^2}}_{:={D}_{2}}+\cdots,
\end{split}
\eeqs
where  the semi-positivity  of $L$ in (\ref{spectL}) has been used.
Unfortunately, the term $D_1$ cannot be controlled by the weak dissipation due to the collision frequency
$$\nu\sim \langle v\rangle^\gamma \sim (1+|v|^2)^{\frac{\gamma}{2}} \;\text{ for } -3< \gamma<  1.$$
Rather than using the time-velocity weighted higher order energy approach to provide extra dissipation \cite{DYZ2002,DYZ2003},
we control $D_1$ in a creative manner by weighted $L^\infty_{x,v}$-estimate, that is,
\bal
\bsp\label{D:qkj:esti:f:L^2:8}
{D}_{1}
\leq &\;\int_{\mathbb{R}^3} \big\||v|\langle v\rangle^{{\frac{3}{4}}}\langle v\rangle^{-\frac{\gamma}{2}}
(\mathbf{I}-\mathbf{P})f^\e\big\|_{L^3_v}\big\|\langle v\rangle^{-{\frac{3}{4}}}\big\|_{L^6_v}
\big\|\langle v\rangle^{\frac{\gamma}{2}}
(\mathbf{I}-\mathbf{P})f^\e \big\|_{L^2_v}|\nabla_x \phi^\e |\d x\\
\leq&\;
 \big\||v|\langle v\rangle^{{\frac{3}{4}}}\langle v\rangle^{-\frac{\gamma}{2}}
(\mathbf{I}-\mathbf{P})f^\e\big\|_{L^3_{x,v}}
\left\|
(\mathbf{I}-\mathbf{P})f^\e\right\|_{L^2_{x,v}(\nu)}\|\nabla_x \phi^\e \|_{L^6_x}\\
\leq&\;\Big(\frac{1}{\e}\|(\mathbf{I}-\mathbf{P})f^\e\|_{L_{x,v}^2}+
\e^2\|w_{\vartheta}f^{\e}\|_{L^\infty_{x,v}}\Big)
\|(\mathbf{I}-\mathbf{P})f^\e\|_{L^2_{x,v}(\nu)}\|\nabla_x^2 \phi^\e \|_{L^2_x}.
\esp
\eal
Here we have utilized  the interpolation inequality to  estimate
\bals
\bsp
\left\||v|\langle v\rangle^{{\frac{3}{4}}}\langle v\rangle^{-\frac{\gamma}{2}}
(\mathbf{I}-\mathbf{P})f^\e\right\|_{L^3_{x,v}}
\leq\;&\Big(\frac{1}{\e}\left\|
(\mathbf{I}-\mathbf{P})f^\e\right\|_{L^2_{x,v}}\Big)^\frac{2}{3}
\Big(\e^2\big\||v|^3\langle v\rangle^{{\frac{9}{4}}}\langle v\rangle^{-\frac{3\gamma}{2}}
(\mathbf{I}-\mathbf{P})f^\e\big\|_{L^\infty_{x,v}}\Big)^\frac{1}{3}\\
\lesssim\;&
\frac{1}{\e}\|(\mathbf{I}-\mathbf{P})f^\e\|_{L_{x,v}^2}+
\e^2\|w_{\vartheta}f^{\e}\|_{L^\infty_{x,v}}
\esp
\eals
for the exponential weight $w_{\vartheta}=e^{\vartheta|v|^2}\gtrsim |v|^3\langle v\rangle^{{\frac{9}{4}}}\langle v\rangle^{-\frac{3\gamma}{2}}$ with $0<\vartheta \ll 1$,
which holds for the full range of cutoff potentials $ -3< \gamma\leq 1$.
Furthermore,  decomposing   $D_2$ into
\bals
\bsp
D_2  =\;&
 \Big\langle{\frac{1}{\e}\Gamma(w_{\vartheta}^{-1} w_{\vartheta}\mathbf{P}f^\e,f^\e)}, (\mathbf{I}-\mathbf{P})f^\e \Big\rangle_{L_{x,v}^2}+\Big\langle{\frac{1}{\e}\Gamma(\mathbf{(I-P)}f^\e,
 w_{\vartheta}^{-1} w_{\vartheta}f^\e}), (\mathbf{I}-\mathbf{P})f^\e \Big\rangle_{L_{x,v}^2}
 \esp
 \eals
and taking $L^\infty_{x,v}$ norm on $w_{\vartheta}\mathbf{P}f^\e$ and $w_{\vartheta}f^\e$,
we are able to treat $D_2$ as
\beq
\bsp\label{D:Nonlinear:term:proof:1}
D_2
\lesssim
&\;\frac{1}{\e}\| (\mathbf{I}-\mathbf{P})f^\e\|_{L_{v}^2(\nu)}\Big(
\| \mathbf{P}f^\e\|_{L_{x}^\infty L_{v}^2}\|  f^\e\|_{L_{x,v}^2(\nu)} +
\e\| w_\vartheta f^\e\|_{L_{x,v}^\infty}\frac{1}{\e}\| (\mathbf{I}-\mathbf{P})f^\e\|_{L_{x,v}^2(\nu)}
 \Big),
\esp
\eeq
see the proof of Lemma  \ref{es-energy-Nonlinear}.
Here we have used  the well-known result $\|w_{\vartheta}\mathbf{P}f^\e\|_{L^\infty_v}\lesssim \|\mathbf{P}f^\e\|_{L^2_v}$ from the exponential decay
factor in  $\mathbf{P}f^\varepsilon$.  Consequently, motivated by the above  uniform estimates \eqref{D:qkj:esti:f:L^2:8}
 and \eqref{D:Nonlinear:term:proof:1}, we believe that it is hopeful to close the standard energy estimate with the help of  the
 weighted $L^\infty_{x,v}$-estimate, rather than the weighted higher order  energy approach \cite{DYZ2002,DYZ2003}.

 It should be mentioned that lower regularity like $L^2_{x,v}$--$L^\infty_{x,v}$ framework or $H^1_{x,v}$--$W^{1,\infty}_{x,v}$ framework cannot
 solve the hydrodynamic limit of the VPB system (\ref{eq:f}) in the perturbative framework, due to the singularity brought by the nonlinear
 collision term $\e^{-1}\Gamma(f^\e, f^\e)$. This is different from the nonlinear term $\e^{k-2}\Gamma(f^\e,f^\e)$ ($k\geq 2$) in
 truncated Hilbert expansion approach, cf. \cite{GZW-2021, guo2010cmp}. Precisely speaking, for the energy estimate of
 $\|\nabla_xf^\e\|_{L^2_{x,v}}$, we deal with the difficult term related to   nonlinear collision term $\e^{-1}\Gamma(f^\e, f^\e)$ by
\bals
 \Big\langle{\frac{1}{\e}\Gamma(\nabla_x\mathbf{P}f^\e,f^\e)}, (\mathbf{I}-\mathbf{P})\nabla_xf^\e \Big\rangle_{L_{x,v}^2}
 \lesssim
\|\nabla_x\mathbf{P} f^\e\|_{L_{x}^6L_{v}^\infty } \| f^\e\|_{L_{x}^3L_{v}^2(\nu)}\frac{1}{\e}\| (\mathbf{I}-\mathbf{P})\nabla_xf^\e\|_{L_{x,v}^2(\nu)},
\eals
which indicates that this term will be controlled once the dissipation estimate of $\|\nabla_x^2\mathbf{P} f^\e\|_{L^2_{x,v}}$ is added into our analysis,
see Lemma \ref{es-energy-Nonlinear} for more details. Correspondingly, to   pair with the  dissipation   $\|\nabla_x^2\mathbf{P} f^\e\|_{L^2_{x,v}}$,  the energy estimate of $\|\nabla_x^2 f^\e\|_{L^2_{x,v}}$   should be taken  into consideration.
To conclude, we adopt the $H^2_{x,v}$ energy space to treat the perturbation VPB system (\ref{eq:f}) as below
\beqs
 \left\{
\begin{array}{ll}
\displaystyle  \sum_{|\a|+|\b|\leq2}\left\|  \partial^{\a}_{\b} f^\e(t)\right\|_{L^2_{x,v}}+ \| \nabla_x \phi^\e(t)\|_{H^2_x}\quad \quad\quad \text{for hard potentials}\; 0\leq \gamma\leq 1,   \\[4mm]
\displaystyle \sum_{|\a|+|\b|\leq2}\left\| w^{|\b|}\partial^{\a}_{\b} f^\e(t)\right\|_{L^2_{x,v}}
+ \left\| \nabla_x \phi^\e(t)\right\|_{H^2_x}\quad \text{for soft potentials}\; -3< \gamma< 0,
\end{array}
\right.
\eeqs
where the $H^2_{x,v}$ energy space for soft potential case involves the weight function $w^{|\beta|}$
as done in \cite{guo2006}, in order to control the velocity derivatives of the streaming term $v\cdot\nabla_x$ by weak dissipation.

Another feature of our analysis is that we design different frameworks for the weighted energy estimate between the soft potential case and hard potential case.
In fact, to deduce the time decay rate of solutions for the soft potential case $-3< \gamma< 0$, the starting point is
the $H^2_{x,v}$ energy inequality
\bals
\bsp
\frac{\d}{\d t} \mathcal{E}^\mathbf{s}(t)
+\frac{1}{\e^2}\sum_{|\a|+|\b|\leq2}\left\| w^{|\b|}\partial^{\a}_{\b} (\mathbf{I-P})f^\e\right\|_{L^2_{x,v}(\nu)}^2
+ \left\| \nabla_x^2 \phi^\e\right\|_{H^2_x}^2+\left\| \nabla_x \mathbf{P}f^\e\right\|_{H^1_{x,v}}^2
\lesssim\cdots,
\esp
\eals
where $\mathcal{E}^\mathbf{s}(t)$ is equivalent   to  $\sum\limits_{|\a|+|\b|\leq2}\left\| w^{|\b|}\partial^{\a}_{\b} f^\e(t)\right\|_{L^2_{x,v}}^2
+ \left\| \nabla_x \phi^\e(t)\right\|_{H^2_x}^2$, cf. Lemma \ref{prop:estimate:dissipation} and  Lemma \ref{prop:f:H^2:1}.
The microscopic dissipation above obviously contains the extra factor $\nu(v)\sim\langle v\rangle^\gamma$, which is degenerate for large
velocity and causes a difficulty in the time decay estimate of $f^\e$.  Luckily, motivated by \cite{DYZ2003},
we  should consider the velocity weighted $H^2_{x,v}$ energy estimate $\left\| w^{|\b|-\ell}\partial^{\a}_{\b} (\mathbf{I-P})f^\e(t)\right\|_{L^2_{x,v}}$ to
deduce the time decay rate for the VPB system (\ref{eq:f}).
In fact, by applying the weighted  energy estimate on the momentum system of $(\mathbf{I}-\mathbf{P})f^\e$, we have
\bal
\bsp\label{I-P:f:eq}
&\;\pt_t(\mathbf{I}-\mathbf{P})f^\e+\frac{1}{\e}v\cdot \nabla_x (\mathbf{I}-\mathbf{P})f^\e+\frac{1}{\e^2}L((\mathbf{I}-\mathbf{P})f^\e)
=\frac{1}{\e}v\cdot \nabla_x  \mathbf{P}f^\e
+\frac{1}{\e}\mathbf{P}(v\cdot \nabla_x  f^\e)+\cdots.
\esp
\eal
We are able to establish the weighted estimate on $\sum\limits_{\substack{|\a|+|\b|\leq2\\0\leq|\a|\leq 1}}\left\| w^{|\b|-\ell}\partial^{\a}_{\b} (\mathbf{I-P})f^\e(t)\right\|_{L^2_{x,v}}$ with lower order space derivatives, rather than $\sum\limits_{\substack{|\a|+|\b|\leq2\\0\leq|\a|\leq 2}}\left\| w^{|\b|-\ell}\partial^{\a}_{\b} (\mathbf{I-P})f^\e(t)\right\|_{L^2_{x,v}}$ with the full order space derivatives.
The reason is that  when applying $\nabla^2_x$ to \eqref{I-P:f:eq} and taking the  $L^2_{x,v}$ estimate with $w^{-2}\nabla^2_x(\mathbf{I-P})f^\e$, a
new trouble  arises because the third order derivative in the term
\bal\label{D:I-P:f:eq:1}
\Big\langle \frac{1}{\e}v\cdot \nabla_x \nabla^2_x  \mathbf{P}f^\e,
w^{-2l}\nabla^2_x(\mathbf{I-P})f^\e \Big\rangle_{L^2_{x,v}}
\eal
exceeds our $H^2_{x,v}$ framework. We solve this difficulty by applying $\nabla^2_x$ directly on the
VPB system (\ref{eq:f}) rather than on the microscopic system (\ref{I-P:f:eq}), and then taking the $L^2_{x,v}$ inner product with
$\e w^{-2}\nabla^2_x f^\e$
\bals
\frac{\e}{2}\frac{\d}{\d t}\Big\| {w}^{-\ell}\nabla_x^2f^\e(t)\Big\|_{L^2_{x,v}}^2+\Big\langle\frac{1}{\e^2}L((\mathbf{I}-\mathbf{P})f^\e),
\e w^{-2}\nabla^2_x f^\e \Big\rangle_{L^2_{x,v}}=\cdots.
\eals
This treatment not only avoids the appearance of the third order derivative \eqref{D:I-P:f:eq:1}, but also absorbs the singularity of $\e^{-2}$ in front of the
linearized Boltzmann operator $L$. In conclusion, we eventually design the following velocity weighted $H^2_{x,v}$ energy estimate for the soft potential
case $-3< \gamma< 0$
\bals
\sum_{\substack{|\a|+|\b|\leq2\\0\leq|\a|\leq 1}}\left\| w^{|\b|-\ell}\partial^{\a}_{\b} (\mathbf{I-P})f^\e(t)\right\|_{L^2_{x,v}}+\e^{\frac{1}{2}}\left\| {w}^{-\ell}\nabla_x^2f^\e(t)\right\|_{L^2_{x,v}}.
\eals

On the other hand, to establish the time decay estimate for the hard potential case $0\leq \gamma \leq 1$, the starting points are the full
$H^2_{x,v}$ energy estimate and the higher order $H^2_{x, v}$ energy estimate
\bals
\bsp
&\frac{\d}{\d t} \mathcal{E}^\mathbf{h}(t)
+\mathcal{E}^\mathbf{h}(t)
\lesssim\|\nabla_x\phi^\e\|_{L^2_x}^2+\|\mathbf{P}f^\e\|_{L^2_{x,v}}^2\cdots,\\
&\frac{\d}{\d t} \widetilde{\mathcal{E}}^\mathbf{h}(t)+
\widetilde{\mathcal{E}}^\mathbf{h}(t)
\lesssim\left\| \nabla_x \mathbf{P}f^\e\right\|_{L^2_{x,v}}^2+\cdots,
\esp
\eals
  see \eqref{qkj:esti:diss:0-hard}, \eqref{11111-hard} and \eqref{11111-hard-high}  for details.
 This means that we should analyze the time decay estimates of $\|\nabla_x\phi^\e\|_{L^2_x}$, $\|\mathbf{P}f^\e\|_{L^2_{x,v}}$ and $\|\nabla_x\mathbf{P}f^\e\|_{L^2_{x,v}}$
 by linear decay estimate and Duhamel's formula
\beqs
\begin{split}
&\|\partial^{\alpha}_{x}\mathbf{P}f^\e\|_{L^2_{x,v}}+\|\partial^{\alpha}_{x}\nabla_x\phi^\e\|_{L^2_{x}} \\
\lesssim &\int_0^t(1+t-s)^{-(\frac{3}{4}+\frac{|\a|}{2})}\|\partial^{\alpha}_{x}\left[{\color{blue} v}\cdot \nabla_x\phi^\e (\mathbf{I}-\mathbf{P})f^\e\right]\|_{L^2_vL^1_x}+\cdots \\
\lesssim &\int_0^t(1+t-s)^{-(\frac{3}{4}+\frac{|\a|}{2})}\|\nabla_x\phi^\e\|_{L^2_x} \|{\color{blue} \langle v \rangle }\partial^{\alpha}_{x}(\mathbf{I}-\mathbf{P})f^\e\|_{L^2_{x,v}}+\cdots \quad \text{for} ~|\a|\leq1,
\end{split}
\eeqs
 see  Lemma \ref{decay:es:nonlinear:hard} for details. Therefore, the energy functional in (\ref{def-energy-R-hard-sum})
 for the hard potential case  $0\leq \gamma \leq 1$ has weight only for the lower order
 derivatives $\| w (\mathbf{I}-\mathbf{P})f^\e(t)\|_{H^1_{x}L^2_{v}}$.


\subsubsection{Time-velocity weighted $W^{2, \infty}_{x,v}$-estimate with time decay}
\hspace*{\fill}

To close the weighted energy estimates in Proposition \ref{main-weighted-energy-estimate-1} and Proposition  \ref{main-weighted-energy-estimate-2},
the time-velocity weighted $W_{x,v}^{2,\infty}$-estimate for $f^\e$ is carried out and some innovations are proposed.

\vspace{1mm}

Firstly, to tackle the $L^\infty_{x,v}$-estimate of $w_{{\vartheta}}f^\e$, we multiply $w_\vartheta$ by the first equation in (\ref{eq:f})
\bals
\bsp
\;&\left\{\pt_t +\frac{1}{\e}v\cdot\nabla_x -\frac{1}{\e}\nabla_x\phi^\e\cdot\nabla_v\right\} (w_\vartheta f^\e)
+\underbrace{\left(\frac{1}{\e^{2}}{\nu(v)}
+\frac{1}{2}{v}\cdot\nabla_x\phi^\e
+2{\vartheta}{v}\cdot\nabla_x\phi^\e\right)}(w_\vartheta f^\e)
\\
=\;&w_\vartheta \left( \frac{1}{\e^2}Kf^\e+\frac{1}{\e}\Gamma(f^\e, f^\e)
-\frac{1}{\e}v\cdot\nabla_x \phi^\e\sqrt{\mu}\right).
\esp
\eals
 We observe that the underbraced term increases linearly in $\langle v\rangle$ and
 cannot conserve positivity, due to the degenerate collision frequency $\langle v\rangle^\gamma$ for the soft potential case $-3<\gamma< 0$.
 To overcome this difficulty, we adopt a time-velocity weighted function $w_{\widetilde{\vartheta}}(v)$ in \eqref{wegiht:express:2}
 as in \cite{LW2021} and introduce a new unknown $h^\e=w_{\widetilde{\vartheta}} f^\e$ satisfying
\beqs
\begin{split}\label{eq:R}
&\Big\{\pt_t +\frac{1}{\e}v\cdot\nabla_x -\frac{1}{\e}\nabla_x\phi^\e\cdot\nabla_v\Big\}h^\e+ \underbrace{\Big(\frac{1}{\e^2}\nu(v)+  \frac{1}{2}v\cdot \nabla_x\phi^\e +2{\widetilde{\vartheta}}v\cdot\nabla_x\phi^\e + \vartheta\sigma\frac{|v|^2}{(1+t)^{1+\sigma}}\Big)}_{:=\frac{1}{\e^2}{\widetilde{\nu}(v)}}h^\e =\cdots.
\end{split}
\eeqs
Here  the  term $\frac{1}{2}{\color{blue}v}\cdot \nabla_x\phi^\e +2{\widetilde{\vartheta}}{\color{blue}v}\cdot\nabla_x\phi^\e$ in
$\frac{1}{\e^2}{\widetilde{\nu}(v)}$ will be controlled, provided that the electric field $\|\nabla_x\phi^\e\|_{L^\infty_x}$ has the
time decay rate not slower than $(1+t)^{-1}$, cf. Lemma \ref{es:nu} for details.
In addition, straightforward calculation yields $w_{{\vartheta}}\leq w_{\widetilde{{\vartheta}}}\leq w_{2{\vartheta}}$,
which means  that once the $W_{x,v}^{2,\infty}$-estimate of $w_{\widetilde{\vartheta}}f^\e$ is controlled,
then the $W_{x,v}^{2,\infty}$-estimate of $w_{{\vartheta}}f^\e$ will be obtained in the meantime.
Thus, our goal is  to   establish the $W_{x,v}^{2,\infty}$-estimate for the new unknown $h^\e$, which
  relies heavily on the characteristic estimate given in
  Lemma \ref{le:jac} under the \emph{a priori} assumption: there is $\delta>0$ small enough such that
\bal
\bsp\label{eq:assumption:1}
 \sup_{0\leq t \leq T }\left\{(1+t)^\frac{5}{4}\| \nabla_x\phi^\e(t)\|_{W^{1,\infty}_{x }}\right\}
 &\le \delta.
\esp
\eal

Secondly,  when handling the  $L^{{ \infty}}_{x,v}$-estimate of the mixed velocity derivative $\partial_\b^\a h^{\e}$
$(|\a|+|\b|\leq 2, |\b|\geq 1)$ for the soft potential case $-3<\gamma< 1$, we apply  $\partial_\b^\a$   to (\ref{eq:h:estimate:1})  and
exploit  Duhamel's principle to get
\bal
\bsp\label{D:eq:Dh2}
\partial_\b^\a{h}^\varepsilon(t,x,v)
\leq  \;&\underbrace{ \int_0^t \text{exp}\Big\{-\int_s^t\frac{\widetilde{\nu}(\tau)}{\e^2}\text{d}\tau\Big\}
\left|\frac{1}{\e}\nabla_x \partial_{\b-1}^\a {h}^\e (s,X(s),V(s))\right|\text{d}s}_{:=D_3}+\cdots.
\esp
\eal
Here $D_3$ is challenging, in that the collision frequency $\dfrac{1}{\e^2}\widetilde{\nu}$ does not have positive lower bound.
Our treatment is based on the observation that
\bals
{\e}^{-1}| \nabla_x \partial_{\b-1}^\a  h^\e|= {\e}^{-\frac{4}{5}} (1+t)^{-\frac{5}{8}}| {\e}^{-\frac{1}{5}} (1+t)^{\frac{5}{8}} \nabla_x \partial_{\b-1}^\a  h^\e|
\lesssim
\frac{1}{\e^2}\widetilde{\nu}| {\e}^{-\frac{1}{5}} (1+t)^{\frac{5}{8}} \nabla_x \partial_{\b-1}^\a  h^\e|,
\eals
where we have used a significant  result that $\frac{1}{\e^2}\widetilde{\nu}$  has a lower bound $\e^{-\frac{4}{5}}(1+t)^{-\frac{5}{8}}$, see \eqref{es:nu:1} in Lemma \ref{es:nu}.
Then $D_3$ can be controlled by
\bals
D_3\lesssim\;&\int_0^t \text{exp}\Big\{-\int_s^t\frac{\widetilde{\nu}(\tau)}{\e^2}\text{d}\tau\Big\}
\frac{1}{\e^2}\widetilde{\nu}(V(s))\left| {\e}^{-\frac{1}{5}} (1+t)^{\frac{5}{8}} \nabla_x \partial_{\b-1}^\a  h^\e (s,X(s),V(s))\right|\text{d}s\\
\lesssim\;&{\e}^{-\frac{1}{5}}\sup_{0\leq s\leq t}\left \{(1+s)^{\frac{5}{8}} \|\nabla_x \partial_{\b-1}^\a  h^\e(s)\|_{L^\infty_{x,v}}\right\}.
\eals
This means that compared with the $L^\infty_{x,v}$-estimate of $\nabla_x \partial_{\b-1}^\a  h^\e$,  the $L^\infty_{x,v}$-estimate
of $ \partial_{\b}^\a  h^\e$  loses a time decay factor ${\e}^{\frac{1}{5}}(1+t)^{-\frac{5}{8}}$.
Therefore, we have to design  a new time decay rate coupling with the $W^{2,\infty}_{x,v}$-estimate of  $h^\e$  as below
\bal
\bsp\label{weihted:wuqiong:estimate}
\;&\e^{\frac{1}{2}}(1+t)^{\frac{5}{4}}\|  {h^\e}(t)\|_{L_{x,v}^{\infty}}
+\sum_{|\a|+|\b|= 1}\e^{\frac{1}{2}+\frac{1}{5}|\b|}(1+t)^{\frac{5}{4}-\frac{5}{8}|\b|}
\| \partial_{\b}^{\a}h ^\e(t)\|_{L_{x,v}^{\infty}} \\
\;&+\sum_{|\a|+|\b|= 2}\e^{\frac{3}{2}+\frac{1}{5}|\b|}(1+t)^{\frac{5}{4}-\frac{5}{8}|\b|}
\| \partial_{\b}^{\a}h ^\e(t)\|_{L_{x,v}^{\infty}}.
\esp
\eal
This indicates  that we  pair   the $L^\infty_{x,v}$-estimate of the pure space derivative $\partial_x^\a h^\e (|\a|\leq2)$
with the uniform time decay factor $(1+t)^{-\frac{5}{4}}$, whereas  the $L^\infty_{x,v}$-estimate of the mixed space-velocity
derivatives $\partial_\b^\a h^\e$ $(|\a|+|\b|\leq2,1\leq |\b|\leq2)$ with the time decay factor
$\e^{-\frac{1}{5}|\b|}(1+t)^{-\frac{5}{4}+\frac{5}{8}|\b|}$.

Here, it should be mentioned that compared with the lower order $W^{1,\infty}_{x,v}$ part in \eqref{weihted:wuqiong:estimate}, one more $\e$
in $$\sum_{|\a|+|\b|= 2}\e^{\frac{3}{2}+\frac{1}{5}|\b|}\left(1+t\right)^{\frac{5}{4}-\frac{5}{8}|\b|}
\| \partial_{\b}^{\a} h^\e(t)\|_{L_{x,v}^{\infty}}$$ is designed for the second order derivative, in that  we employ different
manners to handle the main contribution term
\bals
\bsp
&D_4:=\frac{C}{\e^4}\int_0^t\int_0^{s-\eta\e^2}\int_{|v'|\le 2N}\int_{|v''|\le 3N}
\text{exp}
\Big\{-\int_s^t\frac{{\nu}(V(\tau))}{2\e^2}\text{d}\tau\Big\}
\nu(V(s))\\
&\qquad\qquad\times
  \text{exp}\Big\{-\int_{s_1}^s\frac{{\nu}(V(\tau'))}{2\e^2}\text{d}\tau'\Big\}
\nu(V(s_1))
 \big|w_{\widetilde{\vartheta}} \partial_{\b}^{\a}{f}^\e (s_1,X(s_1),v'') \big| \dd v''\dd v'\dd s_1\dd s.
\esp
\eals
In fact, employing Lemma \ref{le:jac} and the H\"{o}lder inequality, one easily has
 \bals
\bsp
D_4&\lesssim \frac{1}{\e^4}\int_0^t\int_0^{s-\eta\e^2}\cdots\Big(\int_{\mathbb{R}^3}\Big(\int_{\mathbb{R}^3}\big|\partial_{\b}^{\a}{f}^\e   (s_1,y,v'')\big|^p
  \Big| \mathrm{det}\big( \frac{\pt X(s_1)}{\pt v'}  \big)  \Big|^{-1}
     \dd y\Big)^{\frac{2}{p}}\dd v''\Big)^{\frac{1}{2}}\d s_1\d s\\
&\lesssim\frac{1}{\e^4}\int_0^t\int_0^{s-\eta\e^2}\cdots \Big(\int_{\mathbb{R}^3}\Big(\int_{\mathbb{R}^3}\big|\partial_{\b}^{\a}{f}^\e   (s_1,y,v'')\big|^p
  \frac{2\e^3}{|s-s_1|^3}
     \dd y\Big)^{\frac{2}{p}}\dd v''\Big)^{\frac{1}{2}}\d s_1\d s\\
&\lesssim\e^{-\frac{3}{p}}\frac{1}{\e^4}\int_0^t\int_0^{s}\cdots
\big\| \partial_{\b}^{\a}{f}^\e (s_1) \big\|_{L^p_xL^2_v} \dd s_1\dd s\\
&\lesssim \e^{-\frac{3}{p}} \sup_{0\leq s\leq t}\left\{\big\| \partial_{\b}^{\a}{f}^\e (s) \big\|_{L^p_xL^2_v}\right\},
\esp
\eals
which suggests converting the main term $D_4$  to  $\e^{-\frac{1}{2}}\sup\limits_{0\leq s\leq t}\left\{\big\| \partial_{\b}^{\a}{f}^\e (s) \big\|_{L^6_xL^2_v}\right\}$  only for $|\a|+|\b|\leq 1$, and to $\e^{-\frac{3}{2}}\sup\limits_{0\leq s\leq t}\left\{\big\| \partial_{\b}^{\a}{f}^\e (s) \big\|_{L^2_{x,v}}\right\}$  for $|\a|+|\b|= 2$,
rather than to $\e^{-\frac{1}{2}}\sup\limits_{0\leq s\leq t}\left\{\big\| \partial_{\b}^{\a}{f}^\e (s) \big\|_{L^6_xL^2_v}\right\}$
since the later goes beyond our $H^2_{x,v}$ energy framework.

Thirdly, in the process of deducing the $L_{x,v}^\infty$-estimate of $(1+t)^{\frac{5}{4}} {h^\e}$ for the soft potential  case $-3<\gamma< 1$, we utilize  the Young inequality to get
\bals
\bsp
\left|-\frac{5}{4}(1+t)^{\frac{1}{4}}h^\e\right|
=\;& \frac{5}{4} {(1+t)}^{-\frac{5}{8}}
\left[{(1+t)}^{\frac{5}{4}}|h^\e|\right]^\frac{7}{10}|h^\e|^\frac{3}{10}\\
\leq\;&
\eta {(1+t)}^{-\frac{5}{8}}\|(1+t)^{\frac{5}{4}}h^\e\|_{L^{\infty}_{x,v}}
+ \frac{5}{4}C_{\eta}{(1+t)}^{-\frac{5}{8}} \|h^\e\|_{L^{\infty}_{x,v}}.
\esp
\eals
Thus,   applying Duhamel's principle to the system of $(1+t)^{\frac{5}{4}}h^\e$,   one   establish
\bals
\bsp
&(1+t)^{\frac{5}{4}}|h^\e|\\
\leq\;&
-\int_0^t \text{exp}\Big\{-\int_s^t\frac{\widetilde{\nu}(\tau)}{\e^2}\text{d}\tau\Big\}
\Big[\frac{5}{4}{(1+t)}^{\frac{1}{4}}h^\e\Big] (s,X(s),V(s))\text{d}s
+\cdots\\
\leq\; & \eta  \varepsilon^\frac{4}{5} \sup_{0\le s\le t}\left\{  \| (1+s)^{\frac{5}{4}}{h}^\varepsilon (s)\|_{L^\infty_{x,v}}\right\}
+
 C_{\eta}\varepsilon^\frac{4}{5} \sup_{0\le s\le t}\left\{ \| h^\varepsilon (s)\|_{L^\infty_{x,v}}\right\}+\cdots, \quad\text{for }~\e\in(0,1],
\esp
\eals
implying that when handling the weighted $L^{{ \infty}}_{x,v}$-estimate of $h^{\e}$ with time decay, we also need to consider the
corresponding $L^{{\infty}}_{x,v}$-estimate of $h^{\e}$ without time decay in the meantime.

In addition, to obtain the ${L_{x,v}^\infty}$-estimate of $(1+t)^{\frac{5}{4}} \nabla_x^2 {h^\e}$  for the  soft potential  case $-3<\gamma< 0$, we apply   $\nabla_x^2$   to (\ref{eq:h:estimate:1})  and exploit  Duhamel's principle to deduce
\bal
\bsp\label{D:eq:Dxh2}
(1+t)^{\frac{5}{4}} |\nabla_x^2{h}^\varepsilon|
\leq  \;&\underbrace{ \int_0^t \text{exp}\Big\{-\int_s^t\frac{\widetilde{\nu}(\tau)}{\e^2}\text{d}\tau\Big\}
\left|(1+s)^{\frac{5}{4}} \nabla_x ^3\phi^\e\cdot \nabla_v{h}^\e (s,X(s),V(s))\right|\text{d}s}_{:=D_5}+\cdots,
\esp
\eal
where $D_5$  is  tricky because $\dfrac{1}{\e^2}\widetilde{\nu}$ has no positive lower bound and
$\|\nabla_v h^\e\|_{L^\infty_{x,v}}$ is only coupled with  the time decay factor $(1+t)^{-\frac{5}{8}}$.
Fortunately, thanks to \eqref{es:nu:1}, we observe that
 \bals
 \bsp
|(1+s)^{\frac{5}{4}} \nabla_x ^3\phi^\e\cdot \nabla_vh^\e|
\;&\leq{\e}^{-\frac{4}{5}} (1+s)^{-\frac{5}{8}}\e(1+s)^{\frac{5}{4}} \|\nabla_x ^3\phi^\e\|_{L^\infty_{x,v}}
 {\e}^{-\frac{1}{5}}(1+s)^{\frac{5}{8}} \|\nabla_vh^\e\|_{L^\infty_{x,v}}\\
 \;&\lesssim\frac{1}{\e^2}\widetilde{\nu}(V(s))\left[\e(1+s)^{\frac{5}{4}} \|\nabla_x ^3\phi^\e\|_{L^\infty_{x}}\right]
 \left[{\e}^{-\frac{1}{5}}(1+s)^{\frac{5}{8}} \|\nabla_vh^\e\|_{L^\infty_{x,v}}\right],
 \esp
\eals
indicating that once $\e(1+t)^\frac{5}{4}\| \nabla^3_x\phi^\e(t) \|_{{L^{\infty}_{x}} }$ is bounded, then $D_5$ will be controlled by
\bals
D_5\lesssim {\e}^{-\frac{1}{5}}\sup_{0\leq s\leq t}\left \{(1+s)^{\frac{5}{8}} \| \nabla_v  h^\e(s)\|_{L^\infty_{x,v}}\right\}.
\eals

Consequently, based  on the  analysis above and another \emph{a priori}  assumption: there is $\delta>0$ small enough such that
\bal
\bsp\label{eq:assumption:2}
\;&\e\sup_{0\leq t \leq T}\left\{(1+t)^{\frac{5}{4}}\| \nabla^3_x\phi^\e(t) \|_{{L^{\infty}_{x}} }\right\}+
\e^{\frac{3}{4}}\sup_{0\leq t \leq T}\left\{\| h^\e(t) \|_{{L^{\infty}_{x,v}} }\right\}
+\e\sup_{0\leq t \leq T}\{\| h^\e(t) \|_{{W^{1,\infty}_{x,v}} }\}
\le  \delta,
\esp
\eal
we are able to solve the $W^{2,\infty}_{x,v}$-estimate with time decay \eqref{weihted:wuqiong:estimate}  for the VPB system (\ref{eq:f})
with soft potentials $-3<\gamma<0$, cf. the proof of {Proposition} \ref{result:W2:wuqiong}.

\subsubsection{Time decay estimate of the perturbation}
\hspace*{\fill}

  Now we are in the position to close the velocity weighted $H^2_{x,v}$ energy estimate in Proposition \ref{main-weighted-energy-estimate-1} and
  the time-velocity weighted $W^{2,\infty}_{x,v}$-estimate in Proposition  \ref{result:W2:wuqiong} by
  the time decay estimate under the \emph{a priori} assumptions \eqref{eq:assumption:1} and  \eqref{eq:assumption:2}.
  Note that the time-velocity weighted $W^{2,\infty}_{x,v}$-estimate \eqref{result:W2:wuqiong:1} involves the time decay rate
  $(1+t)^{-\frac{5}{4}}$ for
  $\| \nabla_x^2\phi^\e\|_{H^{2}_{x }}$ and $\|\nabla_xf^\e\|_{H^1_{x}L^2_{v}}$ and the time decay rate
  $(1+t)^{-\frac{5}{8}}$ for $\| \nabla_x\nabla_vf^\e \|_{L^2_{x,v}}$.
  In contrast to the standard VPB system \cite{DYZ2003}, one major difference lies in the singularity of ${\e}^{-1}$ in
  front of  the nonlinear term ${\e}^{-1}\Gamma(f^\e, f^\e)$ in \eqref{eq:f}.
  Therefore, we should first deduce the weighted energy estimate for the linearized system
  \eqref{linear:esti:f:decay:1} with a nonhomogeneous microscopic source in the Fourier space
\bal
\bsp\label{D:linear:estimate:decay:proof:result:2}
\frac{\d}{\d t}  {E}_{\ell}(\hat{f^{\e}})+\sigma_0 {D}_{\ell}(\hat{f^{\e}})
\lesssim
\e^{2}\|{w}^{-\ell-\frac{1}{2}}\hat{h}^\e\|_{L^2_v}^2,
\esp
\eal
where the factor  $\e^2$ on the right-hand side will help us to treat the singularity brought by
${\e}^{-1}\Gamma(f^\e, f^\e)$. Then   employing   the time-frequency weighted   and the time-frequency
splitting technique developed in   \cite{DYZ2003}, we derive  the  time decay estimate for the
linearized system \eqref{linear:esti:f:decay:1}
\bal
\bsp\label{D:result:linear:estimate:decay:3}
 & \|    e^{{t B_{\e}}}f_0^\e\|_{L_{x,v}^2}^2
 + \|  \nabla_x\Delta_x^{-1} \mathbf{P}_0e^{{t B_{\e}}}f_0^\e\|_{L_{x}^2}^2
 \\
\lesssim\;& (1+t)^{-\frac{3 }{2}}
\e^{2}\int_0^t(1+\tau)^{J} \|{w}^{-(\ell+\frac{1}{2})}h^\e(\tau)\|_{{L^1_xL^2_v}}^2\d \tau
+\cdots  \quad \text{for} ~J\geq \frac{3 }{2}.
\esp
\eal
Further, (\ref{D:result:linear:estimate:decay:3}) and Duhamel's formula will lead to the time decay estimates of $\|\mathbf{P}f^\e\|_{L^2_{x,v}}$ and $\|\nabla_x\phi^\e\|_{L^2_{x}}$, cf. Lemma \ref{decay:es:nonlinear} for details. Thus, the weighted $H^2_{x,v}$ energy estimate and weighted
$W^{2,\infty}_{x,v}$-estimate
 are closed via
 the  temporal weighted  method, see Proposition \ref{decay:es:nonlinear:result:1}. In addition, employing the interpolation between H\"{o}lder space and $L^2_x$ space and the Riesz potential theory
\bal
\bsp\label{D:eq:assumption:hard:1-1}
  &\e\|\nabla^3_x\phi^\e\|_{L^\infty_x}
  \lesssim\|\nabla^3_x\phi^\e\|_{L^2_x}^{\frac{1}{4}}
 \left (\e^{\frac{4}{3}}[\nabla^3_x\phi^\e]_{C^{\frac{1}{2}}}\right)^{\frac{3}{4}}
  \lesssim\left(\|\nabla_x f^\e \|_{H^{1}_{x}L^{2}_{v}}\right)^\frac{1}{2}
  \left(\e^2\|\nabla_x^2 h^\e \|_{L^{\infty}_{x,v}}\right)^\frac{1}{2}
  \esp
\eal
 (see \eqref{eq:assumption:soft:1-2}),  we   verify the time weighted \emph{a priori}  assumptions \eqref{eq:assumption:1} and  \eqref{eq:assumption:2}
 in terms of the time decay rate $(1+t)^{-\frac{5}{4}}$ of $\| \nabla_x^2\phi^\e\|_{H^{1}_{x }}$,
 $\| \nabla_xf^\e\|_{H^{1}_{x }L^{2}_{v }}$ and  $\e^2\|\nabla_x^2 h^\e \|_{L^{\infty}_{x,v}}$,
 cf. the proof of   Theorem \ref{mainth1}.
\vspace{2mm}

 The rest of this manuscript is organized as follows. In Section \ref{energy-estimate}, we establish the velocity weighted $H^2_{x,v}$ energy estimate
 for the perturbation VPB system (\ref{eq:f}). Section \ref{wuqiong-estimate} concentrates on the time-velocity weighted $W^{2,\infty}_{x,v}$-estimate
 with time decay. In Section \ref{golbal-decay-soft},  we establish the global strong solution to the VPB system (\ref{eq:f}) and justify
 its limit to the incompressible NSFP system, i.e.  Theorem \ref{mainth1} and  Theorem \ref{mainth2}.
 Particularly, the rest of this manuscript will
 focus on the more challenging soft potential case, while the proof for the hard potential case
 will only be written for the different treatments from the soft potential case.

\section{Weighted \texorpdfstring{$H^2_{x,v}$}{Lg} Energy Estimate}
\label{energy-estimate}

The following two sections concern the uniform-in-time \emph{a priori} estimates on the solutions
$(f^\e, \nabla_x\phi^\e)$ to the perturbation VPB system (\ref{eq:f})  for the full range  of cutoff potentials.
For this purpose, we suppose that  the Cauchy problem (\ref{eq:f})
admits a smooth solution $(f^\e, \nabla_x\phi^\e)$ defined on the time interval $0 \leq t\leq T$ for some $0 < T\leq \infty$.

The main purpose of this section is to derive the weighted $H^2_{x,v}$ energy estimate for the perturbation  VPB system (\ref{eq:f}).
To this end, we begin with some basic properties of the linearized collision operator $L$ and the nonlinear collision operator $\Gamma$.
We first list the velocity weighted estimates on the linearized collision operator $L$ with respect to the velocity weight
$w$ defined in \eqref{weight-w} whose proofs can be found in \cite{DYZ2002,XXZ2013}.

\begin{lemma}\label{es-energy-linear}
Let $\ell\in\mathbb{R}$. For any multi-index  $\b$ with  $|\b|\geq 0$,
 we have the following estimates about the linearized operator $L$.\\
 $(1)$ If $-3< \gamma< 0$, then for any small $\eta \geq 0$, there exists $C_{\eta} > 0$ such that
\begin{equation}
\bsp\label{es-energy-linear:1}
\left\langle \partial_v^\b( Lf) , w^{2|\b|-2\ell}\partial_v^\b f \right\rangle_{L_{x,v}^2} \geq \;& \| w^{|\b|-\ell}
\partial_v^{\b} f\|_{L_{x,v}^2(\nu)}^2
-\eta\sum_{|\b'|\leq |\b|} \| w^{|\b'|-\ell}\partial_v^{\b'}
f\|_{L_{x,v}^2(\nu)}^2\\
\;& -C_{\eta}\| f\|_{L_{x,v}^2(\nu)}^2.
\esp
\end{equation}\\
 $(2)$
If $0\leq \gamma\leq1$,
then for any small $\eta \geq 0$, there exists $C_{\eta} > 0$ such that
\begin{equation}
\bsp\label{es-energy-linear:1-hard}
\left\langle \partial_v^\b( Lf) ,{w}^{2\ell}\partial_v^\b f\right\rangle_{L_{x,v}^2} \geq \;& \|  {w}^{\ell}
\partial_v^{\b} f\|_{L_{x,v}^2(\nu)}^2
-\eta\sum_{|\b'|\leq |\b|} \| {w}^{\ell}\partial_v^{\b'}
f\|_{L_{x,v}^2(\nu)}^2 -C_{\eta}\| f\|_{L_{x,v}^2(\nu)}^2.
\esp
\end{equation}
\end{lemma}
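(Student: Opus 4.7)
The plan is to decompose $L = \nu(v)\,\cdot - K$ as in (\ref{nu-def}) and treat each piece separately, extracting the main positive contribution from the multiplicative part $\nu(v)$ and absorbing all errors produced by the compact operator $K$ into the $\eta$-small terms and the $C_\eta\|f\|_{L^2_{x,v}(\nu)}^2$ remainder. Once this decomposition is in place, the soft and hard cases are structurally identical, and only the choice of weight exponent ($w^{|\beta|-\ell}$ for soft potentials versus $w^\ell$ for hard potentials) distinguishes the two statements.

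For the $\nu$-part, I would apply the Leibniz rule to write
\[
\partial_v^\beta(\nu f) \;=\; \nu(v)\,\partial_v^\beta f \;+\; \sum_{0<\beta_1\le\beta} C^{\beta}_{\beta_1}\,(\partial_v^{\beta_1}\nu)\,\partial_v^{\beta-\beta_1}f,
\]
and pair the identity with $w^{2|\beta|-2\ell}\partial_v^\beta f$ in the soft case (respectively $w^{2\ell}\partial_v^\beta f$ in the hard case) in $L^2_{x,v}$. The leading term gives precisely $\|w^{|\beta|-\ell}\partial_v^\beta f\|^2_{L^2_{x,v}(\nu)}$ (resp.\ $\|w^\ell\partial_v^\beta f\|^2_{L^2_{x,v}(\nu)}$), which is the announced main lower bound. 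Each commutator term is controlled using the smoothness bound $|\partial_v^{\beta_1}\nu(v)|\lesssim \nu(v)$ (since $\nu\sim\langle v\rangle^\gamma$ is smooth and its derivatives decay no slower than $\nu$ itself) combined with weighted Cauchy--Schwarz and Young's inequality, producing only contributions of the form $\eta\sum_{|\beta'|\le|\beta|}\|w^{|\beta'|-\ell}\partial_v^{\beta'}f\|^2_{L^2_{x,v}(\nu)}$ that will be absorbed in the final absorption step.

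For the $K$-part, I would invoke Grad's kernel representation $Kf(v)=\int_{\bbR^3} \mathbf{k}(v,v')f(v')\,\d v'$, with $\mathbf{k}$ smooth off the diagonal, integrably singular at $v=v'$, and rapidly decaying at infinity. Differentiating in $v$ either yields another kernel of the same structural type or lower-order corrections. Splitting the $v'$ integration via a cutoff $|v'|\le R$ versus $|v'|> R$ and applying a weighted Schur-type test, one gets, for any $\eta>0$ and $R=R(\eta)$ chosen large,
\[
\bigl\|w^{|\beta|-\ell}\partial_v^\beta(Kf)\bigr\|_{L^2_v}^2
\;\lesssim\; \eta\!\!\sum_{|\beta'|\le|\beta|}\bigl\|w^{|\beta'|-\ell}\partial_v^{\beta'}f\bigr\|_{L^2_v(\nu)}^2
+ C_\eta\|f\|_{L^2_v(\nu)}^2,
\]
and analogously in the hard case with $w^\ell$ throughout. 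Integrating over $x$ and combining with the $\nu$-part estimate, an absorption argument (choosing $\eta$ small and taking the multiplicative factor in front of the top-order $\nu$-term slightly less than one in the final inequality) produces (\ref{es-energy-linear:1}) and (\ref{es-energy-linear:1-hard}).

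The main technical obstacle is the weighted Schur bound for $\partial_v^\beta K$: derivatives can fall on the singular kernel and their interplay with the velocity weights has to be tracked carefully, especially in the very soft regime $-3<\gamma<-2$ where $\nu(v)$ is most degenerate at infinity. However, this precise estimate is already established in \cite{DYZ2002} for hard potentials and in \cite{XXZ2013} for the full range of soft potentials, so the proof reduces to quoting those lemmas and verifying that the present weight convention ($w=\langle v\rangle^{\gamma/2}$ for soft, $w=\langle v\rangle$ for hard, with exponents $|\beta|-\ell$ versus $\ell$) matches the formulations there.
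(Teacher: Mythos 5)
Your approach matches the paper's, which does not prove this lemma and simply cites \cite{DYZ2002} (hard potentials) and \cite{XXZ2013} (soft potentials), exactly where you land. Your $L=\nu-K$ outline is the right skeleton, but one step is imprecise as written: weighted Cauchy--Schwarz plus Young's inequality alone would place a large constant $C_{\eta'}$, not an $\eta$-small coefficient, on the lower-order commutator norms $\|w^{|\beta'|-\ell}\partial_v^{\beta'}f\|^2_{L^2_{x,v}(\nu)}$ coming from $\partial_v^\beta(\nu f)$; the mechanism actually used in the cited references exploits the stronger decay $|\partial_v^{\beta_1}\nu(v)|\lesssim \nu(v)\langle v\rangle^{-|\beta_1|}$ (obtained after moving the derivative onto the Maxwellian in the very soft regime) together with a split at $|v|=R$, so that for $|v|>R$ the extra factor $\langle v\rangle^{-1}\le R^{-1}$ supplies the $\eta$-smallness while the contribution from the compact region $|v|\le R$ is folded into $C_\eta\|f\|^2_{L^2_{x,v}(\nu)}$ via the compactness of the truncated operator. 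Since you ultimately quote the cited lemmas rather than reprove them, the discrepancy is presentational rather than a logical gap.
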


We now turn to deduce the velocity weighted estimates on the nonlinear collision term $\Gamma(f, g)$, which will be frequently
used in the uniform energy estimate. Before stating our result, we recall
\bals
\partial_v^\b\Gamma(f, g)=\sum_{|\b_0|+|\b_1|+|\b_2|=|\b|}
C_{\b}^{\b_0,\b_1, \b_2} \Gamma_v^{\b_0}(\partial_v^{\b_1}f, \partial_v^{\b_2}g),
\eals
where $\Gamma_v^{\b_0}$ is given by
\bals
\Gamma_v^{\b_0}(f, g)
 \equiv\;&\iint_{\bbR^3\times \mathbb{S}^2}
  |v-v_*|^\gamma q_0(\th)
  \partial_{v_*}^{\b_0}\big[\sqrt{\mu}(v_*)\big]
  \Big[f(v')g(v_*')
  -f(v)g(v_*)\Big] \d v_*\d \omega.
\eals
Then  the following main result involves the weighted estimates on $\Gamma$ with the velocity weight $w$ defined in \eqref{weight-w}.
\begin{lemma}\label{es-energy-Nonlinear}
Let  $\ell\in\mathbb{R}$.  For any multi-index $\b$ with $|\b|\geq 0$, we have the following bounds on the nonlinear term $\Gamma_v^{\b}(f, g)$.\\
 $(1)$ For the soft potential case $-3<\gamma< 0$,  there holds
\beq
\bsp\label{es-energy-Nonlinear:2}
&\;\left\langle\Gamma_v^{\b}(f, g)+\Gamma_v^{\b}(g, f),w^{2(|\b|-\ell)} \partial_v^{\b} h\right\rangle_{L^2_{v}}
\\
\lesssim
&\;\left(\| w_\vartheta f\|_{L_{v}^\infty}\|w^{|{\b}|-\ell} (\mathbf{I}-\mathbf{P})g\|_{L_{v}^2(\nu)}+\| \mathbf{P}g\|_{L_{v}^2}
\|w^{|{\b}|-\ell}  f\|_{L_{v}^2(\nu)}\right)
\|w^{|{\b}|-\ell}\partial_v^{\b} h\|_{L_{v}^2(\nu)}.
\esp
\eeq  \\
 $(2)$  For the hard potential case $0\leq \gamma\leq 1$,  there holds
\beq
\bsp\label{es-energy-Nonlinear:2-hard}
&\;\left\langle\Gamma_v^{\b}(f, g)+\Gamma_v^{\b}(g, f),{w}^{2\ell} \partial_v^{\b} h\right\rangle_{L^2_{v}}\\
\lesssim
&\;\| w_\vartheta f\|_{L_{v}^\infty}\|{w}^{\ell}(\mathbf{I}-\mathbf{P})g\|_{L_{v}^2(\nu)} \|{w}^{\ell}\partial_v^\b h\|_{L_{v}^2(\nu)}
+\| \mathbf{P} g\|_{L_{v}^2}\| {w}^{\ell}f\|_{ L_{v}^2(\nu)}
\|{w}^{\ell}\partial_v^{\b} h\|_{L_{v}^2(\nu)}.
\esp
\eeq
Here, the velocity weight function $w_{{\vartheta}}$ is  given by
  \bal\label{weight-w-v}
  w_{{\vartheta}}\equiv w_{{\vartheta}}(v):= e^{{\vartheta}|v|^2}\quad \text{with}\; 0<\vartheta\ll 1
  \eal
  throughout this section.
\end{lemma}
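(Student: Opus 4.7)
The plan is to decompose $g=\mathbf{P}g+(\mathbf{I}-\mathbf{P})g$ and bound the two resulting blocks by different $L^\infty_v$--$L^2_v$ allocations on the three factors of the trilinear form. Two structural facts drive everything: $(i)$ $\mathbf{P}g$ is a linear combination of $\{1,v,|v|^2\}\sqrt{\mu}$, so $\|w_\vartheta\mathbf{P}g\|_{L^\infty_v}\lesssim\|\mathbf{P}g\|_{L^2_v}$ for $\vartheta$ small enough; $(ii)$ the factor $\partial_{v_*}^{\beta_0}\sqrt{\mu}(v_*)$ inside $\Gamma_v^{\beta_0}$ supplies super-exponential decay in $v_*$, which absorbs any polynomial (or mild exponential) velocity weight transferred to $v_*$ or $v_*'$ by the collision.

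For the microscopic block $\langle \Gamma_v^\beta(f,(\mathbf{I}-\mathbf{P})g)+\Gamma_v^\beta((\mathbf{I}-\mathbf{P})g,f), w^{2(|\beta|-\ell)}\partial_v^\beta h\rangle_{L^2_v}$ I would pull $f$ out as $\|w_\vartheta f\|_{L^\infty_v}$, split the collision integral into gain and loss pieces, and exploit the pre--post change of variables $(v,v_*,\omega)\leftrightarrow (v',v_*',\omega)$. The energy conservation $|v|^2+|v_*|^2=|v'|^2+|v_*'|^2$ combined with the pointwise inequality $\langle v'\rangle\langle v_*'\rangle\lesssim\langle v\rangle\langle v_*\rangle$ lets me move the weight $w^{|\beta|-\ell}$ from $v'$ or $v_*'$ back to $v$, modulo a power of $\langle v_*\rangle$ immediately devoured by the Gaussian in $v_*$. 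A Cauchy--Schwarz in $(v_*,\omega)$ against $|v-v_*|^\gamma q_0(\theta)$ then produces the collision frequency $\nu(v)\sim\langle v\rangle^\gamma$, and a further Cauchy--Schwarz in $v$ yields the contribution $\|w_\vartheta f\|_{L^\infty_v}\|w^{|\beta|-\ell}(\mathbf{I}-\mathbf{P})g\|_{L^2_v(\nu)}\|w^{|\beta|-\ell}\partial_v^\beta h\|_{L^2_v(\nu)}$. For the macroscopic block I would reverse the allocation, pulling out $\|\mathbf{P}g\|_{L^2_v}$ via $(i)$ and leaving $f$ inside with the weight $w^{|\beta|-\ell}$; the identical weight-redistribution then gives $\|\mathbf{P}g\|_{L^2_v}\|w^{|\beta|-\ell}f\|_{L^2_v(\nu)}\|w^{|\beta|-\ell}\partial_v^\beta h\|_{L^2_v(\nu)}$. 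Summing the two blocks proves \eqref{es-energy-Nonlinear:2}, and \eqref{es-energy-Nonlinear:2-hard} follows by the same route with $w^\ell=\langle v\rangle^\ell$ replacing $w^{|\beta|-\ell}$, only simpler since $\gamma\geq 0$ removes the degeneracy of $\nu$.

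The main technical obstacle is the bookkeeping of the weight $w^{|\beta|-\ell}=\langle v\rangle^{\gamma(|\beta|-\ell)}$ through the collision in the soft regime, since the sign and size of $\gamma(|\beta|-\ell)$ depend on both $\ell$ and $|\beta|$, and both the transfer $v\mapsto v'$ and the small-velocity behavior must be tracked. This is handled by combining the pointwise bound $\langle v'\rangle\leq\sqrt{2}\langle v\rangle\langle v_*\rangle$ (and its analogue for $v_*'$) with the Gaussian decay supplied by $\partial_{v_*}^{\beta_0}\sqrt{\mu}(v_*)$, so that any residual $\langle v_*\rangle$-power disappears into the Maxwellian at no cost. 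Once this redistribution is in hand, the argument reduces to the standard weighted nonlinear collision estimates developed in \cite{DYZ2002,XXZ2013,guo2006}.
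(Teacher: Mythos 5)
Your decomposition of $g$ into $\mathbf{P}g+(\mathbf{I}-\mathbf{P})g$, the reliance on $\|w_\vartheta\mathbf{P}g\|_{L^\infty_v}\lesssim\|\mathbf{P}g\|_{L^2_v}$, the absorption of weights into the Gaussian factor $\partial_{v_*}^{\beta_0}\sqrt{\mu}(v_*)$, and the Cauchy--Schwarz/change-of-variables routine producing $\nu(v)$ are exactly the route the paper follows: the paper first reduces the lemma to the trilinear claim \eqref{Nonlinear:term:proof:1} and then estimates the four pieces $N_1,\dots,N_4$ one by one. So the architecture is the same as the paper's.

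One intermediate claim in your second paragraph would fail if taken literally: the product inequality $\langle v'\rangle\langle v_*'\rangle\lesssim\langle v\rangle\langle v_*\rangle$ is \emph{not} a valid pointwise bound. Take $v_*=0$ and $\omega$ at $45^\circ$ to $v$, so that $|v'|^2=|v_*'|^2=|v|^2/2$; then the left side scales like $|v|^2$ while the right scales like $|v|$. Since the counterexample has $v_*=0$, no residual power of $\langle v_*\rangle$ (and hence no amount of Maxwellian decay in $v_*$) can rescue this statement. Only the one-sided estimate $\langle v'\rangle\leq\langle v\rangle\langle v_*\rangle$ and its analogue for $v_*'$, which you correctly invoke in your final paragraph, are available; these suffice because the residual $\langle v_*\rangle$-power is eaten by $\mu^{1/4}(v_*)$, while the factor at $v_*'$ in the macroscopic block is controlled by the Gaussian decay of $\mathbf{P}g(v_*')$ itself rather than by any weight-transfer inequality. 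It is worth pointing out that the paper's own intermediate bound $\mu^{1/4}(v_*)w^{2|\beta|-2\ell}(v)\leq Cw^{2|\beta|-2\ell}(v')w^{2|\beta|-2\ell}(v_*')$ fails for exactly the same reason whenever $\gamma(|\beta|-\ell)<0$ (e.g.\ soft potentials with $\ell=0$ and $|\beta|\geq 1$, a case actually used), so this is a shared imprecision in the sketch rather than a divergence in approach; the final inequality \eqref{es-energy-Nonlinear:2} is nevertheless standard and correct when the weight bookkeeping is done with the one-sided bracket estimate plus the Gaussian decay of $\mathbf{P}g$.
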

\begin{proof}
We only prove  \eqref{es-energy-Nonlinear:2}, since the argument can be also applied to   \eqref{es-energy-Nonlinear:2-hard} under minor modification.
To this end, we  claim that for any multi-index $|\b|\geq 0,$ there holds
\beq
\bsp\label{Nonlinear:term:proof:1}
&\;\|\nu^{-\frac{1}{2}}w^{|{\b}|-\ell}\Gamma_v^{\b}(f, g)\|_{L^2_v}+\|\nu^{-\frac{1}{2}}w^{|{\b}|-\ell}\Gamma_v^{\b}(g, f)\|_{L^2_v}\\
\lesssim
&\;\| w_\vartheta f\|_{L_{v}^\infty}\|w^{|{\b}|-\ell} (\mathbf{I}-\mathbf{P})g\|_{L_{v}^2(\nu)}
+\| \mathbf{P}g\|_{L_{v}^2}\|w^{|{\b}|-\ell}  f\|_{L_{v}^2(\nu)}
.
\esp
\eeq
Once this estimate is established, we obtain  \eqref{es-energy-Nonlinear:2} by   the H\"{o}lder inequality. Thus,   we focus on verifying \eqref{Nonlinear:term:proof:1} in the rest of the proof.

In fact, using  the decomposition $g=\mathbf{P}g+(\mathbf{I}-\mathbf{P})g$, we have
\beq
\bsp\label{Nonlinear:term:proof:2}
&\;\|\nu^{-\frac{1}{2}}w^{|{\b}|-\ell}\Gamma_v^{\b}(f, g)\|_{L^2_v}+\|\nu^{-\frac{1}{2}}w^{|{\b}|-\ell}\Gamma_v^{\b}(g, f)\|_{L^2_v}\\
\lesssim\;&
\underbrace{\|\nu^{-\frac{1}{2}}w^{|{\b}|-\ell}\Gamma_v^{\b}(f, \mathbf{P}g)\|_{L^2_v}}_{N_1}
+\underbrace{\|\nu^{-\frac{1}{2}}w^{|{\b}|-\ell}\Gamma_v^{\b}(\mathbf{P}g, f)\|_{L^2_v}}
_{N_2}+\underbrace{\|\nu^{-\frac{1}{2}}w^{|{\b}|-\ell}\Gamma_v^{\b}(f, (\mathbf{I}-\mathbf{P})g)\|_{L^2_v}}_{N_3}\\
\;&+\underbrace{\|\nu^{-\frac{1}{2}}w^{|{\b}|-\ell}\Gamma_v^{\b}((\mathbf{I}-\mathbf{P})g, f)\|_{L^2_v}}_{N_4}.
\esp
\eeq
Note that
$$
 \partial_{v_*}^{\b}\big(\mu^\frac{1}{2}(v_*)\big)\leq C\mu^\frac{1}{4}(v_*),\;\;
 \mu^\frac{1}{4}(v_*)w^{2|{\b}|-2\ell}(v)\leq C w^{2|{\b}|-2\ell}(v')w^{2|{\b}|-2\ell}(v'_*).
$$
And using the H\"{o}lder inequality, $N_1$ is bounded by
\bal
\bsp\label{Nonlinear:term:proof:3}
N_1\lesssim\;&\Big[\int_{\bbR^3}\nu(v)^{-1}w^{2|{\b}|-2\ell}(v)\left|\int_{\bbR^3}
  |v-v_*|^\gamma
  \mu^\frac{1}{4}(v_*)
  f(v')\mathbf{P}g(v_*')
   \d v_*\right|^2\d v\Big]^\frac{1}{2}\\
 \;& +
  \Big[\int_{\bbR^3}\nu^{-1}(v)w^{2|{\b}|-2\ell}(v)\left|\int_{\bbR^3}
  |v-v_*|^\gamma
  \mu^\frac{1}{4}(v_*)
  f(v)\mathbf{P}g(v_*) \d v_*\right|^2\d v\Big]^\frac{1}{2}\\
  \lesssim\;&\Big[\iint_{\bbR^3\times\bbR^3}
  |v-v_*|^\gamma
  \mu^\frac{1}{4}(v_*)
  |f(v')|^2|\mathbf{P}g(v_*')|^2w^{2|{\b}|-2\ell}(v)
   \d v_*\d v\Big]^\frac{1}{2}\\
 \;& +
  \Big[\iint_{\bbR^3\times\bbR^3}
  |v-v_*|^\gamma
  \mu^\frac{1}{4}(v_*)
 | f(v)|^2|\mathbf{P}g(v_*)|^2 w^{2|{\b}|-2\ell}(v)\d v_*\d v\Big]^\frac{1}{2}\\
 \lesssim\;&\Big[\iint_{\bbR^3\times\bbR^3}
  |v-v_*|^\gamma
  |f(v')|^2|\mathbf{P}g(v_*')|^2w^{2|{\b}|-2\ell}(v')w^{2|{\b}|-2\ell}(v'_*)
   \d v_*\d v\Big]^\frac{1}{2}\\
 \;& +
  \Big[\iint_{\bbR^3\times\bbR^3}
  |v-v_*|^\gamma
  \mu^\frac{1}{4}(v_*)
 | f(v)|^2|\mathbf{P}g(v_*)|^2 w^{2|{\b}|-2\ell}(v)\d v_*\d v\Big]^\frac{1}{2}.
\esp
\eal
Further, by the change of variables $(v,v_*)\rightarrow(v',v_*')$ with the unit Jacobain for the    right-hand first term of \eqref{Nonlinear:term:proof:3}, we find
\bal
\bsp\label{Nonlinear:term:proof:4}
N_1\lesssim\;&\|w_{\vartheta} \mathbf{P}g\|_{L^\infty_v}\Big[ \int_{\bbR^3}\Big(
\int_{\bbR^3}
  |v-v_*|^\gamma
   w^{2|{\b}|-2\ell}(v_*)e^{-2\vartheta|v_*|^2}\d v_*\Big)
 | f(v)|^2 w^{2|{\b}|-2\ell}(v)\d v\Big]^\frac{1}{2}\\
 \lesssim\;&\|\mathbf{P}g\|_{L^2_v}
 \| w^{|{\b}|-\ell}f \|_{L^2_v(\nu)}.
\esp
\eal
for some constant $0<\vartheta\ll 1$.
Similarly,  we   deduce
\bals
\bsp
N_2\lesssim\;& \Big[
  \iint_{\bbR^3\times\bbR^3}
  |v-v_*|^\gamma
 | \mathbf{P}g(v)|^2|f(v_*)|^2 w^{2|{\b}|-2\ell}(v)w^{2|{\b}|-2\ell}(v_*)\d v_*\d v\Big]^\frac{1}{2}\\
 \lesssim\;& \|w_{\vartheta} \mathbf{P}g\|_{L^\infty_v}\Big[
  \int_{\bbR^3}
  \Big( \int_{\bbR^3}|v-v_*|^\gamma e^{-2\vartheta|v|^2}
   w^{2|{\b}|-2\ell}(v)\d v\Big)
 |f(v_*)|^2w^{2|{\b}|-2\ell}(v_*)\d v_*\Big]^\frac{1}{2}\\
 \lesssim\;&\|\mathbf{P}g\|_{L^2_v}
 \| w^{|{\b}|-\ell}f \|_{L^2_v(\nu)}.
\esp
\eals

To handle $N_3$, applying the same method as the estimate of \eqref{Nonlinear:term:proof:4} on $N_2$, we get
\bals
\bsp
N_3
 \lesssim\;&
  \Big[\iint_{\bbR^3\times\bbR^3}
  |v-v_*|^\gamma
 | f(v)|^2|(\mathbf{I}-\mathbf{P})g(v_*)|^2 w^{2|{\b}|-2\ell}(v)w^{2|{\b}|-2\ell}(v_*)\d v_*\d v\Big]^\frac{1}{2}\\
  \lesssim\;&\|w_{\vartheta}f\|_{L^\infty_v} \Big[\int_{\bbR^3}\Big(\int_{\bbR^3}
  |v-v_*|^\gamma
  e^{-2\vartheta|v|^2}
  w^{2|{\b}|-2\ell}(v)\d v\Big)w^{2|{\b}|-2\ell}(v_*)|(\mathbf{I}-\mathbf{P})g(v_*)|^2\d v_*\Big]^\frac{1}{2}\\
  \lesssim\;&\|w_{\vartheta}f\|_{L^\infty_v}
 \| w^{|{\b}|-\ell}(\mathbf{I}-\mathbf{P})g \|_{L^2_v(\nu)}
\esp
\eals
for some constant $0<\vartheta\ll 1$.
Furthermore, deducing similarly as $N_1$, we have
\bals
\bsp
N_4
 \lesssim\;&
  \Big[\iint_{\bbR^3\times\bbR^3}
  |v-v_*|^\gamma
 | (\mathbf{I}-\mathbf{P})g(v)|^2|f(v_*)|^2 w^{2|{\b}|-2\ell}(v)w^{2|{\b}|-2\ell}(v_*)\d v_*\d v\Big]^\frac{1}{2}\\
  \lesssim\;&\|w_{\vartheta}f\|_{L^\infty_v} \Big[\int_{\bbR^3}\Big(\int_{\bbR^3}
  |v-v_*|^\gamma
  e^{-2\vartheta|v_*|^2}w^{2|{\b}|-2\ell}(v_*)
  \d v_*\Big)w^{2|{\b}|-2\ell}(v)|(\mathbf{I}-\mathbf{P})g(v)|^2\d v\Big]^\frac{1}{2}\\
  \lesssim\;&\|w_{\vartheta}f\|_{L^\infty_v}
 \| w^{|{\b}|-\ell}(\mathbf{I}-\mathbf{P})g \|_{L^2_v(\nu)}.
\esp
\eals

In summary, putting  the above estimates of $N_1\sim N_4$ into \eqref{Nonlinear:term:proof:2}, we obtain the
desired claim \eqref{Nonlinear:term:proof:1}. This completes the proof.
\end{proof}

\subsection{Weighted \texorpdfstring{$H_{x,v}^2$}{Lg} Energy Estimate for Soft Potentials}
\hspace*{\fill}

The main goal of this subsection is to establish the uniform $H_{x,v}^2$  energy estimate for the   VPB system (\ref{eq:f}) with soft potentials.
To be precise, we begin with decompose the perturbation $f^\e$ into
\begin{equation}\label{fdefenjie}
f^\e =\mathbf{P}f^\e +(\mathbf{I}-\mathbf{P})f^\e,
\end{equation}
where the macro part  $\mathbf{P}f^\e$ is defined as
\bal\label{defination:Pf}
\mathbf{P}f^\e=\Big(a^\e(t,x)+b^\e(t,x)\cdot v+c^\e(t,x)\frac{|v|^2-3}{2}\Big)\sqrt{\mu}.
\eal
Then, we introduce the weighted instant energy functional $\mathbf{E}_{\ell}^\mathbf{s}(t)$ satisfying
\bal
\bsp\label{def-energy-R}
\mathbf{E}_{\ell}^\mathbf{s}(t)\sim\;&\sum_{|\a|+|\b|\leq2}
 \| w^{|\b|}\partial_{\b}^{\a} f^\e(t)\|^2_{L^2_{x,v}}
 + \| \nabla_x \phi^\e(t)\|^2_{H^2_x}+\sum_{\substack{|\a|+|\b|\leq2\\0\leq|\a|\leq1}}
 \| w^{|\b|-\ell}\partial_{\b}^{\a} f^\e(t)\|^2_{L^2_{x,v}}\\
 &+\e\| {w}^{-\ell}\nabla_x^2f^\e(t)\|^2_{L^2_{x,v}}
 ,
 \esp
 \eal
 and the corresponding energy  dissipation rate
 \bal
 \bsp\label{def-disspation-R}
  \mathbf{D}_{\ell}^\mathbf{s}(t):=\;&\|\nabla_x \mathbf{P}f^\e(t)\|^2_{H^1_{x,v}}
  + \| \nabla^2_x \phi^\e(t)\|^2_{H^1_x}
  +\frac{1}{\e^2}\sum_{|\a|+|\b|\leq2}\|w^{|\b|}\partial^{\a}_{\b}
  (\mathbf{I}-\mathbf{P})f^\e(t)\|^2_{L^2_{x,v}(\nu)}
  \\
  \;&
 +\frac{1}{\e^2}\sum_{\substack{|\a|+|\b|\leq2\\0\leq|\a|\leq1}}
 \|  w^{|\b|-\ell}\partial_{\b}^{\a} (\mathbf{I}-\mathbf{P})f^\e(t)\|^2_{L^2_{x,v}(\nu)}
 +\frac{1}{\e}\| w^{ -\ell}\nabla_x^2(\mathbf{I}-\mathbf{P})f^\e(t)\|^2_{L^2_{x,v}(\nu)},
 \esp
\eal
where $\ell>0$ and the velocity weight $w$ is given in \eqref{weight-w}.

We are now in a position to state   the main result of this subsection.

\begin{proposition}\label{main-weighted-energy-estimate-1}
Let $-3<\gamma<0,\, \ell>0$. Suppose that $ (f^\e, \nabla_x\phi^\e) $ is a solution to the
VPB system \eqref{eq:f} defined on $ [0, T] \times \mathbb{R}^3 \times \mathbb{R}^3$
and   satisfies the \emph{a priori} assumption \bal\label{energy-assumptition-soft}
  \mathbf{E}_{\ell}^\mathbf{s}(t)\leq \delta\quad \text{for}\;\; 0\leq t\leq T
  \eal
for some sufficiently small $\delta>0$, then there is $\mathbf{E}_{\ell}^\mathbf{s}(t)$ satisfying  \eqref{def-energy-R} such that
  \bal\label{energy-nonlinear-result}
  \frac{\d}{\d t}\mathbf{E}_{\ell}^\mathbf{s}(t)+\mathbf{D}_{\ell}^\mathbf{s}(t)
  \lesssim
\left( \e^3\|w_{\vartheta}f^\e\|_{W^{2,\infty}_{x,v}}+\e
 \|w_{\vartheta}f^\e\|_{W^{1,\infty}_{x,v}}\right)\mathbf{D}_{\ell}^\mathbf{s}(t) \quad \text{for  all}\;\; 0\leq t\leq T.
  \eal
\end{proposition}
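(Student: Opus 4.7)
The plan is to build the weighted $H^2_{x,v}$ energy inequality by combining four layers of estimates, each designed to control a portion of $\mathbf{E}_{\ell}^\mathbf{s}(t)$, and then bootstrap them using the macroscopic dissipation. Throughout, Lemma \ref{es-energy-linear} (for $L$) and Lemma \ref{es-energy-Nonlinear} (for $\Gamma$) supply the abstract weighted coercivity, so the work is in producing the right inner products and absorbing the electric-field and streaming terms by weak dissipation plus $L^\infty_{x,v}$ norms.

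First, I would perform the \emph{unweighted} $H^2_{x,v}$ energy estimate. Applying $\partial^{\alpha}_{\beta}$ with $|\alpha|+|\beta|\le 2$ to the first equation of \eqref{eq:f} and taking the $L^2_{x,v}$ inner product with $\partial^{\alpha}_{\beta}f^\e$, using \eqref{spectL} and \eqref{es-energy-linear:1}, produces $\e^{-2}\|\partial^{\alpha}_{\beta}(\mathbf{I}-\mathbf{P})f^\e\|^2_{L^2_{x,v}(\nu)}$ as dissipation plus the electric-energy contribution $\frac{1}{2}\tfrac{d}{dt}\|\nabla_x\phi^\e\|_{L^2_x}^2$ from the coupling term. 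For the missing macroscopic dissipation $\|\nabla_x \mathbf{P}f^\e\|_{H^1_{x,v}}^2$ and $\|\nabla_x^2\phi^\e\|_{H^1_x}^2$, I would follow the by now standard local conservation/macro-equation procedure (as in Guo \cite{guo2002cpam, guo2006}) to construct an interaction functional whose time derivative dominates $\mathbf{P}f^\e$ modes, the Poisson relation $-\Delta_x\phi^\e=\int\sqrt{\mu}f^\e\,dv$ converting $\|\nabla_x a^\e\|$ into $\|\nabla_x^2\phi^\e\|$.

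Next I would run the weighted chain with weight $w^{|\beta|}$ (and then $w^{|\beta|-\ell}$). Applying $\partial^{\alpha}_\beta$ and taking inner product with $w^{2|\beta|}\partial^\alpha_\beta f^\e$, Lemma \ref{es-energy-linear} yields the weighted microscopic dissipation while a small $\eta$-term from lower-order $\beta'$ is absorbed inductively by choosing order $|\beta|$ from high to low. The streaming commutator $[\partial_\beta, v\cdot\nabla_x]$ loses one velocity weight but gains a spatial derivative, and is controlled by Cauchy–Schwarz against the dissipation. The nonlinear $\Gamma$-term is split by the decomposition used in Lemma \ref{es-energy-Nonlinear}: $\mathbf{P}$-parts are bounded via $\|w_\vartheta \mathbf{P}f^\e\|_{L^\infty_v}\lesssim \|\mathbf{P}f^\e\|_{L^2_v}$ (absorbed into the macroscopic dissipation), while $(\mathbf{I}-\mathbf{P})$-parts produce exactly the cubic-type factor $\e\|w_\vartheta f^\e\|_{W^{|\alpha|+|\beta|,\infty}_{x,v}}\mathbf{D}_{\ell}^\mathbf{s}(t)$ appearing on the right of \eqref{energy-nonlinear-result}. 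For the lower-space-order weighted layer $w^{|\beta|-\ell}$, I would work on the $(\mathbf{I}-\mathbf{P})f^\e$ equation directly (avoiding the third-order derivative \eqref{D:I-P:f:eq:1}), so that the free term $\e^{-1}v\cdot\nabla_x \mathbf{P}f^\e$ stays within $H^2$.

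The main obstacle, and the reason for the peculiar $\e$-weighted top-order term $\e\|w^{-\ell}\nabla^2_x f^\e\|^2_{L^2_{x,v}}$ in \eqref{def-energy-R}, is to get a weighted estimate on $\nabla^2_x f^\e$ without losing regularity: I would apply $\nabla^2_x$ directly to \eqref{eq:f} (rather than to the microscopic equation) and pair with $\e\,w^{-2\ell}\nabla^2_x f^\e$; then the linear term $\e^{-2}L(\mathbf{I}-\mathbf{P})f^\e$ contributes $\e^{-1}\|w^{-\ell}\nabla_x^2(\mathbf{I}-\mathbf{P})f^\e\|^2_{L^2_{x,v}(\nu)}$, while the prefactor $\e$ in front of the energy exactly tames the $\e^{-2}$ singularity. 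The genuinely delicate pieces are the electric-field terms $\tfrac{1}{\e}v\cdot\nabla_x\phi^\e\sqrt{\mu}$ and $\nabla_x\phi^\e\cdot\nabla_v(\sqrt{\mu}f^\e)/\sqrt{\mu}$: the first is controlled by writing $v\sqrt{\mu}\in \mathrm{N}(L)$ and using orthogonality against $(\mathbf{I}-\mathbf{P})$-pieces, while the second, after integration by parts in $v$ producing the dangerous $D_1$-type term $\tfrac12 v\cdot\nabla_x\phi^\e|(\mathbf{I}-\mathbf{P})f^\e|^2$, is handled by the $L^3_{x,v}$ interpolation \eqref{D:qkj:esti:f:L^2:8} that trades one factor of $(\mathbf{I}-\mathbf{P})f^\e$ for an $L^\infty_{x,v}$ bound on $w_\vartheta f^\e$, producing precisely the $\e$ and $\e^3$ prefactors on the right-hand side of \eqref{energy-nonlinear-result}. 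Finally I would take a suitable linear combination $\mathbf{E}_\ell^\mathbf{s}(t):= \mathcal{E}_0(t) + \kappa_1 \mathcal{E}_w(t) + \kappa_2 \mathcal{E}_{w^{-\ell}}(t) + \kappa_3 \mathcal{I}_{\mathrm{int}}(t)$ with $\kappa_3\ll\kappa_2\ll\kappa_1\ll 1$, verify equivalence with the norm in \eqref{def-energy-R}, and invoke the smallness \eqref{energy-assumptition-soft} to absorb all quadratic-in-$\mathbf{E}$ terms into the dissipation, leaving exactly \eqref{energy-nonlinear-result}.
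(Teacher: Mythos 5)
Your proposal is essentially the paper's own proof, and the structure you lay out (macro dissipation functional; $w^{|\beta|}$-weighted $H^2_{x,v}$ chain for the main energy; $w^{|\beta|-\ell}$-weighted estimate on the microscopic equation restricted to $|\alpha|\le 1$; the $\e$-prefactored $\nabla_x^2$ estimate taken on the original equation to dodge third-order derivatives and tame the $\e^{-2}L$ singularity; the $L^3_{x,v}$ interpolation trading one factor of $(\mathbf{I}-\mathbf{P})f^\e$ for $\e^2\|w_\vartheta f^\e\|_{L^\infty_{x,v}}$; final linear combination absorbing quadratic terms by smallness) matches Lemma~\ref{prop:estimate:dissipation}, Lemma~\ref{prop:f:H^2:1}, and Lemma~\ref{prop:weighted:f:H^2} almost step for step.

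One imprecision worth flagging: your opening paragraph describes an ``unweighted $H^2_{x,v}$'' estimate applying $\partial^\alpha_\beta$ for all $|\alpha|+|\beta|\le 2$ and pairing with $\partial^\alpha_\beta f^\e$. For soft potentials this cannot be literally unweighted once $|\beta|\ge 1$: the operator $L$ loses coercivity on velocity derivatives without the $w^{|\beta|}$ weight, which is precisely why Lemma~\ref{es-energy-linear} is stated with the weight $w^{2|\beta|-2\ell}$ (and why Step~3 of Lemma~\ref{prop:f:H^2:1} pairs with $w^{2|\beta|}\partial^\alpha_\beta(\mathbf{I}-\mathbf{P})f^\e$, not $\partial^\alpha_\beta f^\e$). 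You correct this in the next paragraph by introducing the $w^{|\beta|}$ chain, so the intended argument is right; just be aware the first pass should already carry $w^{|\beta|}$ on the mixed derivatives, with genuinely unweighted estimates only for $\beta=0$. Also, the paper chooses the interaction-functional coefficient as the \emph{largest} of the small parameters ($\eta_2\ll\eta_1\ll 1$ with $\eta_1$ on the macro-dissipation term) so that the $\eta\|\nabla_x\mathbf{P}f^\e\|^2$ remainders from the two weighted lemmas get absorbed; your ordering $\kappa_3\ll\kappa_2\ll\kappa_1$ puts the interaction term smallest, which still works but then forces a second round of smallness on the inner $\eta$ appearing inside each lemma.
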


The main idea we adopt to prove Proposition \ref{main-weighted-energy-estimate-1} parallels to the
Boltzmann equation   in \cite{guo2006}, that is, by combining direct energy methods
on the perturbation VPB system (\ref{eq:f}) together with
the so-called micro-macro decomposition method to   acquire the missing macro dissipations.
However, the analysis for the VPB system for the whole range of soft potentials  is quite complex
since we meet the challenge of overcoming the high velocity part of the external force term $v \cdot\nabla_x \phi^\e f^\e$ and trying to deduce the energy estimate on the lower regularity space $H^2_{x,v}$, thus we prove it in details as follows.
\medskip

To make the presentation clearly, we  define
  the instant energy $\mathcal{E}^\mathbf{s}(t)$ and the dissipation $\mathcal{D}^\mathbf{s}(t)$
 for the  $H_{x,v}^2$ energy estimate
\beq\label{def-energy-R:1}
\bsp
 \mathcal{E}^\mathbf{s}(t):=\;&\sum_{|\a|+|\b|\leq2}
 \| w^{|\b|}\partial^{\a}_{\b} f^\e(t)\|^2_{L^2_{x,v}}
 + \| \nabla_x \phi^\e(t)\|^2_{H^2_x},\\
  \mathcal{D}^\mathbf{s}(t):=\;&\frac{1}{\e^2}\sum_{|\a|+|\b|\leq2}\|w^{|\b|}\partial^{\a}_{\b}
  (\mathbf{I}-\mathbf{P})f^\e(t)\|^2_{L^2_{x,v}(\nu)}+\|\nabla_x \mathbf{P}f^\e(t)\|^2_{H^1_{x,v}}
  + \| \nabla^2_x \phi^\e(t)\|^2_{H^1_x}
 ,
\esp
\eeq
and  the
  instant energy $\mathcal{E}^\mathbf{s}_\ell(t)$ and the dissipation $\mathcal{D}^\mathbf{s}_\ell(t)$
 for the  weighted $H_{x,v}^2$ energy estimate
\beq\label{def-weighted-energy-R:1}
\bsp
 \mathcal{E}_\ell^\mathbf{s}(t):=\;&\sum_{\substack{|\a|+|\b|\leq2\\ 0\leq |\a| \leq 1}}
 \| w^{|\b|-\ell}\partial^{\a}_{\b}( \mathbf{I}-\mathbf{P})f^\e(t)\|^2_{L^2_{x,v}}
 + \e\| w^{-\ell}\nabla_{x}^2f^\e(t)\|^2_{L^2_{x,v}},\\
  \mathcal{D}_\ell^\mathbf{s}(t):=\;&
  \frac{1}{\e^2}\sum_{\substack{|\a|+|\b|\leq2\\ 0\leq |\a| \leq 1}}\|w^{|\b|-\ell}\partial^{\a}_{\b}
  (\mathbf{I}-\mathbf{P})f^\e(t)\|^2_{L^2_{x,v}(\nu)}+
  \frac{1}{\e}\|w^{-\ell}\nabla_{x}^{2}
  (\mathbf{I}-\mathbf{P})f^\e(t)\|^2_{L^2_{x,v}(\nu)}
  \\
  \;&
 + \|\nabla_x \mathbf{P}f^\e(t)\|^2_{H^1_{x,v}}
  + \| \nabla^2_x \phi^\e(t)\|^2_{H^1_x}
 ,
\esp
\eeq
where $\ell>0$ and the velocity weight  $w$ is given in \eqref{weight-w}.

\medskip

First of all, we have
 the following macro dissipation estimate for the VPB system  (\ref{eq:f}) with soft potentials $-3<\gamma<0$.
\begin{lemma}\label{prop:estimate:dissipation}
Assume that $ (f^\e, \nabla_x\phi^\e) $ is a solution to the
VPB system \eqref{eq:f} defined on $ [0, T] \times \mathbb{R}^3 \times \mathbb{R}^3$.
Then there is a temporal functional $\mathcal{E}_{int}^\mathbf{s}(t)$ such that
\bal
\bsp\label{qkj:esti:diss:0}
&\|\nabla_x \mathbf{P}f^\e(t)\|_{H_{x,v}^1}^2+\|\nabla_x^2\phi^\e(t)\|_{H_{x}^1}^2+\e\frac{\d}{\d t}\mathcal{E}_{int}^\mathbf{s}(t)
\\
\lesssim \;&\big[\mathcal{E}^\mathbf{s}(t)\big]^{\frac{1}{2}} \mathcal{D}^\mathbf{s}(t)
+\e
 \|w_{\vartheta}f^\e\|_{L^{\infty}_{x,v}} \mathcal{D}^\mathbf{s}(t) +\frac{1}{\e^2}\|  (\mathbf{I}-\mathbf{P}) f^\e \|_{H_{x}^1L_{v}^2(\nu)}^2
\esp
\eal
holds for any $0\leq t\leq T$,
where $\mathcal{E}_{int}^\mathbf{s}(t)$  given in \eqref{qkj:esti:diss:0-2} satisfies
\bal\label{qkj:esti:diss:0-1}
|\mathcal{E}_{int}^\mathbf{s}(t)|\lesssim \|a^\e\|_{H^1_x}^2+\|  (\mathbf{I}-\mathbf{P}) f^\e \|_{H_{x}^1L_{v}^2}^2+\|  \nabla_x \mathbf{P}f^\e \|_{H_{x,v}^1}^2.
\eal
\end{lemma}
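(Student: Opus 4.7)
The plan is to implement the micro--macro decomposition machinery adapted to the Vlasov--Poisson--Boltzmann system, in the spirit of the higher order energy approach for the pure Boltzmann equation. First, I substitute $f^\e = \mathbf{P}f^\e + (\mathbf{I}-\mathbf{P})f^\e$ with $\mathbf{P}f^\e = (a^\e + b^\e\cdot v + c^\e \frac{|v|^2-3}{2})\sqrt{\mu}$ into the perturbation VPB system \eqref{eq:f} and test against the basis $\{\sqrt{\mu},\, v_i\sqrt{\mu},\, \frac{|v|^2-3}{2}\sqrt{\mu}\}$ of $\mathrm{N}(L)$, together with the higher moments $v_iv_j\sqrt{\mu}$ and $v_i\frac{|v|^2-5}{2}\sqrt{\mu}$. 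Since $L\mathbf{P}f^\e = 0$, these projections yield a fluid-type system for $(a^\e, b^\e, c^\e)$ of the form
\bals
\e\pt_t a^\e + \nabla_x\cdot b^\e &= 0, \\
\e\pt_t b^\e + \nabla_x(a^\e + c^\e) + \nabla_x\phi^\e &= -\nabla_x\cdot \Theta((\mathbf{I}-\mathbf{P})f^\e) + \mathcal{N}_b, \\
\e\pt_t c^\e + \tfrac{1}{3}\nabla_x\cdot b^\e &= -\tfrac{1}{3}\nabla_x\cdot \Lambda((\mathbf{I}-\mathbf{P})f^\e) + \mathcal{N}_c,
\eals
together with Navier--Stokes-type identities $\pt_{x_j} b_i^\e + \pt_{x_i} b_j^\e$ and $\pt_{x_i} c^\e$ expressed purely in terms of the microscopic component $(\mathbf{I}-\mathbf{P})f^\e$, the electric field $\nabla_x\phi^\e$, and the nonlinear terms $\mathcal{N}_b, \mathcal{N}_c$ generated by $\Gamma(f^\e,f^\e)$ and $\nabla_x\phi^\e\cdot\nabla_v(\sqrt{\mu}f^\e)/\sqrt{\mu}$. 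The Poisson equation gives $-\Delta_x\phi^\e = a^\e$, which converts control of $\|\nabla_x^2\phi^\e\|_{H^1_x}$ into control of $\|\nabla_x a^\e\|_{H^1_x}$.

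Next, I construct the interaction functional
\bals
\mathcal{E}_{int}^{\mathbf{s}}(t) := \sum_{|\a|\le 1}\Big( \kappa_1 \langle \pt^\a \nabla_x a^\e,\, \pt^\a b^\e\rangle_{L^2_x}
- \kappa_2 \langle \pt^\a \nabla_x\cdot b^\e,\, \pt^\a c^\e\rangle_{L^2_x}
+ \kappa_3 \langle \pt^\a b^\e,\, \pt^\a \nabla_x\Delta_x^{-1} a^\e\rangle_{L^2_x} \Big)
\eals
for suitably chosen positive constants $\kappa_1, \kappa_2, \kappa_3$; the bound \eqref{qkj:esti:diss:0-1} is immediate from Cauchy--Schwarz together with Hardy--Littlewood--Sobolev applied to $\nabla_x\Delta_x^{-1}a^\e$ via $\|\nabla_x\phi^\e\|_{H^2_x}$. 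Differentiating $\e\mathcal{E}_{int}^{\mathbf{s}}(t)$ in time and substituting the fluid equations above, the principal quadratic part reproduces
\bals
\|\nabla_x a^\e\|_{H^1_x}^2 + \|\nabla_x b^\e\|_{H^1_x}^2 + \|\nabla_x c^\e\|_{H^1_x}^2 + \|\nabla_x^2\phi^\e\|_{H^1_x}^2,
\eals
which is equivalent to $\|\nabla_x \mathbf{P}f^\e\|_{H^1_{x,v}}^2 + \|\nabla_x^2\phi^\e\|_{H^1_x}^2$ thanks to the explicit form of $\mathbf{P}f^\e$. The residual terms naturally split into three categories: (i) microscopic remainders picked up by integration by parts, controlled by $\e^{-2}\|(\mathbf{I}-\mathbf{P})f^\e\|_{H^1_xL^2_v(\nu)}^2$ plus a small multiple of the desired macro quantity; (ii) electric-field contributions arising from $\nabla_x\phi^\e$ and $\nabla_x\phi^\e\cdot\nabla_v(\sqrt{\mu}f^\e)/\sqrt{\mu}$; (iii) nonlinear collision contributions from $\e^{-1}\Gamma(f^\e,f^\e)$.

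For category (iii) I invoke Lemma \ref{es-energy-Nonlinear} in the form \eqref{es-energy-Nonlinear:2} with $|\b|=0, \ell=0$, and split $\Gamma(f^\e,f^\e)$ putting one $L^\infty_v$-norm on the exponentially-weighted factor $w_\vartheta f^\e$ and the other factor in $L^2_{x,v}(\nu)$; after Sobolev embedding in $x$ this produces exactly the combination $[\mathcal{E}^{\mathbf{s}}]^{1/2}\mathcal{D}^{\mathbf{s}} + \e\|w_\vartheta f^\e\|_{L^\infty_{x,v}}\mathcal{D}^{\mathbf{s}}$ appearing on the right of \eqref{qkj:esti:diss:0}. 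The main obstacle I anticipate is category (ii), specifically the moment $\int v\cdot\nabla_x\phi^\e\, (\mathbf{I}-\mathbf{P})f^\e\, \zeta\, dx\,dv$ arising from higher-moment projections: because $\nu(v)\sim \langle v\rangle^\gamma$ is degenerate for $-3<\gamma<0$, the microscopic dissipation alone cannot absorb the linear-in-$v$ factor from the electric force. I overcome this precisely by the interpolation strategy sketched in \eqref{D:qkj:esti:f:L^2:8}: write $|v|(\mathbf{I}-\mathbf{P})f^\e$ as a product whose $L^3_{x,v}$ norm interpolates between $\e^{-1}\|(\mathbf{I}-\mathbf{P})f^\e\|_{L^2_{x,v}}$ and $\e^2\|w_\vartheta f^\e\|_{L^\infty_{x,v}}$, use $\|\nabla_x\phi^\e\|_{L^6_x}\lesssim \|\nabla_x^2\phi^\e\|_{L^2_x}$, and close via Young's inequality, producing exactly the structure on the right of \eqref{qkj:esti:diss:0}. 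Finally, the bound \eqref{qkj:esti:diss:0-1} follows by estimating each cross term in $\mathcal{E}_{int}^{\mathbf{s}}$ by Cauchy--Schwarz and using $\|b^\e\|_{H^1_x}+\|c^\e\|_{H^1_x}\lesssim \|\mathbf{P}f^\e\|_{H^1_xL^2_v}$ while noting the $a^\e$ term is absorbed by the explicit $\|a^\e\|_{H^1_x}^2$ appearing in the statement.
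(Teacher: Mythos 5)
Your plan for the macroscopic equations, the conservation laws, the $L^\infty$-interpolation to tame the force term $v\cdot\nabla_x\phi^\e\,(\mathbf{I}-\mathbf{P})f^\e$, and Lemma~\ref{es-energy-Nonlinear} for the collision term all match the paper's treatment. However, your proposed interaction functional
$\mathcal{E}_{int}^{\mathbf{s}}=\sum_{|\a|\le 1}\bigl(\kappa_1\langle\pt^\a\nabla_x a^\e,\pt^\a b^\e\rangle
-\kappa_2\langle\pt^\a\nabla_x\cdot b^\e,\pt^\a c^\e\rangle
+\kappa_3\langle\pt^\a b^\e,\pt^\a\nabla_x\Delta_x^{-1}a^\e\rangle\bigr)$
consists only of pairings between macroscopic quantities, and this cannot deliver the dissipation $\|\nabla_x b^\e\|_{H^1_x}^2$ that \eqref{qkj:esti:diss:0} requires. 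The local conservation laws \eqref{qkj:esti:f:L^2:12} contain only $\nabla_x\cdot b^\e$; the divergence-free part of $b^\e$ does not appear in $\pt_t a^\e$, $\pt_t b^\e$ or $\pt_t c^\e$. Consequently, time-differentiating any macro-macro pairing and substituting the conservation laws can at best reproduce $\|\nabla_x\cdot b^\e\|^2$, never the full $\|\nabla_x b^\e\|^2$.

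The full gradient of $b^\e$ (and of $c^\e$) must come from the higher-moment macroscopic equations in \eqref{qkj:esti:diss:2}, namely $\frac{1}{\e}(\pt_i b_j^\e+\pt_j b_i^\e)=\J_4^{i,j}$ and $\frac{1}{\e}\pt_i c^\e=\J_5^i$. But the right-hand sides $\J_4^{i,j}$, $\J_5^i$ explicitly contain $-\langle\pt_t(\mathbf{I}-\mathbf{P})f^\e,\zeta\rangle_{L^2_v}$, a time derivative that the microscopic dissipation $\e^{-2}\|(\mathbf{I}-\mathbf{P})f^\e\|^2_{L^2(\nu)}$ does not control. When you pair these identities against $\pt_j b_i^\e$ or $\pt_i c^\e$ to extract $\|\nabla_x b^\e\|^2$ and $\|\nabla_x c^\e\|^2$, the $\pt_t(\mathbf{I}-\mathbf{P})f^\e$ term must be integrated by parts in time, producing cross terms $\langle(\mathbf{I}-\mathbf{P})f^\e,g_b^{i,j}\pt_j b_i^\e\rangle_{L^2_{x,v}}$ and $\langle(\mathbf{I}-\mathbf{P})f^\e,g_c^{i}\pt_i c^\e\rangle_{L^2_{x,v}}$ inside the interaction functional. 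These are exactly the macro-micro coupling pieces present in the paper's $\mathcal{E}_{int}^{\mathbf{s}}$ in \eqref{qkj:esti:diss:0-2} but absent from yours; without them your argument stops at $\|\nabla_x\cdot b^\e\|^2$ and the lemma fails. (Your assertion that the higher-moment identities express $\pt_i b_j^\e+\pt_j b_i^\e$ and $\pt_i c^\e$ ``purely in terms of the microscopic component'' glosses over precisely this $\pt_t(\mathbf{I}-\mathbf{P})f^\e$ term.) The fix is to keep the macro-macro pairing $\langle\nabla_x\cdot b^\e,a^\e\rangle$ for the $a$-dissipation but replace the $\kappa_2,\kappa_3$ terms with the two macro-micro coupling sums from \eqref{qkj:esti:diss:0-2}.
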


\begin{proof}
We prove this lemma by an analysis of the macroscopic equations and the local conservation laws. Firstly,
plugging (\ref{fdefenjie})  and  (\ref{defination:Pf}) into  the first equation in (\ref{eq:f}), we have
\newcommand{\W}{W}
\newcommand{\M}{M}
\newcommand{\T}{T}
\newcommand{\Q}{Q}
\bals
\bsp
&\pt_t \Big(a^\e+b^\e\cdot v+c^\e\frac{|v|^2-3}{2} \Big)\sqrt{\mu}+\frac{1}{\e}\sqrt{\mu}v\cdot \nabla_x \Big(a^\e+b^\e\cdot v+c^\e\frac{|v|^2-3}{2} \Big) +\frac{1}{\e}\sqrt{\mu}v\cdot \nabla_x \phi^\e\\
=\;&-\pt_t(\mathbf{I}-\mathbf{P}) f^\e-\frac{1}{\e}v\cdot\nabla_x (\mathbf{I}-\mathbf{P}) f^\e-\frac{1}{\e^2}L f^\e
 + \frac{\nabla_x \phi^\e\cdot\nabla_v(\sqrt{\mu} f^\e)}{\sqrt{\mu}}
 +\frac{1}{\e}\Gamma(f^\e,f^\e).
\esp
\eals
Fixing $t$ and $x$ and comparing the coefficients on both sides in front of
$\big[\sqrt{\mu},\,v_i\sqrt{\mu},\,v_iv_j\sqrt{\mu},\,v_i|v|^2\sqrt{\mu}\big]$
we  thereby obtain the so-called macroscopic equations
\newcommand{\J}{\mathcal{Y}} 
\beq\label{qkj:esti:diss:2}
 \left\{
\begin{array}{ll}
\sqrt{\mu}:  &\qquad         \displaystyle \pt_t\Big(a^\e-\frac{3}{2}c^\e\Big) = \J_1 , \,\quad\qquad\qquad     \\[2mm]
v_i\sqrt{\mu}:&\qquad   \displaystyle \pt_tb_i^\e+\frac{1}{\e}\pt_i\Big(a^\e-\frac{3}{2}c^\e\Big)+\frac{1}{\e}\pt_i\phi^\e=\J _2 ^{i}  , \quad\qquad\qquad\\[2mm]
v_i^2\sqrt{\mu}:&\qquad      \displaystyle   \frac{1}{2}\pt_t c^\e+\frac{1}{\e}\pt_ib_i^\e= \J_3 ^{i}  , \quad\qquad\qquad \\[2mm]
v_iv_j\sqrt{\mu}(i\ne j):&\qquad    \displaystyle   \frac{1}{\e}\left(\pt_ib_j^\e+\pt_jb_i^\e\right)= \J _4 ^{i,j}, \,\,\qquad\qquad \\[2mm]
v_i|v|^2\sqrt{\mu}:&\qquad  \displaystyle   \frac{1}{\e}\pt_ic^\e= \J _5^i. \quad\qquad\qquad
\end{array}
\right.
\eeq
Here the right-hand terms $ \J_1,\J _2 ^{i},\J_3 ^{i},\J _4 ^{i,j}$ and $\J _5^i (i,j=1,2,3) $ are all of the form
\bals
\bsp
-\Big\langle\pt_t(\mathbf{I}-\mathbf{P}) f^\e, \zeta \Big\rangle_{L^2_v}
+\Big\langle-\frac{1}{\e}v\cdot\nabla_x (\mathbf{I}-\mathbf{P}) f^\e-\frac{1}{\e^2}L f^\e
 + \frac{\nabla_x \phi^\e\cdot\nabla_v(\sqrt{\mu} f^\e)}{\sqrt{\mu}}
 +\frac{1}{\e}\Gamma(f^\e,f^\e), \zeta \Big\rangle_{L^2_v},
\esp
\eals
where $\zeta$ is a (different) linear combination of the basis $\left[\sqrt{\mu},\,v_i\sqrt{\mu},\,v_iv_j\sqrt{\mu},\,v_i|v|^2\sqrt{\mu} \right]$.

The second set of equations we consider are the local conservation laws satisfied
by $(a^\e, b^\e, c^\e)$. To derive them, multiplying the first equation in (\ref{eq:f}) by the collision
invariant $(1, v, \frac{|v|^2-3}{2})\sqrt{\mu}$ and integrating  the resulting  identity over $\mathbb{R}^3_v$,  we derive by
using  (\ref{defination:Pf}) that $a^\e$, $b^\e$ and $c^\e$ obey the local macroscopic balance laws of mass, moment and energy
\beq\label{qkj:esti:f:L^2:12}
 \left\{
\begin{array}{ll}
 \displaystyle\pt_t a^\e=-\frac{1}{ \e}\nabla_x\cdot b^\e,\\[2mm]
 \displaystyle \pt_t b^\e=-\frac{1}{\e}\nabla_x \phi^\e+\mathcal{X}_{b^\e},\\[2mm]
 \displaystyle\pt_t c^\e
 =-\frac{2}{3\e}\nabla_x\cdot b^\e+\mathcal{X}_{c^\e}.
\end{array}
\right.
\eeq
Here $\mathcal{X}_{b^\e}$ and $\mathcal{X}_{c^\e}$ are given by
\bals
\mathcal{X}_{b^\e}&:=- \frac{1}{\e}\nabla_x(a^\e+c^\e)-a^\e\nabla_x \phi^\e
 - \frac{1}{\e}\int_{\mathbb{R}^3}v\cdot \nabla_x (\mathbf{I}-\mathbf{P})f^\e v\sqrt{\mu} \d v,\\
\mathcal{X}_{c^\e}&:=- \frac{2}{3\e }\nabla_x\phi^\e\cdot b^\e
-\frac{1}{3\e}\int_{\bbR^3}v\cdot\nabla_x(\mathbf{I}-\mathbf{P})f^\e|v|^2\sqrt{\mu}\d v.
\eals

Based on the analysis of the macro fluid-type system \eqref{qkj:esti:diss:2} and the local conservation
laws \eqref{qkj:esti:f:L^2:12},  the following nonlinear
estimate deduced by \eqref{Nonlinear:term:proof:1}
\bals
\sum_{|\a|\leq 1}\left\langle\partial_x^\a \Gamma(f^\e,f^\e), \zeta \right\rangle_{L^2_v}
\lesssim
\| w_\vartheta f^\e\|_{L_{v}^\infty}\sum_{|\a|\leq 1}\| \partial_x^\a(\mathbf{I}-\mathbf{P})f^\e\|_{L_{v}^2(\nu)}
+\| \mathbf{P}f^\e\|_{L_{v}^2}\sum_{|\a|\leq 1}\|\partial_x^\a f^\e\|_{L_{v}^2(\nu)},
\eals
 and  the arguments employed in \cite{GZW-2021}, we conclude the desired estimate  \eqref{qkj:esti:diss:0}.  The details are omitted for
 simplicity. Here, we only point out the representation of
$\mathcal{E}^\mathbf{s}_{int}(t)$ as
\bal
\bsp\label{qkj:esti:diss:0-2}
\mathcal{E}^\mathbf{s}_{int}(t)=\;& \eta_0\sum_{|\a|\leq 1}\big\langle   \nabla_x\cdot \partial_x^\a b ,\, \partial_x^\a a\big\rangle_{L^2_x}
+\sum_{|\a|\leq 1}\sum_{i,j=1}^3\big\langle \partial_x^\a (\mathbf{I}-\mathbf{P}) f^\e ,\,g_b^{i,j}\pt_j \partial_x^\a b_i \big\rangle_{L^2_{x,v}}\\
\;&
+\sum_{|\a|\leq 1}\sum_{i=1}^3\big\langle  \partial_x^\a(\mathbf{I}-\mathbf{P}) f^\e ,\,g_c^{i}\pt_i \partial_x^\a c \big\rangle_{L^2_{x,v}},
\esp
\eal
where $ g_b^{i,j}$ and $g_c^{i}$ are linear combinations of
$\left[\sqrt{\mu},\,v_i\sqrt{\mu},\,v_iv_j\sqrt{\mu},\,v_i|v|^2\sqrt{\mu} \right]$
and  $\eta_0> 0$ is a small   constant. Further, \eqref{qkj:esti:diss:0-1} directly follows from \eqref{qkj:esti:diss:0-2} and the H\"{o}lder inequality. This   completes the proof.
\end{proof}
Next, we   perform the
 $H^2_{x,v}$  energy estimate for the VPB system  (\ref{eq:f}) with soft potentials $-3<\gamma<0$.
\begin{lemma}\label{prop:f:H^2:1}
Assume that $ (f^\e, \nabla_x\phi^\e) $ is a solution to the VPB system \eqref{eq:f} defined on $ [0, T] \times \mathbb{R}^3 \times \mathbb{R}^3$. Then, there holds
\bal
\bsp\label{11111}
 &\frac{\d}{\d t}\Big[C_0\sum_{|\a|\leq2}\Big(\|\partial_x^\a f^\e\|_{L^2_{x,v}}^2 +\|\partial_x^\a\nabla_x  \phi^\e \|_{L^2_x}^2\Big)
 +\sum_{m=1}^2 C_m \sum_{\substack{|\a|+|\b|\leq 2\\|\b|=m}} \|w^{|\b|}\partial^\a_\b(\mathbf{I}-\mathbf{P})f^\e\|_{L_{x,v}^2}^2\Big] \\
  &
  -C_0\e\frac{\d}{\d t}  \int_{\mathbb{R}^3}|b^\e|^2(a^\e+c^\e)\dd x
  +\frac{\sigma_0}{2\e^2}
  \sum_{\substack{|\a|+|\b|\leq2}}
\|w^{|\b|}\partial_\b^\a(\mathbf{I}-\mathbf{P})f^\e\|_{L^2_{x,v}(\nu) }^2\\
 \lesssim  \;&
 \left(\big[\mathcal{E}^\mathbf{s}(t)\big]^{\frac{1}{2}}
 + \mathcal{E}^\mathbf{s}(t)\right) \mathcal{D}^\mathbf{s}(t)+
\Big( \e^3\|w_{\vartheta}f^\e\|_{W^{2,\infty}_{x,v}}+\e
 \|w_{\vartheta}f^\e\|_{W^{1,\infty}_{x,v}}\Big)\mathcal{D}^\mathbf{s}(t)+
 \1\|\nabla_x\mathbf{P}f^\e\|_{H^1_{x}L^2_{v}}^2
\esp
\eal
 for any $0\leq t \leq  T$, where $C_0\gg C_1\gg C_2>0$ are   fixed large constants
 and  $\eta> 0$ is an enough small   constant throughout this section.
\end{lemma}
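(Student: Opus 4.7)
The plan is to establish \eqref{11111} by superposing three sub-estimates with a hierarchy of large constants $C_0\gg C_1\gg C_2$: a standard unweighted $H^2_{x,v}$ energy identity for the full perturbation $f^\e$, and two velocity-weighted identities for $\partial^\a_\b(\mathbf{I}-\mathbf{P})f^\e$ with $|\b|=1$ and $|\b|=2$. The ordering of constants ensures that the commutator terms generated at each weighted level are absorbed either into the previous level's dissipation or into the unweighted microscopic dissipation furnished by the $C_0$-block.

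For the unweighted step I would apply $\partial_x^\a$ with $|\a|\leq 2$ to \eqref{eq:f} and pair with $\partial_x^\a f^\e$ in $L^2_{x,v}$. The streaming term is antisymmetric; the linearized collision yields $\frac{1}{\e^2}\|\partial_x^\a(\mathbf{I}-\mathbf{P})f^\e\|_{L^2_{x,v}(\nu)}^2$ via \eqref{spectL}; and the linear electric-field forcing $\frac{1}{\e}v\cdot\nabla_x\phi^\e\sqrt{\mu}$ paired with $\partial_x^\a f^\e$ picks up only the $b^\e$-component, which via the Poisson equation and the mass balance in \eqref{qkj:esti:f:L^2:12} produces $\frac{1}{2}\frac{d}{dt}\|\partial_x^\a\nabla_x\phi^\e\|_{L^2_x}^2$. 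The nonlinear $\Gamma$-term is handled by \eqref{es-energy-Nonlinear:2} and contributes $[\mathcal{E}^\mathbf{s}]^{1/2}\mathcal{D}^\mathbf{s}$. The delicate piece is the cubic external-force integral $-\frac12\iint v\cdot\nabla_x\phi^\e|\partial_x^\a f^\e|^2$: decomposing $f^\e = \mathbf{P}f^\e+(\mathbf{I}-\mathbf{P})f^\e$, the purely macroscopic contribution, after Maxwellian moment evaluation, produces a term of the form $\int\nabla_x\phi^\e\cdot b^\e(a^\e+c^\e)\,dx$. Substituting $\nabla_x\phi^\e = -\e\pt_t b^\e + \e\mathcal{X}_{b^\e}$ from \eqref{qkj:esti:f:L^2:12} and integrating by parts in $t$ yields exactly the modified-energy correction $-\e\frac{d}{dt}\int|b^\e|^2(a^\e+c^\e)\,dx$ plus a residue bounded by $\mathcal{E}^\mathbf{s}\mathcal{D}^\mathbf{s}$; the remaining cubic contributions involving $(\mathbf{I}-\mathbf{P})f^\e$ are bounded by the $L^\infty$-interpolation trick in \eqref{D:qkj:esti:f:L^2:8}, producing the $\e^3\|w_{\vartheta} f^\e\|_{W^{2,\infty}_{x,v}}\mathcal{D}^\mathbf{s}$ and $\e\|w_{\vartheta} f^\e\|_{W^{1,\infty}_{x,v}}\mathcal{D}^\mathbf{s}$ factors.

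For the weighted microscopic steps I would apply $\partial^\a_\b$ with $|\b|=m\in\{1,2\}$ to the $(\mathbf{I}-\mathbf{P})$-equation \eqref{I-P:f:eq} and pair with $w^{2m}\partial^\a_\b(\mathbf{I}-\mathbf{P})f^\e$. Lemma \ref{es-energy-linear} then delivers the weighted dissipation $\|w^{|\b|}\partial^\a_\b(\mathbf{I}-\mathbf{P})f^\e\|_{L^2_{x,v}(\nu)}^2$ modulo an $\eta$-fraction of lower-order velocity derivatives and the unweighted $\|f^\e\|_{L^2_{x,v}(\nu)}^2$, both absorbable under the hierarchy $C_0\gg C_1\gg C_2$. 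The streaming commutator $[\pt^\b_v,v\cdot\nabla_x]$ generates contributions of the form $\frac{1}{\e}\nabla_x\partial^\a_{\b'}f^\e$ with $|\b'|=|\b|-1$; splitting these into macro and micro parts and applying Young's inequality yields an absorbable microscopic piece (inside the unweighted $C_0$-dissipation) and leaves the $\eta\|\nabla_x\mathbf{P}f^\e\|_{H^1_xL^2_v}^2$ remainder on the right. The weighted electric-force and nonlinear contributions are again controlled by \eqref{es-energy-Nonlinear:2} and the interpolation of \eqref{D:qkj:esti:f:L^2:8}, producing the same $\e^3\|w_{\vartheta} f^\e\|_{W^{2,\infty}_{x,v}}$ and $\e\|w_{\vartheta} f^\e\|_{W^{1,\infty}_{x,v}}$ coefficients on the right.

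The principal obstacle I expect is the detailed bookkeeping of the cubic electric-force step: one must verify that the macroscopic moment computation produces precisely the $|b^\e|^2(a^\e+c^\e)$ structure so that a single integration by parts in $t$ recovers the advertised correction with the correct $\e$-scaling, and confirm that every remaining residue falls into one of the four families $[\mathcal{E}^\mathbf{s}]^{1/2}\mathcal{D}^\mathbf{s}$, $\mathcal{E}^\mathbf{s}\mathcal{D}^\mathbf{s}$, the $\e^k\|w_{\vartheta}f^\e\|_{W^{k,\infty}_{x,v}}\mathcal{D}^\mathbf{s}$ products, or $\eta\|\nabla_x\mathbf{P}f^\e\|_{H^1_xL^2_v}^2$ on the right of \eqref{11111}. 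Once this accounting is complete, summing the three sub-estimates with the hierarchy $C_0\gg C_1\gg C_2$ and taking $\eta$ small concludes the proof.
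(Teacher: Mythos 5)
Your proposal follows essentially the same strategy as the paper's proof: the three-tier hierarchy $C_0\gg C_1\gg C_2$ with an unweighted $H^2_x L^2_v$ block and two velocity-weighted microscopic blocks, the time-integration-by-parts on $\int(a^\e+c^\e)b^\e\cdot\nabla_x\phi^\e\,\dd x$ via the local conservation law for $b^\e$ to produce the $-\e\frac{\d}{\d t}\int|b^\e|^2(a^\e+c^\e)\,\dd x$ correction, the $L^\infty$-interpolation trick of \eqref{D:qkj:esti:f:L^2:8} for the microscopic cubic electric-force terms, and Lemmas \ref{es-energy-linear} and \ref{es-energy-Nonlinear} for the collision terms. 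One minor imprecision: once you work on the microscopic equation \eqref{I-P:f:eq}, the velocity-derivative commutator with $v\cdot\nabla_x$ is purely microscopic; the $\eta\|\nabla_x\mathbf{P}f^\e\|_{H^1_xL^2_v}^2$ remainder actually enters through the right-hand source commutator $[[\mathbf{P},\mathcal{A}_{\phi^\e}]]f^\e$ (the paper's $\mathcal{X}_9$), not through the streaming commutator itself — but this is a bookkeeping detail that does not affect the structure or closure of the estimate.
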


\begin{proof}
The proof  of (\ref{11111}) is divided into
three steps as follows.

 \emph{Step 1. $L_{x,v}^2$-estimate of $f^\e$.}\;
Taking the  $L^2_{x,v}(\bbR^3\times\mathbb{R}^3)$ inner product of the first equation in (\ref{eq:f}) with $f^\e$ and then employing (\ref{spectL}), we derive
\beq
\begin{split}\label{qkj:esti:f:L^2:1}
&\frac{1}{2}\frac{\d }{\d t}\Big(\|f^\e\|_{L_{x,v}^2}^2
+\|\nabla_x \phi^\e \|_{L^2_x}^2\Big)
+\frac{\sigma_0}{\e^2}\|(\mathbf{I}-\mathbf{P})f^\e\|_{L_{x,v}^2(\nu)}^2\\
\le\;&\underbrace{\Big\langle  \frac{\nabla_x \phi^{\e} \cdot \nabla_v(\sqrt{\mu}f^{\e}) }{\sqrt{\mu}} , f^\e \Big \rangle_{L_{x,v}^2}}_{\mathcal{X}_1}
+
\underbrace{\Big\langle\frac{1}{\e}\Gamma(f^\e,f^\e), f^\e \Big\rangle_{L_{x,v}^2}}_{\mathcal{X}_2}
.
\end{split}
\eeq
Using the decomposition (\ref{fdefenjie})  of $f^\e$  leads to
\bals
\bsp
\mathcal{X}_1=\;&  -\frac{1}{2} \iint_{\mathbb{R}^3\times\mathbb{R}^3} v \cdot\nabla_x \phi^\e |f^\e|^2\d x\d v \\
 =\;&\underbrace{-\frac{1}{2} \iint_{\mathbb{R}^3\times\mathbb{R}^3}  v \cdot\nabla_x \phi^\e|(\mathbf{I}-\mathbf{P})f^\e|^2 \d x\d v }_{\mathcal{X}_{1,1}}
  \underbrace{-\iint_{\mathbb{R}^3\times\mathbb{R}^3}  v \cdot\nabla_x \phi^\e \mathbf{P}f^\e \cdot(\mathbf{I}-\mathbf{P})f^\e  \d x\d v}_{\mathcal{X}_{1,2}}\\
              \;&\underbrace{ -\frac{1}{2} \iint_{\mathbb{R}^3\times\mathbb{R}^3}  v \cdot\nabla_x \phi^\e |\mathbf{P}f^\e|^2 \d x\d v}_{\mathcal{X}_{1,3}}.
\esp
\eals
Then the H\"{o}lder inequality yields
\bal
\bsp\label{qkj:esti:f:L^2:8}
\mathcal{X}_{1,1}
\leq &\;\int_{\mathbb{R}^3}\left\||v|w_{\vartheta}^{\frac{1}{4}}\langle v\rangle^{-\frac{\gamma}{2}}
(\mathbf{I}-\mathbf{P})f^\e\right\|_{L^3_v}\left\|w_{\vartheta}^{-\frac{1}{4}}\right\|_{L^6_v}
\left\|\langle v\rangle^{\frac{\gamma}{2}}
(\mathbf{I}-\mathbf{P})f^\e \right\|_{L^2_v}|\nabla_x \phi^\e |\d x\\
\leq&\;
 \left\||v|w_{\vartheta}^{\frac{1}{4}}\langle v\rangle^{-\frac{\gamma}{2}}
(\mathbf{I}-\mathbf{P})f^\e\right\|_{L^3_{x,v}}
\left\|
(\mathbf{I}-\mathbf{P})f^\e\right\|_{L^2_{x,v}(\nu)}\|\nabla_x \phi^\e \|_{L^6_x}\\
\leq&\;\Big(\frac{1}{\e}\|(\mathbf{I}-\mathbf{P})f^\e\|_{L_{x,v}^2}+
\e^2\|w_{\vartheta}f^{\e}\|_{L^\infty_{x,v}}\Big)\|
(\mathbf{I}-\mathbf{P})f^\e\|_{L^2_{x,v}(\nu)}\|\nabla_x^2 \phi^\e \|_{L^2_x}\\
\lesssim&\;\big[\mathcal{E}^\mathbf{s}(t)\big]^{\frac{1}{2}}\mathcal{D}^\mathbf{s}(t)+ \e^{3}\|w_{\vartheta}f^{\e}\|_{L^\infty_{x,v}}\mathcal{D}^\mathbf{s}(t),
\esp
\eal
where we have utilized  the Young inequality to  bound
\bals
\bsp
\left\||v|w_{\vartheta}^{\frac{1}{4}}\langle v\rangle^{-\frac{\gamma}{2}}
(\mathbf{I}-\mathbf{P})f^\e\right\|_{L^3_{x,v}} \leq\;
&\Big(\frac{1}{\e}\left\|
(\mathbf{I}-\mathbf{P})f^\e\right\|_{L^2_{x,v}}\Big)^\frac{2}{3}
\Big(\e^2\left\||v|^3w_{\vartheta}^{\frac{3}{4}}\langle v\rangle^{-\frac{3\gamma}{2}}
(\mathbf{I}-\mathbf{P})f^\e\right\|_{L^\infty_{x,v}}\Big)^\frac{1}{3}\\
\lesssim\;&
\frac{1}{\e}\|(\mathbf{I}-\mathbf{P})f^\e\|_{L_{x,v}^2}+
\e^2\|w_{\vartheta}f^{\e}\|_{L^\infty_{x,v}}.
\esp
\eals
For the term $\mathcal{X}_{1,2},$
  the H\"{o}lder inequality and the Sobolev embedding inequality  imply
  \bals
\bsp
&\;\mathcal{X}_{1,2}
\lesssim \|\nabla_x \phi^\e\|_{L_x^3} \|\mathbf{P}f^\e\|_{L^6_xL^2_v}\|(\mathbf{I}-\mathbf{P})f^\e\|_{L_{x,v}^2(\nu)}
\lesssim \e\big[\mathcal{E}^\mathbf{s}(t)\big]^{\frac{1}{2}}\mathcal{D}^\mathbf{s}(t).
\esp
\eals
For the term $\mathcal{X}_{1,3}$, we derive from   (\ref{qkj:esti:f:L^2:12}) and the H\"{o}lder inequality that
\bals
\bsp
\mathcal{X}_{1,3}
=\;&-\frac{1}{2}\iint_{\mathbb{R}^3\times\mathbb{R}^3}    v \cdot\nabla_x \phi^\e \Big[\Big(a^\e+b^\e\cdot v+c^\e\frac{|v|^2-3}{2}\Big)\sqrt{\mu}\Big]^2  \d x\d v \\
=\;&-\int_{\bbR^3}    (a^\e+c^\e)b^\e\cdot\nabla_x \phi^\e    \d x \\
\le\;&\int_{\bbR^3}  \e  (a^\e+c^\e)b^\e\cdot\pt_t b^\e\d x+\int_{\bbR^3} | \e (a^\e+c^\e)b^\e\cdot\mathcal{X}_{b^\e}|\d x\\
 =\;&\frac{\e}{2}\frac{\d}{\d t}\int_{\bbR^3}     (a^\e+c^\e)|b^\e|^2\d x-\frac{1}{2} \int_{\bbR^3}  \e \pt_t (a^\e+c^\e)|b^\e|^2\d x+\int_{\bbR^3} | \e (a^\e+c^\e)b^\e\cdot\mathcal{X}_{b^\e}|\d x \\
 \le\;&\frac{\e}{2}\frac{\d}{\d t}\int_{\bbR^3}   (a^\e+c^\e)|b^\e|^2\d x+C\int_{\bbR^3}  \e \Big(|\mathcal{X}_{c^\e}|+\frac{1}{ \e}|\nabla_x\cdot b^\e|\Big)  |{b^\e}|^2\d x\\
\;& +\int_{\bbR^3} | \e (a^\e+c^\e)b^\e\cdot\mathcal{X}_{b^\e}|\d x\\
 \leq\;& \frac{\e}{2}\frac{\d}{\d t}\int_{\bbR^3}    (a^\e+c^\e)|b^\e|^2\d x
 +C\big[\mathcal{E}^\mathbf{s}(t)\big]^{\frac{1}{2}} \mathcal{D}^\mathbf{s}(t)+C \mathcal{E}^\mathbf{s}(t) \mathcal{D}^\mathbf{s}(t),
\esp
\eals
where the fourth equality   above is derived from the integrating by parts with respect to $t$.
Consequently,   the above estimates on $\mathcal{X}_{1,i} (i=1,2,3)$  give us
\bal
\bsp\label{qkj:esti:f:L^2:14}
\mathcal{X}_1&\leq \frac{1}{2}\frac{\d}{\d t}\int_{\bbR^3}  \e (a^\e+c^\e)|b^\e|^2\d x+ C\Big(\big[\mathcal{E}^\mathbf{s}(t)\big]^{\frac{1}{2}}+ \mathcal{E}^\mathbf{s}(t) + \e^{3}\|w_{\vartheta}f^{\e}\|_{L^\infty_{x,v}} \Big)\mathcal{D}^\mathbf{s}(t).
\esp
\eal

We now deal with the term $\mathcal{X}_2$ via the collision symmetry of $\Gamma $, \eqref{es-energy-Nonlinear:2}, the H\"{o}lder inequality and  the Sobolev embedding inequality. More precisely,
\bal
\bsp\label{qkj:esti:f:L^2:3}
\mathcal{X}_2=&\;\Big\langle\frac{1}{\e}\Gamma(f^\e,f^\e), (\mathbf{I}-\mathbf{P})f^\e \Big\rangle_{L_{x,v}^2}\\
\lesssim &\;
\frac{1}{\e}\| w_\vartheta f^\e\|_{L_{x,v}^\infty} \|(\mathbf{I}-\mathbf{P})f^\e\|_{L_{x,v}^2(\nu)}^2
+\frac{1}{\e}\|\mathbf{P} f^\e\|_{L_{x}^\infty L_{v}^2}
\|f^\e\|_{L_{x,v}^2(\nu)}\|(\mathbf{I}-\mathbf{P})f^\e\|_{L_{x,v}^2(\nu)}\\
\lesssim&\;
\e\|w_{\vartheta}f^{\e}\|_{L^{\infty}_{x,v}} \mathcal{D}^\mathbf{s}(t)
+\big[\mathcal{E}^\mathbf{s}(t)\big]^{\frac{1}{2}}\mathcal{D}^\mathbf{s}(t).
\esp
\eal
Therefore, inserting the bounds  (\ref{qkj:esti:f:L^2:14}) and (\ref{qkj:esti:f:L^2:3})
into (\ref{qkj:esti:f:L^2:1}), we derive  \begin{align}
\begin{split}\label{qkj:esti:f:L^2}
 &\;\frac{1}{2}\frac{\d }{\d t}\Big(\|f^\e\|_{L_{x,v}^2}^2 +\|\nabla_x \phi^\e\|_{L_{x}^2}^2 -\e \int_{\mathbb{R}^3}|b^\e|^2(a^\e+c^\e)\dd x\Big)+\frac{\sigma_0}{\e^2}\|(\mathbf{I}-\mathbf{P})f^\e\|_{L_{x,v}^2(\nu)}^2\\
 \lesssim &\;\big[\mathcal{E}^\mathbf{s}(t)\big]^{\frac{1}{2}}\mathcal{D}^\mathbf{s}(t)+
 \mathcal{E}^\mathbf{s}(t)\mathcal{D}^\mathbf{s}(t)+\e\|w_{\vartheta}f^{\e}\|_{L^{\infty}_{x,v}} \mathcal{D}^\mathbf{s}(t).
\end{split}
\end{align}

\emph{Step 2. $L_{x,v}^2$-estimate of the pure space derivative  $\partial_x^\a f^\e$ $(1\leq|\a|\leq 2).$\;}\;
Applying $\partial_x^\a $ to (\ref{eq:f}) and then taking the $L^2_{x,v}(\bbR^3\times\bbR^3)$ inner product with $\partial_x^\a f^\e$, we  obtain   by employing (\ref{spectL}) that
\bal
\bsp\label{qkj:esti:Dxf:L^2:1}
& \frac{1}{2}\frac{\d }{\d t} \big(\| \partial_x^\a f^\e\|_{L_{x,v}^2}^2+ \| \partial_x^\a\nabla_x\phi^\e\|_{L_{x}^2}^2\big)+\frac{\sigma_0}{\e^2}\| \partial_x^\a (\mathbf{I}-\mathbf{P})f^\e\|_{L_{x,v}^2(\nu)}^2  \\
\le \;& \underbrace{ \Big\langle  \partial_x^\a\Big[ \frac{\nabla_x \phi^{\e} \cdot \nabla_v(\sqrt{\mu}f^{\e}) }{\sqrt{\mu}}\Big] ,  \partial_x^\a f^\e \Big \rangle_{L_{x,v}^2}}_{\mathcal{X}_3}
+
\underbrace{\Big\langle\frac{1}{\e}\partial_x^\a\Gamma(f^\e,f^\e), \partial_x^\a f^\e \Big\rangle_{L_{x,v}^2}}_{\mathcal{X}_4}.
\esp
\eal
Recalling the decomposition \eqref{fdefenjie}  and applying a straightforward computation lead  to
\bal
\mathcal{X}_3=\;&\sum_{\substack{|\a'|<|\a|}}\Big\langle  \partial_x^{\a-\a'}\nabla_x \phi^{\e} \cdot \nabla_v\partial_x^{\a'}f^{\e},
 \partial_x^\a f^\e \Big \rangle_{L_{x,v}^2}
-\sum_{\substack{|\a'|\leq|\a|}}
\Big\langle \frac{v}{2}\cdot  \partial_x^{\a-\a'}\nabla_x \phi^{\e} \partial_x^{\a'} f^{\e},
 \partial_x^\a f^\e \Big \rangle_{L_{x,v}^2}\notag\\
 = \;&\underbrace{\sum_{\substack{|\a'|<|\a|}}\Big\langle  \partial_x^{\a-\a'}\nabla_x \phi^{\e} \cdot \nabla_v  \partial_x^{\a'}f^{\e},
 \partial_x^\a (\mathbf{I}-\mathbf{P}) f^\e \Big \rangle_{L_{x,v}^2}}_{\mathcal{X}_{3,1}}\notag\\
 \;&+\underbrace{\sum_{\substack{|\a'|<|\a|}}\Big\langle  \partial_x^{\a-\a'}\nabla_x \phi^{\e} \cdot \nabla_v \partial_x^{\a'}f^{\e},
 \partial_x^\a \mathbf{P}f^\e \Big \rangle_{L_{x,v}^2}}_{\mathcal{X}_{3,2}}\label{qkj:esti:Dxf:L^2:3-1}\\
 \;&\underbrace{-\sum_{\substack{|\a'|\leq|\a|}}
\Big\langle \frac{v}{2}\cdot  \partial_x^{\a-\a'}\nabla_x \phi^{\e}\partial_x^{\a'} f^{\e},
 \partial_x^\a (\mathbf{I}-\mathbf{P})f^\e \Big \rangle_{L_{x,v}^2}}_{\mathcal{X}_{3,3}}\notag\\
 \;&\underbrace{-\sum_{\substack{|\a'|\leq|\a|}}
\Big\langle \frac{v}{2}\cdot  \partial_x^{\a-\a'}\nabla_x \phi^{\e}\partial_x^{\a'} f^{\e},
 \partial_x^\a \mathbf{P}f^\e \Big \rangle_{L_{x,v}^2}}_{\mathcal{X}_{3,4}}\notag
\eal
Then taking the similar way as the estimate of $\mathcal{X}_{1,1}$, we find   that
 \bals
 \bsp
 {\mathcal{X}_{3,1}}\lesssim\;&\sum_{\substack{|\a'|<|\a|}}\| \partial_x^{\a-\a'}\nabla_x \phi^{\e}\|_{L^6_x}
\|\langle v\rangle^{-\frac{\gamma}{2}}w_{\vartheta}^{\frac{1}{4}}\nabla_v \partial_x^{\a'} f^\e\|_{L_{x,v}^3}
\|\partial_x^{\a}(\mathbf{I}-\mathbf{P})f^\e\|_{L_{x,v}^2(\nu)}\\
\lesssim\;&\e\sum_{\substack{|\a'|<|\a|}}\| \nabla_x a^\e\|_{H^1_x}
\Big(\frac{1}{\e}\|w\nabla_v \partial_x^{\a'}f^\e\|_{L_{x,v}^2}+
\e^2\|w_{\vartheta}\nabla_v \partial_x^{\a'} f^{\e}\|_{L^\infty_{x,v}}\Big)
\big[\mathcal{D}^\mathbf{s}(t)\big]^{\frac{1}{2}}\\
\lesssim\;&
\big[\mathcal{E}^\mathbf{s}(t)\big]^{\frac{1}{2}}\mathcal{D}^\mathbf{s}(t)
+\e^3\|w_{\vartheta}\nabla_v f^{\e}\|_{W^{1,\infty}_{x,v}}\mathcal{D}^\mathbf{s}(t)
 \esp
\eals
and
\bals
 \bsp
 {\mathcal{X}_{3,3}}\lesssim\;&\sum_{\substack{|\a'|\leq|\a|}}\| \partial_x^{\a-\a'}\nabla_x \phi^{\e}\|_{L^6_x}
\||v|\langle v\rangle^{-\frac{\gamma}{2}}w_{\vartheta}^{\frac{1}{4}} \partial_x^{\a'} f^\e\|_{L_{x,v}^3}
\|\partial_x^{\a}(\mathbf{I}-\mathbf{P})f^\e\|_{L_{x,v}^2(\nu)}\\
 \lesssim \;&\e
\sum_{\substack{|\a'|\leq|\a|}}\|a^\e\|_{H^2_x}
\Big(\frac{1}{\e}\|\partial_x^{\a'} f^\e\|_{L_{x,v}^2}+\e^2\| w_\vartheta \partial_x^{\a'} f^\e\|_{L_{x,v}^\infty} \Big)
\big[\mathcal{D}^\mathbf{s}(t)\big]^{\frac{1}{2}}
\\
 \lesssim\;&
\big[\mathcal{E}^\mathbf{s}(t)\big]^{\frac{1}{2}}\mathcal{D}^\mathbf{s}(t)
+\e^3\|w_{\vartheta}  f^{\e}\|_{W^{2,\infty}_{x,v}} \mathcal{D}^\mathbf{s}(t).
 \esp
\eals
Here we have used a basic inequality
\bal\label{esti:Dxxa}
\|a^\e\|_{H^2_x}=\|a^\e\|_{L^2_x}+\|\nabla_xa^\e\|_{H^1_x}\leq
\|\nabla_x^2\phi^\e\|_{L^2_x}+\|\nabla_x\mathbf{P}f^\e\|_{H^1_xL^2_v}
\lesssim\big[\mathcal{D}^\mathbf{s}(t)\big]^{\frac{1}{2}},
\eal
which is derived from \eqref{defination:Pf} and the Riesz potential theory to the Poisson equation  in (\ref{eq:f}).
For the term  $\mathcal{X}_{3,2}$, we  deduce  from integrating by parts with respect to $v$,
 the H\"{o}lder inequality and  \eqref{esti:Dxxa} that
\bals
 \bsp
 \mathcal{X}_{3,2}\;&=-\sum_{\substack{|\a'|<|\a|}}\Big\langle  \partial_x^{\a-\a'}\nabla_x \phi^{\e} \cdot  \partial_x^{\a'}f^{\e},
\nabla_v  \partial_x^\a \mathbf{P} f^\e \Big \rangle_{L_{x,v}^2}\\
\;&\lesssim\sum_{\substack{|\a'|<|\a|}}\| \partial_x^{\a-\a'}\nabla_x \phi^{\e}\|_{L^3_x}
\|\partial_x^{\a'}f^{\e}\|_{L_{{x}}^6L_{v}^2}
\|\nabla_v \partial_x^{\a}\mathbf{P} f^\e\|_{L_{x,v}^2}\\
\;&\lesssim\| a^{\e}\|_{H^2_x}
\|\nabla_xf^{\e}\|_{H_{{x}}^1L_{v}^2}
\|\partial_x^{\a}\mathbf{P} f^\e\|_{L_{x,v}^2}\\
\;&\lesssim\big[\mathcal{E}^\mathbf{s}(t)\big]^{\frac{1}{2}}\mathcal{D}^\mathbf{s}(t).
  \esp
\eals
Further, by the H\"{o}lder inequality, the Sobolev embedding and  \eqref{esti:Dxxa}, we have
 \bals
 \bsp
 {\mathcal{X}_{3,4}}\lesssim \;&
\Big(\sum_{\substack{|\a'|<|\a|}}\|\partial_x^{\a-\a'}\nabla_x \phi^{\e}\|_{L^6_x}
\|\partial_x^{\a'}f^\e\|_{L_{x}^3L_{v}^2}
+\|\nabla_x \phi^{\e}\|_{L^\infty_x}
\|\partial_x^\a f^\e\|_{L_{x,v}^2}\Big)
\|\partial_x^\a\mathbf{P} f^\e\|_{L_{x,v}^2}\\
\lesssim\;&
\Big(\|a^{\e}\|_{H^2_x}
\|f^\e\|_{H_{x}^2L_{v}^2}
+\|\nabla_x^2 \phi^{\e}\|_{H^1_x}
\|\partial_x^\a f^\e\|_{L_{x,v}^2}\Big)
\|\partial_x^\a\mathbf{P} f^\e\|_{L_{x,v}^2}\\
\lesssim\;&\big[\mathcal{E}^\mathbf{s}(t)\big]^{\frac{1}{2}}\mathcal{D}^\mathbf{s}(t).
 \esp
\eals
Hence, substituting the above bounds on ${\mathcal{X}_{3,i}} (i=1,2,3,4)$ into \eqref{qkj:esti:Dxf:L^2:3-1} yields
 \bal
 \bsp\label{qkj:esti:Dxxf:L^2:32}
 {\mathcal{X}_{3}}\lesssim
\big[\mathcal{E}^\mathbf{s}(t)\big]^{\frac{1}{2}}\mathcal{D}^\mathbf{s}(t)
+\e^3\|w_{\vartheta} f^{\e}\|_{W^{2,\infty}_{x,v}} \mathcal{D}^\mathbf{s}(t).
 \esp
\eal

For the term $\mathcal{X}_4$,
the collision symmetry of $\Gamma $ implies
\bal
\bsp\label{qkj:esti:Dxxf:L^2:8}
\mathcal{X}_4=&\;\underbrace{\frac{1}{\e}\Big\langle
\Gamma(f^\e,\partial_x^\a f^\e)+\Gamma(\partial_x^\a f^\e,f^\e), \partial_x^\a(\mathbf{I}-\mathbf{P}) f^\e \Big\rangle_{L_{x,v}^2}}_{\mathcal{X}_{4,1}}\\
&\;+\underbrace{\frac{1}{\e}\Big\langle
\Gamma(\nabla_xf^\e, \nabla_xf^\e), \partial_x^{\a}(\mathbf{I}-\mathbf{P}) f^\e \Big\rangle_{L_{x,v}^2}}_{\mathcal{X}_{4,2}},
\esp
\eal
where the term ${\mathcal{X}_{4,2}}$  only exists when $|\a|=2$.
Similar to the estimate of $\mathcal{X}_{2}$ in (\ref{qkj:esti:f:L^2:3}),  we derive
\bals
\bsp
\mathcal{X}_{4,1}\lesssim\e\|w_{\vartheta}f^{\e}\|_{L^{\infty}_{x,v}} \mathcal{D}^\mathbf{s}(t)
+\big[\mathcal{E}^\mathbf{s}(t)\big]^{\frac{1}{2}}\mathcal{D}^\mathbf{s}(t).
\esp
\eals
Besides, thanks to   \eqref{es-energy-Nonlinear:2},   the H\"{o}lder inequality, the Sobolev embedding $H^{1}_x(\mathbb{R}^3)\hookrightarrow L^3_x(\mathbb{R}^3)$ and  $H^{1}_x(\mathbb{R}^3)\hookrightarrow L^6_x(\mathbb{R}^3)$,  we obtain
\bals
\bsp
{\mathcal{X}_{4,2}} \lesssim&\;\frac{1}{\e}\| w_\vartheta \nabla_xf^\e\|_{L_{x,v}^\infty} \|\nabla_x(\mathbf{I}-\mathbf{P}) f^\e\|_{L_{x,v}^2(\nu)}
\|\nabla_x^2(\mathbf{I}-\mathbf{P}) f^\e\|_{L_{x,v}^2(\nu)}\\
&\;
+\frac{1}{\e}\|\nabla_x\mathbf{P} f^\e\|_{L_{x}^6 L_{v}^2}
 \|\nabla_xf^\e\|_{L_{x}^3 L_{v}^2(\nu)}
\|\nabla_x^2(\mathbf{I}-\mathbf{P})f^\e\|_{L_{x,v}^2(\nu)}\\
\lesssim&\;
\e\|w_{\vartheta}\nabla_xf^{\e}\|_{L^{\infty}_{x,v}} \mathcal{D}^\mathbf{s}(t)
+\big[\mathcal{E}^\mathbf{s}(t)\big]^{\frac{1}{2}}\mathcal{D}^\mathbf{s}(t).
\esp
\eals
Thus, putting  the above estimates on $\mathcal{X}_{4,1}$ and $\mathcal{X}_{4,2}$
into \eqref{qkj:esti:Dxxf:L^2:8} leads to
\bal
\bsp\label{qkj:esti:Dxxf:L^2:10}
\mathcal{X}_{4}
\lesssim\e\|w_{\vartheta}\nabla_xf^{\e}\|_{L^{\infty}_{x,v}} \mathcal{D}^\mathbf{s}(t)
+\e\|w_{\vartheta} f^{\e}\|_{L^{\infty}_{x,v}} \mathcal{D}^\mathbf{s}(t)
+\big[\mathcal{E}^\mathbf{s}(t)\big]^{\frac{1}{2}}\mathcal{D}^\mathbf{s}(t).
\esp
\eal
In summary, gathering the bounds  \eqref{qkj:esti:Dxxf:L^2:32} and (\ref{qkj:esti:Dxxf:L^2:10}) into \eqref{qkj:esti:Dxf:L^2:1},   we conclude
\bal
\bsp\label{qkj:esti:Dxxf:L^2}
\;&\frac{1}{2}\frac{\d }{\d t}\left( \|\partial_x^\a f^\e\|_{L_{x,v}^2}^2 + \|\partial_x^\a\nabla_x\phi^\e\|_{L_{x}^2}^2 \right)+\frac{\sigma_0}{\e^2}\|\partial_x^\a (\mathbf{I}-\mathbf{P})f^\e\|_{L_{x,v}^2(\nu)}^2\\
 \lesssim\;&\big[\mathcal{E}^\mathbf{s}(t)\big]^{\frac{1}{2}}\mathcal{D}^\mathbf{s}(t)+
\left(\e\|w_{\vartheta}f^{\e}\|_{W^{1,\infty}_{x,v}}
+\e^3\|w_{\vartheta}f^{\e}\|_{W^{2,\infty}_{x,v}}\right)\mathcal{D}^\mathbf{s}(t).
\esp
\eal

\emph{Step 3. $L_{x,v}^2$-estimate of the   mixed derivative  $\partial_{{\b}}^\a f^\varepsilon$ $(|\a|+|\b|\leq2,1\leq|\b|\leq 2).$}\;
Indeed, thanks to the Maxwellian
structure in the macro part $\mathbf{P}f^\varepsilon$, there holds
\begin{equation*}
 \|w^{|\b|}\partial_{{\b}}^\a\mathbf{P}f^\varepsilon\|_{L^2_{x, v}} \lesssim\|\partial^\a_x\mathbf{P}f^\varepsilon\|_{L^2_{x, v}},
\end{equation*}
which helps us conclude the  ${L^2_{x, v}}$-estimate of $ w^{|\b|}\partial_{{\b}}^\a\mathbf{P}f^\varepsilon$.
Thus it is adequate to deduce the momentum derivative estimate of
$ w^{|\b|}\partial_{{\b}}^\a(\mathbf{I}-\mathbf{P})f^\varepsilon$.
Applying microscopic projection to   the first equation in (\ref{eq:f}) and using  $(\mathbf{I}-\mathbf{P})( v\cdot \nabla_x \phi^\e\sqrt{\mu})=0$, we have
\bal
\bsp\label{qkj:esti:Dv{I-P}f:L^2:1}
&\;\pt_t(\mathbf{I}-\mathbf{P})f^\e+\frac{1}{\e}v\cdot \nabla_x (\mathbf{I}-\mathbf{P})f^\e+\frac{1}{\e^2}L((\mathbf{I}-\mathbf{P})f^\e)\\
=&\;\nabla_x \phi^\e\cdot \nabla_v (\mathbf{I}-\mathbf{P}) f^\e
-\frac{1}{2} v \cdot\nabla_x \phi^\e(\mathbf{I}-\mathbf{P}) f^\e +\frac{1}{\e}\Gamma(f^\e,f^\e)
+[[\mathbf{P},\mathcal{A}_{\phi^\e}]]f^\e.
\esp
\eal
Here $ [[\mathbf{P},\mathcal{A}_{\phi^\e}]]
:=\mathbf{P}\mathcal{A}_{\phi^\e}-\mathcal{A}_{\phi^\e}\mathbf{P}$ denotes the commutator of two operators  and $\mathcal{A}_{\phi^\e}$ is given by
$$\mathcal{A}_{\phi^\e}:=\frac{1}{\e}v\cdot \nabla_x - \nabla_x \phi^\e\cdot \nabla_v+\frac{1}{2} v \cdot\nabla_x \phi^\e .$$

Applying $\partial_{\b}^\a$ to (\ref{qkj:esti:Dv{I-P}f:L^2:1}) and then taking the $L^2_{x,v}(\bbR^3\times\bbR^3)$ inner product with $w^{2|\b|}\partial_{\b}^\a(\mathbf{I}-\mathbf{P})f^\e$, we  obtain by (\ref{es-energy-linear:1})   that
\bal
\bsp\label{qkj:esti:Dv{I-P}f:L^2:2}
& \frac{1}{2}\frac{\d }{\d t} \|w^{|\b|}\partial_{\b}^\a(\mathbf{I}-\mathbf{P})f^\e\|_{L_{x,v}^2}^2
+\frac{\sigma_0}{\e^2}\| w^{|\b|}\partial_{\b}^\a(\mathbf{I}-\mathbf{P})f^\e\|_{L_{x,v}^2(\nu)}^2
\\
\le
&\;\frac{\eta}{\e^2}\sum_{|\b'|\leq|\b|}\| w^{|\b'|}\partial_{\b'}^\a(\mathbf{I}-\mathbf{P})f^\e\|_{L_{x,v}^2(\nu)}^2
+\frac{C_{\sigma_0}}{\e^2}\| \partial^\a_x(\mathbf{I}-\mathbf{P})f^\e\|_{L_{x,v}^2(\nu)}^2\\
&\;+\underbrace{\sum_{|\b_1|=1}\Big\langle- \frac{1}{\e}C_{\b}^{\b_1}\partial_v^{\b_1}v\cdot\partial_{\b-\b_1}^\a\nabla_x (\mathbf{I}-\mathbf{P}) f^\e , w^{2|\b|}\partial_{\b}^\a(\mathbf{I}-\mathbf{P})f^\e
\Big\rangle_{L_{x,v}^2}}_{\mathcal{X}_5}\\
&\;
+\underbrace{\Big\langle
\partial_{\b}^\a\Big(\nabla_x \phi^\e \cdot\nabla_v(\mathbf{I}-\mathbf{P}) f^\e
\Big) , w^{2|\b|}\partial_{\b}^\a(\mathbf{I}-\mathbf{P})f^\e\Big
\rangle_{L_{x,v}^2}}_{\mathcal{X}_6}\\
&\;+\underbrace{\Big\langle
\partial_{\b}^\a\Big(
-\frac{1}{2}v\cdot\nabla_x \phi^\e (\mathbf{I}-\mathbf{P}) f^\e\Big) , w^{2|\b|}\partial_{\b}^\a(\mathbf{I}-\mathbf{P})f^\e\Big
\rangle_{L_{x,v}^2}}_{\mathcal{X}_7}
\\
&+\underbrace{\Big\langle\frac{1}{\e} \partial_{\b}^\a\Gamma(f^\e,f^\e), w^{2|\b|}\partial_{\b}^\a(\mathbf{I}-\mathbf{P})f^\e\Big
\rangle_{L_{x,v}^2}}_{\mathcal{X}_8}
+\underbrace{\Big\langle \partial_{\b}^\a\big([[\mathbf{P},\mathcal{A}_{\phi^\e}]]f^\e\big),  w^{2|\b|}\partial_{\b}^\a(\mathbf{I}-\mathbf{P})f^\e\Big \rangle_{L_{x,v}^2}.}
_{\mathcal{X}_9}
\esp
\eal

For the term ${\mathcal{X}_5}$, the H\"{o}lder inequality and the Young inequality lead to
\bals
{\mathcal{X}_5}\leq \frac{\eta}{\e^2} \|w^{|\b|}\partial_{\b}^\a(\mathbf{I}-\mathbf{P})f^\e\|_{L^2_{x,v}(\nu)}^2
+C_{\eta  }\|w^{|\b-\b_1|}\partial^\a_{\b-\b_1}\nabla_x (\mathbf{I}-\mathbf{P}) f^\e \|_{L^2_{x,v}(\nu)}^2,
\eals
where we have used
\bals
w^{2|\b|}(v)=w^{|\b|+\frac{1}{2}}(v)
w^{|\b|-1+\frac{1}{2}}(v)\leq \nu^{\frac{1}{2}}(v) w^{|\b|}(v)\cdot
\nu^{\frac{1}{2}}(v)w^{|\b-\b_1|}(v).
\eals

To estimate the term $\mathcal{X}_6$, 
we   integrate by part  over $v\in\mathbb{R}^3$ and  employ  the similar procedure  as  \eqref{qkj:esti:f:L^2:8} that
 \bals
\bsp
{\mathcal{X}_6}
=\;&-\sum_{|\a'|\leq|\a|}\Big\langle
\partial^{\a-\a'}_x\nabla_x \phi^\e \cdot\frac{\nabla_v(w^{2|\b|})}{w^{2|\b|}}\partial^{\a'}_{\b}(\mathbf{I}-\mathbf{P}) f^\e , w^{2|\b|}\partial_{\b}^\a(\mathbf{I}-\mathbf{P})f^\e\Big
\rangle_{L_{x,v}^2}\\
\lesssim\;&\sum_{|\a'|\leq|\a|}\|\partial^{\a-\a'}_x\nabla_x \phi^\e\|_{L^6_x}
\|w^{|\b|}\langle v\rangle^{-\frac{\gamma}{2}}w_{\vartheta}^{\frac{1}{4}}
\partial_{\b}^\a(\mathbf{I}-\mathbf{P})f^\e\|_{L_{x,v}^3}
\|w^{|\b|}\partial_{\b}^{\a'}(\mathbf{I}-\mathbf{P})f^\e\|_{L_{x,v}^2(\nu)}\\
\lesssim\;&\e
\|\nabla_x^2\phi^\e\|_{H^1_x}
\Big(\frac{1}{\e}\|\partial_{\b}^\a(\mathbf{I}-\mathbf{P}) f^\e\|_{L_{x,v}^2}+\e^2\| w_\vartheta \partial_{\b}^\a f^\e\|_{L_{x,v}^\infty} \Big)
\big[\mathcal{D}^\mathbf{s}(t)\big]^{\frac{1}{2}}\\
\lesssim\;&\big[\mathcal{E}^\mathbf{s}(t)\big]^{\frac{1}{2}}\mathcal{D}^\mathbf{s}(t)
+\e^3\|w_{\vartheta} \partial_{\b}^\a f^{\e}\|_{L^{\infty}_{x,v}} \mathcal{D}^\mathbf{s}(t).
\esp
\eals
Similarly, noting that $|\a|\leq1$, we have
\bals
\bsp
\mathcal{X}_7=\;&-\sum_{|\a'|\leq|\a|,|\b'|\leq 1}\Big\langle \frac{1}{2}\partial^{\b'}_v v\cdot \partial^{\a-\a'}_x \nabla_x \phi^\e \partial_{\b-\b'}^{\a'}(\mathbf{I}-\mathbf{P}) f^\e, w^{2|\b|}\partial_{\b}^\a(\mathbf{I}-\mathbf{P})f^\e\Big
\rangle_{L_{x,v}^2}\\
\lesssim\;&\sum_{|\a'|\leq|\a|,|\b'|\leq1}\|\partial^{\a-\a'}_x\nabla_x \phi^\e\|_{L^6_x}
\|w^{|\b|}\langle v\rangle ^{-\frac{\gamma}{2}+1} w_{\vartheta}^{\frac{1}{4}}\partial_{\b}^\a
(\mathbf{I}-\mathbf{P})f^\e\|_{L_{x,v}^3}\\
\;&
\quad\quad\quad\quad\quad\cdot\|w^{|\b|}\partial_{\b-\b '}^{\a'}(\mathbf{I}-\mathbf{P})f^\e\|_{L_{x,v}^2(\nu)}\\
\lesssim\;&\e
\|\nabla_x^2\phi^\e\|_{H^1_x}
\Big(\frac{1}{\e}\|\partial_{\b}^\a(\mathbf{I}-\mathbf{P}) f^\e\|_{L_{x,v}^2}+\e^2\| w_\vartheta \partial_{\b}^\a f^\e\|_{L_{x,v}^\infty} \Big)
\big[\mathcal{D}^\mathbf{s}(t)\big]^{\frac{1}{2}}\\
\lesssim\;&\big[\mathcal{E}^\mathbf{s}(t)\big]^{\frac{1}{2}}\mathcal{D}^\mathbf{s}(t)
+\e^3\|w_{\vartheta} \partial_{\b}^\a f^{\e}\|_{L^{\infty}_{x,v}} \mathcal{D}^\mathbf{s}(t).
\esp
\eals

Further, with the aid of Lemma \ref{es-energy-Nonlinear} and following  the estimate of $\mathcal{X}_4$ in \eqref{qkj:esti:Dxxf:L^2:10}, we infer that
\bals
\bsp
\mathcal{X}_{8}\lesssim
\big[\mathcal{E}^\mathbf{s}(t)\big]^{\frac{1}{2}}\mathcal{D}^\mathbf{s}(t)
+\e\|w_{\vartheta}f^{\e}\|_{L^{\infty}_{x,v}} \mathcal{D}^\mathbf{s}(t)
+\e\|w_{\vartheta}\nabla_vf^{\e}\|_{L^{\infty}_{x,v}} \mathcal{D}^\mathbf{s}(t).
\esp
\eals

Thanks to $|\a|\leq1$,
using the integration by parts with respect to $v$ when $\partial_v^{\b}$
acts on $(\mathbf{I}-\mathbf{P})f^\e$, the H\"{o}lder inequality, the Sobolev embedding inequality and  the Young inequality,  we are able to deduce that
\bals
\bsp
\mathcal{X}_9=\;&\Big\langle \partial_{\b}^\a\big([[\mathbf{P},\mathcal{A}_{\phi^\e}]]f^\e\big),  w^{2|\b|}\partial_{\b}^\a(\mathbf{I}-\mathbf{P})f^\e\Big \rangle_{L_{x,v}^2}\\
\lesssim\;&
\frac{1}{ \e}\|\partial_x^\a (\mathbf{I}-\mathbf{P})f^\e\|_{L_{x,v}^2(\nu)}
\Big(\|\partial_x^\a \nabla_x f^\e\|_{L_{x,v}^2(\nu)}
+\|\partial_x^\a \nabla_x \mathbf{P} f^\e\|_{L_{x,v}^2}\Big)\\
\;&+\|\partial_x^\a (\mathbf{I}-\mathbf{P})f^\e\|_{L_{x,v}^2(\nu)}
\|\nabla_x \phi^\e\|_{W_{x}^{1,\infty}}
\Big(\|w\nabla_v f^\e\|_{{H_{x}^1L_{v}^2}}+\|\nabla_v \mathbf{P}f^\e\|_{{H_{x}^1L_{v}^2}}\Big)\\
\;&+\|\partial_x^\a (\mathbf{I}-\mathbf{P})f^\e\|_{L_{x,v}^2(\nu)}
\|\nabla_x \phi^\e\|_{W_{x}^{1,\infty}}
\Big(\| f^\e\|_{H_{x}^1L_{v}^2}+\| \mathbf{P}f^\e\|_{{H_{x}^1L_{v}^2}}\Big)\\
\lesssim\;&\eta\|\nabla_x\mathbf{P}f^\e\|_{H_{x}^1L_{v}^2}^2
 +\frac{C_\eta}{ \e^2}\|(\mathbf{I}-\mathbf{P})f^\e\|_{H_{x}^2L_{v}^2(\nu)}^2
  +\e\big[\mathcal{E}^\mathbf{s}(t)\big]^{\frac{1}{2}}\mathcal{D}^\mathbf{s}(t),
\esp
\eals
where we have used a fact
\bal
\bsp\label{eq:assumption:hard:1-1}
\| \nabla_x\phi^\e\|_{W^{1,\infty}_{x }}
\lesssim \| \nabla_x^2\phi^\e\|_{H^{1}_{x }}+ \| \nabla_x^3\phi^\e\|_{H^{1}_{x }}
\lesssim \;&\| \nabla_x^2\phi^\e\|_{H^{1}_{x }}+\| \nabla_x\mathbf{P}f^\e\|_{H^{1}_{x }L^{2}_{v }},
\esp
\eal
which is deduced from  the Sobolev embedding and the Riesz potential theory to  the Poisson equation in (\ref{eq:f}).

Thus,    substituting the above estimates on $\mathcal{X}_5\sim\mathcal{X}_9$  into \eqref{qkj:esti:Dv{I-P}f:L^2:2}, and
then
taking the proper linear combination of the $L_{x,v}^2$-estimate
 on $w^{|\b|}\partial^\a_\b(\mathbf{I}-\mathbf{P})f^\e$ $(|\b|=m, |\a| + |\b| \leq 2)$ with properly chosen constants $C_m>0$ for each given $m$,
  we   get  by choosing $\1$ sufficiently small that
   \bal
\bsp\label{qkj:esti:Dv{I-P}f:L^2}
&\;\frac{1}{2}\frac{\d }{\d t}  \sum_{m=1}^2 C_m \sum_{\substack{|\a|+|\b|\leq 2\\|\b|=m}} \|w^{|\b|}\partial^\a_\b(\mathbf{I}-\mathbf{P})f^\e\|_{L_{x,v}^2}^2
+\sum_{\substack{|\a|+|\b|\leq 2\\1\leq|\b|\leq2}}\frac{\sigma_0}{2\e^2}\|w^{|\b|}\partial^\a_\b(\mathbf{I}-\mathbf{P})f^\e\|_{L_{x,v}^2(\nu)}^2\\
\lesssim  &\;\big[\mathcal{E}^\mathbf{s}(t)\big]^{\frac{1}{2}}\mathcal{D}^\mathbf{s}(t)
+\e\|w_{\vartheta}  f^{\e}\|_{W^{1,\infty}_{x,v}}\mathcal{D}^\mathbf{s}(t)+\e^3\|w_{\vartheta} f^{\e}\|_{W^{2,\infty}_{x,v}}\mathcal{D}^\mathbf{s}(t)+\1\|\nabla_x \mathbf{P}f^\e\|_{H_{x}^1 L_{v}^2}^2 \\
&\;
+\| \nabla_x(\mathbf{I}-\mathbf{P})f^\e\|_{H_{x}^1 L^2_{v}(\nu)}^2+\frac{1}{ \e^2}\|(\mathbf{I}-\mathbf{P})f^\e\|_{H_{x}^2L_{v}^2(\nu)}^2.
\esp
\eal

In summary, multiplying (\ref{qkj:esti:f:L^2}) and (\ref{qkj:esti:Dxxf:L^2}) by a sufficiently large constant $C_0$ and
combining the result with   \eqref{qkj:esti:Dv{I-P}f:L^2},   we  conclude (\ref{11111}). This completes the proof.
\end{proof}

Then we are devoted to the weighted $H_{x,v}^2$  energy estimate for the VPB system \eqref{eq:f}  with soft potentials $-3<\gamma<0$.
\begin{lemma}\label{prop:weighted:f:H^2}
Let $\ell>0.$
Assume that $ (f^\e, \nabla_x\phi^\e) $ is a solution to the
VPB system \eqref{eq:f} defined on $  [0, T] \times \mathbb{R}^3 \times \mathbb{R}^3$. Then there holds
\bal
\bsp\label{weighted:11111}
 &\frac{\d}{\d t}\Big(\sum_{m=0}^2 \widetilde{C}_m \sum_{\substack{|\a|+|\b|\leq 2\\0\leq|\a|\leq1,|\b|=m}} \|w^{|\b|-\ell}\partial^\a_\b(\mathbf{I}-\mathbf{P})f^\e\|_{L_{x,v}^2}^2
 +\widetilde{C}_0 \e
\|{w}^{-\ell}\nabla_x^2f^\e\|_{L_{x,v}^2}^2\Big)\\
\;&
  \\
  &+\frac{\sigma_0}{2\e^2}
  \sum_{\substack{|\a|+|\b|\leq2\\0\leq|\a|\leq1}}
\|w^{|\b|-\ell}\partial^\a_\b(\mathbf{I}-\mathbf{P})f^\e\|_{L^2_{x,v}(\nu) }^2
  +\frac{\sigma_0}{4\e}\|{w}^{-\ell}\nabla_x^2(\mathbf{I}-\mathbf{P})f^\e\|_{L_{x,v}^2(\nu)}^2\\
 \lesssim  \;&
\big[\mathcal{E}^\mathbf{s}(t)\big]^{\frac{1}{2}}\mathcal{D}_{\ell}^\mathbf{s}(t)
+\e
 \|w_{\vartheta}f^\e\|_{L^{\infty}_{x,v}}\mathcal{D}_\ell^\mathbf{s}(t)
+\e^3\|w_{\vartheta} f^{\e}\|_{W^{1,\infty}_{x,v}}
\mathcal{D}^\mathbf{s}_\ell(t)+\mathcal{D}^\mathbf{s}(t)
\esp
\eal
for any $0\leq t\leq T$, where $\widetilde{C}_0\gg \widetilde{C}_1\gg \widetilde{C}_2>0$ are   fixed large constants.
\end{lemma}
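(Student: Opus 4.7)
The plan is to combine two kinds of weighted estimates whose coupling is suggested by the presence of two distinct terms on the left-hand side of \eqref{weighted:11111}: a velocity-weighted estimate for $w^{|\b|-\ell}\partial^\a_\b(\mathbf{I}-\mathbf{P})f^\e$ with $|\a|+|\b|\leq 2$, $0\leq|\a|\leq 1$, carried out on the microscopic equation \eqref{qkj:esti:Dv{I-P}f:L^2:1}; and a separate $\e$-weighted estimate for $w^{-\ell}\nabla_x^2 f^\e$ carried out directly on the full VPB system \eqref{eq:f}. This split is forced by the dissipation degeneracy $\nu(v)\sim\langle v\rangle^{\gamma}$ in the soft potential regime, and also serves to bypass a third-order derivative obstruction explained below.

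First, I apply $\partial^\a_\b$ to \eqref{qkj:esti:Dv{I-P}f:L^2:1} with $|\a|+|\b|\leq 2$, $0\leq|\a|\leq 1$, and test against $w^{2|\b|-2\ell}\partial^\a_\b(\mathbf{I}-\mathbf{P})f^\e$. The coercivity of the linearized operator with the weight $w^{|\b|-\ell}$ comes from \eqref{es-energy-linear:1}, producing $\e^{-2}\|w^{|\b|-\ell}\partial^\a_\b(\mathbf{I}-\mathbf{P})f^\e\|_{L^2_{x,v}(\nu)}^2$ up to a small-$\eta$ absorption and a lower-order remainder controlled by $\e^{-2}\|\partial^\a_x(\mathbf{I}-\mathbf{P})f^\e\|_{L^2_{x,v}(\nu)}^2 \lesssim \mathcal{D}^\mathbf{s}(t)$. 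The streaming commutator $\e^{-1}[\partial_v^{\b_1},v]\cdot\nabla_x\partial^\a_{\b-\b_1}(\mathbf{I}-\mathbf{P})f^\e$ produces terms with one more space derivative but one less velocity derivative; since we restrict to $|\a|\leq 1$, these can be absorbed into $\e^{-2}\|w^{|\b|-\ell}\partial^\a_\b(\mathbf{I}-\mathbf{P})f^\e\|_{L^2_{x,v}(\nu)}^2$ with a small constant by the usual weight-splitting $w^{2|\b|-2\ell}=\nu^{1/2}w^{|\b|-\ell}\cdot\nu^{1/2}w^{|\b-\b_1|-\ell}$. The electric-field terms $\partial^\a_\b(\nabla_x\phi^\e\cdot\nabla_v(\mathbf{I}-\mathbf{P})f^\e)$ and $\partial^\a_\b(v\cdot\nabla_x\phi^\e(\mathbf{I}-\mathbf{P})f^\e)$ are estimated exactly as $\mathcal{X}_6$, $\mathcal{X}_7$ in the proof of Lemma \ref{prop:f:H^2:1}, using the $L^3_xL^3_v$ interpolation $\e^{-1}\|\cdots\|_{L^2_{x,v}}+\e^2\|w_\vartheta f^\e\|_{L^\infty_{x,v}}$ and the control $\|\nabla_x^2\phi^\e\|_{H^1_x}\lesssim[\mathcal{D}^\mathbf{s}(t)]^{1/2}$. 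The nonlinear contribution $\e^{-1}\langle\partial^\a_\b\Gamma(f^\e,f^\e),w^{2|\b|-2\ell}\partial^\a_\b(\mathbf{I}-\mathbf{P})f^\e\rangle$ is handled by \eqref{es-energy-Nonlinear:2}, which is exactly designed for the weight $w^{|\b|-\ell}$. Finally the commutator $[[\mathbf{P},\mathcal A_{\phi^\e}]]f^\e$ is bounded as in the treatment of $\mathcal{X}_9$, giving a contribution $\eta\|\nabla_x\mathbf{P}f^\e\|_{H^1_xL^2_v}^2+\mathcal{D}^\mathbf{s}(t)$.

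Second, and this is the step that needs the most care, I estimate $\e^{1/2}\|w^{-\ell}\nabla_x^2 f^\e\|_{L^2_{x,v}}$. As emphasized in the introduction, working instead on $\nabla_x^2$ applied to the microscopic equation \eqref{qkj:esti:Dv{I-P}f:L^2:1} tested against $w^{-2\ell}\nabla_x^2(\mathbf{I}-\mathbf{P})f^\e$ would produce the uncontrollable third-order term $\langle\e^{-1}v\cdot\nabla_x\nabla_x^2\mathbf{P}f^\e,w^{-2\ell}\nabla_x^2(\mathbf{I}-\mathbf{P})f^\e\rangle$ which lies outside the $H^2_{x,v}$ framework. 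To avoid this, I apply $\nabla_x^2$ directly to the first equation in \eqref{eq:f} and test against $\e w^{-2\ell}\nabla_x^2 f^\e$. The streaming term $\e^{-1}v\cdot\nabla_x\nabla_x^2 f^\e$ gets multiplied by $\e$, cancelling the $\e^{-1}$ singularity, and integrates to zero by the skew-symmetry of $v\cdot\nabla_x$ under the $x$-independent weight $w^{-2\ell}$. The linearized operator yields $\e\cdot\e^{-2}\langle L(\nabla_x^2(\mathbf{I}-\mathbf{P})f^\e),w^{-2\ell}\nabla_x^2(\mathbf{I}-\mathbf{P})f^\e\rangle$, and \eqref{es-energy-linear:1} with $|\b|=0$ delivers the coercive term $\e^{-1}\|w^{-\ell}\nabla_x^2(\mathbf{I}-\mathbf{P})f^\e\|_{L^2_{x,v}(\nu)}^2$ up to the absorbable lower-order $\e^{-1}\|\nabla_x^2(\mathbf{I}-\mathbf{P})f^\e\|_{L^2_{x,v}(\nu)}^2$, which is part of $\mathcal{D}^\mathbf{s}(t)$. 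The macroscopic piece $\nabla_x^2\mathbf{P}f^\e$ satisfies $L(\mathbf{P}\cdot)=0$ and produces no additional dissipation. The nonlinear, electric and source terms are controlled as in Step 2 of the proof of Lemma \ref{prop:f:H^2:1}, the extra factor $\e$ rendering all coefficients harmless and contributing the $\|w_\vartheta f^\e\|_{W^{1,\infty}_{x,v}}\mathcal{D}^\mathbf{s}_\ell(t)$ and $\mathcal{D}^\mathbf{s}(t)$ terms on the right-hand side of \eqref{weighted:11111}.

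The main obstacle will be the absorption step: the weighted dissipation coefficient $\nu(v)w^{2|\b|-2\ell}(v)$ is weaker than the unweighted one, so the lower-order remainders in Lemma \ref{es-energy-linear} and the commutator $\e^{-1}\partial_v^{\b_1}v\cdot\nabla_x\partial^\a_{\b-\b_1}(\mathbf{I}-\mathbf{P})f^\e$ must be reabsorbed into $\mathcal{D}^\mathbf{s}_\ell(t)$ or pushed into $\mathcal{D}^\mathbf{s}(t)$ via the ordered linear combination $\widetilde{C}_0\gg\widetilde{C}_1\gg\widetilde{C}_2>0$. Specifically, I will sum the weighted estimate for $\partial^\a_\b(\mathbf{I}-\mathbf{P})f^\e$ with $|\b|=m$ weighted by $\widetilde{C}_m$, take $\widetilde{C}_2$ very small relative to $\widetilde{C}_1$ so that commutator terms with $|\b-\b_1|=1$ are absorbed, then take $\widetilde{C}_1$ small relative to $\widetilde{C}_0$ so that commutator terms with $|\b-\b_1|=0$ are absorbed by the $\e$-weighted $\nabla_x^2 f^\e$ dissipation. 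Adding up with $\eta$ suitably small to eat the $\eta\|\nabla_x\mathbf{P}f^\e\|_{H^1_xL^2_v}^2$ remainder against part of $\mathcal{D}^\mathbf{s}(t)$ on the right-hand side produces exactly \eqref{weighted:11111}.
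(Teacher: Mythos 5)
Your proposal reproduces the paper's own strategy: estimate $w^{|\b|-\ell}\partial^\a_\b(\mathbf{I}-\mathbf{P})f^\e$ for $|\a|\leq 1$ from the microscopic equation \eqref{qkj:esti:Dv{I-P}f:L^2:1}, estimate $\e^{1/2}\|w^{-\ell}\nabla_x^2 f^\e\|_{L^2_{x,v}}$ by applying $\nabla_x^2$ directly to the full system \eqref{eq:f} and testing against $\e w^{-2\ell}\nabla_x^2 f^\e$ (precisely to sidestep the third-order-derivative obstruction and to let the extra $\e$ tame the $\e^{-2}L$ singularity), and then close with the ordered constants $\widetilde{C}_0\gg\widetilde{C}_1\gg\widetilde{C}_2$. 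The one loose point is your remark that the macroscopic test piece produces no additional term from $L$: since $L$ is not self-adjoint in the $w^{-2\ell}$-weighted inner product, testing $\e^{-1}L(\nabla_x^2(\mathbf{I}-\mathbf{P})f^\e)$ against $w^{-2\ell}\nabla_x^2\mathbf{P}f^\e$ leaves a nonzero cross term (the paper's $\mathcal{X}_{14}$), which must be bounded by Cauchy--Schwarz and absorbed into $\mathcal{D}^\mathbf{s}(t)$.
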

\begin{proof}
The proof  of (\ref{weighted:11111}) is divided into
three steps.

 \emph{Step 1. The  $L_{x,v}^2$-estimate of ${w}^{-\ell}\partial^\a_x (\mathbf{I}-\mathbf{P}) f^\e$ $(0\leq |\a|\leq 1)$.}\;
Applying $\partial_{x}^\a$ to \eqref{qkj:esti:Dv{I-P}f:L^2:1},
 and  then taking the $L^2_{x,v}$ inner product on the resulting equation   with $w^{-2\ell}\partial_{x}^\a(\mathbf{I}-\mathbf{P})f^\e$, we  obtain by employing \eqref{es-energy-linear:1} that
 \bal
\bsp\label{weighted-qkj:esti:Dx{I-P}f:L^2:2}
& \frac{1}{2}\frac{\d }{\d t} \|w^{-\ell}\partial_{x}^\a(\mathbf{I}-\mathbf{P})f^\e\|_{L_{x,v}^2}^2
+\frac{\sigma_0}{\e^2}\| w^{-\ell}\partial_{x}^\a(\mathbf{I}-\mathbf{P})f^\e\|_{L_{x,v}^2(\nu)}^2
-\frac{C_{\sigma_0}}{\e^2}\| \partial_{x}^\a(\mathbf{I}-\mathbf{P})f^\e\|_{L_{x,v}^2(\nu)}^2\\
\le&\;
\underbrace{\Big\langle\partial_{x}^\a\Big(\nabla_x \phi^\e \cdot\nabla_v(\mathbf{I}-\mathbf{P}) f^\e-
\frac{1}{2}v\cdot\nabla_x \phi^\e (\mathbf{I}-\mathbf{P}) f^\e
\Big) , w^{-2\ell}\partial_{x}^\a(\mathbf{I}-\mathbf{P})f^\e\Big
\rangle_{L_{x,v}^2}}_{\mathcal{X}_{10}}\\
&\;
+\underbrace{\frac{1}{\e}\left\langle \partial_{x}^\a\Gamma(f^\e, f^\e), w^{-2\ell}\partial_{x}^\a(\mathbf{I}-\mathbf{P})f^\e\right
\rangle_{L_{x,v}^2}}_{\mathcal{X}_{11}}
+\underbrace{\left\langle\partial_{x}^\a\big([[\mathbf{P},\mathcal{A}_{\phi^\e}]]f^\e\big), w^{-2\ell}\partial_{x}^\a(\mathbf{I}-\mathbf{P})f^\e\right
\rangle_{L_{x,v}^2}}_{\mathcal{X}_{12}}.
\esp
\eal
Noting that $|\a|\leq1$, by the H\"{o}lder
inequality,  the  Sobolev
embedding inequality and the Young inequality,  we deduce
\bals
\bsp
\mathcal{X}_{10}
 =\;&
 -\Big\langle \Big( \frac{\nabla_v( w^{-2\ell} )}{w^{-2\ell}}+\frac{v}{2}\Big)\cdot\partial_x^\a\left(\nabla_x \phi^{\e} (\mathbf{I}-\mathbf{P})f^{\e}\right), w^{-2\ell}\partial_x^\a  (\mathbf{I}-\mathbf{P})f^\e \Big \rangle_{L_{x,v}^2}\\
 =\;&-\sum_{|\a'|\leq |\a|}\Big\langle \left( \frac{\nabla_v( w^{-2\ell} )}{w^{-2\ell}}+\frac{v}{2}\right)\cdot \partial_x^{\a-\a'}\nabla_x \phi^{\e} \partial_x^{\a'}(\mathbf{I}-\mathbf{P}) f^{\e}, w^{-2\ell}\partial_x^\a (\mathbf{I}-\mathbf{P})f^\e \Big \rangle_{L_{x,v}^2}\\
 \lesssim \;&\sum_{\substack{|\a'|\leq|\a|}}\|  \partial_x^{\a-\a'}\nabla_x \phi^{\e}\|_{L^6_x}\|\langle v \rangle w^{-\ell}w_{\vartheta}^{\frac{1}{4}}\partial_x^{\a'} (\mathbf{I}-\mathbf{P})f^\e \|_{L^3_{x,v}}\|w^{-\ell}\partial_x^\a  (\mathbf{I}-\mathbf{P})f^\e \|_{L^2_{x,v}(\nu)}\\
\lesssim\;& \e\sum_{\substack{|\a'|\leq|\a|}}\| \nabla_x^2 \phi^{\e}\|_{H^1_x}\Big(\frac{1}{\e}\| \partial_x^{\a'}f^\e\|_{L_{x,v}^2}+
\e^2\|w_{\vartheta} \partial_x^{\a'} f^{\e}\|_{L^\infty_{x,v}}\Big)
\big[\mathcal{D}^\mathbf{s}_\ell(t)\big]^{\frac{1}{2}}\\
\lesssim\;& \big[\mathcal{E}^\mathbf{s}(t)\big]^{\frac{1}{2}} \mathcal{D}^\mathbf{s}_\ell(t)+
\e^3\|w_{\vartheta} f^{\e}\|_{W^{1,\infty}_{x,v}}
\mathcal{D}^\mathbf{s}_\ell(t).
\esp
\eals
By exploiting (\ref{es-energy-Nonlinear:2}) in Lemma \ref{es-energy-Nonlinear}, the term $\mathcal{X}_{11}$  is bounded by
\bals
\bsp
\mathcal{X}_{11}=&\;\frac{1}{\e}\Big\langle \Gamma(f^\e,\partial_x^{\a}f^\e)
+\Gamma(\partial_x^{\a}f^\e,f^\e) , w^{-2\ell} \partial_x^{\a}(\mathbf{I}-\mathbf{P})f^\e \Big\rangle_{L_{x,v}^2}\\
\lesssim &\;
\frac{1}{\e}\| w_\vartheta f^\e\|_{L_{x,v}^\infty}
 \|w^{-\ell} \partial_x^{\a}(\mathbf{I}-\mathbf{P})f^\e\|_{L_{x,v}^2(\nu)}^2\\
&\;+\frac{1}{\e}\|\mathbf{P} \partial_x^{\a}f^\e\|_{L_{x}^6 L_{v}^2}
\|w^{-\ell} \partial_x^{\a}f^\e\|_{L_{x}^3 L_{v}^2(\nu)}\|w^{-\ell} \partial_x^{\a}(\mathbf{I}-\mathbf{P})f^\e\|_{L_{x,v}^2(\nu)}\\
\lesssim&\;
\e\|w_{\vartheta}f^{\e}\|_{L^{\infty}_{x,v}} \mathcal{D}^\mathbf{s}_{\ell}(t)
+\big[\mathcal{E}^\mathbf{s}(t)\big]^{\frac{1}{2}}\mathcal{D}^\mathbf{s}_{\ell}(t).
\esp
\eals
Observing $|\a|\leq1$ and arguing similarly as the estimate of $\mathcal{X}_9$,
we deduce
\bals
\bsp
\mathcal{X}_{12}
\lesssim
 { \frac{1}{\e}}\|(\mathbf{I}-\mathbf{P})f^\e\|_{H_{x}^2L_{v}^2(\nu)}
 \|\nabla_x\mathbf{P}f^\e\|_{H_{x}^1L_{v}^2}
  +\e\big[\mathcal{E}^\mathbf{s}(t)\big]^{\frac{1}{2}}\mathcal{D}^\mathbf{s}_\ell(t).
\esp
\eals
Therefore,  plugging the above estimates on $\mathcal{X}_{10}$, $\mathcal{X}_{11}$ and $\mathcal{X}_{12}$ into \eqref{weighted-qkj:esti:Dx{I-P}f:L^2:2}, we obtain
\bal
\bsp\label{qkj:weighted:esti:{I-P}f:L^2}
&\; \frac{\d }{\d t}
\|{w}^{-\ell}\partial^\a_x (\mathbf{I}-\mathbf{P})f^\e\|_{L_{x,v}^2}^2
+\frac{\sigma_0}{ \e^2}\|{w}^{-\ell}\partial^\a_x (\mathbf{I}-\mathbf{P})f^\e\|_{L_{x,v}^2(\nu)}^2\\
\lesssim  &\;
\big[\mathcal{E}^\mathbf{s}(t)\big]^{\frac{1}{2}}\mathcal{D}_{\ell}^\mathbf{s}(t)
+\e
 \|w_{\vartheta}f^\e\|_{L^{\infty}_{x,v}}\mathcal{D}_\ell^\mathbf{s}(t)
+\e^3\|w_{\vartheta} f^{\e}\|_{W^{1,\infty}_{x,v}}
\mathcal{D}^\mathbf{s}_\ell(t)+\mathcal{D}^\mathbf{s}(t).
\esp
\eal

\emph{Step 2. The $L_{x,v}^2$-estimate of  $\e^{\frac{1}{2}} {w}^{-\ell} \nabla^2_x f^\e$.}\;
Acting $\nabla_x^2 $ on the first equation in (\ref{eq:f}) and then taking the $L^2_{x,v}(\bbR^3\times\bbR^3)$ inner product with $\e w^{-2\ell}\nabla_x^2 f^\e$, we get
\bal
\bsp\label{weighted:qkj:esti:Dxxf:L^2:1}
& \frac{\e}{2}\frac{\d }{\d t} \|w^{-\ell} \nabla_x^2 f^\e\|_{L_{x,v}^2}^2+\underbrace{ \frac{1}{\e}\Big\langle L (\nabla_x^2(\mathbf{I}-\mathbf{P})f^\e),  w^{-2\ell} \nabla_x^2 (\mathbf{I}-\mathbf{P})f^\e \Big \rangle_{L_{x,v}^2}}_{\mathcal{X}_{13}} \\
\le \;&
\underbrace{- \frac{1}{\e}\Big\langle L (\nabla_x^2(\mathbf{I}-\mathbf{P})f^\e),  w^{-2\ell} \nabla_x^2 \mathbf{P}f^\e \Big \rangle_{L_{x,v}^2}}_{\mathcal{X}_{14}}
+\underbrace{ \Big\langle -v\cdot \nabla_x^2\nabla_x \phi^{\e}\sqrt{\mu},   w^{-2\ell} \nabla_x^2 f^\e \Big \rangle_{L_{x,v}^2}}_{\mathcal{X}_{15}}\\
\;&
+\underbrace{ \Big\langle \nabla_x^2\Big[ \frac{\nabla_x \phi^{\e} \cdot \nabla_v(\sqrt{\mu}f^{\e}) }{\sqrt{\mu}}\Big] , \e w^{-2\ell} \nabla_x^2 f^\e \Big \rangle_{L_{x,v}^2}}_{\mathcal{X}_{16}}
+
\underbrace{\Big\langle \nabla_x^2\Gamma(f^\e,f^\e),  w^{-2\ell} \nabla_x^2 f^\e \Big\rangle_{L_{x,v}^2}}_{\mathcal{X}_{17}}
.
\esp
\eal
By the spectral inequality  \eqref{es-energy-linear:1} in Lemma \ref{es-energy-linear}, we have
\bals
\mathcal{X}_{13}\gtrsim\frac{\sigma_0}{2\e}\|w^{-\ell} \nabla_x^2 (\mathbf{I}-\mathbf{P})f^\e\|_{L_{x,v}^2(\nu)}^2 -\frac{C_{\sigma_0}}{\e}\| \nabla_x^2 (\mathbf{I}-\mathbf{P})f^\e\|_{L_{x,v}^2(\nu)}^2.
\eals
Observing the Maxwellian
structure \eqref{defination:Pf}  of $\mathbf{P}f^\varepsilon$, we also  derive
\bals
\mathcal{X}_{14}
\lesssim \frac{1}{\e} \|\nabla_x^2(\mathbf{I}-\mathbf{P})f^{\e}\|_{L_{x,v}^2(\nu)}
\|\nabla_x^2\mathbf{P}f^\e\|_{L_{x,v}^2}.
\eals
For the term $\mathcal{X}_{15}$,  the  H\"{o}lder inequality and  \eqref{esti:Dxxa} induce
\bals
\mathcal{X}_{15}
\lesssim  \|\nabla_x^2\nabla_x \phi^{\e}\|_{L^2_x}\|\nabla_x^2f^\e\|_{L_{x,v}^2(\nu)}
 \lesssim  \|\nabla_x a^\e\|_{L^2_x}\|\nabla_x^2f^\e\|_{L_{x,v}^2(\nu)}.
\eals
Direct calculations yield
\bals
\bsp
\mathcal{X}_{16}=\;&
 \Big\langle \nabla_x^2\Big( \nabla_x \phi^{\e}\cdot\nabla_v f^{\e}\Big), \e^2 w^{-2\ell} \nabla_x^2 f^\e \Big \rangle_{L_{x,v}^2}-
 \Big\langle \nabla_x^2\Big( \frac{v}{2}\cdot\nabla_x \phi^{\e} f^{\e} \Big) , \e w^{-2\ell} \nabla_x^2 f^\e \Big \rangle_{L_{x,v}^2}\\
 =\;&
 -\Big\langle \nabla_x^2\Big(\Big( \frac{\nabla_v( w^{-2\ell} )}{w^{-2\ell}}+\frac{v}{2}\Big)\cdot\nabla_x \phi^{\e} f^{\e}\Big), \e w^{-2\ell}\nabla_x^2 f^\e \Big \rangle_{L_{x,v}^2}\\
 =\;&-\sum_{|\a'|\leq 2}\Big\langle \Big( \frac{\nabla_v( w^{-2\ell} )}{w^{-2\ell}}+\frac{v}{2}\Big)\cdot \nabla_x^{2-|\a'|}\nabla_x \phi^{\e} \partial_x^{\a'} f^{\e}, \e w^{-2\ell}\nabla_x^2 f^\e \Big \rangle_{L_{x,v}^2},\\
 \esp
 \eals
 which means, by the similar argument as the estimate on the term $\mathcal{X}_{10}$, that
 \bals
\bsp
\mathcal{X}_{16}\lesssim \;&\e\sum_{\substack{|\a'|\leq|\a|}}\|  \nabla_x^{2-|\a'|}\nabla_x \phi^{\e}\|_{L^6_x}\|\langle v \rangle w^{-\ell}w_{\vartheta}^{\frac{1}{4}}\partial_x^{\a'} f^{\e}\|_{L^3_{x,v}}\|w^{-\ell}\nabla_x^{2} f^{\e}\|_{L^2_{x,v}(\nu)}\\
\lesssim\;& \e\sum_{\substack{|\a'|\leq|\a|}}\| a^{\e}\|_{H^2_x}\Big(\frac{1}{\e}\| \partial_x^{\a'}f^\e\|_{L_{x,v}^2}+
\e^2\|w_{\vartheta} \partial_x^{\a'} f^{\e}\|_{L^\infty_{x,v}}\Big)
\big[\mathcal{D}^\mathbf{s}_\ell(t)\big]^{\frac{1}{2}}\\
\lesssim\;& \big[\mathcal{E}^\mathbf{s}(t)\big]^{\frac{1}{2}} \mathcal{D}^\mathbf{s}_\ell(t)+
\e^3\|w_{\vartheta} f^{\e}\|_{W^{2,\infty}_{x,v}}
\mathcal{D}^\mathbf{s}_\ell(t).
\esp
\eals
Based on   \eqref{fdefenjie}   and Lemma \ref{es-energy-Nonlinear}, as in the derivation of
\eqref{qkj:esti:Dxxf:L^2:10}, we deduce
\bals
\bsp
\mathcal{X}_{17}=\;&
\Big\langle \nabla_x^2\Gamma(f^\e,f^\e),  w^{-2\ell} \nabla_x^2 (\mathbf{I}-\mathbf{P})f^\e \Big\rangle_{L_{x,v}^2}
+\Big\langle \big|\nabla_x^2\Gamma(f^\e,f^\e)\big|,  |\nabla_x^2 \mathbf{P}f^\e| \Big\rangle_{L_{x,v}^2}\\
\lesssim\;&\big[\mathcal{E}^\mathbf{s}(t)\big]^{\frac{1}{2}}\mathcal{D}^\mathbf{s}_{\ell}(t)
+\e\|w_{\vartheta}f^{\e}\|_{L^{\infty}_{x,v}} \mathcal{D}^\mathbf{s}_{\ell}(t)
+\e\|w_{\vartheta}\nabla_xf^{\e}\|_{L^{\infty}_{x,v}} \mathcal{D}^\mathbf{s}_{\ell}(t).
\esp
\eals
Hence,  plugging the above bounds on  $\mathcal{X}_{13}\sim \mathcal{X}_{17}$   into \eqref{weighted:qkj:esti:Dxxf:L^2:1}  implies
\bal
\bsp\label{qkj:weighted:esti:Dx{I-P}f:L^2}
&\;\e\frac{\d }{\d t}
\|{w}^{-\ell}\nabla_x^2f^\e\|_{L_{x,v}^2}^2
+\frac{\sigma_0}{\e}\|{w}^{-\ell}\nabla_x^2(\mathbf{I}-\mathbf{P})f^\e\|_{L_{x,v}^2(\nu)}^2\\
\lesssim  &\; \left[\mathcal{E}^\mathbf{s}(t)\right]^{\frac{1}{2}} \mathcal{D}^\mathbf{s}_{\ell}(t)
+\e\|w_{\vartheta}  f^{\e}\|_{W^{1,\infty}_{x,v}}\mathcal{D}^\mathbf{s}_{\ell}(t)+
\e^3\|w_{\vartheta} f^{\e}\|_{W^{2,\infty}_{x,v}}
\mathcal{D}^\mathbf{s}_\ell(t)
 +
  \mathcal{D}^\mathbf{s}(t).
\esp
\eal

\emph{Step 3. The
 $L_{x,v}^2$-estimate of the  mixed derivative  $w^{|\b|-\ell}\partial_{{\b}}^\a(\mathbf{I}-\mathbf{P})f^\varepsilon$ $ (|\a|+|\b|\leq2,1\leq|\b|\leq 2).$}\;
 Indeed, the process of the $L_{x,v}^2$-estimate on  $w^{|\b|-\ell}\partial_{{\b}}^\a(\mathbf{I}-\mathbf{P})f^\varepsilon$  is analogous to the $L_{x,v}^2$-estimate on  $ w^{|\b|}\partial^\a_\b(\mathbf{I}-\mathbf{P})f^\e$  under some necessary modifications to match the new velocity weighted $w^{|\b|-\ell}$. Thus, the method of verifying \eqref{qkj:esti:Dv{I-P}f:L^2}  applied   here  yields
\bal
\bsp\label{qkj:weighted:esti:DvDx{I-P}f:L^2}
&\;\frac{\d }{\d t}  \sum_{m=1}^2 \widetilde{C}_m \sum_{\substack{|\a|+|\b|\leq 2\\|\b|=m}} \|w^{|\b|-\ell}\partial^\a_\b(\mathbf{I}-\mathbf{P})f^\e\|_{L_{x,v}^2}^2
+\sum_{\substack{|\a|+|\b|\leq 2\\1\leq |\b|\leq2}}\frac{\sigma_0}{\e^2}\|w^{|\b|-\ell}\partial^\a_\b(\mathbf{I}-\mathbf{P})f^\e\|_{L_{x,v}^2(\nu)}^2\\
\lesssim  &\; \big[\mathcal{E}^\mathbf{s}(t)\big]^{\frac{1}{2}}
\mathcal{D}^\mathbf{s}_\ell(t)+\e\|w_{\vartheta}  f^{\e}\|_{W^{1,\infty}_{x,v}}\mathcal{D}^\mathbf{s}_\ell(t)+\e^3\|w_{\vartheta} f^{\e}\|_{W^{2,\infty}_{x,v}}\mathcal{D}^\mathbf{s}_\ell(t)+\eta\| \nabla_x\mathbf{P} f^\e\|_{H_{x}^1 L^2_{v}}^2\\
&\;+\|w^{-\ell} \nabla_x(\mathbf{I}-\mathbf{P})f^\e\|_{H_{x}^1 L^2_{v}(\nu)}^2
+\frac{1}{ \e^2}\|(\mathbf{I}-\mathbf{P})f^\e\|_{H_{x}^2 L^2_{v}(\nu)}^2,
\esp
\eal
where $\widetilde{C}_1\gg \widetilde{C}_2>0$ are   fixed large constants.

\medskip

Consequently,
multiplying \eqref{qkj:weighted:esti:{I-P}f:L^2} and \eqref{qkj:weighted:esti:Dx{I-P}f:L^2} by a sufficiently large constant $\widetilde{C}_0$ and then
combining the resulting inequalities with  \eqref{qkj:weighted:esti:DvDx{I-P}f:L^2}, we get (\ref{weighted:11111}). This completes the proof of Lemma \ref{prop:weighted:f:H^2}.
\end{proof}

With  Lemma \ref{prop:estimate:dissipation}, Lemma \ref{prop:f:H^2:1} and Lemma \ref{prop:weighted:f:H^2} in hand,   we are now in the position to give the proof of
Proposition \ref{main-weighted-energy-estimate-1}.
\begin{proof}[\textbf{Proof of Proposition \ref{main-weighted-energy-estimate-1}}] \ \
Choosing $0<\eta\ll\eta_2\ll \eta_1\ll 1$   small enough and taking the linear combination   (\ref{qkj:esti:diss:0})$\times \eta_1+$(\ref{11111})$+$(\ref{weighted:11111})$\times \eta_2$ induce  that
\bal
\bsp\label{eq:energy:estimate:result-soft}
\frac{\d}{\d t}\mathbf{E}^\mathbf{s}_{\ell}(t)
 +\mathbf{D}^\mathbf{s}_{\ell}(t)
 \lesssim
 \Big([\mathcal{E}^\mathbf{s}(t)]^{\frac{1}{2}}
 + \mathcal{E}^\mathbf{s}(t) \Big) \mathbf{D}^\mathbf{s}_{\ell}(t)+
\Big( \e^3\|w_{\vartheta}f^\e\|_{W^{2,\infty}_{x,v}}+\e
 \|w_{\vartheta}f^\e\|_{W^{1,\infty}_{x,v}}\Big)\mathbf{D}^\mathbf{s}_{\ell}(t).
\esp
\eal
where ${\mathbf{E}}^\mathbf{s}_{\ell}(t)$ comes from the energy given in  \eqref{qkj:esti:diss:0-2}, (\ref{11111}) and   (\ref{weighted:11111}), as follows
\bal
\bsp\label{eq:energy:estimate:result-soft-1}
{\mathbf{E}}^\mathbf{s}_{\ell}(t):=\;&C_0\sum_{|\a|\leq2}\Big(\|\partial_x^\a f^\e\|_{L^2_{x,v}}^2 +\|\partial_x^\a\nabla_x  \phi^\e \|_{L^2_x}^2\Big)
 -C_0\e \int_{\mathbb{R}^3}|b^\e|^2(a^\e+c^\e)\dd x\\
 \;&+\sum_{m=1}^2 C_m \sum_{\substack{|\a|+|\b|\leq 2\\|\b|=m}} \|w^{|\b|}\partial^\a_\b(\mathbf{I}-\mathbf{P})f^\e\|_{L_{x,v}^2}^2
 +\eta_1\e \mathcal{E}_{int}^\mathbf{s}(t)\\
 \;&
 +\eta_2\sum_{m=0}^2 \widetilde{C}_m \sum_{\substack{|\a|+|\b|\leq 2\\0\leq|\a|\leq1,|\b|=m}} \|w^{|\b|-\ell}\partial^\a_\b(\mathbf{I}-\mathbf{P})f^\e\|_{L_{x,v}^2}^2
 +\eta_2\widetilde{C}_0 \e
\|{w}^{-\ell}\nabla_x^2f^\e\|_{L_{x,v}^2}^2.
\esp
\eal
Thus, \eqref{eq:energy:estimate:result-soft-1}  and \eqref{qkj:esti:diss:0-1}  mean that $\mathbf{E}_{\ell}^\mathbf{s}(t)$ satisfies \eqref{def-energy-R}. Further,  we
utilize \eqref{eq:energy:estimate:result-soft} and the \emph{a priori} assumption \eqref{energy-assumptition-soft} to conclude \eqref{energy-nonlinear-result}.
This completes the proof of Proposition \ref{main-weighted-energy-estimate-1}.
\end{proof}

\subsection{Weighted \texorpdfstring{$H^2_{x,v}$}{Lg} Energy Estimate for Hard Potentials}
\hspace*{\fill}

The main purpose in this subsection is to perform the full $H_{x,v}^2$  energy estimate coupled with the  weighted $H_{x}^1L_{v}^2$ energy estimate,  and the high-order  $H_{x,v}^2$ energy estimate   with the   weighted $H_{x}^1L_{v}^2$ energy estimate for the   VPB system (\ref{eq:f}) with    hard potentials $0\leq \gamma\leq 1$.
To be exact, we introduce the weighted instant full energy $\mathbf{E}^\mathbf{h}(t)$ and the weighted instant high-order energy  $\mathbf{\widetilde{E}}^\mathbf{h}(t)$
\bal
\mathbf{E}^\mathbf{h}(t)\sim\;&
 \|  f^\e(t)\|^2_{H^2_{x,v}}
 + \| \nabla_x \phi^\e(t)\|^2_{H^2_x}+\| {w}(\mathbf{I}-\mathbf{P}) f^\e (t)\|_{H^1_{x}L^2_{v}}^2\label{def-energy-R-hard}
 ,\\
 \mathbf{\widetilde{E}}^\mathbf{h}(t)\sim\;&
 \sum_{1\leq|\a|\leq2}
 \Big(\|\partial_{x}^{\a}f^\e(t)\|^2_{L^2_{x,v}}+\|\partial_{x}^{\a} \nabla_x \phi^\e(t)\|^2_{L^2_x}\Big)
 +\sum_{|\a|+|\b|\leq2}
 \|\partial_{\b}^{\a}(\mathbf{I}-\mathbf{P}) f^\e(t)\|^2_{L^2_{x,v}}\notag
\\&
+\|{w} (\mathbf{I}-\mathbf{P})f^\e (t)\|_{H^1_{x}L^2_{v}}^2.  \label{def-energy-R-hard-2}
\eal
 The corresponding energy dissipation rate is defined as
\bal
\bsp\label{def-energy-R-hard-3}
  \mathbf{D}^\mathbf{h}(t):=\;&\frac{1}{\e^2}\|
  (\mathbf{I}-\mathbf{P})f^\e(t)\|^2_{H^2_{x,v}(\nu)}+\frac{1}{\e^2} \|{w} (\mathbf{I}-\mathbf{P})f^\e (t)\|_{H^1_{x}L^2_{v}(\nu)}^2+\|\nabla_x \mathbf{P}f^\e(t)\|^2_{H^1_{x,v}}\\
  \;&+ \| \nabla^2_x \phi^\e(t)\|^2_{H^1_x},
  \esp
\eal
where the velocity weight $w$   is given in \eqref{weight-w}.
Then we   state the   main result of this section.
\begin{proposition}\label{main-weighted-energy-estimate-2}
Let $0\leq\gamma\leq1$.
Suppose that $ (f^\e, \nabla_x\phi^\e) $ is a solution to the VPB system \eqref{eq:f} defined on $ [0, T] \times \mathbb{R}^3 \times \mathbb{R}^3$ and
 satisfies the \emph{a priori} assumption
  \bal\label{energy-assumptition-hard}
  \mathbf{E}^\mathbf{h}(t)\leq \delta\quad  \text{for} \quad 0\leq t\leq T
  \eal
  for some sufficiently small $\delta>0$, then there exist $\mathbf{E}^\mathbf{h}(t)$
  and $\mathbf{\widetilde{E}}^\mathbf{h}(t)$ satisfying
\eqref{def-energy-R-hard}--\eqref{def-energy-R-hard-2} such that
  \bal
  \frac{\d}{\d t}\mathbf{E}^\mathbf{h}(t)+\mathbf{D}^\mathbf{h}(t)
  \;&\lesssim
\left( \e^3\|w_{\vartheta}f^\e\|_{W^{2,\infty}_{x,v}}+\e
 \|w_{\vartheta}f^\e\|_{W^{1,\infty}_{x,v}}\right)
 \mathbf{D}^\mathbf{h}(t)\label{energy-nonlinear-result-3},\\
 \frac{\d}{\d t}\mathbf{\widetilde{E}}^\mathbf{h}(t)+\mathbf{D}^\mathbf{h}(t)
  \;&\lesssim
\left( \e^3\|w_{\vartheta}f^\e\|_{W^{2,\infty}_{x,v}}+\e
 \|w_{\vartheta}f^\e\|_{W^{1,\infty}_{x,v}}\right)
 \mathbf{D}^\mathbf{h}(t)+\|\nabla_x \mathbf{P}f^\e(t)\|_{L^2_{x,v}}^2\label{energy-nonlinear-result-4}
  \eal
   for  all $0\leq t\leq T$.
\end{proposition}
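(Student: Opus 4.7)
My plan is to mirror the three-block framework of Proposition \ref{main-weighted-energy-estimate-1}, with the simplification that for hard potentials $\nu(v)\sim\langle v\rangle^\gamma$ with $\gamma\ge 0$ already dominates $\langle v\rangle$ and the streaming commutator, so no auxiliary $\e^{\frac{1}{2}}\|w^{-\ell}\nabla_x^2 f^\e\|_{L^2_{x,v}}$ correction is needed. First, following Lemma \ref{prop:estimate:dissipation} verbatim but invoking \eqref{es-energy-linear:1-hard} and \eqref{es-energy-Nonlinear:2-hard}, I would build an interaction functional $\mathcal{E}_{int}^{\mathbf{h}}(t)$ from the macroscopic system \eqref{qkj:esti:diss:2} and the conservation laws \eqref{qkj:esti:f:L^2:12} whose time derivative recovers the macro dissipation $\|\nabla_x\mathbf{P}f^\e\|_{H^1_{x,v}}^2+\|\nabla_x^2\phi^\e\|_{H^1_x}^2$ up to the microscopic term $\e^{-2}\|(\mathbf{I}-\mathbf{P})f^\e\|_{H^1_x L^2_v(\nu)}^2$ and a nonlinear error absorbed by $\bigl([\mathbf{E}^{\mathbf{h}}(t)]^{\frac{1}{2}}+\e\|w_\vartheta f^\e\|_{L^\infty_{x,v}}\bigr)\mathbf{D}^{\mathbf{h}}(t)$. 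Second, I would rerun the three steps of Lemma \ref{prop:f:H^2:1} on the unweighted $H^2_{x,v}$ norm of $f^\e$: the boundary contribution from $\mathcal{X}_{1,3}$ that produces the term $-\e\frac{\d}{\d t}\int_{\mathbb{R}^3}|b^\e|^2(a^\e+c^\e)\,\d x$ is unchanged, and the electric-field term $\mathcal{X}_{1,1}$ together with its higher-order analogues still requires the interpolation split of \eqref{qkj:esti:f:L^2:8}, which works verbatim since $\langle v\rangle^{-\gamma/2}\le 1$ for $0\le\gamma\le 1$.

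Third, the new ingredient specific to this proposition is a weighted $H^1_x L^2_v$ estimate on $(\mathbf{I}-\mathbf{P})f^\e$ with weight $w=\langle v\rangle$: applying $\partial_x^\alpha$ ($|\alpha|\le 1$) to the microscopic equation \eqref{qkj:esti:Dv{I-P}f:L^2:1}, pairing with $w^2\partial_x^\alpha(\mathbf{I}-\mathbf{P})f^\e$, and using \eqref{es-energy-linear:1-hard} produces the weighted dissipation $\sigma_0\e^{-2}\|w\,\partial_x^\alpha(\mathbf{I}-\mathbf{P})f^\e\|_{L^2_{x,v}(\nu)}^2$. The electric-field commutator contributions are handled via the Riesz-type bound \eqref{eq:assumption:hard:1-1}, and the nonlinear term by \eqref{es-energy-Nonlinear:2-hard}. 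A suitable linear combination of the macro-dissipation step (coefficient $\eta_1\ll 1$), the unweighted $H^2_{x,v}$ step (coefficient one), and this weighted step (coefficient $\eta_2\ll\eta_1$), together with the smallness assumption \eqref{energy-assumptition-hard} to absorb the prefactor $[\mathbf{E}^{\mathbf{h}}(t)]^{\frac{1}{2}}+\mathbf{E}^{\mathbf{h}}(t)$, then yields \eqref{energy-nonlinear-result-3} and simultaneously identifies $\mathbf{E}^{\mathbf{h}}(t)$ satisfying \eqref{def-energy-R-hard}.

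For the high-order inequality \eqref{energy-nonlinear-result-4}, I would repeat the same calculation but restrict the unweighted piece to $1\le|\alpha|\le 2$; since the $L^2_{x,v}$ estimate of $f^\e$ itself is dropped, the cancellation that produced the macro dissipation from the undifferentiated conservation laws now leaves a residual source $\|\nabla_x\mathbf{P}f^\e\|_{L^2_{x,v}}^2$ on the right-hand side, exactly matching the form of \eqref{energy-nonlinear-result-4}. The main obstacle throughout is to absorb the singular $\e^{-1}$ in front of $\Gamma(f^\e,f^\e)$ and in the mixed velocity-derivative commutators $\mathcal{X}_5$--$\mathcal{X}_9$ uniformly in $\e\in(0,1]$; this is the reason the prefactor $\e^3\|w_\vartheta f^\e\|_{W^{2,\infty}_{x,v}}+\e\|w_\vartheta f^\e\|_{W^{1,\infty}_{x,v}}$ must appear on the right-hand side, and this is what makes the weighted $W^{2,\infty}_{x,v}$ estimate of Section \ref{wuqiong-estimate} inseparable from the present energy estimate.
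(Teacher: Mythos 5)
Your three-block blueprint (macro dissipation step, unweighted $H^2_{x,v}$ step, weighted $H^1_xL^2_v$ step on $(\mathbf{I}-\mathbf{P})f^\e$), combined with coefficients $\eta_2\ll\eta_1\ll 1$, reproduces the paper's derivation of \eqref{energy-nonlinear-result-3}; the transfer of the interpolation split \eqref{qkj:esti:f:L^2:8} and of the boundary term $-\e\frac{\d}{\d t}\int_{\mathbb{R}^3}|b^\e|^2(a^\e+c^\e)\,\d x$ to the hard-potential case, and your identification of the weighted $H^1_xL^2_v$ estimate as the new ingredient, are all on target.

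There is, however, a gap in your treatment of the high-order inequality \eqref{energy-nonlinear-result-4}. Simply restricting the unweighted $H^2_{x,v}$ estimate to $1\le|\alpha|\le 2$ would not produce the source $\|\nabla_x\mathbf{P}f^\e\|_{L^2_{x,v}}^2$ on the right-hand side, and it would also remove the zero-order microscopic dissipation $\e^{-2}\|(\mathbf{I}-\mathbf{P})f^\e\|_{L^2_{x,v}(\nu)}^2$ from the left-hand side, so the full $\mathbf{D}^\mathbf{h}(t)$ in \eqref{energy-nonlinear-result-4} would not be recovered. What the paper does, and what your plan must do, is \emph{replace} the discarded zero-order estimate of $f^\e$ by a zero-order estimate of $(\mathbf{I}-\mathbf{P})f^\e$: test the microscopic equation \eqref{qkj:esti:Dv{I-P}f:L^2:1} at $|\alpha|=|\beta|=0$ against $(\mathbf{I}-\mathbf{P})f^\e$ itself, giving \eqref{qkj:esti:f:L^2:high:hard}. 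This recovers the coercivity $\sigma_0\e^{-2}\|(\mathbf{I}-\mathbf{P})f^\e\|_{L^2_{x,v}(\nu)}^2$, but the macroscopic streaming term $\frac{1}{\e}v\cdot\nabla_x\mathbf{P}f^\e$ now sits as a source on the right of the microscopic equation, and absorbing it by Young against that coercivity yields precisely the residual $\|\nabla_x\mathbf{P}f^\e\|_{L^2_{x,v}}^2$ with an $O(1)$ coefficient---too large to be absorbed by the $\eta_1$-weighted macro dissipation. Your intuition about a lost cancellation is correct in spirit, but the cancellation that is lost is the conservativity $\langle\frac{1}{\e}v\cdot\nabla_x f^\e, f^\e\rangle_{L^2_{x,v}}=0$, not anything tied to the macro dissipation functional or the conservation laws; once the zero-order microscopic estimate is made explicit, the rest of your plan follows the paper's high-order estimate \eqref{11111-hard-high}.
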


\begin{proof}  \
Observing that the technique \eqref{qkj:esti:f:L^2:8},  which we adopt to  overcome the trouble force term $v \cdot\nabla_x \phi^\e f^\e$ for the soft potential case, can be applied to the hard potential case.
 Therefore, thanks to the properties \eqref{es-energy-linear:1-hard} and \eqref{es-energy-Nonlinear:2-hard},
 we adopt the methods of justifying  Lemma \ref{prop:estimate:dissipation}, Lemma \ref{prop:f:H^2:1} and Lemma \ref{prop:weighted:f:H^2} to formulate
 the corresponding results for the hard potential case under minor modification.
That is, for any $0\leq t\leq T$,
we are able to establish the macroscopic dissipation estimate
\bal
\bsp\label{qkj:esti:diss:0-hard}
\;&\|\nabla_x \mathbf{P}f^\e(t)\|_{H_{x,v}^1}^2+\|\nabla_x^2\phi^\e(t)\|_{H_{x}^1}^2
+\e\frac{\d}{\d t}\mathbf{E}_{int}^\mathbf{h}(t)\\
\lesssim\;& \left[\mathbf{E}^\mathbf{h}(t)\right]^{\frac{1}{2}} \mathbf{D}^\mathbf{h}(t)+\e
 \|w_{\vartheta}f^\e\|_{L^{\infty}_{x,v}} \mathbf{D}^\mathbf{h}(t)  +\frac{1}{\e^2}\|  (\mathbf{I}-\mathbf{P}) f^\e \|_{H_{x}^2L_{v}^2(\nu)}^2
\esp
\eal
with the  temporal interactive functional $\mathbf{E}_{int}^\mathbf{h}(t)$ satisfying
\bal\label{qkj:esti:diss:0-1-hard}
|\mathbf{E}_{int}^\mathbf{h}(t)|\lesssim \|a^\e\|_{H^1_x}^2+\|  (\mathbf{I}-\mathbf{P}) f^\e \|_{H_{x}^1L_{v}^2}^2+\|  \nabla_x \mathbf{P}f^\e \|_{H_{x}^1L_{v}^2}^2,
\eal
the  full standard  $H^2_{x,v}$  energy estimate
\bal
\bsp\label{11111-hard}
 &\frac{\d}{\d t}\Big[\widetilde{C}_0\sum_{|\a|\leq2}\left(\|\partial_x^\a f^\e\|_{L^2_{x,v}}^2 +\|\partial_x^\a\nabla_x  \phi^\e \|_{L^2_x}^2\right)
 +\sum_{m=1}^2 \widetilde{C}_m \sum_{\substack{|\a|+|\b|\leq 2\\|\b|=m}} \|\partial^\a_\b(\mathbf{I}-\mathbf{P})f^\e\|_{L_{x,v}^2}^2\Big] \\
  &
  -\widetilde{C}_0\e\frac{\d}{\d t}  \int_{\mathbb{R}^3}|b^\e|^2(a^\e+c^\e)\dd x
  +\frac{\sigma_0}{2\e^2}
  \sum_{\substack{|\a|+|\b|\leq2}}
\|\partial_\b^\a (\mathbf{I}-\mathbf{P})f^\e\|_{L^2_{x,v}(\nu) }^2\\
 \lesssim &
 \Big(\big[\mathbf{E}^\mathbf{h}(t)\big]^{\frac{1}{2}}
 + \mathbf{E}^\mathbf{h}(t)\Big) \mathbf{D}^\mathbf{h}(t)+
 \left( \e^3\|w_{\vartheta}f^\e\|_{W^{2,\infty}_{x,v}}+\e
 \|w_{\vartheta}f^\e\|_{W^{1,\infty}_{x,v}}\right)\mathbf{D}^\mathbf{h}(t)+
 \1\|\nabla_x\mathbf{P}f^\e\|_{H^1_{x,v}}^2
\esp
\eal
with fixed large constants $\widetilde{C}_0\gg \widetilde{C}_1\gg \widetilde{C}_2>0$,
and the velocity weighted $H_{x}^1L_{v}^2$  energy estimate
\bal
\bsp\label{result:weighted:f:H^1-1}
&\frac{\d}{\d t}\|{w}(\mathbf{I}-\mathbf{P})f^\e\|_{H^1_{x}L^2_{v}}^2
+\frac{\sigma_0}{\e^2}\|{w}(\mathbf{I}-\mathbf{P})f^\e\|_{H^1_{x}L^2_{v}(\nu)}^2\\
 \lesssim\;&
\left[\mathbf{E}^\mathbf{h}(t)\right]^{\frac{1}{2}}
 \mathbf{D}^\mathbf{h}(t)
+\left( \e^3\|w_{\vartheta}f^\e\|_{W^{2,\infty}_{x,v}}+\e
 \|w_{\vartheta}f^\e\|_{W^{1,\infty}_{x,v}}\right)\mathbf{D}^\mathbf{h}(t)
+ { \frac{1}{\e^2}}\|(\mathbf{I}-\mathbf{P})f^\e\|_{H_{x}^2L_{v}^2(\nu)}^2\\
 \;&
 +\|\nabla_x\mathbf{P}f^\e\|_{H_{x}^1L_{v}^2}^2
.
\esp
\eal
Consequently, employing   the linear combination   \eqref{qkj:esti:diss:0-hard}$\times \eta_3+$(\ref{11111-hard})$+$\eqref{result:weighted:f:H^1-1}$\times\eta_4$ for $0<\eta\ll\eta_4\ll \eta_3\ll 1$   small enough and  the \emph{a priori} assumption \eqref{energy-assumptition-hard}, we conclude \eqref{energy-nonlinear-result-3}.
Here ${\mathbf{E}}^\mathbf{h}(t)$ is given by
\bals
\bsp
{\mathbf{E}}^\mathbf{h}(t):=\;&\widetilde{C}_0\sum_{|\a|\leq2}\Big(\|\partial_x^\a f^\e\|_{L^2_{x,v}}^2 +\|\partial_x^\a\nabla_x  \phi^\e \|_{L^2_x}^2\Big)
 -\widetilde{C}_0\e \int_{\mathbb{R}^3}|b^\e|^2(a^\e+c^\e)\dd x\\
 \;&+\sum_{m=1}^2 \widetilde{C}_m \sum_{\substack{|\a|+|\b|\leq 2\\|\b|=m}} \|w^{|\b|}\partial^\a_\b(\mathbf{I}-\mathbf{P})f^\e\|_{L_{x,v}^2}^2
 +\eta_3\e \mathbf{E}_{int}^\mathbf{h}(t)
 +\eta_4\|{w}(\mathbf{I}-\mathbf{P})f^\e\|_{H^1_{x}L^2_{v}}^2,
\esp
\eals
which combines with \eqref{qkj:esti:diss:0-1-hard}  meaning that $\mathbf{E}^\mathbf{h}(t)$ satisfies \eqref{def-energy-R-hard}.

\medskip

To prove \eqref{energy-nonlinear-result-4}, we proceed along the same scheme as  the proof of \eqref{energy-nonlinear-result-3}.
Comparing   the relations of $\mathbf{{E}}^\mathbf{h}(t)$
with
$\mathbf{\widetilde{E}}^\mathbf{h}(t)$ given in \eqref{def-energy-R-hard} and \eqref{def-energy-R-hard-2},   we find that the main difference lies in the  zero-order energy estimate.
 In fact, it suffices to replace the zero-order estimate in  (\ref{11111-hard}) by the following estimate
 \begin{align}
\begin{split}\label{qkj:esti:f:L^2:high:hard}
 &\;\frac{1}{2}\frac{\d }{\d t}\|(\mathbf{I}-\mathbf{P})f^\e\|_{L_{x,v}^2}^2 +\frac{\sigma_0}{2\e^2}\|(\mathbf{I}-\mathbf{P})f^\e\|_{L_{x,v}^2(\nu)}^2\\
 \lesssim &\;
 \big[\mathbf{E}^{\mathbf{h}}(t)\big]^{\frac{1}{2}}\mathbf{D}^{\mathbf{h}}(t)
+\left( \e^3\|w_{\vartheta}f^\e\|_{L^{\infty}_{x,v}}+\e
 \|w_{\vartheta}f^\e\|_{L^{\infty}_{x,v}}\right) \mathbf{D}^{\mathbf{h}}(t)+\|\nabla_x \mathbf{P}f^\e\|_{L_{x,v}^2}^2,
\end{split}
\end{align}
which further implies that the $H^2_{x, v}$ energy estimate  (\ref{11111-hard}) is replaced by  the following high-order $H^2_{x, v}$ energy estimate
\bal
\bsp\label{11111-hard-high}
 &\frac{\d}{\d t}\Big[\widetilde{C}_0\sum_{1\leq|\a|\leq2}\left(\|\partial_x^\a f^\e\|_{L^2_{x,v}}^2 +\|\partial_x^\a\nabla_x  \phi^\e \|_{L^2_x}^2\right)
 +\sum_{m=0}^2 \widetilde{C}_m \sum_{\substack{|\a|+|\b|\leq 2\\|\b|=m}} \|\partial^\a_\b(\mathbf{I}-\mathbf{P})f^\e\|_{L_{x,v}^2}^2\Big] \\
  &
  +\frac{\sigma_0}{2\e^2}
  \sum_{\substack{|\a|+|\b|\leq2}}
\|\partial_\b^\a (\mathbf{I}-\mathbf{P})f^\e\|_{L^2_{x,v}(\nu) }^2\\
 \lesssim &
 \left(\left[\mathbf{E}^\mathbf{h}(t)\right]^{\frac{1}{2}}+ \e^3\|w_{\vartheta}f^\e\|_{W^{2,\infty}_{x,v}}+\e
 \|w_{\vartheta}f^\e\|_{W^{1,\infty}_{x,v}}\right)\mathbf{D}^\mathbf{h}(t)+
 \|\nabla_x\mathbf{P}f^\e\|_{L^2_{x,v}}^2+
 \eta\|\nabla_x^2\mathbf{P}f^\e\|_{L^2_{x,v}}^2.
\esp
\eal
To prove (\ref{qkj:esti:f:L^2:high:hard}),  multiplying \eqref{qkj:esti:Dv{I-P}f:L^2:1} by $(\mathbf{I}-\mathbf{P})f^\e$ and integrating over $\mathbb{R}^3 \times \mathbb{R}^3$, we have
\bals
\bsp
& \frac{1}{2}\frac{\d }{\d t} \|(\mathbf{I}-\mathbf{P})f^\e\|_{L_{x,v}^2}^2
+\frac{\sigma_0}{\e^2}\| (\mathbf{I}-\mathbf{P})f^\e\|_{L_{x,v}^2(\nu)}^2
\\
\le
&\;
\left\langle
\frac{1}{\e}v\cdot \nabla_x\mathbf{P}f^\e - \nabla_x \phi^\e\cdot \nabla_v\mathbf{P}f^\e+\frac{1}{2} v \cdot\nabla_x \phi^\e\mathbf{P}f^\e
,(\mathbf{I}-\mathbf{P})f^\e\right
\rangle_{L_{x,v}^2}\\
&\;-\left\langle
\frac{v}{2}\cdot\nabla_x \phi^\e (\mathbf{I}-\mathbf{P}) f^\e, (\mathbf{I}-\mathbf{P})f^\e\right
\rangle_{L_{x,v}^2}
+\frac{1}{\e}\big\langle \Gamma(f^\e,f^\e), (\mathbf{I}-\mathbf{P})f^\e\big
\rangle_{L_{x,v}^2}.
\esp
\eals
With the aid of the H\"{o}lder inequality and the Young inequality, the right-hand first term of the inequality above  is bounded by
\bals
 \frac{\eta}{\e^2}
\|(\mathbf{I}-\mathbf{P})f^\e\|_{L^2_{x,v}(\nu) }^2+C_{\eta}\|\nabla_x\mathbf{P}f^\e\|_{L^2_{x,v}}^2+
\|\mathbf{P}f^\e\|_{L^2_{x,v}}\|\nabla_x\phi^\e\|_{L^\infty_x}
\|(\mathbf{I}-\mathbf{P})f^\e\|_{L^2_{x,v}(\nu)}.
\eals
 Other terms  have  been handled in  \eqref{qkj:esti:f:L^2:8} and \eqref{qkj:esti:f:L^2:3}.
 This hence proves  (\ref{qkj:esti:f:L^2:high:hard}).

Thus,  using the linear combination  \eqref{qkj:esti:diss:0-hard}$\times \eta_5$+\eqref{11111-hard-high}+\eqref{result:weighted:f:H^1-1}$\times \eta_6$  for $0<\eta\ll\eta_6\ll \eta_5\ll 1$  small enough  and   the \emph{a priori} assumption \eqref{energy-assumptition-hard}, we derive \eqref{energy-nonlinear-result-4}
with ${\widetilde{\mathbf{E}}}^\mathbf{h}(t)$  given by
\bals
\bsp
{\widetilde{\mathbf{E}}}^\mathbf{h}(t):=\;&\widetilde{C}_0
\sum_{1\leq|\a|\leq2}\left(\|\partial_x^\a f^\e\|_{L^2_{x,v}}^2 +\|\partial_x^\a\nabla_x  \phi^\e \|_{L^2_x}^2\right)
 +\sum_{m=0}^2 \widetilde{C}_m \sum_{\substack{|\a|+|\b|\leq 2\\|\b|=m}} \|\partial^\a_\b(\mathbf{I}-\mathbf{P})f^\e\|_{L_{x,v}^2}^2\\
 \;&
 +\eta_5\e \mathbf{E}_{int}^\mathbf{h}(t)
 +\eta_6\|{w}(\mathbf{I}-\mathbf{P})f^\e\|_{H^1_{x}L^2_{v}}^2.
\esp
\eals
This gathers with \eqref{qkj:esti:diss:0-1}  signifying that $\mathbf{E}^\mathbf{h}(t)$ satisfies \eqref{def-energy-R-hard-2}.
This completes the proof.
\end{proof}

\section{Weighted \texorpdfstring{$W^{{2,\infty}}_{x,v}$}{Lg}-estimate with Time Decay}\label{wuqiong-estimate}

To  close the weighted $H^2_{x,v}$ energy   estimates presented in Proposition \ref{main-weighted-energy-estimate-1} and Proposition  \ref{main-weighted-energy-estimate-2}, we need to handle the
term involving the $W_{x,v}^{2,\infty}$-estimate of  $w_{{\widetilde{\vartheta}}}f^\e$,
where $w_{{\widetilde{\vartheta}}}$ is given in \eqref{wegiht:express:2}. To this end, we    introduce a new unknown function
\bals
h^\e:= w_{\widetilde{\vartheta}}f^\e.
\eals
Multiplying  (\ref{eq:f}) by $w_{\widetilde{\vartheta}}$, we get the following system satisfied by $h^\e $
\bal
\bsp\label{eq:h}
&\pt_t h^\e+\frac{1}{\e}v\cdot\nabla_x h^\e-\nabla_x\phi^\e\cdot\nabla_v h^\e +\frac{1}{\e^2}\widetilde{\nu }(v) h^\e-\frac{1}{\e^2}K_{w_{\widetilde{\vartheta}}}(h^\e)\\
=\;& \frac{1}{\e}\frac{w_\vartheta}{\sqrt{\mu}}Q\Big(\frac{\sqrt{\mu}}
{w_{\widetilde{\vartheta}}}h^\e,
\frac{\sqrt{\mu}}{w_{\widetilde{\vartheta}}}h^\e\Big)
-\frac{1}{\e}v\cdot\nabla_x \phi^\e\sqrt{\mu}w_{\widetilde{\vartheta}},
\esp
\eal
where
\bal
\frac{1}{\e^{2}}\widetilde{\nu}(v):=\;&\frac{1}{\e^{2}}{\nu}(v)
+\frac{v}{2}\cdot\nabla_x\phi^\e
+2\widetilde{\vartheta}v\cdot\nabla_x\phi^\e+
\frac{\vartheta\sigma}{(1+t)^{1+\sigma}}|v|^2,\label{new:nu:def:1}\\
K_{w_{\widetilde{\vartheta}}}(h^\e):=\;&w_{\widetilde{\vartheta}} K\Big(\frac{h^\e} {w_{\widetilde{\vartheta}}}\Big).\label{new:K:def:1}
\eal
%

In the following, we gather some basic results for the later analysis. Firstly,  we present the  positive
lower bound estimate for $\frac{1}{\e^2}\widetilde{\nu }(v)$, which
is crucial for dealing with the $W^{2,\infty}_{x,v}$-estimate for $h^\e$.

\begin{lemma}\label{es:nu}
Let $-3<\gamma\leq1$, $0<\vartheta\ll 1$, $0<\sigma\leq \frac{1}{4}$  and the {\emph{a priori}} assumption \eqref{eq:assumption:1} hold. Then  there holds
\beq
\bsp\label{eq:nu:result:1}
\frac{1}{\e^{2}}\widetilde{\nu}(v)
\gtrsim\;&\frac{1}{2\e^{2}}{\nu}(v)+\frac{\langle v\rangle^2}{(1+t)^{1+\sigma}}.
\esp
\eeq
In particular, if $-3<\gamma<0$ and $0<\sigma\leq \frac{1}{24}$,  then there also holds
\beq
\bsp\label{es:nu:1}
\qquad\qquad \qquad\qquad\frac{1}{\e^{2}}\widetilde{\nu}(v)
\gtrsim\frac{1}{\e^{\frac{4}{5}}}(1+t)^{\varrho-1}\qquad \text{with}~~\varrho:=\frac{\sigma \gamma+2}{2-\gamma}\in\big(\frac{3}{8},1\big).
\esp
\eeq
\end{lemma}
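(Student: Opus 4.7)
My plan is to prove \eqref{eq:nu:result:1} by isolating the sign-indefinite contribution $\tfrac12 v\cdot\nabla_x\phi^\e + 2\widetilde{\vartheta}\, v\cdot\nabla_x\phi^\e$ inside the definition of $\widetilde{\nu}$ and absorbing it into the two positive terms $\tfrac{1}{\e^2}\nu(v)$ and $\tfrac{\vartheta\sigma|v|^2}{(1+t)^{1+\sigma}}$ via the a priori assumption \eqref{eq:assumption:1} together with a Young-type interpolation. Then \eqref{es:nu:1} will follow from \eqref{eq:nu:result:1} by an AM--GM interpolation between the two positive terms that kills the $v$-dependence, followed by comparing the resulting power of $\e$ to $\e^{-4/5}$ through $\gamma>-3$.

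For \eqref{eq:nu:result:1}, the first step is to note that $0<\widetilde{\vartheta}\le 2\vartheta$, so the cross term is controlled by
\[
\left|\tfrac12 v\cdot\nabla_x\phi^\e + 2\widetilde{\vartheta} v\cdot\nabla_x\phi^\e\right|
\;\le\; C|v|\,\|\nabla_x\phi^\e\|_{L^\infty_x}
\;\le\; C\delta\,|v|\,(1+t)^{-5/4}
\]
by \eqref{eq:assumption:1}. Next I would apply Young's inequality with the split $|v|(1+t)^{-5/4} = [|v|(1+t)^{-(1+\sigma)/2}][(1+t)^{(1+\sigma)/2-5/4}]$, which gives
\[
C\delta\,|v|(1+t)^{-5/4}\;\le\; \tfrac{\vartheta\sigma}{2}\frac{|v|^2}{(1+t)^{1+\sigma}}+\frac{C'\delta^2}{(1+t)^{3/2-\sigma}}.
\]
The key arithmetic is that $\sigma\le\tfrac14$ is exactly the threshold forcing $3/2-\sigma\ge 1+\sigma$, so the residual $(1+t)^{\sigma-3/2}$ piece is bounded by $(1+t)^{-(1+\sigma)}$. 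Finally I would split cases: for $|v|\le 1$ I use $\nu(v)\ge \nu_0>0$ and the smallness of $\delta$ to absorb the remainder into $\tfrac{1}{2\e^2}\nu(v)$ (noting $(1+t)^{-(1+\sigma)}\le 1$), while for $|v|>1$ I use $\langle v\rangle^2\le 2|v|^2$ and absorb the constant-in-$v$ residual into $\tfrac{\vartheta\sigma}{4}\tfrac{|v|^2}{(1+t)^{1+\sigma}}$, obtaining the claim with the lost portion of $\vartheta\sigma|v|^2/(1+t)^{1+\sigma}$.

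For \eqref{es:nu:1}, starting from \eqref{eq:nu:result:1} and $\nu(v)\gtrsim \langle v\rangle^\gamma$ (valid for $-3<\gamma<0$), I would apply the weighted AM--GM inequality $A+B\gtrsim A^{\alpha}B^{1-\alpha}$ with
\[
A=\frac{\langle v\rangle^\gamma}{\e^2},\qquad B=\frac{\langle v\rangle^2}{(1+t)^{1+\sigma}},
\]
and choose $\alpha=\tfrac{2}{2-\gamma}$ so that the exponent of $\langle v\rangle$ in $A^\alpha B^{1-\alpha}$ vanishes, i.e.\ $\gamma\alpha+2(1-\alpha)=0$. A direct computation then yields
\[
\tfrac{1}{\e^2}\widetilde{\nu}(v)\;\gtrsim\;\e^{-\frac{4}{2-\gamma}}\,(1+t)^{-\frac{(1+\sigma)(-\gamma)}{2-\gamma}}
\;=\;\e^{-\frac{4}{2-\gamma}}\,(1+t)^{\varrho-1}.
\]
Since $\gamma>-3$ gives $\tfrac{4}{2-\gamma}>\tfrac{4}{5}$, and $\e\in(0,1]$, we have $\e^{-4/(2-\gamma)}\ge \e^{-4/5}$, completing the bound. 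The remaining check $\varrho\in(\tfrac38,1)$ is elementary: $\varrho<1$ since $\gamma(\sigma+1)<0$, and $\varrho>\tfrac38$ is equivalent to $-\gamma(3+8\sigma)<10$, which follows from $-\gamma<3$ together with $\sigma\le \tfrac{1}{24}$ (indeed, this is precisely where the constraint $\sigma\le\tfrac{1}{24}$ is used and sharp).

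The main obstacle is the case $|v|\ge 1$ in \eqref{eq:nu:result:1}, where $\nu(v)\sim\langle v\rangle^\gamma$ is degenerate for soft potentials and cannot by itself control the linear-in-$|v|$ cross term; the argument must delicately balance the $\vartheta\sigma|v|^2/(1+t)^{1+\sigma}$ reserve to compensate, and the constant-in-$v$ leftover must decay no slower than $(1+t)^{-(1+\sigma)}$, which is exactly the constraint $\sigma\le\tfrac14$. A secondary subtlety in \eqref{es:nu:1} is that the weight $w_{\widetilde{\vartheta}}$ is merely Gaussian-at-a-rate-$\widetilde{\vartheta}$, so one cannot replace $\langle v\rangle^2$ by a higher power; this fixes $\alpha$ uniquely and in turn ties the admissible range of $\sigma$ to the $3/8$ threshold appearing in the time decay exponent $5/8$ of the $W^{2,\infty}_{x,v}$ estimates used later.
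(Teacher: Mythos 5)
Your proposal is correct, and the proof of \eqref{es:nu:1} is essentially the same weighted AM--GM the paper uses (with $\alpha=\tfrac{2}{2-\gamma}$ chosen to cancel the $\langle v\rangle$ powers, then $\e^{-4/(2-\gamma)}\ge\e^{-4/5}$ via $\gamma>-3$, and the arithmetic for $\varrho\in(\tfrac38,1)$ is identical). The genuine difference lies in your proof of \eqref{eq:nu:result:1}. The paper interpolates the cross term $|v\cdot\nabla_x\phi^\e|\le\langle v\rangle\|\nabla_x\phi^\e\|_{L^\infty_x}$ between $\tfrac{1}{\e^2}\nu(v)\sim\tfrac{\langle v\rangle^\gamma}{\e^2}$ and $\tfrac{\langle v\rangle^2}{(1+t)^{1+\sigma}}$ with $\gamma$-dependent exponents $\tfrac{1}{2-\gamma}$ and $\tfrac{1-\gamma}{2-\gamma}$, then uses $\e^{2/(1-\gamma)}\le\e^{1/2}$ and $\|\nabla_x\phi^\e\|^{(2-\gamma)/(1-\gamma)}\le\|\nabla_x\phi^\e\|^{5/4}$ (both valid for $-3<\gamma\le1$, $\e,\delta\le1$) so that the residual coefficient is $\vartheta\sigma-\e^{1/2}(1+t)^{1+\sigma}(\tfrac12+2\vartheta)\|\nabla_x\phi^\e\|^{5/4}_{L^\infty_x}>0$. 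You instead apply a single, $\gamma$-independent Young split of $|v|(1+t)^{-5/4}$ against the reserve $\tfrac{\vartheta\sigma|v|^2}{(1+t)^{1+\sigma}}$, producing a constant-in-$v$ leftover $\tfrac{C'\delta^2}{(1+t)^{3/2-\sigma}}$, then close with a case split $|v|\lessgtr1$: for $|v|\le1$ you use $\nu\ge\nu_0>0$ on a compact set, for $|v|>1$ you absorb into the remaining half of the $|v|^2$-reserve. Both arguments are valid; yours is more elementary and has the mild advantage of avoiding the $\gamma$-dependent exponent $\tfrac{2}{1-\gamma}$ (which degenerates formally at $\gamma=1$), at the cost of a case split and an appeal to the positive lower bound of $\nu$ on $\{|v|\le1\}$, whereas the paper's interpolation handles all $v$ at once. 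Your identification of $\sigma\le\tfrac14$ (from $3/2-\sigma\ge1+\sigma$) and $\sigma\le\tfrac{1}{24}$ (from $\varrho>\tfrac38$ as $\gamma\to-3^+$) as the binding constraints is accurate, though note that for \eqref{eq:nu:result:1} the paper's route only needs $\sigma\le\tfrac{9}{16}$; the sharper $\tfrac14$ is used elsewhere in the paper, so both proofs are consistent with the stated hypotheses.
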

\begin{proof}
Via \eqref{new:nu:def:1} and  the  Young  inequality, we have, for any small $\eta>0$,
\beq
\bsp\label{eq:nu:1-1}
\frac{1}{\e^{2}}\widetilde{\nu}
\gtrsim\;&\frac{1}{\e^{2}}\Big({\nu}-\eta\nu(v)\Big)
+\Big[\vartheta\sigma-\e^{\frac{2}{1-\gamma}}(1+t)^{(1+\sigma)}\Big(\frac{1}{2}+2\vartheta\Big)
\|\nabla_x\phi^\e\|_{L^\infty_x}^{\frac{2-\gamma}{1-\gamma}}\Big]
\frac{\langle v\rangle^2}{(1+t)^{1+\sigma}}\\
\gtrsim\;&\frac{1}{\e^{2}}\Big({\nu}-\eta\nu(v)\Big)
+\Big[\vartheta\sigma-\e^{\frac{1}{2}}(1+t)^{(1+\sigma)}\Big(\frac{1}{2}+2\vartheta\Big)
\|\nabla_x\phi^\e\|_{L^\infty_x}^{\frac{5}{4}}\Big]
\frac{\langle v\rangle^2}{(1+t)^{1+\sigma}}.
\esp
\eeq
Here, to deduce the second inequality in \eqref{eq:nu:1-1}, we have used the fact
\bals
|v\cdot \nabla_x\phi^\e|\leq \langle v \rangle \|\nabla_x\phi^\e\|_{L^\infty_x}
\leq  \Big[\frac{1}{\e^{2}}\nu(v)\Big]^{\frac{1}{2-\gamma}}
\Big[\e^\frac{2}{1-\gamma}\langle v \rangle ^2 \|\nabla_x\phi^\e\|_{L^\infty_x}^{\frac{2-\gamma}{1-\gamma}}\Big]^{\frac{1-\gamma}{2-\gamma}}.
\eals
Thus, combining (\ref{eq:nu:1-1}) with the {\emph{a priori}} assumption (\ref{eq:assumption:1}) and  choosing $\eta$ and $\delta$  small enough, we conclude (\ref{eq:nu:result:1}).

To prove \eqref{es:nu:1}, noticing $-3<\gamma<0$,  using \eqref{eq:nu:result:1} and the Young  inequality again,  we have
\beq
\bsp\label{eq:nu:2}
\frac{1}{\e^{2}}\widetilde{\nu}
\gtrsim\Big( \frac{1}{\e^{2}}\Big)^{\frac{2}{2-\gamma}}
\langle v\rangle^{\frac{2\gamma}{2-\gamma}}
\Big[\frac{1}{(1+t)^{1+\sigma}}\langle v\rangle^2\Big]^{\frac{-\gamma}{2-\gamma}}
=\;&\Big(\frac{1}{\e}\Big)^{\frac{4}{2-\gamma}}
(1+t)^{\frac{(1+\sigma)\gamma}{2-\gamma}}\\
\gtrsim\;&\e^{-\frac{4}{5}}(1+t)^{\frac{\sigma\gamma+\gamma}{2-\gamma}}.
\esp
\eeq
Observing $0<\sigma\leq \frac{1}{24}$ and denoting
 $\varrho:=\frac{\sigma \gamma+2}{2-\gamma}$, we
conclude  \eqref{es:nu:1}. This completes the proof.
\end{proof}
 Now we  turn to the analysis of $K$. From (\ref{nu-def}) and   changing the variables $v-v_* \rightarrow v_*$, we obtain
\bals
\bsp
Kf=& \;\iint_{\mathbb{R}^3\times\mathbb{S}^2 }|v-v_*|^{\gamma}
q_0(\th)\sqrt{\mu(v_*)}\left[f(v')\sqrt{\mu(v_*')}+f(v_*')\sqrt{\mu(v')}\right]\d v_* \d \o\\
& \;-\sqrt{\mu(v)}\iint_{\mathbb{R}^3\times\mathbb{S}^2 }|v-v_*|^{\gamma}
q_0(\th)f( v_*)\sqrt{\mu(v_*)} \d v_* \d \o\\
=& \;\iint_{\mathbb{R}^3\times\mathbb{S}^2 }|v_*|^{\gamma}
q_0(\th)\sqrt{\mu(v-v_*)}\left[f(v-v_*^{\parallel})
\sqrt{\mu(v-v_*^{\perp})}+f(v-v_*^{\perp})\sqrt{\mu(v-v_*^{\parallel})}\right]\d v_* \d \o\\
& \;-\sqrt{\mu(v)}\iint_{\mathbb{R}^3\times\mathbb{S}^2 }|v_*|^{\gamma}
q_0(\th)f(v- v_*)\sqrt{\mu(v-v_*)} \d v_* \d \o
,
\esp
\eals
where $v_*^{\parallel} = (v_* \cdot\omega)\omega$ and
$v_*^\perp=v_*- v_*^{\parallel}.$
Then  by means of \eqref{new:K:def:1} and direct computations, we have
\bals
\bsp
\partial_v^{\b}\big(K_{w_{\widetilde{\vartheta}}}f\big)
=
\sum_{|\b_0|+|\b_1|=|\b|}
\big(\partial_v^{\b_0}K_{w_{\widetilde{\vartheta}}}\big)(\partial_v^{\b_1}f)
,
\esp
\eals
where $\big(\partial_v^{\b_0}K_{w_{\widetilde{\vartheta}}}\big)f$
 is defined as
\bals
 \bsp
 \big(\partial_v^{\b_0}K_{w_{\widetilde{\vartheta}}}\big)f
:=& \;\iint_{\mathbb{R}^3\times\mathbb{S}^2 }q_0(\th)|v_*|^{\gamma}
{\partial_v^{\b_0}\Big[\frac{w_{\widetilde{\vartheta}}(v )\sqrt{\mu(v-v_* )}\sqrt{\mu(v-v_*^{\perp})}}{w_{\widetilde{\vartheta}}(v-v_*^{\|}  )}\Big]}{f(v-v_*^{\|})}\d v_* \d \o
\\
&\;
+\iint_{\mathbb{R}^3\times\mathbb{S}^2 }q_0(\th)|v_*|^{\gamma}{\partial_v^{\b_0}
\Big[\frac{w_{\widetilde{\vartheta}}(v )\sqrt{\mu(v-v_* )}\sqrt{\mu(v -v_*^{\parallel})}}{w_{\widetilde{\vartheta}}(v-v_*^{\perp}  )}\Big]}
{f(v-v_*^{\perp})}
\d v_* \d \o\\
& \;-\iint_{\mathbb{R}^3\times\mathbb{S}^2 }q_0(\th)|v_*|^{\gamma}
{\partial_v^{\b_0}\Big[\frac{w_{\widetilde{\vartheta}}(v )\sqrt{\mu(v-v_* )}\sqrt{\mu(v)}}{w_{\widetilde{\vartheta}}(v-v_*  )}\Big]}
{f(v-v_*)}
\d v_* \d \o\\
\equiv&\;\int_{\mathbb{R}^3}l_{w_{\widetilde{\vartheta}}}(v,v_*)f(v_*)\d v_*.
 \esp
 \eals

In the case of soft potentials, to treat the singularity in $\partial_v^{\b_0}K_{w_{\widetilde{\vartheta}}}$, we also introduce a smooth cutoff function $0 \leq \chi_m\leq 1$ such that
\beqs
\chi_m(s)=\left\{
\begin{array}{ll}
\displaystyle  1,  \quad\quad          \text{for} \;|s|\leq m,&\\[2mm]
\displaystyle 0, \quad\quad       \text{for} \;|s|\geq 2m.&\\
\end{array}
\right.
\eeqs
Then we  use $\chi_m$ to split $\partial_v^{\b_0}K_{w_{\widetilde{\vartheta}}} = \partial_v^{\b_0}K_{w_{\widetilde{\vartheta}}}^m + \partial_v^{\b_0}K_{w_{\widetilde{\vartheta}}}^c$
, where
\bals
 \bsp
 \big(\partial_v^{\b_0}K_{w_{\widetilde{\vartheta}}}^m\big)f
:=& \;\iint_{\mathbb{R}^3\times\mathbb{S}^2 }q_0(\th)|v_*|^{\gamma}\chi_m(v_*)
{\partial_v^{\b_0}\Big[\frac{w_{\widetilde{\vartheta}}(v )\sqrt{\mu(v-v_* )}\sqrt{\mu(v-v_*^{\perp})}}{w_{\widetilde{\vartheta}}(v-v_*^{\|}  )}\Big]}{f(v-v_*^{\|})}\d v_* \d \o
\\
&\;
+\iint_{\mathbb{R}^3\times\mathbb{S}^2 }q_0(\th)|v_*|^{\gamma}\chi_m(v_*){\partial_v^{\b_0}
\Big[\frac{w_{\widetilde{\vartheta}}(v )\sqrt{\mu(v-v_* )}\sqrt{\mu(v -v_*^{\parallel})}}{w_{\widetilde{\vartheta}}(v-v_*^{\perp}  )}\Big]}
{f(v-v_*^{\perp})}
\d v_* \d \o\\
& \;-\iint_{\mathbb{R}^3\times\mathbb{S}^2 }q_0(\th)|v_*|^{\gamma}\chi_m(v_*)
{\partial_v^{\b_0}\Big[\frac{w_{\widetilde{\vartheta}}(v )\sqrt{\mu(v-v_* )}\sqrt{\mu(v)}}{w_{\widetilde{\vartheta}}(v-v_*  )}\Big]}
{f(v-v_*)}
\d v_* \d \o.\\
\big(\partial_v^{\b_0}K_{w_{\widetilde{\vartheta}}}^c\big)f=\;&
\big(\partial_v^{\b_0}K_{w_{\widetilde{\vartheta}}}\big)f
-\big(\partial_v^{\b_0}K_{w_{\widetilde{\vartheta}}}^m\big)f
\equiv\int_{\mathbb{R}^3}l^c_{w_{\widetilde{\vartheta}}}(v,v_*)f(v_*)\d v_*.
 \esp
 \eals

Then the following  result  is concerned with  the  properties of the operator $\partial_v^{\b}K_{w_{\widetilde{\vartheta}}}$.

\begin{lemma}[\cite{LW2021}]  \label{eq:es:K}
$(1)$ Let $0\leq\gamma\leq1$. For  the kernel $l_{w_{\widetilde{\vartheta}}}(v,v_*)$ of $\partial_v^{\b}K_{w_{\widetilde{\vartheta}}}$,  there holds
\bal\label{eq:es:Kc:1:hard}
&\int_{\mathbb{R}^3}|l_{w_{\widetilde{\vartheta}}}(v,v_*)e^{\frac{\eta}{8}|v-v_*|^2}|\dd v_* \lesssim\nu(v)\langle v\rangle^{-2},
\eal
where $\eta>0$ is a  sufficiently small constant.

$(2)$  Let $-3<\gamma<0$.
For the kernel $l_{w_{\widetilde{\vartheta}}}^c(v,v_*)$ of $\big(\partial_v^{\b}K_{w_{\widetilde{\vartheta}}}^c\big)f$, it satisfies
\bal\label{eq:es:Kc:1}
&\int_{\mathbb{R}^3}|l_{w_{\widetilde{\vartheta}}}^c(v,v_*)e^{\frac{\eta}{8}|v-v_*|^2}|\dd v_* \lesssim\nu(v)\langle v\rangle^{-2},
\eal
where $\eta>0$ is a  sufficiently small constant. For the operator $\partial_v^{\b}K_{w_{\widetilde{\vartheta}}}^m$,  there holds
\bal\label{eq:es:Km:1}
\big(\partial_v^{\b}K_{w_{\widetilde{\vartheta}}}^m\big)f
&\;\lesssim  m^{3+\gamma}\mu^{\frac{1}{8}}(v)\|f\|_{L^\infty_{x,v}}
\eal
for any multi-index  $\b$ with $|\beta|\geq 0$.

%
\end{lemma}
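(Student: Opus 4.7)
My plan is to exploit the explicit Carleman-type representation of each of the three integrals defining $\partial_v^{\b_0}K_{w_{\widetilde{\vartheta}}}$, following the classical Grad--Caflisch strategy adapted to the time-velocity weight $w_{\widetilde{\vartheta}}$. A representative piece is
\[
\iint_{\mathbb{R}^3\times\mathbb{S}^2}q_0(\theta)|v_*|^{\gamma}\partial_v^{\b_0}\!\left[\frac{w_{\widetilde{\vartheta}}(v)\sqrt{\mu(v-v_*)\mu(v-v_*^{\perp})}}{w_{\widetilde{\vartheta}}(v-v_*^{\|})}\right]f(v-v_*^{\|})\,\d v_*\,\d\omega,
\]
and the change of variable $u=v-v_*^{\|}$ together with integration in $v_*^{\perp}$ puts it in the form $\int l_{w_{\widetilde{\vartheta}}}(v,u)f(u)\,\d u$. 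The crucial point is that the weight ratio $w_{\widetilde{\vartheta}}(v)/w_{\widetilde{\vartheta}}(v-v_*^{\|})=\exp\{\widetilde{\vartheta}(2v\cdot v_*^{\|}-|v_*^{\|}|^2)\}$ is harmless once $\widetilde{\vartheta}\ll 1$, since it is absorbed into an arbitrarily small fraction of the Gaussian $\sqrt{\mu(v-v_*)\mu(v-v_*^{\perp})}$; the $\partial_v^{\b_0}$ derivative then only produces polynomial prefactors in $v$, $v_*$ that the remaining Gaussian tolerates.

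For part (1), hard potentials, the standard Grad--Caflisch computation yields a pointwise bound for $l_{w_{\widetilde{\vartheta}}}(v,u)$ of the form $\langle v\rangle^{-2}|v-u|^{-1}$ times a Gaussian in $|v-u|$ and $(|v|^2-|u|^2)^2/|v-u|^2$ with exponent $\eta_0$ strictly larger than $\eta/8$; the extra $\langle v\rangle^{-2}$ comes from the gain/loss cancellation after the Carleman reduction together with the additional decay produced by differentiating the smoothly modulated weight $w_{\widetilde{\vartheta}}$. Multiplying by $e^{\eta|v-u|^2/8}$ and using $\eta<\eta_0$ preserves Gaussian integrability, and integration in Carleman coordinates converts the $|v-u|^{-1}$ singularity into the claimed moment $\nu(v)\langle v\rangle^{-2}$.

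For part (2), soft potentials, the cutoff $\chi_m$ splits the $v_*$-singularity cleanly. The estimate \eqref{eq:es:Km:1} for the truncated piece $\partial_v^{\b}K_{w_{\widetilde{\vartheta}}}^m$ is the easier one: pulling $f$ out in $L^\infty_{x,v}$, using the integrability $\int_{|v_*|\le 2m}|v_*|^{\gamma}\,\d v_*\lesssim m^{3+\gamma}$ (valid because $\gamma>-3$), and bounding $\sqrt{\mu(v-v_*)}\lesssim C_m\mu^{1/8}(v)$ for $|v_*|\le 2m$, absorbs $C_m$ into the constant and delivers the claim. For $K_{w_{\widetilde{\vartheta}}}^c$, the factor $|v_*|^{\gamma}$ is bounded on $|v_*|\ge m$, so the Carleman analysis of part (1) applies verbatim to $l_{w_{\widetilde{\vartheta}}}^c$ and produces \eqref{eq:es:Kc:1}.

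The main obstacle will be the careful bookkeeping of $\partial_v^{\b_0}$ acting on the full product $w_{\widetilde{\vartheta}}(v)\sqrt{\mu(v-v_*)\mu(v-v_*^{\perp})}/w_{\widetilde{\vartheta}}(v-v_*^{\|})$: each derivative generates factors of $\widetilde{\vartheta}v$, polynomial moments of $v-v_*$ or $v-v_*^{\perp}$, and combinations thereof, and one must verify that every such prefactor is dominated by a fixed fraction of the Gaussian decay. This is precisely what forces the once-and-for-all smallness assumptions $0<\vartheta\ll 1$ in \eqref{wegiht:express:1} and the smallness of $\eta$ built into the statement; once these choices are made, the analysis in both regimes is robust.
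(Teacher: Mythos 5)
The paper offers no proof of this lemma: it is stated with a direct citation to \cite{LW2021} and used as a black box, so there is no in-text argument to compare your proposal against. Your reconstruction follows the classical Grad--Caflisch/Carleman route adapted to the time-velocity weight, which is the standard strategy behind such kernel estimates and, so far as can be inferred from the paper's conventions, is precisely what the cited reference carries out: Carleman reduction, absorption of the exponential ratio $w_{\widetilde{\vartheta}}(v)/w_{\widetilde{\vartheta}}(v-v_*^{\parallel})=e^{\widetilde{\vartheta}(2v\cdot v_*^{\parallel}-|v_*^{\parallel}|^2)}$ into a small fraction of the product Gaussian using $\vartheta\le\widetilde{\vartheta}\le2\vartheta$ (hence uniformity in $t$), control of the polynomial prefactors generated by $\partial_v^{\b}$, and in the soft case splitting by $\chi_m$ so that $\int_{|v_*|\le2m}|v_*|^{\gamma}\,\d v_*\lesssim m^{3+\gamma}$ tames the singularity while the residual Gaussian supplies the $\mu^{1/8}(v)$ decay. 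Your sketch is sound and identifies every load-bearing step; the only place where the argument remains a sketch rather than a proof is exactly where you say it does, namely the uniform bookkeeping of the $\partial_v^{\b}$ prefactors, which is the genuine content of the cited reference.
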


Next, we turn to deducing the ${L^{\infty}_{x,v}}$-estimate  on the terms related to the nonlinear collision operator $Q$.
Before stating our results, we recall
\bals
\partial_v^\beta\Big[
\frac{w_{\widetilde{\vartheta}}}{\sqrt{\mu}}Q\Big(
\frac{\sqrt{\mu}}{w_{\widetilde{\vartheta}}}f,
\frac{\sqrt{\mu}}{w_{\widetilde{\vartheta}}}g\Big)\Big]
=\sum_{|\beta_0|+ |\beta_1 |+ |\beta_2|= |\beta|} C^{\beta_0,\beta_1,\beta_2}_\beta
\frac{w_{\widetilde{\vartheta}}}{\sqrt{\mu}}Q{_v^{\beta_0}}\Big(
\frac{\sqrt{\mu}}{w_{\widetilde{\vartheta}}}\partial_v^{\beta_1}f,
\frac{\sqrt{\mu}}{w_{\widetilde{\vartheta}}}\partial_v^{\beta_2}g\Big),
\eals
where $Q{_v^{\beta_0}}$ is given by
\bals
Q{_v^{\beta_0}}\Big(
\frac{\sqrt{\mu}}{w_{\widetilde{\vartheta}}}f,
\frac{\sqrt{\mu}}{w_{\widetilde{\vartheta}}}g\Big)
 :=\frac{\sqrt{\mu}}{w_{\widetilde{\vartheta}}}\iint_{\bbR^3\times \mathbb{S}^2}
  |v-v_*|^\gamma q_0(\th)
  \partial_{v_*}^{\beta_0}\Big(\frac{\sqrt{\mu(v_*)}}{w_{\widetilde{\vartheta}}(v_*)}\Big)
  \Big[f(v')g(v_*')
  -f(v)g(v_*)\Big] \d v_*\d \omega.
\eals

Then we have the following result  on the nonlinear collision
operator $Q$.
\begin{lemma}\label{eq:es:Q}
Let $-3<\gamma\leq1$. For any multi-index $\b$ with $|\beta|\geq 0$,
there holds
\beqs
\bsp
\Big\|\frac{w_{\widetilde{\vartheta}}}{\sqrt{\mu}}Q_v^{\beta}\Big(
\frac{\sqrt{\mu}}{w_{\widetilde{\vartheta}}}f,
\frac{\sqrt{\mu}}{w_{\widetilde{\vartheta}}}g\Big)\Big\|_{L^{\infty}_{x,v}}
\lesssim \nu(v)\|  f\|_{L^{\infty}_{x,v}}
\| g\|_{L^{\infty}_{x,v}}.
\esp
\eeqs
\end{lemma}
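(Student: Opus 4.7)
The plan is to reduce the weighted nonlinear bound to a purely velocity-integral estimate, after which it becomes a direct comparison with the collision frequency $\nu(v)\sim\langle v\rangle^{\gamma}$. First I would expand the definition of $Q_v^{\beta}$ to rewrite the quantity to be estimated as
\begin{equation*}
\frac{w_{\widetilde{\vartheta}}(v)}{\sqrt{\mu(v)}}Q_v^{\beta}\Big(\frac{\sqrt{\mu}}{w_{\widetilde{\vartheta}}}f,\frac{\sqrt{\mu}}{w_{\widetilde{\vartheta}}}g\Big)
=\iint_{\bbR^3\times\mathbb{S}^2}|v-v_*|^{\gamma}q_0(\theta)\,
\partial_{v_*}^{\beta}\Big(\frac{\sqrt{\mu(v_*)}}{w_{\widetilde{\vartheta}}(v_*)}\Big)
\big[\tilde f(v')\tilde g(v_*')-\tilde f(v)\tilde g(v_*)\big]\,\dd v_*\dd\omega,
\end{equation*}
where $\tilde f=\frac{\sqrt{\mu}}{w_{\widetilde{\vartheta}}}f$ evaluated at the respective argument, and the prefactor $\frac{w_{\widetilde{\vartheta}}(v)}{\sqrt{\mu(v)}}$ has been absorbed. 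Using the conservation laws $|v'|^{2}+|v_*'|^{2}=|v|^{2}+|v_*|^{2}$, the weights $\sqrt{\mu}/w_{\widetilde{\vartheta}}$ evaluated at $v',v_*'$ (or at $v,v_*$) can be reassembled, and one immediately bounds the gain and loss terms by
\begin{equation*}
|\tilde f(v')\tilde g(v_*')|+|\tilde f(v)\tilde g(v_*)|\lesssim \|f\|_{L^{\infty}_{x,v}}\|g\|_{L^{\infty}_{x,v}}
\end{equation*}
after pulling the resulting bounded exponential factor into the kernel.

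Next I would estimate the weight derivative. Since $w_{\widetilde{\vartheta}}(v_*)=e^{\widetilde{\vartheta}|v_*|^{2}}$ with $\widetilde{\vartheta}\le 2\vartheta$ and $0<\vartheta\ll 1$, the function $\sqrt{\mu(v_*)}/w_{\widetilde{\vartheta}}(v_*)=(2\pi)^{-3/4}e^{-(1/4+\widetilde{\vartheta})|v_*|^{2}}$ is a Gaussian, and a straightforward induction on $|\beta|$ yields polynomials multiplying the same exponential. Choosing $\vartheta$ sufficiently small, I would obtain
\begin{equation*}
\Big|\partial_{v_*}^{\beta}\Big(\frac{\sqrt{\mu(v_*)}}{w_{\widetilde{\vartheta}}(v_*)}\Big)\Big|\lesssim \mu^{1/8}(v_*).
\end{equation*}
Combining the two previous steps reduces the desired bound to the pointwise-in-$v$ inequality
\begin{equation*}
\iint_{\bbR^3\times\mathbb{S}^2}|v-v_*|^{\gamma}q_0(\theta)\,\mu^{1/8}(v_*)\,\dd v_*\dd\omega\lesssim \nu(v).
\end{equation*}

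Finally, I would verify this last inequality. Because $0\le q_0(\theta)\le C|\cos\theta|$ is integrable on $\mathbb{S}^{2}$, it suffices to show $\int_{\bbR^{3}}|v-v_*|^{\gamma}\mu^{1/8}(v_*)\dd v_*\lesssim \langle v\rangle^{\gamma}$, which is the classical asymptotic of the collision frequency. For the hard potential range $0\le\gamma\le 1$ this follows immediately from $|v-v_*|^{\gamma}\le\langle v\rangle^{\gamma}+\langle v_*\rangle^{\gamma}$ and Gaussian integrability. The main, if standard, obstacle is the soft potential range $-3<\gamma<0$: here one must split $\bbR^{3}_{v_*}$ into $\{|v-v_*|\le 1\}$ and its complement; on the singular region the local integrability of $|v-v_*|^{\gamma}$ (which is exactly why we require $\gamma>-3$) bounds the contribution by a constant, while on the complement the integrand is dominated by $\mu^{1/8}(v_*)$ against a bounded kernel, and a direct computation (see e.g.\ \cite{guo2002cpam,guo2006}) gives the decay $\langle v\rangle^{\gamma}$. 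Assembling these steps proves the lemma.
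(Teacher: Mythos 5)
Your proof is essentially correct and follows the same approach as the paper: bound the Gaussian weight derivative $\partial_{v_*}^{\beta}(\sqrt{\mu}/w_{\widetilde{\vartheta}})$ by a Gaussian of slightly slower decay, and then reduce to the standard collision-frequency comparison $\int |v-v_*|^{\gamma}e^{-c|v_*|^2}\dd v_*\lesssim\langle v\rangle^{\gamma}$. The paper then defers to references for that last step; you fill in the near/far splitting, which is fine.

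One correction on notation: the paper's definition of $Q_v^{\beta}\big(\frac{\sqrt{\mu}}{w_{\widetilde{\vartheta}}}f,\frac{\sqrt{\mu}}{w_{\widetilde{\vartheta}}}g\big)$ already has the \emph{bare} $f,g$ in the integrand, because the $\sqrt{\mu}/w_{\widetilde{\vartheta}}$ factors coming from the arguments have already been reassembled via the collision identity into the single weight $\sqrt{\mu(v_*)}/w_{\widetilde{\vartheta}}(v_*)$ (which is then differentiated) and a prefactor $\sqrt{\mu(v)}/w_{\widetilde{\vartheta}}(v)$ that cancels with the outer $w_{\widetilde{\vartheta}}/\sqrt{\mu}$. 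So your first display should carry $\big[f(v')g(v_*')-f(v)g(v_*)\big]$, not $\big[\tilde f(v')\tilde g(v_*')-\tilde f(v)\tilde g(v_*)\big]$, and the subsequent "conservation-law reassembly" step is unnecessary — indeed, as you wrote it you are double-counting those weight factors. Since the factors you introduce are all bounded by one, this error is harmless: it only makes the integrand smaller, and the inequality you ultimately reduce to, $\iint_{\bbR^3\times\mathbb{S}^2}|v-v_*|^{\gamma}q_0(\theta)\mu^{1/8}(v_*)\dd v_*\dd\omega\lesssim\nu(v)$, is exactly the right one. The remainder of your argument (the Gaussian bound on the derivative, and the near/far splitting for the soft-potential case) is correct.
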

\begin{proof}Notice that
$\partial_{v_*}^{\beta}\Big(\frac{\sqrt{\mu(v_*)}}{w_{\widetilde{\vartheta}}(v_*)}\Big)
\leq\frac{{\mu(v_*)}^{\frac{1}{4}}}{w_{\widetilde{\vartheta}}(v_*)}.
$
Then the proof proceeds  in an analogous manner of Lemma 5 in \cite{guo2010arma} for the hard potential case
and Lemma 2.3 in \cite{LW2021} for the soft potential case. The details are omitted here for simplicity.
\end{proof}

Finally, we define the characteristics $(X(\tau;t,x,v),V(\tau;t,x,v))$ passing through $(t,x,v)$ as
\beq
\left\{
\begin{array}{ll}\label{tezhengxian}
\displaystyle \frac{\d X(\tau;t,x,v)}{\d \tau}=\frac{1}{\e}V(\tau;t,x,v),  \quad\quad\quad\quad\quad\quad           X(t;t,x,v)=x,&\\[2mm]
\displaystyle \frac{\d V(\tau;t,x,v)}{\d \tau}=-\nabla_x\phi^\e (\tau,X(\tau;t,x,v)), \quad\quad  V(t;t,x,v)=v.&\\
\end{array}
\right.
\eeq
Then under the \emph{a priori} assumption (\ref{eq:assumption:1}), we  study the characteristics estimate \cite{guo2010arma, guo2010cmp}, which will be
used to control the $W_{x,v}^{2,\infty}$-estimate.

\begin{lemma}\label{le:jac}
Let $-3<\gamma\leq1$ and $T_0$ be a small fixed number.
Then for $0\le \tau \le t\leq  \e^{\frac{1}{2}}T_0$,
there holds
\bals
&\frac{1}{2\e^3}|t-\tau|^3\le \Big| \mathrm{det}\Big( \frac{\pt X(\tau;t,x,v)}{\pt v}  \Big)    \Big|\le\frac{2}{\e^3}|t-\tau|^3.
\eals
\end{lemma}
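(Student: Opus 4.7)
The plan is to analyze the matrix $A(\tau) := \partial X(\tau;t,x,v)/\partial v$ as a perturbation of its free-streaming value $-\frac{t-\tau}{\varepsilon} I$. Differentiating the characteristic system \eqref{tezhengxian} with respect to $v$ and setting $B(\tau) := \partial V(\tau;t,x,v)/\partial v$, I obtain the linear ODE system
\begin{equation*}
\frac{\d A}{\d \tau} = \frac{1}{\varepsilon} B, \qquad \frac{\d B}{\d \tau} = -\nabla_x^2\phi^\varepsilon(\tau,X(\tau))\, A,
\end{equation*}
with terminal data $A(t)=0$ and $B(t)=I$. Integrating backwards from $t$ and substituting $B$ into the equation for $A$, and then swapping the order of the resulting double integral, yields the Volterra-type identity
\begin{equation*}
A(\tau) = -\frac{t-\tau}{\varepsilon} I - \frac{1}{\varepsilon}\int_\tau^t (s-\tau)\, \nabla_x^2\phi^\varepsilon(s,X(s))\, A(s)\, \d s.
\end{equation*}

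First I would record that the \emph{a priori} assumption \eqref{eq:assumption:1} implies $\|\nabla_x^2\phi^\varepsilon(s)\|_{L^\infty_x}\leq\delta$ uniformly in $s\in[0,T]$ (since $(1+s)^{-5/4}\leq 1$). Next I would run a bootstrap: assuming $\|A(s)\|\leq 2(t-s)/\varepsilon$ on the interval, the integral above is bounded by $\frac{C\delta}{\varepsilon^2}(t-\tau)^3$, and using $t-\tau\leq\varepsilon^{1/2}T_0$ this is at most $C\delta T_0^2\cdot(t-\tau)/\varepsilon$. Taking $T_0$ small enough (depending only on the fixed $\delta$) closes the bootstrap and simultaneously shows
\begin{equation*}
\Bigl\|A(\tau)+\frac{t-\tau}{\varepsilon}I\Bigr\| \leq C\delta T_0^2\cdot\frac{t-\tau}{\varepsilon},
\end{equation*}
so $A(\tau) = -\frac{t-\tau}{\varepsilon}\bigl(I+E(\tau)\bigr)$ with $\|E(\tau)\|\leq C\delta T_0^2\ll 1$.

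From here the determinant bound follows by standard linear algebra: $\det(I+E) = 1+O(\|E\|)$ whenever $\|E\|$ is small, so
\begin{equation*}
\det A(\tau) = \Bigl(-\frac{t-\tau}{\varepsilon}\Bigr)^3\bigl(1+O(\delta T_0^2)\bigr),
\end{equation*}
and choosing $T_0$ small enough that the perturbation factor lies in $[1/2,2]$ yields the claimed double-sided estimate $\frac{1}{2\varepsilon^3}(t-\tau)^3\leq |\det A(\tau)|\leq \frac{2}{\varepsilon^3}(t-\tau)^3$.

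The main obstacle is the bootstrap step: since the operator on the right-hand side of the Volterra equation carries a factor $1/\varepsilon$, one must use the smallness of the time interval $t-\tau\leq\varepsilon^{1/2}T_0$ to convert this singular prefactor into a genuinely small contraction constant, and this is precisely what forces the time-restriction $t\leq\varepsilon^{1/2}T_0$ in the statement. The role of $\gamma$ is only to guarantee that the \emph{a priori} bound \eqref{eq:assumption:1} on $\nabla_x\phi^\varepsilon$ applies, since the collision kernel does not enter the characteristics. No delicate cancellation or fine structure of $\nabla_x^2\phi^\varepsilon$ is needed beyond its uniform $L^\infty_x$ bound.
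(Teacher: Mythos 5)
Your proof is correct and follows the same core strategy as the paper: extract the leading term $-\frac{t-\tau}{\e}\mathbb{I}$ from an integral formula for $\partial_v X$, use the \emph{a priori} bound $\|\nabla_x^2\phi^\e\|_{L^\infty_x}\lesssim\delta$ from \eqref{eq:assumption:1} to control the remainder, and exploit the time restriction $t\le\e^{1/2}T_0$ to convert the $1/\e^2$ singularity in the error integral into the small constant $\delta T_0^2$. The paper organizes this in two stages — first a uniform bound $\sup_\tau\|\partial_v X(\tau)\|\lesssim\e^{-1/2}T_0$ via \eqref{tezhengxian3}--\eqref{tezhengxianjielun1}, then a Taylor expansion of $\partial_v X$ around $\tau=t$ with Lagrange remainder in \eqref{tezhengxian5}--\eqref{tezhengxian6} — while you derive the Volterra identity for $A(\tau)=\partial_v X(\tau)$ directly (after swapping the double integral) and close a pointwise bootstrap $\|A(\tau)\|\le 2(t-\tau)/\e$ in one pass. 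Your formulation is somewhat cleaner: the perturbation structure $A(\tau)=-\frac{t-\tau}{\e}(\mathbb{I}+E(\tau))$ with $\|E\|\lesssim\delta T_0^2$ drops out of the Volterra kernel $\frac{1}{\e^2}\int_\tau^t(s-\tau)(t-s)\,\nabla_x^2\phi^\e\,ds \lesssim \frac{\delta(t-\tau)^2}{\e}\le\delta T_0^2$ without needing to invoke the second derivative $\frac{d^2}{d\tau^2}\partial_v X$ or a Lagrange remainder; both routes, however, rest on exactly the same two inputs (the $L^\infty_x$ bound on $\nabla_x^2\phi^\e$ and the restriction $t\le\e^{1/2}T_0$) and yield the same determinant bound.
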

\begin{proof}
From (\ref{tezhengxian}), we find
\bal
& X(\tau;t,x,v)=x-\frac{v}{\e}(t-\tau)
-\frac{1}{\e}\int_\tau^t\int_{\tau'}^t\nabla_{x}\phi^\e (\tau'',X(\tau'';t,x,v) )\d\tau''\d\tau'.\label{tezhengxian1}
\eal
Applying $\partial_v$ to (\ref{tezhengxian1}) leads to
\bal
\label{tezhengxian3}
\pt_v X(\tau;t,x,v)
\lesssim \frac{1}{\e}|t-\tau|+ \frac{1}{\e}|t-\tau|^2  \sup_{0\le \tau\le t}\big\{\|\nabla_{x}^2\phi^\e(\tau) \|_{L_{x }^\infty}\big\} \sup_{0\le \tau\le t}\big\{|\pt_v X(\tau;t,x,v)|\big\}.
\eal
 Thus, we derive from (\ref{eq:assumption:1}) and (\ref{tezhengxian3})  that
 \bal
 \label{tezhengxianjielun1}&\sup_{0\le \tau\le t}\left\{ \|\pt_v X(\tau;t,x,v)\|_{L_{x,v}^\infty}   \right\}\lesssim \frac{1}{\e}t\lesssim {\e^{-\frac{1}{2}}}T_0.
 \eal

Then consider the Taylor expansion of $\frac{\pt X(\tau;t,x,v)}{\pt v}$ in $\tau$ around $t$
\bal
\bsp\label{tezhengxian5}
\frac{\pt X(\tau;t,x,v)}{\pt v}=\;&\frac{\pt X(t;t,x,v)}{\pt v}+\Big.(\tau-t)\frac{\d }{\d \tau}\frac{\pt X(\tau;t,x,v)}{\pt v}\Big|_{\tau=t}
+\frac{(\tau-t)^2}{2}\frac{\d ^2}{\d \tau^2}\frac{\pt X(\tilde{\tau};t,x,v)}{\pt v}\\
=\;&\frac{1}{\e} (\tau-t)\mathbb{I}_{3\times 3}
+\frac{1}{\e} (\tau-t)\int_\tau^t\nabla_{x}^2\phi^\e (\tau,X(\tau;t,x,v) )
\cdot \partial_v X(\tau;t,x,v)\d\tau\\
\;&+(\tau-t)\Big[\frac{(\tau-t)}{2}\frac{\d ^2}{\d \tau^2}\frac{\pt X(\tilde{\tau};t,x,v)}{\pt v}\Big],
\esp
\eal
where $\mathbb{I}_{3\times 3}$ is the identity matrix. From (\ref{tezhengxian1}), we  find
\bal
\bsp\label{tezhengxian6}
&\frac{1}{\e}\int_\tau^t\nabla_{x}^2\phi^\e (\tau,X(\tau;t,x,v) )
\cdot \partial_v X(\tau;t,x,v)\d\tau+\frac{\tau-t}{2}\frac{\d ^2}{\d \tau^2}\frac{\pt X(\tilde{\tau};t,x,v)}{\pt v}\\
\le\;& \frac{2}{\e}|\tau-t| \sup_{0\le \tau\le t}\left\{\|\nabla_{x}^2\phi^\e(\tau)
\|_{L_{x }^\infty}\right\} \sup_{0\le \tau\le t}\left\{\|\pt_v X(\tau;t,x,v)\|_{L_{x,v}^\infty}\right\}\\
\le\;& \frac{1}{8\e},
\esp
\eal
where (\ref{eq:assumption:1}) and (\ref{tezhengxianjielun1}) are utilized. Therefore, we combine (\ref{tezhengxian5}) and (\ref{tezhengxian6}) to conclude  Lemma \ref{le:jac}. This completes the proof.
\end{proof}

\subsection{Weighted \texorpdfstring{$W^{{2,\infty}}_{x,v}$}{Lg}-estimate with Time Decay for Soft Potentials}\label{W-infty-soft}
\hspace*{\fill}

In this subsection,  we   aim to  set up the $W^{{2,\infty}}_{x,v}$-estimate of $h^{\e}$ for soft potentials $-3 <\gamma<0$.
The main result of this subsection is stated as  follows.

\begin{proposition}\label{result:W2:wuqiong}
Let $-3 <\gamma<0$.
Suppose that $ (f^\e, \nabla_x\phi^\e) $ is a solution to the
VPB system \eqref{eq:f}   defined on $ [0, T] \times \mathbb{R}^3 \times \mathbb{R}^3$. And, assume that the crucial {\em a priori} assumptions \eqref{eq:assumption:1}
and \eqref{eq:assumption:2} hold.
Then for any $0\leq t \leq T$, there holds
\beq
\bsp
\label{result:W2:wuqiong:1}
 \;&\e^{\frac{1}{2}}(1+t)^{\frac{5}{4}} \| {h^\e}(t)\|_{L_{x,v}^{\infty}}
+\sum_{|\a|+|\b|= 1}\e^{\frac{1}{2}+\frac{1}{5}|\b|}(1+t)^{\frac{5}{4}-\frac{5}{8}|\b|}
\| \partial_{\b}^{\a}h ^\e(t)\|_{L_{x,v}^{\infty}} \\
\;&+\sum_{|\a|+|\b|= 2}\e^{\frac{3}{2}+\frac{1}{5}|\b|}(1+t)^{\frac{5}{4}-\frac{5}{8}|\b|}
\| \partial_{\b}^{\a}h ^\e(t)\|_{L_{x,v}^{\infty}}\\
\lesssim \;&\e^\frac{1}{2}\| h^\e _0\|_{W^{2,\infty}_{x,v}}
+\e^{\frac{3}{2}}\sup_{0\le \tau\le t}\left\{(1+\tau)^{\frac{5}{4}} \| \nabla_x^2\phi^\e(\tau)\|_{H^2_{x}}\right\}
+
\e^{\frac{5}{2}}\sup_{0\le \tau\le t}\left\{ (1+ \tau)^{\frac{5}{4}} \|\nabla_x^3{\phi}^\varepsilon (\tau)\|_{L^\infty_{x}}\right\}\\
\;&
+\sup_{0\le \tau\le t}\left\{(1+\tau)^{\frac{5}{4}}\| \nabla_xf^\e(\tau)\|_{H^1_{x}L^2_{v}} \right\}
 +\e^{\frac{1}{5}}\sup_{0\le \tau\le t}\left\{(1+\tau)^{\frac{5}{8}} \| \nabla_x\nabla_vf^\e(\tau)\|_{L^2_{x,v}}\right\}\\
\;&+\e^{\frac{2}{5}}\sup_{0\le \tau\le t}\left\{\| \nabla_v^2f^\e(\tau)\|_{L^2_{x,v}} \right\}
.
\esp
\eeq
\end{proposition}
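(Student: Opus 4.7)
The plan is to run Duhamel's principle along the characteristics $(X(\tau;t,x,v), V(\tau;t,x,v))$ from \eqref{tezhengxian} for the equation \eqref{eq:h} satisfied by $h^\e$. Setting $A(s,t) := \int_s^t \frac{\widetilde{\nu}(V(\tau))}{\e^2}\,\d\tau$, I would first rewrite \eqref{eq:h} as a linear ODE along the characteristic with the exponential integrating factor $e^{-A(s,t)}$, absorbing the transport and the enlarged collision frequency $\frac{1}{\e^2}\widetilde{\nu}$ into the left-hand side. This produces
\[
h^\e(t,x,v) = e^{-A(0,t)} h^\e_0(X(0),V(0)) + \int_0^t e^{-A(s,t)} [\text{RHS}](s,X(s),V(s))\,\d s,
\]
where the right-hand side collects $\frac{1}{\e^2}K_{w_{\widetilde\vartheta}}h^\e$, the nonlinear collision term, and the external-field source $-\frac{1}{\e}v\cdot\nabla_x\phi^\e \sqrt{\mu}\,w_{\widetilde\vartheta}$. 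The positive lower bound $\frac{1}{\e^2}\widetilde\nu \gtrsim \frac{1}{2\e^2}\nu + (1+t)^{-1-\sigma}\langle v\rangle^2$ from Lemma \ref{es:nu} under the a priori assumption \eqref{eq:assumption:1}, together with the Jacobian estimate of Lemma \ref{le:jac}, will be the main tools throughout.

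For the zero-order bound I would multiply through by $(1+t)^{5/4}$ and observe that the new source $-\tfrac{5}{4}(1+t)^{1/4}h^\e$ produced by the time weight satisfies, via Young's inequality,
\[
\tfrac{5}{4}(1+t)^{1/4}|h^\e| \lesssim \eta(1+t)^{-5/8}\|(1+t)^{5/4}h^\e\|_{L^\infty_{x,v}} + C_\eta(1+t)^{-5/8}\|h^\e\|_{L^\infty_{x,v}}.
\]
Using \eqref{es:nu:1}, the exponential factor $e^{-A(s,t)}$ yields a time-integrable contribution of order $\e^{4/5}$, so that the first term can be absorbed for $\eta$ small while the second is bounded uniformly in $\e$. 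To treat the compact part $\frac{1}{\e^2}K_{w_{\widetilde\vartheta}}h^\e$, I would iterate Duhamel once more, split the inner velocity integration by a smooth cutoff $\chi_N(v')$, and apply Lemma \ref{eq:es:K}: the non-compact tail $K^c_{w_{\widetilde\vartheta}}$ gives smallness via \eqref{eq:es:Kc:1}, while on the bounded set Lemma \ref{le:jac} permits a change of variables $v' \mapsto X(s_1)$ with Jacobian $\gtrsim \e^{-3}|s-s_1|^3$, converting the $L^\infty$ integral into an $L^p_xL^2_v$ norm of $f^\e$ and ultimately into $\e^{-1/2}\sup_s \|f^\e(s)\|_{L^2_{x,v}}$ after H\"older. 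The nonlinear term is controlled by Lemma \ref{eq:es:Q}, and the force term by the decay of $\|\nabla_x^2\phi^\e\|_{H^2_x}$.

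For the first and second order derivatives I would apply $\partial^\a_\b$ to \eqref{eq:h}. The crucial new source for $|\b|\ge 1$ is $\frac{1}{\e}\partial_v^{\b_1}v \cdot \nabla_x \partial^\a_{\b-\b_1} h^\e$ together with commutator terms from $\nabla_x\phi^\e \cdot \nabla_v$. I intend to exploit Lemma \ref{es:nu} in the key form $\frac{1}{\e} \lesssim \e^{-4/5}(1+s)^{-5/8}\cdot \e^{-1/5}(1+s)^{5/8} \lesssim \frac{1}{\e^2}\widetilde\nu\cdot\e^{-1/5}(1+s)^{5/8}$, so that the dangerous prefactor is absorbed into the exponential weight at the cost of exactly $\e^{1/5}(1+s)^{-5/8}$ — precisely the loss $\e^{1/5|\b|}(1+t)^{-5|\b|/8}$ appearing in the statement. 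Iterating the Duhamel representation and carrying out the change of variables as before converts each highest-order term to an $L^p_xL^2_v$ norm of $\partial^\a_\b f^\e$. For $|\a|+|\b|=1$ this can be taken as $L^6_xL^2_v$ (handled by $\|\nabla_xf^\e\|_{H^1_xL^2_v}$), but for $|\a|+|\b|=2$ we must use $L^2_{x,v}$, which costs an extra $\e^{-1}$ via H\"older on the $|s-s_1|$ kernel — this is the source of the additional $\e$ in the $\e^{3/2+|\b|/5}$ weight.

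I expect the principal obstacle to be the second-order term $D_5 \sim \nabla_x^3\phi^\e \cdot \nabla_v h^\e$, where $\frac{1}{\e^2}\widetilde\nu$ does not directly absorb the loss. My approach there is to use the a priori assumption \eqref{eq:assumption:2} to bound $\e(1+t)^{5/4}\|\nabla_x^3\phi^\e\|_{L^\infty_x}$, pair it with the first-order decay $\e^{-1/5}(1+t)^{5/8}\|\nabla_v h^\e\|_{L^\infty_{x,v}}$, and apply \eqref{es:nu:1} in the product form $|(1+s)^{5/4}\nabla_x^3\phi^\e \cdot \nabla_v h^\e| \lesssim \frac{1}{\e^2}\widetilde\nu \cdot [\e(1+s)^{5/4}\|\nabla_x^3\phi^\e\|_{L^\infty_x}][\e^{-1/5}(1+s)^{5/8}\|\nabla_v h^\e\|_{L^\infty_{x,v}}]$, which integrates against $e^{-A(s,t)}$ cleanly. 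Gathering the zero-, first-, and second-order pieces and choosing $\eta$ small enough to absorb the self-referential terms on the left-hand side, together with the smallness from \eqref{eq:assumption:1}--\eqref{eq:assumption:2}, yields \eqref{result:W2:wuqiong:1}.
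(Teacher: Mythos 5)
Your proposal follows the paper's proof architecture step by step: Duhamel along characteristics with the integrating factor $e^{-\int_s^t \widetilde\nu/\e^2}$, the lower bound of Lemma \ref{es:nu}, the Jacobian estimate of Lemma \ref{le:jac} to trade the iterated $K^c$ double integral for an $L^p_x L^2_v$ norm of $\partial^\a_\b f^\e$, the time-weight $\widetilde h^\e = (1+t)^q h^\e$ with the Young-inequality split of the extra source $-q(1+t)^{q-1}h^\e$ into a small self-referential piece plus an unweighted piece, the absorption $\frac{1}{\e}\lesssim \frac{1}{\e^2}\widetilde\nu\cdot\e^{1/5}(1+s)^{-5/8}$ giving the $\e^{|\b|/5}(1+t)^{-5|\b|/8}$ loss per velocity derivative, the extra $\e$ at second order from using $p=2$ rather than $p=6$, and the use of \eqref{eq:assumption:2} to handle the $\nabla_x^3\phi^\e\cdot\nabla_v h^\e$ contribution. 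These are exactly the ingredients the paper assembles in the three-step proof of Proposition \ref{result:W2:wuqiong}.

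Two technical points deserve a remark, however. First, in the zero-order case the change of variables via Lemma \ref{le:jac} produces an $\e^{-1/2}\|f^\e\|_{L^6_xL^2_v}$ bound (taking $p=6$), and it is Sobolev embedding, not H\"older, that converts this into $\e^{-1/2}\|\nabla_x f^\e\|_{L^2_{x,v}}$; your sketch states $\|f^\e\|_{L^2_{x,v}}$, which would correspond to $p=2$ and cost $\e^{-3/2}$, too singular for the $\e^{1/2}$ weight — and indeed the right-hand side of \eqref{result:W2:wuqiong:1} carries $\|\nabla_x f^\e\|_{H^1_xL^2_v}$, not $\|f^\e\|_{L^2_{x,v}}$. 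Since you later correctly identify $L^6_xL^2_v$ for $|\a|+|\b|\le 1$, I take this as a slip, but it should be corrected. Second, Lemma \ref{le:jac} only holds on the short interval $0\le\tau\le t\le\e^{1/2}T_0$, so the entire Duhamel argument must first be closed on $[0,\e^{1/2}T_0]$ and then iterated in time by decomposing any $t\le T$ as $t=n\e^{1/2}T_0+\tilde\tau$ and exploiting the exponential contraction over each subinterval (this is \eqref{eq:h7}--\eqref{eq:h8} in the paper). Your "choose $\eta$ small to absorb the self-referential terms" step does not by itself extend the estimate past the first short interval, and this iteration must be made explicit.
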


\begin{proof}  \
To prove Proposition \ref{result:W2:wuqiong}, the key point is to obtain the $L^{\infty}_{x,v}$-estimate  with time decay
for $h^\e$ from the zero order to the second order  via Duhamel's principle.

For convenience, we introduce the temporal weighted  function
\bal\label{express:weighted:h}
\widetilde{h}^\e:=(1+t)^q h^\e \qquad\text{for } \quad 0\leq q\leq \frac{5}{4}.
\eal
In terms of (\ref{eq:h}) and (\ref{express:weighted:h}), we have the weighted system
\bal
\bsp\label{eq:h:estimate:1}
&\pt_t \widetilde{h}^\e+\frac{1}{\e}v\cdot\nabla_x \widetilde{h}^\e-\nabla_x\phi^\e\cdot\nabla_v \widetilde{h}^\e +\frac{1}{\e^2}\widetilde{\nu } \widetilde{h}^\e-\frac{1}{\e^2}K_{w_{\widetilde{\vartheta}}}(\widetilde{h}^\e)\\
=\;& \frac{1}{\e}\frac{w_\vartheta}{\sqrt{\mu}}Q\Big(\frac{\sqrt{\mu}}
{w_{\widetilde{\vartheta}}}\widetilde{h}^\e,
\frac{\sqrt{\mu}}{w_{\widetilde{\vartheta}}}h^\e\Big)
-\frac{1}{\e}v\cdot\nabla_x \phi^\e\sqrt{\mu}w_{\widetilde{\vartheta}}(1+t)^q
-q{(1+t)}^{q-1}h^\e,
\esp
\eal
where the last term on the right-hand side of \eqref{eq:h:estimate:1}  only exists when $q>0$.
Then the proof  of (\ref{result:W2:wuqiong:1}) is divided into three steps.

\medskip
 \emph{Step 1. \; $L_{x,v}^\infty$-estimate of $\widetilde{h}^\e$.}\;
 In fact,
let $T_0$ be the small fixed number as in Lemma \ref{le:jac}. We first consider $\| \widetilde{h}^\e\|_{L^\infty_{x,v}}$  on the time interval
$ 0\le   t \le \e^{\frac{1}{2}} T_0 \ll1.$
Applying Duhamel's principle to the system \eqref{eq:h:estimate:1}, $\widetilde{h}^\e$ can be bounded by
\bal\label{eq:h1}
&\widetilde{h}^\varepsilon(t,x,v) \leq \text{exp}\Big\{-\frac{1}{\varepsilon^2}\int_0^t\widetilde{\nu}(\tau)\text{d}\tau\Big\}
|\widetilde{h}^\varepsilon(0,X(0),V(0))|+\sum_{i=1}^5|  I_i|,
\eal
where $X(s):=X(s;t,x,v)$, $V(s):=V(s;t,x,v)$ and
\begin{align*}
I_1&=\frac{1}{\e^2}\int_0^t \text{exp}\Big\{-\frac{1}{\e^2}\int_s^t\widetilde{\nu}(\tau)\text{d}\tau\Big\}
\left[K_{w_{\widetilde{\vartheta}}}^c\widetilde{h}^\varepsilon\right](s,X(s),V(s))\text{d}s,\\
I_2&=\frac{1}{\e^2}\int_0^t \text{exp}\Big\{-\frac{1}{\e^2}\int_s^t\widetilde{\nu}(\tau)\text{d}\tau\Big\}
\left[K_{w_{\widetilde{\vartheta}}}^m\widetilde{h}^\varepsilon\right](s,X(s),V(s))\text{d}s,\\
I_3&=\frac{1}{\e}\int_0^t \text{exp}\Big\{-\frac{1}{\e^2}\int_s^t\widetilde{\nu}(\tau)\text{d}\tau\Big\}
\Big[\frac{w_\vartheta}{\sqrt{\mu}}Q\Big(\frac{\sqrt{\mu}}
{w_{\widetilde{\vartheta}}}\widetilde{h}^\e,
\frac{\sqrt{\mu}}{w_{\widetilde{\vartheta}}}h^\e\Big)\Big](s,X(s),V(s))\text{d}s,\\
I_4&=-\frac{1}{\e}\int_0^t \text{exp}\Big\{-\frac{1}{\e^2}\int_s^t\widetilde{\nu}(\tau)\text{d}\tau\Big\}
\Big[v\cdot\nabla_x \phi^\e\sqrt{\mu}w_{\widetilde{\vartheta}}(1+t)^q\Big] (s,X(s),V(s))\text{d}s,\\
I_5&=-\int_0^t \text{exp}\Big\{-\frac{1}{\e^2}\int_s^t\widetilde{\nu}(\tau)\text{d}\tau\Big\}
\Big[q{(1+t)}^{q-1}h^\e\Big] (s,X(s),V(s))\text{d}s
.
\end{align*}
Based on Lemma \ref{es:nu}, we have
\bal
\;&\int_{0}^{t}
\text{exp}\Big\{-\frac{1}{\e^2}\int_s^t\widetilde{\nu}(\tau)\text{d}\tau\Big\}
\nu(V(s))
e^{-\frac{s^\varrho  }{4\e^a}}\dd s
\lesssim
\e^2e^{-\frac{ t^\varrho  }{4\e^a}}, \label{eq:Dhnu} \\
\;&\int_{0}^{t}
\text{exp}\Big\{-\frac{1}{\e^2}\int_s^t\widetilde{\nu}(\tau)\text{d}\tau\Big\}
\frac{\langle V(s)\rangle^2}{(1+s)^{1+\sigma}}e^{-\frac{ s^\varrho  }{4\e^a}}
\dd s
\lesssim
e^{-\frac{ t^\varrho  }{4\e^a}} \label{eq:Dhnu-2}, \\
\;&\int_{0}^{t}
\text{exp}\Big\{-\frac{1}{\e^2}\int_s^t\widetilde{\nu}(\tau)\text{d}\tau\Big\}
(1+s)^{-\frac{5}{8}}e^{-\frac{s^\varrho  }{4\e^a}}\dd s
\lesssim
\e^\frac{4}{5}e^{-\frac{t^\varrho  }{4\e^a}},\label{eq:Dhnu-1}
\eal
where  $a\equiv\frac{4}{5}$ and  we have used a basic inequality
derived from \eqref{es:nu:1}
\bals
\text{exp}\Big\{-\frac{1}{2\e^2}\int_s^t\widetilde{\nu}(\tau)\text{d}\tau\Big\}
\lesssim \text{exp}\Big\{-\frac{1}{2\e^{\frac{4}{5}}}\int_s^t(1+\tau)^{\varrho-1}\text{d}\tau\Big\}
\lesssim\text{exp}\Big\{-\frac{t^\varrho}{2\e^{\frac{4}{5}}} +\frac{s^\varrho}{2\e^{\frac{4}{5}}}\Big\}.
\eals

In the following, we  estimate $I_i$ $(1 \leq i \leq 5)$ in (\ref{eq:h1}) term by term.
Using \eqref{eq:es:Km:1}   and (\ref{eq:Dhnu}),  we have
\bals
\bsp
|I_2|&\lesssim m^{3+\gamma}
   e^{-\frac{ t^\varrho}{4\e^{a}}} \sup_{0\le s\le t}\left\{\| e^{\frac{ s^\varrho}{4\e^{a}}} \widetilde{h}^\varepsilon (s)\|_{L^\infty_{x,v}}\right\}.
\esp
\eals
Recalling the property of $Q$  in Lemma \ref{eq:es:Q}
and the \emph{a priori} assumption \eqref{eq:assumption:2}, we derive by  employing (\ref{eq:Dhnu}) that
\bals
\bsp
|I_3|\lesssim\;& \e
   e^{-\frac{ t^\varrho}{4\e^{a}}} \sup_{0\le s\le t}\left\{\| e^{\frac{ s^\varrho}{4\e^{a}}} \widetilde{h}^\varepsilon (s)\|_{L^\infty_{x,v}}\right\}
   \sup_{0\le s\le t}\left\{\| {h}^\varepsilon (s)\|_{L^\infty_{x,v}}\right\}
   \\
    \lesssim\;&\delta\e^{\frac{1}{4}}
   e^{-\frac{ t^\varrho}{4\e^{a}}} \sup_{0\le s\le t}\left\{\| e^{\frac{ s^\varrho}{4\e^{a}}} \widetilde{h}^\varepsilon (s)\|_{L^\infty_{x,v}}\right\}
   .
\esp
\eals
By (\ref{eq:Dhnu}) and the Sobolev embedding inequality, we get
\bals
\bsp
|I_4|\lesssim &\; \varepsilon \sup_{0\le s\le t}\left\{(1+s)^q \| \nabla_x\phi^\varepsilon (s)\|_{L^\infty_{x}}\right\}\lesssim  \varepsilon \sup_{0\le s\le t}\left\{\| \nabla_x^2\widetilde{\phi}^\varepsilon (s)\|_{H^1_{x}}\right\} \quad \text{with}~ \widetilde{\phi}^\varepsilon\equiv(1+t)^q{\phi}^\varepsilon.
\esp
\eals
The Young inequality yields
\bals
q{(1+t)}^{q-\frac{3}{8}}h^\e
\leq q\left[{(1+t)}^{q}|h^\e|\right]^\frac{8q-3}{8q}|h^\e|^\frac{3}{8q}
\leq \eta q|\widetilde{h}^\e|+qC_\eta|h^\e|,
\eals
which combines with (\ref{eq:Dhnu-1}) inducing
\bals
\bsp
|I_5|\lesssim &\; \eta\varepsilon^\frac{4}{5} e^{-\frac{ t^\varrho}{4\e^{a}}}  \sup_{0\le s\le t}\left\{e^{\frac{ s^\varrho}{4\e^{a}}} \| \widetilde{h}^\varepsilon (s)\|_{L^\infty_{x,v}}\right\}
+
 q\varepsilon^\frac{4}{5} \sup_{0\le s\le t}\left\{ \| h^\varepsilon (s)\|_{L^\infty_{x,v}}\right\}.
\esp
\eals

To handle the term $I_1$, we  iterate the bound (\ref{eq:h1}) of $\widetilde{h}^\e$  into  $I_1$ again to get
\bals
\bsp
|I_1|
  \lesssim  \;&e^{-\frac{t^\varrho}{2\e^a}}\|  h^\varepsilon_0\|_{L^\infty_{x,v}}+\left(m^{3+\gamma}+\delta\e^{\frac{1}{4}}+\eta \e^{\frac{4}{5}}\right)
   e^{-\frac{ t^\varrho}{4\e^{a}}} \sup_{0\le s\le t}\left\{\| e^{\frac{ s^\varrho}{4\e^{a}}} \widetilde{h}^\varepsilon (s)\|_{L^\infty_{x,v}}\right\}\\
   \;&
+\e\sup_{0\le s\le t}\left\{\|  \nabla_x^2\widetilde{\phi}^\varepsilon (s)\|_{H^1_{x}}\right\}
+
 q\varepsilon^\frac{4}{5} \sup_{0\le s\le t}\left\{ \| h^\varepsilon (s)\|_{L^\infty_{x,v}}\right\}+I_{1}^r,
\esp
\eals
where $I_{1}^r$ is given by
\bals
\bsp
I_{1}^r:=\frac{C}{\varepsilon^4}\int_0^t&\text{exp}
\Big\{-\frac{1}{\e^2}\int_s^t\widetilde{\nu}(V(\tau))\text{d}\tau\Big\}\int_0^s  \text{exp}\Big\{-\frac{1}{\e^2}\int_{s_1}^s\widetilde{\nu}(V(\tau'))\text{d}\tau'\Big\}\\
& \times \iint_{\bbR^3\times\bbR^3}|l_{w_{\widetilde{\vartheta}}}^c (V(s),v')l_{w_{\widetilde{\vartheta}}}^c (V(s_1),v'')  \widetilde{h}^\varepsilon (s_1,X(s_1),V(s_1))|\text{d}v''\text{d}v'\text{d}s_1\dd s
\esp
\eals
with $X(s_1):=X(s_1;s,X(s),v')$ and $V(s_1):=V(s_1;s,X(s),v')$.
Observing that  $l_{w_{\widetilde{\vartheta}}}^c(v,v')$ has a possible integrable
singularity of $\frac{1}{|v-v'|},$ we  choose a smooth function $l_N^c(v,v')$ with compact support satisfying
\begin{equation*}
\sup\limits_{|p|\leq 3N}\int_{|v'|\leq 2N}|l_N^c(p,v')-l_{w_{\widetilde{\vartheta}}}^c(p,v')|\dd v'\leq\frac{1}{N}.
\end{equation*}
Using the above   approximation and the  characteristics estimate in Lemma \ref{le:jac}, then taking the similar proof as that  of the $L^\infty_{x,v}$-estimate   in \cite{WZL-2},    we  establish
\bals
\bsp
I_{1}^r
\le\;&\Big(\frac{C}{N}+C\eta+o(1)\Big) e^{-\frac{ t^\varrho}{4\e^a}} \sup_{0\le s\le t}
 \Big\{ e^{ \frac{ s^\varrho}{4\e^a}}\|  \widetilde{h}^\e (s)\|_{L^\infty_{x,v}}\Big\} \\
 \;&+\frac{C}{\e^4}\int_0^t\int_0^{s-\eta\e^2}\int_{|v'|\le 2N}\int_{|v''|\le 3N}
\text{exp}
\Big\{-\frac{1}{2\e^2}\int_s^t{\nu}(V(\tau))\text{d}\tau\Big\}
\nu(V(s))\\
&\qquad\qquad\times
  \text{exp}\Big\{-\frac{1}{2\e^2}\int_{s_1}^s{\nu}(V(\tau'))\text{d}\tau'\Big\}
\nu(V(s_1))
 \big| \widetilde{h}^\e (s_1,X(s_1),v'') \big| \dd v''\dd v'\dd s_1\dd s\\
 \le\;&\left(\frac{C}{N}+C\eta+o(1)\right) e^{-\frac{ t^\varrho}{4\e^a}} \sup_{0\le s\le t}
 \Big\{ e^{ \frac{ s^\varrho}{4\e^a}}\|  \widetilde{h}^\e (s)\|_{L^\infty_{x,v}}\Big\} +\frac{C_{N,\1}}{\e^{{1}/{2}}}  \sup_{0\le s\le t}\left\{\| \nabla_x \widetilde{f}^\e (s)\|_{L^2_{x,v}}\right\}.
\esp
\eals

Thus, substituting the above estimates on $I_1\sim I_5$ into (\ref{eq:h1}), we derive
that, for any $0\le t\le \e^\frac{1}{2} T_0$, there holds
\bals
\bsp
\;&e^{\frac{t^\varrho}{4\e^a}}\|\widetilde{h}^\varepsilon (t)\|_{L^\infty_{x,v}} \\
\lesssim \;& e^{-\frac{t^\varrho}{4\e^a}}\|  h^\varepsilon_0\|_{L^\infty_{x,v}}
+e^{\frac{t^\varrho}{4\e^a}}\e\sup_{0\le s\le t}\left\{ \|  \nabla_x^2\widetilde{\phi}^\varepsilon (s)\|_{H^1_{x}}\right\}
+e^{\frac{t^\varrho}{4\e^a}}\e^{-\frac{1}{2}}\sup_{0\le s\le t}\left\{\| \nabla_x \widetilde{f}^\e (s)\|_{L^2_{x,v}}\right\}\\
\;&
+\Big(m^{3+\gamma}+\delta\e^{\frac{1}{4}}+\eta \e^{\frac{4}{5}}+\frac{1}{N}+\eta+o(1)\Big)
    \sup_{0\le s\le t}\left\{\| e^{\frac{ s^\varrho}{4\e^{a}}} \widetilde{h}^\varepsilon (s)\|_{L^\infty_{x,v}}\right\}\\
   \;&+
e^{\frac{t^\varrho}{4\e^a}} q\varepsilon^\frac{4}{5} \sup_{0\le s\le t}\left\{ \| h^\varepsilon (s)\|_{L^\infty_{x,v}}\right\}.
\esp
\eals
Choosing $\frac{1}{N}$ , $\eta$,  $m$ and $\delta$ small enough, we obtain
\bal
\bsp\label{eq:h7}
 &\sup_{0\le s \le \e^{\frac{1}{2}} T_0 }\{e^{\frac{t^\varrho}{4\e^a}}\|\widetilde{h}^\varepsilon (s) \|_{L^\infty_{x,v}} \} \\ \lesssim \;&\| h^\varepsilon_0\|_{L^\infty_{x,v}}+
 e^{\frac{({\e}^{1/2}T_0)^\varrho}{4\e^a}}\Big[\e\sup_{0\le s\le \e^{\frac{1}{2}} T_0 }\left\{ \|  \nabla_x^2\widetilde{\phi}^\varepsilon (s)\|_{H^1_{x}}\right\}+\e^{-\frac{1}{2}}\sup_{0\le s\le \e^{\frac{1}{2}} T_0 }\left\{\| \nabla_x \widetilde{f}^\e (s)\|_{L^2_{x,v}}\right\}\Big]
\\
\;&+
 \e^{\frac{({\e}^{1/2}T_0)^\varrho}{4\e^a}} q\varepsilon^\frac{4}{5} \sup_{0\le s\le \e^{\frac{1}{2}} T_0 }\left\{ \| h^\varepsilon (s)\|_{L^\infty_{x,v}}\right\}.
\esp
\eal
Further,  we have
\bal
\bsp\label{eq:h8}
\varepsilon^{\frac{1}{2} }\| \widetilde{h}^\varepsilon (\varepsilon^{\frac{1}{2}} T_0) \|_{L^\infty_{x,v}}\le \;& \frac{1}{2}\varepsilon^{\frac{1}{2} }\| h^\varepsilon_0\|_{L^\infty_{x,v}}+ \varepsilon^{\frac{3}{2} }\sup_{0\le s\le \e^{\frac{1}{2}} T_0 }\left\{ \|  \nabla_x^2\widetilde{\phi}^\varepsilon (s)\|_{H^1_{x}}\right\}\\
\;&+\sup_{0\le s\le \e^{\frac{1}{2}} T_0 }\left\{\| \nabla_x \widetilde{f}^\e (s)\|_{L^2_{x,v}}\right\}
+
 q\varepsilon^\frac{13}{10} \sup_{0\le s\le \e^{\frac{1}{2}} T_0 }\left\{ \| h^\varepsilon (s)\|_{L^\infty_{x,v}}\right\}.
\esp
\eal
For $\varepsilon^\frac{1}{2} T_0< t \leq T$, there exist $n\in \mathbb{Z}^+$ and $~\tilde\tau\in[0,\e^\frac{1}{2} T_0)$  such that  $t=\varepsilon^\frac{1}{2} T_0\times n+\tilde{\tau}$.  Consequently, using (\ref{eq:h7}) and (\ref{eq:h8}) for  $q=0$ leads to the estimate of  $\e^{\frac{1}{2}}\|h^\e(t)\|_{L^\infty_{x,v}}$.
Then taking the resulting estimate  into (\ref{eq:h7}) and (\ref{eq:h8}), we conclude that for each $0\leq q \leq\frac{5}{4}$, there holds
\beq
\bsp
\label{eqle:result:1}
\e^{\frac{1}{2}}\|  {\widetilde{h}^\e}(t)\|_{L_{x,v}^{\infty}}
\lesssim \;&\e^{\frac{1}{2}}\| h^\e_0\|_{L_{x,v}^{\infty}}
+\e^{\frac{3}{2}}\sup_{0\le \tau\le t}\left\{ \|  \nabla_x^2\widetilde{\phi}^\varepsilon (\tau)\|_{H^1_{x}}\right\}
+\sup_{0\le \tau\le t}\left\{\| \nabla_x\widetilde{f}^\e(\tau)\|_{L^2_{x,v}} \right\}.
\esp
\eeq

\emph{Step 2. $L_{x,v}^\infty$-estimates of $\nabla_x\widetilde{h}^\varepsilon$  and $\nabla_v\widetilde{h}^\varepsilon$.}\;
Applying $\nabla_x$ and  $\nabla_v$ to (\ref{eq:h:estimate:1}), respectively and exploiting Duhamel's principle, we get
\bal
\bsp\label{eq:Dh1}
\nabla_x\widetilde{h}^\varepsilon(t,x,v)
 \leq\; & \text{exp}\Big\{-\frac{1}{\varepsilon^2}\int_0^t\widetilde{\nu}(\tau)\text{d}\tau\Big\}
|\nabla_x\widetilde{h}^\varepsilon(0,X(0),V(0))|+\sum_{i=6}^{11}|  {I}_i|\\
\; &+\int_0^t \text{exp}\Big\{-\frac{1}{\e^2}\int_s^t\widetilde{\nu}(\tau)\text{d}\tau\Big\}
\left|\big(\nabla_x^2\phi^\e\cdot\nabla_v \widetilde{h}^\e\big)(s,X(s),V(s))\right|\text{d}s
\esp
\eal
and
\bal
\bsp\label{eq:Dh2}
\nabla_v\widetilde{h}^\varepsilon(t,x,v)
\leq  \;& \text{exp}\Big\{-\frac{1}{\varepsilon^2}\int_0^t\widetilde{\nu}(\tau)\text{d}\tau\Big\}
|\nabla_v\widetilde{h}^\varepsilon(0,X(0),V(0))|+\sum_{i=6}^{11}|   I_i|\\
\; &+\frac{1}{\e}\int_0^t \text{exp}\Big\{-\frac{1}{\e^2}\int_s^t\widetilde{\nu}(\tau)\text{d}\tau\Big\}
\left|\big(\nabla_x \widetilde{h}^\e\big)(s,X(s),V(s))\right|\text{d}s.
\esp
\eal
Here ${I}_6\sim{I}_{11}$ are given by
\bals
\bsp
{I}_6&=\frac{1}{\e^2}\int_0^t \text{exp}\Big\{-\frac{1}{\e^2}\int_s^t\widetilde{\nu}(\tau)\text{d}\tau\Big\}
\nabla( K_{w_{\widetilde{\vartheta}}}^c\widetilde{h}^\varepsilon)(s,X(s),V(s))\text{d}s,\\
{I}_7&=\frac{1}{\e^2}\int_0^t \text{exp}\Big\{-\frac{1}{\e^2}\int_s^t\widetilde{\nu}(\tau)\text{d}\tau\Big\}
\nabla(K_{w_{\widetilde{\vartheta}}}^m\widetilde{h}^\varepsilon)(s,X(s),V(s))\text{d}s,\\
{I}_8&=\frac{1}{\e}\int_0^t \text{exp}\Big\{-\frac{1}{\e^2}\int_s^t\widetilde{\nu}(\tau)\text{d}\tau\Big\}
\nabla\Big[\frac{w_\vartheta}{\sqrt{\mu}}Q\Big(\frac{\sqrt{\mu}}
{w_{\widetilde{\vartheta}}}\widetilde{h}^\e,
\frac{\sqrt{\mu}}{w_{\widetilde{\vartheta}}}h^\e\Big)\Big](s,X(s),V(s))\text{d}s,\\
{I}_9&=-\frac{1}{\e}\int_0^t \text{exp}\Big\{-\frac{1}{\e^2}\int_s^t\widetilde{\nu}(\tau)\text{d}\tau\Big\}
\nabla\Big[v\cdot\nabla_x \widetilde{\phi}^\e\sqrt{\mu}w_{\widetilde{\vartheta}}\Big] (s,X(s),V(s))\text{d}s,\\
{I}_{10}&=-\int_0^t \text{exp}\Big\{-\frac{1}{\e^2}\int_s^t\widetilde{\nu}(\tau)\text{d}\tau\Big\}
\left[q{(1+t)}^{q-1}\nabla h^\e\right] (s,X(s),V(s))\text{d}s,\\
{I}_{11}&=-\int_0^t \text{exp}\Big\{-\frac{1}{\e^2}\int_s^t\widetilde{\nu}(\tau)\text{d}\tau\Big\}
\Big[\nabla\Big(\frac{1}{\e^2}\widetilde{\nu}(v)\Big) \widetilde{h}^\e\Big] (s,X(s),V(s))\text{d}s
,
\esp
\eals
where  $\nabla$ represents $\nabla_x$ if
${I}_6\sim{I}_{11}$ is in  \eqref{eq:Dh1}   and   $\nabla_v$ if
${I}_6\sim{I}_{11}$   in  \eqref{eq:Dh2}.

Thanks to Lemma \ref{eq:es:K} and Lemma  \ref{eq:es:Q},   and taking the  similar approach adopted in the  estimate  of  $I_1\sim I_5$,   we are able to derive
\bal
\bsp\label{eq:DI2}
\sum_{i=6}^{10}|{I}_i|
  \lesssim  \;&\left(m^{3+\gamma}+\delta\e^{\frac{1}{4}}+\eta \e^{\frac{4}{5}}+\frac{1}{N}+\eta+o(1)\right)
   e^{-\frac{ t^\varrho}{4\e^{a}}} \sup_{0\le s\le t}\left\{\| e^{\frac{ s^\varrho}{4\e^{a}}} \nabla\widetilde{h}^\varepsilon (s)\|_{L^\infty_{x,v}}\right\}\\
   \;&+e^{-\frac{t^\varrho}{2\e^a}}\|  \nabla h^\varepsilon_0\|_{L^\infty_{x,v}}+
\e\sup_{0\le s\le t}\left\{ \|  \nabla_x^2\widetilde{\phi}^\varepsilon (s)\|_{H^2_{x}}\right\}
+
 \sup_{0\le s\le t}\left\{ \|  \widetilde{h}^\varepsilon (s)\|_{L^\infty_{x,v}}\right\}\\
 \;& +\frac{C_{N,\1}}{\e^{{1}/{2}}}  \sup_{0\le s\le t}\left\{\| \nabla_x\nabla \widetilde{f}^\e (s)\|_{L^2_{x,v}}\right\}+
 q\varepsilon^\frac{4}{5} \sup_{0\le s\le t}\left\{ \| \nabla  h^\varepsilon (s)\|_{L^\infty_{x,v}}\right\}.
\esp
\eal
Using the definition  \eqref{new:nu:def:1} of $\frac{1}{\e^2}\widetilde{\nu}(v)$ and applying direct computations, we obtain
\bals
\nabla\Big(\frac{1}{\e^2}\widetilde{\nu}(v)\Big)
\lesssim\;&
\langle v\rangle|\nabla_x^2\phi^\e|+|\nabla_x\phi^\e|
+\frac{1}{\e^{2}}\nabla_v{\nu}(v)
+\frac{\vartheta\sigma}{(1+t)^{1+\sigma}}\langle v\rangle\\
\lesssim\;&
\langle v\rangle\|\nabla_x^2\phi^\e\|_{L^\infty_{x}}
+\|\nabla_x\phi^\e\|_{L^\infty_{x}}
+\frac{1}{\e^{2}}{\nu}(v)
+\frac{\langle v\rangle^2}{(1+t)^{1+\sigma}},
\eals
which  means, by  \eqref{eq:assumption:1}, \eqref{eq:nu:result:1},
\eqref{eq:Dhnu} and \eqref{eq:Dhnu-2}, that
\bal
\bsp\label{DI6}
|{I}_{11}|\lesssim &\;
 \sup_{0\le s\le t}\left\{ \|\widetilde{h }^\varepsilon (s)\|_{L^\infty_{x,v}}\right\}.
\esp
\eal

From the  \emph{a priori}  assumption \eqref{eq:assumption:1} and the lower bound estimate
\eqref{es:nu:1} of $\frac{1}{\e^2}\widetilde{\nu}$, the last term on the right-hand side of \eqref{eq:Dh1} is bounded by
\bal
\bsp\label{eq:Dh1-1}
\;&\int_0^t \text{exp}\Big\{-\frac{1}{\e^2}\int_s^t\widetilde{\nu}(\tau)\text{d}\tau\Big\}
\|\nabla_x^2\phi^\e(s)\|_{L^\infty_x}(1+s)^{\frac{5}{8}}  \d s\sup_{0\le s\le t}\left\{\| (1+s)^{-\frac{5}{8}}  \nabla_v{\widetilde{h}^\e}(s)\|_{L_{x,v}^{\infty}}\right\}
\\
\lesssim\;&\delta\int_0^t \text{exp}\Big\{-\frac{1}{\e^{\frac{4}{5}}}\int_s^t(1+\tau)^{-\frac{5}{8}} \text{d}\tau\Big\}
(1+s)^{-\frac{5}{8}}  \d s\sup_{0\le s\le t}\left\{(1+s)^{-\frac{5}{8}}  \| \nabla_v{\widetilde{h}^\e}(s)\|_{L_{x,v}^{\infty}}\right\}
\\
\lesssim\;&\delta\e^{\frac{4}{5}}\sup_{0\le s\le t}\left\{(1+s)^{-\frac{5}{8}}  \| \nabla_v{\widetilde{h}^\e}(s)\|_{L_{x,v}^{\infty}}\right\}.
\esp
\eal
Moreover,
applying
\eqref{es:nu:1} and observing  the trivial fact that $ |\nabla_x{\widetilde{h}^\e}|\leq (1+s)^{-\frac{5}{8}}|(1+s)^{\frac{5}{8}}  \nabla_x{\widetilde{h}^\e}|$,   the right-hand last term  of \eqref{eq:Dh2} is controlled by
\bal
\bsp\label{eq:Dh2-1}
\;&\frac{1}{\e}\int_0^t \text{exp}\Big\{-\frac{1}{\e^2}\int_s^t\widetilde{\nu}(\tau)\text{d}\tau\Big\}
(1+s)^{-\frac{5}{8}}  \d s\sup_{0\le s\le t}\left\{(1+s)^{\frac{5}{8}}  \| \nabla_x{\widetilde{h}^\e}(s)\|_{L_{x,v}^{\infty}}\right\}
\\
\lesssim\;&\frac{1}{\e}\int_0^t \text{exp}\Big\{-\frac{1}{\e^{\frac{4}{5}}}\int_s^t(1+\tau)^{-\frac{5}{8}} \text{d}\tau\Big\}
(1+s)^{-\frac{5}{8}}  \d s\sup_{0\le s\le t}\left\{(1+s)^{\frac{5}{8}}  \| \nabla_x{\widetilde{h}^\e}(s)\|_{L_{x,v}^{\infty}}\right\}
\\
\lesssim\;&\e^{-\frac{1}{5}}\sup_{0\le s\le t}\left\{(1+s)^{\frac{5}{8}} \|  \nabla_x{\widetilde{h}^\e}(s)\|_{L_{x,v}^{\infty}}\right\}.
\esp
\eal

Thus, putting the bounds (\ref{eq:DI2}), \eqref{DI6} and \eqref{eq:Dh1-1} into \eqref{eq:Dh1},
and further arguing  in the same procedure as   the proof of \eqref{eqle:result:1} via  \eqref{eq:h7}--\eqref{eq:h8}, we obtain that,
for each $0\leq q\leq\frac{5}{4}$ and for any $0\le t\le T$, there holds
\beq
\bsp
\label{eqle:result:2}
\|  \nabla_x{\widetilde{h}^\e}(t)\|_{L_{x,v}^{\infty}}
\lesssim
   \;&\|  \nabla_x h^\varepsilon_0\|_{L^\infty_{x,v}}+
\e\sup_{0\le s\le t}\left\{ \|  \nabla_x^2\widetilde{\phi}^\varepsilon (s)\|_{H^2_{x}}\right\}
+
 \sup_{0\le s\le t}\left\{ \|  \widetilde{h}^\varepsilon (s)\|_{L^\infty_{x,v}}\right\}\\
 \;& +{\e^{-\frac{1}{2}}}  \sup_{0\le s\le t}\left\{\| \nabla_x^2 \widetilde{f}^\e (s)\|_{L^2_{x,v}}\right\}
+\delta\e^{\frac{4}{5}}\sup_{0\le s\le t}\left\{ (1+s)^{-\frac{5}{8}}  \| \nabla_v{\widetilde{h}^\e}(s)\|_{L_{x,v}^{\infty}}\right\}.
\esp
\eeq
Furthermore, based on \eqref{eq:Dh2}--\eqref{DI6} and \eqref{eq:Dh2-1} and following the same procedure as the proof of  \eqref{eqle:result:2}, we easily verify that for each $0\leq q\leq\frac{5}{8}$ and  for any $0\le t\le T$, there holds
\beq
\bsp
\label{eqle:result:3}
\e^{\frac{1}{5}}\|\nabla_v{\widetilde{h}^\e}(t)\|_{L_{x,v}^{\infty}}
\lesssim\;&\e^{\frac{1}{5}}\| \nabla_v{h^\e_0}\|_{L_{x,v}^{\infty}}
+\e^{\frac{6}{5}}\sup_{0\le s\le t}\left\{\|  \nabla_x^2\widetilde{\phi}^\varepsilon (s)\|_{H^2_{x}}\right\}
+
\e^{\frac{1}{5}} \sup_{0\le s\le t}\left\{ \|  \widetilde{h}^\varepsilon (s)\|_{L^\infty_{x,v}}\right\}\\
\;&
+\e^{-\frac{3}{10}}\sup_{0\le s\le t}\left\{\| \nabla_x\nabla_v\widetilde{f}^\e(s)\|_{L^2_{x,v}} \right\}
+\sup_{0\le s\le t}\left\{(1+s)^{\frac{5}{8}}  \| \nabla_x{{\widetilde{h}}^\e}(s)\|_{L_{x,v}^{\infty}}\right\}.
\esp
\eeq
Consequently, taking the  linear combination  of \eqref{eqle:result:2} with $q=\frac{5}{4}$  and
  $\eta\times$\eqref{eqle:result:3}
with $q=\frac{5}{8}$ for  $\eta$ sufficiently small,  we conclude by applying  \eqref{eqle:result:1}  that
\begin{align}\label{eqle:result:4}
\begin{split}
&\sum_{|\a|+|\b|= 1}\e^{\frac{1}{2}+\frac{1}{5}|\b|}(1+t)^{\frac{5}{4}-\frac{5}{8}|\b|}
\| \partial_{\b}^{\a}h ^\e(t)\|_{L_{x,v}^{\infty}} \\
\lesssim \;&
\e^\frac{1}{2}\| h^\e_0\|_{W^{1,\infty}_{x,v}}
+\e^{\frac{3}{2}}\sup_{0\le \tau\le t}\left\{(1+\tau)^{\frac{5}{4}}  \|  \nabla_x^2{\phi}^\varepsilon (\tau)\|_{H^2_{x}}\right\}
+\sup_{0\le \tau\le t}\left\{(1+\tau)^{\frac{5}{4}} \| \nabla_xf^\e(\tau)\|_{H^1_{x}L^2_{v}} \right\}\\
\;& +\e^{\frac{1}{5}}\sup_{0\le \tau\le t}\left\{(1+\tau)^{\frac{5}{8}} \| \nabla_x\nabla_vf^\e(\tau)\|_{L^2_{x,v}} \right\}
\end{split}
\end{align}
holds for any $0\le t\le T$.
\medskip

\emph{Step 3. $L_{x,v}^\infty$-estimates of $ \nabla^2_x h^\e, \nabla_x\nabla_v h^\e$
and $ \nabla^2_v h^\e$.}\;
Applying $\nabla_x^2$, $\nabla_x\nabla_v$ and $\nabla_v^2$ to (\ref{eq:h:estimate:1}) respectively and employing Duhamel's principle, we have
\bal
\bsp\label{eq:DDh1}
\nabla_x^2\widetilde{h}^\varepsilon(t,x,v)
 \leq\; & \text{exp}\Big\{-\frac{1}{\varepsilon^2}\int_0^t\widetilde{\nu}(\tau)\text{d}\tau\Big\}
|\nabla_x^2\widetilde{h}^\varepsilon(0,X(0),V(0))|+\sum_{i={12}}^{17}|   {I}_i|\\
\; &+\int_0^t \text{exp}\Big\{-\frac{1}{\e^2}\int_s^t\widetilde{\nu}(\tau)\text{d}\tau\Big\}
\left|\nabla_x\left(\nabla_x^2\phi^\e\cdot\nabla_v \widetilde{h}^\e\right)(s,X(s),V(s))\right|\text{d}s\\
\; &+\int_0^t \text{exp}\Big\{-\frac{1}{\e^2}\int_s^t\widetilde{\nu}(\tau)\text{d}\tau\Big\}
\left|\left[\nabla_x^2\phi^\e\cdot\nabla_v\nabla_x \widetilde{h}^\e\right](s,X(s),V(s))\right|\text{d}s
,
\esp
\eal
\bal
\bsp\label{eq:DDh2}
\nabla_x\nabla_v\widetilde{h}^\varepsilon(t,x,v)
\leq  \;& \text{exp}\Big\{-\frac{1}{\varepsilon^2}\int_0^t\widetilde{\nu}(\tau)\text{d}\tau\Big\}
|\nabla_x\nabla_v\widetilde{h}^\varepsilon(0,X(0),V(0))|+\sum_{i={12}}^{17}|   {I}_i|\\
\; &
+\int_0^t \text{exp}\Big\{-\frac{1}{\e^2}\int_s^t\widetilde{\nu}(\tau)\text{d}\tau\Big\}
\left|[\nabla_x^2\phi^\e\cdot\nabla_v \nabla_v\widetilde{h}^\e](s,X(s),V(s))\right|\text{d}s\\
\; &+\frac{1}{\e}\int_0^t \text{exp}\Big\{-\frac{1}{\e^2}\int_s^t\widetilde{\nu}(\tau)\text{d}\tau\Big\}
\left|(\nabla_x^2 \widetilde{h}^\e)(s,X(s),V(s))\right|\text{d}s,
\esp
\eal
and
\bal
\bsp\label{eq:DDh3}
\nabla_v^2\widetilde{h}^\varepsilon(t,x,v)
\leq  \;& \text{exp}\Big\{-\frac{1}{\varepsilon^2}\int_0^t\widetilde{\nu}(\tau)\text{d}\tau\Big\}
|\nabla_v^2\widetilde{h}^\varepsilon(0,X(0),V(0))|+\sum_{i={12}}^{17}|   {I}_i|\\
\; &+\frac{1}{\e}\int_0^t \text{exp}\Big\{-\frac{1}{\e^2}\int_s^t\widetilde{\nu}(\tau)\text{d}\tau\Big\}
\left|(\nabla_x\nabla_v \widetilde{h}^\e)(s,X(s),V(s))\right|\text{d}s.
\esp
\eal
Here ${I}_{12}\sim {I}_{17}$ are given by
\bals
\bsp
{I}_{12}&=\frac{1}{\e^2}\int_0^t \text{exp}\Big\{-\frac{1}{\e^2}\int_s^t\widetilde{\nu}(\tau)\text{d}\tau\Big\}
\partial^\a_\b( K_{w_{\widetilde{\vartheta}}}^c\widetilde{h}^\varepsilon)(s,X(s),V(s))\text{d}s,\\
{I}_{13}&=\frac{1}{\e^2}\int_0^t \text{exp}\Big\{-\frac{1}{\e^2}\int_s^t\widetilde{\nu}(\tau)\text{d}\tau\Big\}
\partial^\a_\b(K_{w_{\widetilde{\vartheta}}}^m\widetilde{h}^\varepsilon)(s,X(s),V(s))\text{d}s,\\
{I}_{14}&=\frac{1}{\e}\int_0^t \text{exp}\Big\{-\frac{1}{\e^2}\int_s^t\widetilde{\nu}(\tau)\text{d}\tau\Big\}
\partial^\a_\b\Big[\frac{w_\vartheta}{\sqrt{\mu}}Q\Big(\frac{\sqrt{\mu}}
{w_{\widetilde{\vartheta}}}\widetilde{h}^\e,
\frac{\sqrt{\mu}}{w_{\widetilde{\vartheta}}}h^\e\Big)\Big](s,X(s),V(s))\text{d}s,\\
{I}_{15}&=-\frac{1}{\e}\int_0^t \text{exp}\Big\{-\frac{1}{\e^2}\int_s^t\widetilde{\nu}(\tau)\text{d}\tau\Big\}
\partial^\a_\b\left[v\cdot\nabla_x \widetilde{\phi}^\e\sqrt{\mu}w_{\widetilde{\vartheta}}\right] (s,X(s),V(s))\text{d}s,\\
{I}_{16}&=-\int_0^t \text{exp}\Big\{-\frac{1}{\e^2}\int_s^t\widetilde{\nu}(\tau)\text{d}\tau\Big\}
\left[q{(1+t)}^{q-1}\partial^\a_\b h^\e\right] (s,X(s),V(s))\text{d}s,\\
{I}_{17}&=-\int_0^t \text{exp}\Big\{-\frac{1}{\e^2}\int_s^t\widetilde{\nu}(\tau)\text{d}\tau\Big\}
\Big[\partial^\a_\b\Big(\frac{1}{\e^2}\widetilde{\nu}(v) \widetilde{h}^\e\Big)
-\frac{1}{\e^2}\widetilde{\nu}(v) \partial^\a_\b\widetilde{h}^\e\Big]
(s,X(s),V(s))\text{d}s
,
\esp
\eals
where $\partial^\a_\b$ represents $\nabla_x^2$, $\nabla_x\nabla_v$ or $\nabla_v^2$
according to the position of ${I}_{12}\sim {I}_{17}$.

Exploiting Lemma \ref{eq:es:K}, Lemma  \ref{eq:es:Q}    and the  similar method applied in the  estimate  of $I_1\sim I_5$,   we  establish
\bal
\bsp\label{eq:DD12}
\sum_{i={12}}^{16}|{I}_i|
  \lesssim  \;&\left(m^{3+\gamma}+\delta\e^{\frac{1}{4}}+\eta \e^{\frac{4}{5}}+\frac{1}{N}+\eta+o(1)\right)
   e^{-\frac{ t^\varrho}{4\e^{a}}} \sup_{0\le s\le t}\left\{\| e^{\frac{ s^\varrho}{4\e^{a}}}\partial_\b^\a\widetilde{h}^\varepsilon (s)\|_{L^\infty_{x,v}}\right\}\\
   \;&+e^{-\frac{t^\varrho}{2\e^a}}\|  \partial_\b^\a h^\varepsilon_0\|_{L^\infty_{x,v}}+
\e\sup_{0\le s\le t}\left\{ \|\partial_x^\a  \nabla_x\widetilde{\phi}^\varepsilon (s)\|_{L^\infty_{x}}\right\}
\\
 \;& +\frac{C_{N,\1}}{\e^{{3}/{2}}}  \sup_{0\le s\le t}\left\{\|  \partial_\b^\a \widetilde{f}^\e (s)\|_{L^2_{x,v}}\right\}+
 q\varepsilon^\frac{4}{5} \sup_{0\le s\le t}\left\{ \| \partial_\b^\a  h^\varepsilon (s)\|_{L^\infty_{x,v}}\right\}\\
 \;&+
 \e\sup_{0\le s\le t}\left\{ \| {h}^\varepsilon (s)\|_{W^{1,\infty}_{x,v}}\right\}\sup_{0\le s\le t}\left\{ \| \nabla_x \widetilde{h}^\varepsilon (s)\|_{L^{\infty}_{x,v}}\right\}+
 \sup_{0\le s\le t}\left\{ \|  \widetilde{h}^\varepsilon (s)\|_{W^{1,\infty}_{x,v}}\right\}\\
  \;&+
 \e\sup_{0\le s\le t}\left\{ \| {h}^\varepsilon (s)\|_{W^{1,\infty}_{x,v}}\right\}\left[\sup_{0\le s\le t}\left\{ \| \nabla_v \widetilde{h}^\varepsilon (s)\|_{L^{\infty}_{x,v}}\right\}
 +\sup_{0\le s\le t}\left\{ \|  \widetilde{h}^\varepsilon (s)\|_{L^{\infty}_{x,v}}\right\}\right],
\esp
\eal
where the last two terms on the right-hand side of \eqref{eq:DD12}  only exist  when $\partial^\a_\b$ represents $\nabla_x\nabla_v$ or $\nabla_v^2$.
The \emph{a priori} assumptions \eqref{eq:assumption:1} and \eqref{eq:assumption:2} imply
\bal
\bsp\label{DDI17-1}
\nabla^2_x\Big(\frac{1}{\e^2}\widetilde{\nu}(v) \widetilde{h}^\e\Big)
-\frac{1}{\e^2}\widetilde{\nu}(v) \nabla^2_x\widetilde{h}^\e
\;&\lesssim
\langle v\rangle|\nabla_x^2\phi^\e||\nabla_x\widetilde{h}^\e|
+\langle v\rangle|\nabla_x^3\phi^\e||\widetilde{h}^\e|\\
\;&
\lesssim\frac{\langle v\rangle^2}{(1+t)^{1+\sigma}}
\left(|\nabla_x\widetilde{h}^\e|+|\widetilde{h}^\e|\right),
\esp
\eal
which combines with
\eqref{eq:Dhnu-2} inducing
\bal
\bsp\label{DDI17-2}
|{I}_{17}|\lesssim &\;
 \sup_{0\le s\le t}\left\{ \|\widetilde{h }^\varepsilon (s)\|_{L^\infty_{x,v}}\right\}+\sup_{0\le s\le t}\left\{ \|\nabla_x\widetilde{h }^\varepsilon (s)\|_{L^\infty_{x,v}}\right\} \quad \text{for } \partial^\a_\b=\nabla_x^2.
\esp
\eal
Furthermore, direct  computations   tell us
\bals
\;&\nabla_x\nabla_v\Big(\frac{1}{\e^2}\widetilde{\nu}(v) \widetilde{h}^\e\Big)
-\frac{1}{\e^2}\widetilde{\nu}(v) \nabla_x\nabla_v\widetilde{h}^\e
+\nabla_v^2\Big(\frac{1}{\e^2}\widetilde{\nu}(v) \widetilde{h}^\e\Big)
-\frac{1}{\e^2}\widetilde{\nu}(v) \nabla_v^2\widetilde{h}^\e\\
\lesssim\;&
\left(|\nabla_x\phi^\e|+\langle v\rangle|\nabla_x^2\phi^\e|+\frac{1}{\e^{2}}{\nu}(v)
+\frac{\langle v\rangle^2}{(1+t)^{1+\sigma}}\right)
\left(|\widetilde{h}^\e|+|\nabla_x\widetilde{h}^\e|+|\nabla_v\widetilde{h}^\e|\right),
\eals
which  combines with \eqref{eq:assumption:1}, \eqref{eq:nu:result:1}, \eqref{eq:Dhnu} and
\eqref{eq:Dhnu-2} inferring
\bal
\bsp\label{DDI17-3}
|{I}_{17}|\lesssim &\;
 \sup_{0\le s\le t}\left\{ \|\widetilde{h }^\varepsilon (s)\|_{W^{1,\infty}_{x,v}}\right\} \quad \text{for } \partial^\a_\b=\nabla_x\nabla_v~\text{or}~\partial^\a_\b=\nabla_v^2.
\esp
\eal

According to  \eqref{eq:assumption:1}, \eqref{eq:assumption:2}
 and \eqref{eq:Dhnu-1},  the last two terms on the right-hand side of \eqref{eq:DDh1} are estimated by
\bal
\bsp\label{eq:DDh1-1}
\;&\delta\e^{-1}\int_0^t \text{exp}\Big\{-\frac{1}{\e^2}\int_s^t\widetilde{\nu}(\tau)\text{d}\tau\Big\}
(1+s)^{-\frac{5}{8}}  \d s\sup_{0\le s\le t}\left\{(1+s)^{-\frac{5}{8}}  \| \nabla_v{\widetilde{h}^\e}(s)\|_{L_{x,v}^{\infty}}\right\}\\
\;&+\delta\int_0^t \text{exp}\Big\{-\frac{1}{\e^2}\int_s^t\widetilde{\nu}(\tau)\text{d}\tau\Big\}
(1+s)^{-\frac{5}{8}}  \d s\sup_{0\le s\le t}\left\{(1+s)^{-\frac{5}{8}}  \| \nabla_x\nabla_v{\widetilde{h}^\e}(s)\|_{L_{x,v}^{\infty}}\right\}\\
\lesssim\;&\delta\e^{-\frac{1}{5}}\sup_{0\le s\le t}\left\{(1+s)^{-\frac{5}{8}}  \| \nabla_v{\widetilde{h}^\e}(s)\|_{L_{x,v}^{\infty}}\right\}
+\delta\e^{\frac{4}{5}}\sup_{0\le s\le t}\left\{(1+s)^{-\frac{5}{8}}  \| \nabla_x\nabla_v{\widetilde{h}^\e}(s)\|_{L_{x,v}^{\infty}}\right\}.
\esp
\eal
Further,
adopting the similar technique
as in the proof of  \eqref{eq:Dh1-1} and \eqref{eq:Dh2-1},   the last two terms on the right-hand side of \eqref{eq:DDh2} can be controlled by
\bal
\bsp\label{eq:DDh2-1}
\delta\e^{\frac{4}{5}}\sup_{0\le s\le t}\left\{(1+s)^{-\frac{5}{8}}  \| \nabla_v^2{\widetilde{h}^\e}(s)\|_{L_{x,v}^{\infty}}\right\}+\e^{-\frac{1}{5}}\sup_{0\le s\le t}\left\{(1+s)^{\frac{5}{8}} \|  \nabla_x^2{\widetilde{h}^\e}(s)\|_{L_{x,v}^{\infty}}\right\}.
\esp
\eal
Similarly,  the right-hand last term   of \eqref{eq:DDh3} can be  bounded by
\bal
\bsp\label{eq:DDh3-1}
\e^{-\frac{1}{5}}\sup_{0\le s\le t}\left\{(1+s)^{\frac{5}{8}} \|  \nabla_x\nabla_v{\widetilde{h}^\e}(s)\|_{L_{x,v}^{\infty}}\right\}.
\esp
\eal

Thus, plugging the bounds (\ref{eq:DD12}), \eqref{DDI17-2} and \eqref{eq:DDh1-1} into \eqref{eq:DDh1},
and arguing  similarly as the proof of \eqref{eqle:result:1}, we find that,
for each $0\leq q\leq\frac{5}{4}$ and for any $0\le t\le T$, there holds
\beq
\bsp
\label{eqle:result:6}
\;&\|{\nabla_x^2{\widetilde{h}^\e}}(t)\|_{L_{x,v}^{\infty}}\\
\lesssim\;&\| \nabla_x^2{h^\e_0}\|_{L_{x,v}^{\infty}}
+
\e\sup_{0\le s\le t}\left\{ \|\nabla_x^3\widetilde{\phi}^\varepsilon (s)\|_{L^\infty_{x}}\right\}
 +\frac{1}{\e^{{3}/{2}}}  \sup_{0\le s\le t}\left\{\|  \nabla_x^2 \widetilde{f}^\e (s)\|_{L^2_{x,v}}\right\}\\
 \;&+
 \delta\sup_{0\le s\le t}\left\{ \| \nabla_x \widetilde{h}^\varepsilon (s)\|_{L^{\infty}_{x,v}}\right\}+\sup_{0\le s\le t}\left\{ \|\widetilde{h }^\varepsilon (s)\|_{L^\infty_{x,v}}\right\}+\sup_{0\le s\le t}\left\{ \| \nabla_x \widetilde{h}^\varepsilon (s)\|_{L^{\infty}_{x,v}}\right\}\\
 \;&
 +\delta\e^{-\frac{1}{5}}\sup_{0\le s\le t}\left\{(1+s)^{-\frac{5}{8}}  \| \nabla_v{\widetilde{h}^\e}(s)\|_{L_{x,v}^{\infty}}\right\}
+\delta\e^{\frac{4}{5}}\sup_{0\le s\le t}\left\{(1+s)^{-\frac{5}{8}}  \| \nabla_x\nabla_v{\widetilde{h}^\e}(s)\|_{L_{x,v}^{\infty}}\right\}.\\
\esp
\eeq
Similarly,  collecting the estimates \eqref{eq:DDh2}--(\ref{eq:DD12}), \eqref{DDI17-3}, \eqref{eq:DDh2-1}  and \eqref{eq:DDh3-1},
we directly establish that
 for any $0\le t\le  T$, there hold
\beq
\bsp
\label{eqle:result:7}
\;&\e^{\frac{1}{5}}\|{\nabla_x\nabla_v {\widetilde{h}^\e}}(t)\|_{L_{x,v}^{\infty}}\\
\lesssim\;&\e^{\frac{1}{5}}\| \nabla_x \nabla_v{h^\e_0}\|_{L_{x,v}^{\infty}}
+
\e^{\frac{6}{5}}\sup_{0\le s\le t}\left\{ \|  \nabla_x^2\widetilde{\phi}^\varepsilon (s)\|_{L^\infty_{x}}\right\}
 +\frac{\e^{{1}/{5}}}{\e^{{3}/{2}}}  \sup_{0\le s\le t}\left\{\| \nabla_x \nabla_v \widetilde{f}^\e (s)\|_{L^2_{x,v}}\right\}\\
 \;&+
 \e^{\frac{1}{5}}(1+\delta)\sup_{0\le s\le t}\left\{ \|  \widetilde{h}^\varepsilon (s)\|_{W^{1,\infty}_{x,v}}\right\}
 +\delta\e\sup_{0\le s\le t}\left\{(1+s)^{-\frac{5}{8}}  \| \nabla_v^2{\widetilde{h}^\e}(s)\|_{L_{x,v}^{\infty}}\right\}\\
 \;&+\sup_{0\le s\le t}\left\{(1+s)^{\frac{5}{8}} \|  \nabla_x^2{\widetilde{h}^\e}(s)\|_{L_{x,v}^{\infty}}\right\}
 ~\qquad\qquad\qquad\qquad\qquad~~~~\text{for each}~~~ 0\leq q\leq\frac{5}{8}
\esp
\eeq
and
\beq
\bsp
\label{eqle:result:8}
\;&\e^{\frac{2}{5}}\|{\nabla_v^2 {\widetilde{{h}}^\e}}(t)\|_{L_{x,v}^{\infty}}\\
\lesssim\;&\e^{\frac{2}{5}}\| \nabla_v^2{h^\e_0}\|_{L_{x,v}^{\infty}}
+
\e^{\frac{7}{5}}\sup_{0\le s\le t}\left\{ \|  \nabla_x\widetilde{{\phi}}^\varepsilon (s)\|_{L^\infty_{x}}\right\}
 +\frac{\e^{{2}/{5}}}{\e^{{3}/{2}}}  \sup_{0\le s\le t}\left\{\| \nabla_v^2 \widetilde{{f}}^\e (s)\|_{L^2_{x,v}}\right\}\\
 \;&+
 \e^{\frac{2}{5}}(1+\delta)\sup_{0\le s\le t}\left\{ \| \widetilde{{h}}^\varepsilon (s)\|_{W^{1,\infty}_{x,v}}\right\}
+\e^{\frac{1}{5}}\sup_{0\le s\le t}\left\{(1+s)^{\frac{5}{8}} \|  \nabla_x\nabla_v{\widetilde{{h}}^\e}(s)\|_{L_{x,v}^{\infty}}\right\}
 ~\text{for}~q=0.
\esp
\eeq

Thus, for any $0\le t\le T$, taking a proper linear combination of \eqref{eqle:result:6}, \eqref{eqle:result:7} and \eqref{eqle:result:8}, and further gathering \eqref{eqle:result:1} and  \eqref{eqle:result:4},
we  conclude  
\begin{align}\label{eqle:result:5}
\begin{split}
&\sum_{|\a|+|\b|= 2}\e^{\frac{3}{2}+\frac{1}{5}|\b|}(1+t)^{\frac{5}{4}-\frac{5}{8}|\b|}
\| \partial_{\b}^{\a}h ^\e(t)\|_{L_{x,v}^{\infty}} \\
\lesssim \;&
\e^\frac{1}{2}\| h^\e _0\|_{W^{2,\infty}_{x,v}}
+\e^{\frac{21}{10}}\sup_{0\le \tau\le t}\left\{(1+\tau)^{\frac{5}{4}}  \|  \nabla_x^2{\phi}^\varepsilon (\tau)\|_{H^2_{x}}\right\}+
\e^{\frac{5}{2}}\sup_{0\le \tau\le t}\left\{ (1+ \tau)^{\frac{5}{4}} \|\nabla_x^3{\phi}^\varepsilon (\tau)\|_{L^\infty_{x}}\right\}
\\
 \;&+\sup_{0\le \tau\le t}\left\{(1+ \tau)^{\frac{5}{4}}\| \nabla_xf^\e( \tau)\|_{H^1_{x}L^2_{v}} \right\}
 +\e^{\frac{1}{5}}\sup_{0\le \tau\le t}\left\{(1+ \tau)^{\frac{5}{8}} \| \nabla_x\nabla_vf^\e( \tau)\|_{L^2_{x,v}} \right\}
  \\
\;&
 +\e^{\frac{2}{5}}\sup_{0\le  \tau\le t}\left\{\|\nabla_v^2f^\e( \tau)\|_{L^2_{x,v}} \right\}
.
\end{split}
\end{align}

In summary, combining  \eqref{eqle:result:1}, \eqref{eqle:result:4} and \eqref{eqle:result:5}, we complete the proof of  Proposition \ref{result:W2:wuqiong}.
\end{proof}

\subsection{Weighted \texorpdfstring{$W^{{2,\infty}}_{x,v}$}{Lg}-estimate with Time Decay for Hard Potentials}\label{W-infty-hard}
\hspace*{\fill}

In this subsection,   we  turn to establish the $W^{{2,\infty}}_{x,v}$-estimate for $h^{\e}$ with    hard potentials $0\leq\gamma\leq 1$.
\begin{proposition}\label{result:W2:wuqiong-hard}
Let $0\leq\gamma\leq 1$. Suppose that $ (f^\e, \nabla_x\phi^\e) $ is a solution to the
VPB system \eqref{eq:f}   defined on $ [0, T] \times \mathbb{R}^3 \times \mathbb{R}^3$.
And, assume that the crucial bootstrap assumptions \eqref{eq:assumption:1}
and \eqref{eq:assumption:2} hold.
Then for each $0\leq q\leq \frac{5}{4}$, there holds
\beqs
\bsp
\;&\sum_{|\a|+|\b|\leq 1}\e^{\frac{1}{2}}(1+t)^{q}
\| \partial_{\b}^{\a}h ^\e(t)\|_{L_{x,v}^{\infty}}
+\sum_{|\a|+|\b|= 2}\e^{\frac{3}{2}}(1+t)^{q}
\| \partial_{\b}^{\a}h ^\e(t)\|_{L_{x,v}^{\infty}}\\
\lesssim \;&\e^\frac{1}{2}\| h^\e_0\|_{W^{2,\infty}_{x,v}}
+\e^{\frac{3}{2}}\sup_{0\le \tau\le t}\left\{(1+\tau)^{q}\| \nabla_x^2\phi^\e(\tau)\|_{H^2_{x}}\right\}+
\e^{\frac{5}{2}}\sup_{0\le\tau\le t}\left\{ (1+ \tau)^{q} \|\nabla_x^3{\phi}^\varepsilon (\tau)\|_{L^\infty_{x}}\right\}\\
\;&
+\sup_{0\le \tau\le t}\left\{(1+\tau)^{q}\| \nabla_xf^\e(\tau)\|_{H^1_{x,v}} \right\}
+\sup_{0\le \tau\le t}\left\{(1+\tau)^{q}\| \nabla_v^2(\mathbf{I}-\mathbf{P})f^\e(\tau)\|_{L^2_{x,v}} \right\}
\esp
\eeqs
  for any $0\leq t \leq T$.
\end{proposition}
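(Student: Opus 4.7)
The plan is to mimic the Duhamel-along-characteristics scheme developed for Proposition \ref{result:W2:wuqiong}, exploiting the crucial simplification that for hard potentials $0\le\gamma\le 1$ the collision frequency enjoys the uniform positive lower bound
\[
\frac{1}{\e^2}\widetilde{\nu}(v)\;\gtrsim\;\frac{1}{2\e^2}\nu(v)\;\gtrsim\;\frac{1}{\e^2},
\]
provided by Lemma \ref{es:nu} together with $\nu(v)\sim\langle v\rangle^\gamma\ge 1$. This replaces the degenerate bound $\e^{-4/5}(1+t)^{\varrho-1}$ used in the soft case and eliminates both the auxiliary $(1+t)^{-5/8}$ weight and the extra $\e^{-1/5}$ losses on velocity derivatives. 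Consequently every derivative $\partial^\a_\b h^\e$ with $|\a|+|\b|\le 2$ should enjoy the \emph{same} uniform time decay $(1+t)^q$.

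First I would introduce the temporally rescaled unknown $\widetilde{h}^\e=(1+t)^q h^\e$ satisfying a system analogous to \eqref{eq:h:estimate:1}, and apply Duhamel's principle along the characteristics \eqref{tezhengxian}. For the zeroth-order estimate the uniform lower bound yields
\[
\int_0^t\exp\Big\{-\frac{1}{\e^2}\int_s^t\widetilde{\nu}(\tau)\,\d\tau\Big\}\,\d s\;\lesssim\;\e^2,
\]
so the Duhamel pieces $I_1,\ldots,I_5$ are controlled in direct analogy with the soft case: $I_2,I_3$ via Lemmas \ref{eq:es:K}(1) and \ref{eq:es:Q} together with the \emph{a priori} smallness \eqref{eq:assumption:2}; $I_4$ by a Sobolev bound on $\nabla_x\phi^\e$; and $I_5$ by the Young-type interpolation $q(1+t)^{q-1}|h^\e|\le\eta(1+t)^q|h^\e|+C_\eta|h^\e|$, which absorbs the first summand on the left and carries the second as an auxiliary $q=0$ estimate. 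The nested kernel term $I_1^r$ is handled by iterating Duhamel once more and converting via Lemma \ref{le:jac} and a H\"older step in velocity into $C_{N,\eta}\e^{-1/2}\|\nabla_x\widetilde{f}^\e\|_{L^2_{x,v}}$. Choosing $N$ large and $m,\eta,\delta$ sufficiently small closes the zeroth-order estimate.

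Next I would differentiate the system by $\partial^\a_\b$ for $|\a|+|\b|=1,2$ and iterate the same scheme. Two new commutator contributions appear: a transport-type term $\e^{-1}\nabla_x\partial^\a_{\b-1}\widetilde{h}^\e$ produced when $\nabla_v$ hits $\e^{-1}v\cdot\nabla_x$, and a field-coupling term $\nabla_x^2\phi^\e\cdot\nabla_v\partial^\a_x\widetilde{h}^\e$ produced when $\nabla_x$ hits $-\nabla_x\phi^\e\cdot\nabla_v$. Because $\nu\gtrsim 1$, one has $\e^{-1}\int_0^t e^{-\nu(t-s)/\e^2}\,\d s\lesssim\e$, so the first contributes only $\e$ times the $L^\infty_{x,v}$ norm of a derivative of order one lower, producing a closed bootstrap without the $(1+t)^{-5/8}$ loss required in the soft case; the second is absorbed using \eqref{eq:assumption:1}. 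For $|\a|+|\b|=2$, iterating Duhamel on the $I_1^r$-analogue and combining Lemma \ref{le:jac} with H\"older converts the double-kernel piece into $\e^{-3/2}\sup\|\partial^\a_\b f^\e\|_{L^2_{x,v}}$, which accounts for the factor $\e^{3/2}$ on the left side of the proposition. The Maxwellian factor built into $\mathbf{P}f^\e$ makes $\|\nabla_v^2\mathbf{P}f^\e\|_{L^2_{x,v}}$ controllable by the macroscopic content already present in $\|\nabla_xf^\e\|_{H^1_{x,v}}$, so that only $\|\nabla_v^2(\mathbf{I}-\mathbf{P})f^\e\|_{L^2_{x,v}}$ appears on the right-hand side.

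The main obstacle will be the careful treatment of the commutator $\partial^\a_\b\!\bigl(\widetilde\nu\widetilde{h}^\e/\e^2\bigr)-(\widetilde\nu/\e^2)\partial^\a_\b\widetilde{h}^\e$, which generates lower-order pieces dressed with $\nabla_x^2\phi^\e$ and $\nabla_x^3\phi^\e$ (hence the $\e^{5/2}\sup(1+\tau)^q\|\nabla_x^3\phi^\e\|_{L^\infty_x}$ contribution on the right-hand side); these must be absorbed using both \emph{a priori} assumptions \eqref{eq:assumption:1}, \eqref{eq:assumption:2} and the lower bound \eqref{eq:nu:result:1} without sacrificing the uniform $(1+t)^q$ decay. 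After combining the zero-, first- and second-order $L^\infty_{x,v}$ estimates and choosing all small parameters consistently, one arrives at the stated bound. Because the dissipation is non-degenerate here, no $(1+t)^{-5/8}$-weighted auxiliary estimate or $\e^{-1/5}$-loss is needed and the single uniform decay rate $(1+t)^q$ covers all derivatives up to second order, in sharp contrast with the soft-potential situation.
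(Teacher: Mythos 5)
Your proposal is correct and takes essentially the same route as the paper: the crux is exactly the uniform lower bound $\frac{1}{\e^2}\widetilde\nu \gtrsim \frac{\nu_0}{\e^2}$ (from Lemma \ref{es:nu} with $\nu\ge\nu_0>0$ for $0\le\gamma\le1$), which replaces the soft-case bounds \eqref{eq:Dhnu}--\eqref{eq:Dhnu-1} by the non-degenerate \eqref{eq:Dhnu-hard}--\eqref{eq:Dhnu-2-hard}, removes the $(1+t)^{-5/8}$ auxiliary weight and the $\e^{-1/5|\beta|}$ losses, and lets the paper run the same Duhamel-along-characteristics and iterated-kernel scheme (with Lemma \ref{le:jac}, the $\e^{-1/2}$ gain on $L^6_xL^2_v$ at first order and $\e^{-3/2}$ on $L^2_{x,v}$ at second order) to obtain a single uniform rate $(1+t)^q$ for all $|\a|+|\b|\le 2$. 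Two small imprecisions, neither affecting the argument: the $\e^{5/2}\sup(1+\tau)^q\|\nabla_x^3\phi^\e\|_{L^\infty_x}$ term arises from differentiating the force source $v\cdot\nabla_x\widetilde\phi^\e\sqrt{\mu}w_{\widetilde\vartheta}$ (the analogue of $I_{15}$), not primarily from the $\widetilde\nu$-commutator; and the passage from $\|\nabla_v^2 f^\e\|_{L^2_{x,v}}$ to $\|\nabla_v^2(\mathbf{I}-\mathbf{P})f^\e\|_{L^2_{x,v}}$ on the RHS uses the Gaussian structure of $\mathbf{P}f^\e$ together with the Gagliardo--Nirenberg bound $\|\mathbf{P}f^\e\|_{L^\infty_xL^2_v}\lesssim\|\nabla_x\mathbf{P}f^\e\|_{H^1_xL^2_v}$, rather than the $L^2_{x,v}$ estimate $\|\nabla_v^2\mathbf{P}f^\e\|_{L^2_{x,v}}\lesssim\|\nabla_xf^\e\|_{H^1_{x,v}}$ you state (which is false as written, since the left side carries no spatial derivative).
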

\begin{proof}
The main idea to deduce   Proposition \ref{result:W2:wuqiong-hard} is along the same line as    Proposition \ref{result:W2:wuqiong} for the   soft potentials case, but   $\frac{1}{\e^2}\widetilde{\nu}$  for   hard potentials has positive bound   here, leading to some different treatments in our analysis.
Indeed, thanks to Lemma \ref{es:nu} and $\nu(v)\geq \nu_0$, we have
\bal
\;&\int_{0}^{t}
\text{exp}\Big\{-\frac{1}{\e^2}\int_s^t\widetilde{\nu}(\tau)\text{d}\tau\Big\}
\nu(V(s))
e^{-\frac{ \nu_0 s }{4\e^2}}\dd s
\lesssim
\e^2e^{-\frac{  \nu_0 t }{4\e^2}}, \label{eq:Dhnu-hard} \\
\;&\int_{0}^{t}
\text{exp}\Big\{-\frac{1}{\e^2}\int_s^t\widetilde{\nu}(\tau)\text{d}\tau\Big\}
\frac{\langle V(s)\rangle^2}{(1+s)^{1+\sigma}}e^{-\frac{  \nu_0 s  }{4\e^a}}
\dd s
\lesssim
e^{-\frac{  \nu_0 t  }{4\e^2}} \label{eq:Dhnu-2-hard}.
\eal
From   \eqref{eq:assumption:1} and
\eqref{eq:Dhnu-hard}, we find that the last term on the right-hand side of \eqref{eq:Dh1} and \eqref{eq:Dh2}  are bounded by
\bals
\bsp
\delta\e^{2}\sup_{0\le s\le t}\left\{ \| \nabla_v{\widetilde{h}^\e}(s)\|_{L_{x,v}^{\infty}}\right\},\qquad \e\sup_{0\le s\le t}\left\{ \|  \nabla_x{\widetilde{h}^\e}(s)\|_{L_{x,v}^{\infty}}\right\},
\esp
\eals
respectively.
Then  exploiting the above two estimates, \eqref{eq:Dhnu-hard},  \eqref{eq:Dhnu-2-hard},  and adopting the similar analysis as   the proof of \eqref{eqle:result:1} and \eqref{eqle:result:4}, we   conclude that, for each $0\leq q \leq \frac{5}{4}$ and for any $0\le t\le T$, there holds
\begin{align}\label{eqle:result:4-hard}
\begin{split}
\e^{\frac{1}{2}}
\| \widetilde{h} ^\e(t)\|_{W_{x,v}^{1,\infty}}
\lesssim \;&
\e^\frac{1}{2}\| h^\e_0\|_{W^{1,\infty}_{x,v}}
+\e^{\frac{3}{2}}\sup_{0\le \tau\le t}\left\{\|  \nabla_x^2{\widetilde{\phi}}^\varepsilon (\tau)\|_{H^2_{x}}\right\}
+\sup_{0\le \tau\le t}\left\{ \| \nabla_x\widetilde{f}^\e(\tau)\|_{H^1_{x,v}} \right\}.
\end{split}
\end{align}

On the other hand, from   \eqref{eq:assumption:1} and
\eqref{eq:Dhnu-hard}, we deduce  that
   the last two terms on the right-hand side of \eqref{eq:DDh1}, \eqref{eq:DDh2},  and  the last  term  on the right-hand side of \eqref{eq:DDh3} can be controlled by
\bals
\bsp
\;&\delta\e^2\sup_{0\le s\le t}\left\{   \| \nabla_v{\widetilde{h}^\e}(s)\|_{L_{x,v}^{\infty}}\right\}
+\delta\e^2\sup_{0\le s\le t}\left\{  \| \nabla_x\nabla_v{\widetilde{h}^\e}(s)\|_{L_{x,v}^{\infty}}\right\},\\
\;&
\delta\e^2\sup_{0\le s\le t}\left\{  \| \nabla_v^2{\widetilde{h}^\e}(s)\|_{L_{x,v}^{\infty}}\right\}+\e\sup_{0\le s\le t}\left\{\|  \nabla_x^2{\widetilde{h}^\e}(s)\|_{L_{x,v}^{\infty}}\right\},\\
\;& \e\sup_{0\le s\le t}\left\{\|  \nabla_x\nabla_v{\widetilde{h}^\e}(s)\|_{L_{x,v}^{\infty}}\right\},
\esp
\eals
respectively. Then employing  the estimates above, \eqref{eq:Dhnu-2-hard}  and taking the similar process as   the proof of \eqref{eqle:result:5},
we  obtain  
\begin{align}\label{eqle:result:5-hard}
\begin{split}
\;&\sum_{|\a|+|\b|= 2}\e^{\frac{3}{2}}
\| \partial_{\b}^{\a}\widetilde{h} ^\e(t)\|_{L_{x,v}^{\infty}}   \\
\lesssim \;&
\e^\frac{1}{2}\| h^\e _0\|_{W^{2,\infty}_{x,v}}
+\e^{\frac{3}{2}}\sup_{0\le \tau\le t}\left\{ \|  \nabla_x^2{\widetilde{\phi}}^\varepsilon (\tau)\|_{H^2_{x}}\right\}+
\e^{\frac{5}{2}}\sup_{0\le \tau\le t}\left\{  \|\nabla_x^3{\widetilde{\phi}}^\varepsilon (\tau)\|_{L^\infty_{x}}\right\}
\\
 \;&+\sup_{0\le \tau\le t}\left\{\| \nabla_x\widetilde{f}^\e( \tau)\|_{H^1_{x,v}} \right\}
+\sup_{0\le \tau\le t}\left\{\| \nabla_v^2(\mathbf{I}-\mathbf{P})\widetilde{f}^\e(\tau)\|_{L^2_{x,v}} \right\}
\end{split}
\end{align}
  for each $0\leq q \leq \frac{5}{4}$ and for any $0\le t\le T$.
Consequently, gathering \eqref{eqle:result:4-hard} and  \eqref{eqle:result:5-hard}, we complete the proof of    Proposition \ref{result:W2:wuqiong-hard}.
\end{proof}
\begin{remark}For the hard potential case,
the \emph{a priori} assumption    \eqref{eq:assumption:2}
can be replaced by a weaker      priori  assumption: there is $\delta>0$ small enough such that
\bal
\bsp\label{eq:assumption:3}
\;&\e\sup_{0\leq t \leq T}\left\{(1+t)^{\frac{5}{8}}\| \nabla^3_x\phi^\e(t) \|_{{L^{\infty}_{x,v}} }\right\}+
\e^{\frac{3}{4}}\sup_{0\leq t \leq T}\left\{\| h^\e(t) \|_{{L^{\infty}_{x,v}} }\right\}
+\e\sup_{0\leq t \leq T}\{\| h^\e(t) \|_{{W^{1,\infty}_{x,v}} }\}
\le  \delta.
\esp
\eal
Indeed,    the  term $D_5$ defined in \eqref{D:eq:Dxh2} for  hard potentials  can be controlled directly without the time decay rate $(1+t)^{-\frac{5}{4}}$ of $\|\nabla_x\phi^\e\|_{L^\infty_x}$. On the other hand, the \emph{a priori}  assumption  \eqref{eq:assumption:3} is also enough to  control the
velocity growth of $v\cdot\nabla_x^3\phi^\e h^\e$
with the aid of the Young inequality.

In addition, according to \eqref{D:eq:assumption:hard:1-1},   the \emph{a priori}  assumption \eqref{eq:assumption:3}
 can be verified only with the help of the time decay rate $(1+t)^{-\frac{5}{4}}$ of
 $\| \nabla_x^2\phi^\e\|_{H^{1}_{x }}$ and $\| \nabla_xf^\e\|_{H^{1}_{x }L^{2}_{v }}$, rather than the time decay rate of $h^\e$.
\end{remark}

\section{Global Solutions  and  Limit to the Incompressible NSFP System}\label{golbal-decay-soft}
In this section, our main goal is to establish   the global existence and  temporal decay estimate for the perturbation VPB system
(\ref{eq:f}), and
justify the convergence  limit to the incompressible NSFP system  \eqref{INSFP}  as $\e\rightarrow 0$.

\subsection{Time Decay Estimate of Energy  for Soft Potentials}
\hspace*{\fill}

To close the energy estimate and $W^{2,\infty}_{x,v}$-estimate for  soft potentials
$-3<\gamma<0$ under the {\em{a priori}} assumptions \eqref{eq:assumption:1}, \eqref{eq:assumption:2} and  \eqref{energy-assumptition-soft},  we have to investigate the time decay of $\mathbf{E}_\ell^\mathbf{s}(t)$. For this purpose,
denote
\bal
\bsp\label{1:estimate:decay:10}
\mathbf{X}_\ell(t):=\;&\sup_{0\le\tau\le t}\left\{(1+\tau)^{\frac{3}{2}} \mathbf{E}_\ell^\mathbf{s}(\tau)
 +(1+\tau)^{\frac{5}{2}} \|\nabla_x f^\e(\tau)\|_{H^1_xL^2_v}^2
 +(1+\tau)^{\frac{5}{2}} \|\nabla_x \phi^\e(\tau)\|_{H^1_x}^2\right\}\\
 \;&+\sup_{0\le\tau\le t}\Big\{\sum_{|\a|+|\b|\leq 1}\e^{1+\frac{2}{5}|\b|}(1+\tau)^{\frac{5}{2}-\frac{5}{4}|\b|}
\| \partial_{\b}^{\a}h ^\e(\tau)\|_{L_{x,v}^{\infty}}^2
 \Big\}
 \\
 \;&+\sup_{0\le\tau\le t}\Big\{\sum_{|\a|+|\b|= 2}\e^{3+\frac{2}{5}|\b|}(1+\tau)^{\frac{5}{2}-\frac{5}{4}|\b|}
\| \partial_{\b}^{\a}h ^\e(\tau)\|_{L_{x,v}^{\infty}}^2\Big\}
 .
\esp
\eal

The key point is to prove the following main result of this subsection.

\begin{proposition}\label{decay:es:nonlinear:result:1}
Let $-3<\gamma<0,$
$\ell_{0}, \ell,
\widetilde{\vartheta}$ be fixed parameters  stated in Theorem \ref{mainth1}.
Assume that
$
\iint_{\mathbb{R}^3\times\mathbb{R}^3}\sqrt{\mu} f^\e_0 \d x\d v=0,
$
and   the \emph{a priori} assumptions   \eqref{eq:assumption:1}, \eqref{eq:assumption:2} and \eqref{energy-assumptition-soft}  hold true for $\delta>0$ small enough.
Then,  any strong solution $(f^\e, \nabla_x\phi^\e)$ to
  the VPB system \eqref{eq:f} over $0\leq t\leq T$ with $0 <T \leq \infty$ satisfies
\bal\label{decay:result:1}
\mathbf{X}_\ell(t)\lesssim \mathbf{\kappa}_0^2+\delta\mathbf{X}_\ell(t)+\mathbf{X}_\ell^2(t)
\eal
for any $0\leq t\leq T$, where ${\kappa}_0$ is defined by
\beq
\bsp\label{initial-data-soft}
\mathbf{\kappa}_0:=\;&\e^\frac{1}{2}\| h^\e_0\|_{W^{2,\infty}_{x,v}}+\sum_{|\a|+|\b|\leq2}\| w^{|\b|}\partial^{\a}_{\b} f^\e_0\|_{L^2_{x,v}}+\sum_{\substack{|\a|+|\b|\leq2\\0\leq|\a|\leq1}}\| w^{|\b|-(\ell+1)}\partial^{\a}_{\b} f^\e_0\|_{L^2_{x,v}}\\
\;&
+\e^{\frac{1}{2}}\| {w}^{-(\ell+1)}\nabla_x^2f^\e_0\|_{L^2_{x,v}}
 +\big\|\big(1+|x|\big)f^\e_0\big\|_{L^2_vL^1_x}
 +\big\|w^{-(\frac{1}{2}+\ell_{0})}f^\e_0\big\|_{L^2_vL^1_x}.
\esp
\eeq
\end{proposition}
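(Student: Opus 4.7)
The plan is to combine three ingredients already in hand: the weighted $H^2_{x,v}$ energy inequality of Proposition \ref{main-weighted-energy-estimate-1}, the time-velocity weighted $W^{2,\infty}_{x,v}$ bound of Proposition \ref{result:W2:wuqiong}, and the time decay estimate for the linearized VPB system with a nonhomogeneous microscopic source (sketched in the introduction as \eqref{D:linear:estimate:decay:proof:result:2}--\eqref{D:result:linear:estimate:decay:3}, to be packaged as a separate lemma). Closure of $\mathbf{X}_\ell(t)$ is then extracted from these via a temporal-weighted Duhamel argument. First I would multiply the energy inequality \eqref{energy-nonlinear-result} by $(1+t)^{3/2}$ and integrate in time. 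By the \emph{a priori} assumption \eqref{eq:assumption:2} the nonlinear prefactor is $\lesssim\delta$ and absorbs into the dissipation, yielding
\begin{align*}
(1+t)^{3/2}\mathbf{E}_\ell^\mathbf{s}(t)+\int_0^t(1+\tau)^{3/2}\mathbf{D}_\ell^\mathbf{s}(\tau)\,d\tau\lesssim \kappa_0^2+\int_0^t(1+\tau)^{1/2}\mathbf{E}_\ell^\mathbf{s}(\tau)\,d\tau.
\end{align*}
Since $\mathbf{E}_\ell^\mathbf{s}$ contains the macroscopic block $\|\mathbf{P}f^\e\|_{L^2_{x,v}}^2+\|\nabla_x\phi^\e\|_{L^2_x}^2$ not captured by $\mathbf{D}_\ell^\mathbf{s}$, a pure Gronwall loop fails and the decay of the macro block must be injected from a direct Duhamel analysis.

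The second step is to represent the solution via Duhamel's formula $f^\e(t)=e^{tB_\e}f^\e_0+\int_0^t e^{(t-s)B_\e}\mathcal{N}^\e(s)\,ds$, where $\mathcal{N}^\e$ collects $-\e^{-1}v\cdot\nabla_x\phi^\e\sqrt{\mu}$, $\e^{-1}\Gamma(f^\e,f^\e)$ and the field-drift term $\nabla_x\phi^\e\cdot\nabla_v(\sqrt{\mu}f^\e)/\sqrt{\mu}$, and then apply \eqref{D:result:linear:estimate:decay:3}. The crucial feature is the prefactor $\e^2$ in front of $\int_0^t(1+\tau)^J\|w^{-(\ell+1/2)}h^\e(\tau)\|_{L^1_xL^2_v}^2\,d\tau$ in that linear bound, which exactly absorbs the singular $\e^{-1}$ in $\mathcal{N}^\e$. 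Together with the initial-data quantities inside $\kappa_0$, in particular $\|(1+|x|)f^\e_0\|_{L^2_vL^1_x}$ (removing the Poisson-kernel singularity at low frequency) and $\|w^{-(1/2+\ell_0)}f^\e_0\|_{L^2_vL^1_x}$ with $\ell_0>3/4$ (controlling the nonhomogeneous source), this yields
\begin{align*}
\|\mathbf{P}f^\e(t)\|_{L^2_{x,v}}^2+\|\nabla_x\phi^\e(t)\|_{L^2_x}^2\lesssim (1+t)^{-3/2}\bigl(\kappa_0^2+\mathbf{X}_\ell^2(t)\bigr).
\end{align*}
Inserting this into the time-weighted energy display closes the $(1+t)^{3/2}\mathbf{E}_\ell^\mathbf{s}(t)$ portion of $\mathbf{X}_\ell(t)$.

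Third, to upgrade to the $(1+t)^{5/2}$ rate for $\|\nabla_x f^\e\|_{H^1_xL^2_v}^2+\|\nabla_x\phi^\e\|_{H^1_x}^2$, I would apply $\partial_x^\a$ ($|\a|\leq 1$) to Duhamel's formula and exploit the extra $(1+t)^{-1/2}$ factor per spatial derivative in the linear decay, then iterate the weighted-energy inequality with the sharper macro decay. This iteration is precisely why $\kappa_0$ carries the weight $w^{-(\ell+1)}$ rather than $w^{-\ell}$: one extra $w^{-1}$ is consumed at each bootstrap step to dominate $\Gamma(f^\e,f^\e)$ and $\nabla_x\phi^\e\cdot\nabla_v f^\e$ in $L^2_vL^1_x$ by the weighted energy. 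Finally, feeding the resulting decay rates of $\|\nabla_x^2\phi^\e\|_{H^2_x}$, $\|\nabla_x f^\e\|_{H^1_xL^2_v}$, $\|\nabla_x\nabla_v f^\e\|_{L^2_{x,v}}$ and $\|\nabla_v^2 f^\e\|_{L^2_{x,v}}$ into the bound \eqref{result:W2:wuqiong:1} of Proposition \ref{result:W2:wuqiong} delivers the time-velocity weighted $W^{2,\infty}_{x,v}$ components of $\mathbf{X}_\ell(t)$, completing \eqref{decay:result:1}.

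The main obstacle will be the interplay between the singular Knudsen prefactors $\e^{-1},\e^{-2}$ and the velocity weight $w^{|\b|-\ell}$: the $\e^{-1}\Gamma(f^\e,f^\e)$ contribution in the Duhamel representation must be absorbed by the $\e^2$ gain of the linear decay estimate, and simultaneously its $L^2_vL^1_x$ norm must be dominated by the $w^{-(\ell+1)}$-weighted energy via a micro-macro splitting and H\"older interpolation of the form $\|\mathbf{P}f^\e\|_{L^2_xL^\infty_v}\|f^\e\|_{L^2_{x,v}(\nu)}+\|w_\vartheta f^\e\|_{L^\infty_{x,v}}\|(\mathbf{I}-\mathbf{P})f^\e\|_{L^2_{x,v}(\nu)}$. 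Balancing these two constraints is exactly what forces the choice $\ell\geq\max\{1,-1/\gamma\}+1/2+\ell_0$ with $\ell_0>3/4$, and what ultimately produces the quadratic-in-$\mathbf{X}_\ell$ closure \eqref{decay:result:1}.
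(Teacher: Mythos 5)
Your overall architecture matches the paper's: linear decay for the nonhomogeneous linearized system with the $\e^2$ gain, Duhamel for the macro block, time-weighted energy estimates, and feeding the resulting rates into Proposition~\ref{result:W2:wuqiong}. However, two concrete pieces of the mechanism are missing or misattributed, and they are precisely the parts that make the soft-potential case hard.

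First, for Step~1 you note that ``a pure Gronwall loop fails'' because the macro block escapes $\mathbf{D}_\ell^\mathbf{s}$, but you never describe the actual closure of $\int_0^t(1+\tau)^{1/2}\mathbf{E}_\ell^\mathbf{s}(\tau)\,d\tau$. The soft-potential collision frequency $\nu\sim\langle v\rangle^\gamma$ is degenerate, so beyond the macro block, $\mathbf{E}_\ell^\mathbf{s}$ is \emph{not} controlled by $\mathbf{D}_\ell^\mathbf{s}$; one must instead use the interpolation $\mathbf{E}_\ell^\mathbf{s}(t)\lesssim\|\mathbf{P}f^\e\|_{L^2_{x,v}}^2+\|\nabla_x\phi^\e\|_{L^2_x}^2+\mathbf{D}_{\ell+\frac12}^\mathbf{s}(t)$ (the paper's \eqref{main-result:2}). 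This opens a genuine weight-ladder: the time-weighted inequality at level $\ell$ with $(1+t)^{3/2+p}$ calls on the one at level $\ell+\frac12$ with $(1+t)^{1/2+p}$, which in turn calls on $\int_0^t\mathbf{D}_{\ell+1}^\mathbf{s}(\tau)\,d\tau\lesssim\mathbf{E}_{\ell+1}^\mathbf{s}(0)$ from the unweighted integration of \eqref{energy-nonlinear-result}. That ladder is the reason $\kappa_0$ carries $w^{-(\ell+1)}$ and $0<p<\tfrac12$ is introduced (to kill the borderline $\int_0^t(1+\tau)^{1/2+p-3/2}\,d\tau$). You correctly intuit ``one extra $w^{-1}$ is consumed at each bootstrap step,'' but you attribute it to applying $\partial_x^\alpha$ to Duhamel's formula, which is the wrong locus: in the paper the weight loss happens entirely inside the iterated time-weighted energy inequalities, not in the Duhamel representation.

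Second, for the $(1+t)^{5/2}$ rate of $\|\nabla_x f^\e\|_{H^1_xL^2_v}^2+\|\nabla_x^2\phi^\e\|_{H^1_x}^2$, you propose applying $\partial_x^\alpha$ to Duhamel and invoking an extra $(1+t)^{-1/2}$ per spatial derivative in the linear decay. That is the \emph{hard-potential} mechanism (cf.~Lemma~\ref{result:linear:estimate:decay:hard} and Lemma~\ref{decay:es:nonlinear:hard}); the paper's soft-potential Lemma~\ref{result:linear:estimate:decay} is stated only at zero spatial order, and extending it to $\partial_x^\alpha$ while keeping track of the degenerate weight $w^{|\b|-\ell}$ is not automatic. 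The paper instead closes this rate by a direct time-weighted energy estimate on the higher-order part $\sum_{1\leq|\alpha|\leq2}\|\partial_x^\alpha f^\e\|^2+\|\partial_x^\alpha\nabla_x\phi^\e\|^2$ (Step~2 of the proof of Proposition~\ref{decay:es:nonlinear:result:1}), where the macro forcing can be bounded in terms of $\mathbf{X}_\ell(t)$ with rate $(1+\tau)^{-15/4}$ and the singular prefactors are absorbed via the \emph{a priori} assumption \eqref{eq:assumption:2}. If you want to keep a Duhamel route here you would need to prove a derivative version of Lemma~\ref{result:linear:estimate:decay} for soft potentials, which is a nontrivial extension; the energy route sidesteps it.

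Step~3 of your plan (feeding the decay rates plus the interpolation \eqref{eq:assumption:soft:1-2} into Proposition~\ref{result:W2:wuqiong}) is correct and matches the paper.
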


Indeed,  our main idea to deduce Proposition \ref{decay:es:nonlinear:result:1}  is based on  the  temporal decay estimate  on the solution of  the corresponding linearized system of VPB system (\ref{eq:f})  and Duhamel's formula. We shall first consider the time decay properties of solutions to  the linearized VPB system with a nonhomogeneous microscopic source
\beq
 \left\{
\begin{array}{ll}\label{linear:esti:f:decay:1}
 \pt_t f^\e +\frac{1}{\e} v\cdot \nabla_x f^\e+\frac{1}{\e} v\cdot\nabla_x\phi^\e\sqrt{\mu}   +\frac{1}{\e^2}Lf^\e
=h^\e, \quad\quad \mathbf{P}h^\e=0,\\[3mm]
\phi^\e= \frac{1}{4\pi|x|}\ast_{x}\int_{\mathbb{R}^3}\sqrt{\mu}f^\e \d v\rightarrow 0\quad\quad  \text{as}\; |x|\rightarrow\infty.
\end{array}
\right.
\eeq
Given the initial data  $f^\e_0=f^\e_0(x,v)$, we formally define
$e^{tB_{\e}}f^\e_0$ to be  the solution to the linearized homogeneous system \eqref{linear:esti:f:decay:1} when $h^\e= 0$.
Then
we state the following result about the  linearized VPB system \eqref{linear:esti:f:decay:1}  for  soft potentials $-3<\gamma<0$.
\begin{lemma}     \label{result:linear:estimate:decay}
Let $-3<\gamma<0, \ell\geq \frac{1}{2}$,  $\frac{3}{2}<J<2$ and $\ell_0>\frac{3}{4}$.
Assume
$\iint_{\mathbb{R}^3\times\mathbb{R}^3}\sqrt{\mu}f^\e_0\d x\d v=0$
and
\beqs
\e^{\frac{1}{2}}\|{w}^{-({\ell+\ell_0})}f^\e_0\|_{L^2_vL^1_x}
+\e^{\frac{1}{2}}\|{w}^{-(\ell+\ell_0)} f^\e_0\|_{L^2_{x,v}}
+\||x|a^\e_0\|_{L^1_x}+\|f^\e_0\|_{L^2_vL^1_x}
+\|  f^\e_0\|_{L^2_{x,v}}<\infty,
\eeqs
Then for any $t\geq 0$, the solution to the   system \eqref{linear:esti:f:decay:1}  satisfies
\bal
\bsp\label{result:linear:estimate:decay:3}
 & \|  e^{{t B_{\e}}}f_0^\e\|_{L_{x,v}^2}^2
 + \| \nabla_x\Delta_x^{-1} \mathbf{P}_0e^{{t B_{\e}}}f_0^\e\|_{L_{x}^2}^2
 +\e \|{w}^{-\ell}
 e^{{t B_{\e}}}f_0^\e\|_{L_{x,v}^2}^2
 \\
\lesssim\;& (1+t)^{-\frac{3}{2}}
\Big(\e\|{w}^{-(\ell+\ell_0)}f^\e_0\|_{L^2_vL^1_x}^2
+\||x|a_0^\e\|_{L^1_x}^2+\|f^\e_0\|_{L^2_vL^1_x}^2
+\e\|{w}^{-(\ell+\ell_0)}  f^\e_0\|_{L^2_{x,v}}^2
+\|  f^\e_0\|_{L^2_{x,v}}^2
\Big)\\
\;&+(1+t)^{-\frac{3}{2}} \Big(
\e^{2}\int_0^t(1+\tau)^{J} \|{w}^{-(\ell+\frac{1}{2})}h^\e(\tau)\|_{{L^2_vL^1_x}}^2\d \tau
+ \e^{2}\int_0^t \| {w}^{-(\ell+\ell_0+\frac{1}{2}) }h^\e(\tau)\|_{{L^2_vL^1_x}}^2\d \tau
 \Big)\\
\;&+(1+t)^{-\frac{3}{2}}\Big(\e^{2}\int_0^t (1+\tau)^{J} \| {w}^{-(\ell+\frac{1}{2})}  h^\e(\tau)\|_{L^2_{x,v}}^2\d s
+\e^{2}\int_0^t  \| {w}^{-(\ell+\ell_0+\frac{1}{2})}  h^\e(\tau)\|_{L^2_{x,v} }^2\d \tau \Big),
\esp
\eal
where $\mathbf{P}_0$ is a projector corresponding to the hyperbolic part of the macro  component defined by  $\mathbf{P}_0f:=\langle f,\,\sqrt{\mu}\rangle_{L^2_v}\sqrt{\mu}$.
\end{lemma}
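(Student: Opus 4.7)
The plan is to Fourier transform \eqref{linear:esti:f:decay:1} in $x$, reducing it for each fixed dual variable $k$ to a parametrized $(t,v)$--system, and then carry out a weighted energy estimate in Fourier space. Defining an instant energy $E_\ell(\hat{f}^\e)$ equivalent to $\|\hat{f}^\e\|_{L^2_v}^2+|\widehat{\nabla_x\phi^\e}|^2+\e\|w^{-\ell}\hat{f}^\e\|_{L^2_v}^2$ together with a dissipation $D_\ell(\hat{f}^\e)$ that combines the coercivity \eqref{spectL} with the standard micro--macro interactive functional lifted to Fourier variables, I would establish
\[
\frac{\d}{\d t} E_\ell(\hat{f}^\e)+\sigma_0 D_\ell(\hat{f}^\e)\lesssim \e^{2}\|w^{-\ell-\frac{1}{2}}\hat{h}^\e\|_{L^2_v}^2.
\]
The crucial feature is the prefactor $\e^{2}$ on the right-hand side, which is exactly the slack that will later allow Duhamel's formula to absorb the singular $\e^{-1}\Gamma(f^\e,f^\e)$ of the full nonlinear VPB system without losing powers of $\e$.

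The main obstacle is the soft-potential degeneracy of the dissipation: since $\nu(v)\sim\langle v\rangle^\gamma$ with $-3<\gamma<0$, the dissipation $D_\ell$ is strictly weaker than $E_\ell$, preventing a naive Gronwall step. To repair this I plan to follow the time-frequency weighted splitting technique of \cite{DYZ2003}: interpolate between the additional weighted bound $\|w^{-(\ell+\ell_0)}\hat{f}^\e\|_{L^2_v}$ and $D_\ell$ to close, at each frequency $k$, an inequality of the form $\frac{\d}{\d t}E_\ell +\frac{c|k|^2}{1+|k|^2} E_\ell\lesssim \cdots$. This is precisely why the hypothesis carries the extra weight $w^{-\ell_0}$ with $\ell_0>\frac{3}{4}$, and integration yields a Fourier-pointwise factor $\exp(-c|k|^2 t/(1+|k|^2))$ whose $L^2_k$ envelope delivers the heat-type rate $(1+t)^{-3/2}$ after a standard low/high-frequency split.

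The Poisson piece $\nabla_x\Delta_x^{-1}\mathbf{P}_0 e^{tB_\e}f_0^\e$ requires special care near $k=0$, where the Coulomb kernel $|k|^{-1}$ is singular. The hypothesis $\iint_{\bbR^3\times\bbR^3}\sqrt{\mu}f_0^\e\,\d x\d v=0$ kills the worst singularity, and the moment bound $\||x|a_0^\e\|_{L^1_x}$ supplies the Taylor cancellation $|\hat{a}_0^\e(k)-\hat{a}_0^\e(0)|\lesssim|k|\,\||x|a_0^\e\|_{L^1_x}$, making $|k|^{-1}|\hat{a}_0^\e(k)|$ square-integrable near the origin. The $L^2_v L^1_x$ ingredients of the initial data are then routed through Hausdorff--Young for the low-frequency regime, while the $L^2_{x,v}$ ingredients control the high-frequency regime via Plancherel; together they produce the stated $(1+t)^{-3/2}$ rate for the homogeneous part.

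Finally, for the nonhomogeneous source I would invoke Duhamel's formula $f^\e(t)=e^{tB_\e}f_0^\e+\int_0^t e^{(t-\tau)B_\e}h^\e(\tau)\,\d\tau$, apply the homogeneous decay estimate to each slice, and split the $\tau$-integral at $t/2$. On $[0,t/2]$ the pointwise bound $(1+t-\tau)^{-3/2}\lesssim (1+t)^{-3/2}(1+\tau)^{J}$ for any $J\in(\tfrac{3}{2},2)$ extracts the desired time-weighted integral of $\|w^{-(\ell+\frac{1}{2})}h^\e\|$ in \eqref{result:linear:estimate:decay:3}, while on $[t/2,t]$ a direct convolution argument that shifts a half power of the weight onto the source yields the unweighted integral carrying the extra factor $w^{-\ell_0}$. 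Summing the four contributions produces \eqref{result:linear:estimate:decay:3}.
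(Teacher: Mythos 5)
Your Step 1 is essentially identical to the paper's: Fourier transform, a weighted Lyapunov functional built from the $L^2_v$-energy, the micro--macro interactive functional, and the velocity-weighted microscopic part, yielding
\[
\frac{\d}{\d t}E_\ell(\hat f^\e)+\sigma_0 D_\ell(\hat f^\e)\lesssim \e^2\|w^{-\ell-\frac12}\hat h^\e\|_{L^2_v}^2,
\]
and you correctly identify the $\e^2$ as the crucial slack. Your Steps 2 and 3, however, diverge from the paper in two places, and both diverge into trouble.

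In Step 2 you claim that interpolating the extra weight $w^{-\ell_0}$ against the dissipation closes an inequality of the form $\frac{\d}{\d t}E_\ell+\frac{c|k|^2}{1+|k|^2}E_\ell\lesssim\cdots$, and that integration then produces a Fourier-pointwise factor $\exp(-c|k|^2 t/(1+|k|^2))$. This is the \emph{hard}-potential mechanism and cannot hold for $-3<\gamma<0$: since $\nu(v)\sim\langle v\rangle^\gamma\to 0$ as $|v|\to\infty$, the microscopic dissipation in $D_\ell$ carries a degenerate $\nu$-weight and does \emph{not} dominate $\rho(k)\|(\mathbf{I}-\mathbf{P})\hat f^\e\|_{L^2_v}^2$, so $D_\ell\not\gtrsim\rho(k)E_\ell$. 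If you try to repair this by interpolation, the remainder term is $\rho(k)^{1+2\ell_0}\|w^{-\ell-\ell_0}\hat f^\e(s)\|_{L^2_v}^2$, which is \emph{not} small uniformly in $s$, so Gronwall does not deliver exponential decay. The paper's Step 2 instead multiplies the Lyapunov inequality by the \emph{polynomial} time-frequency weight $\big(1+\tilde\eta\rho(k)t\big)^J$, splits velocity space along $\{w^{-1}(v)\lessgtr 1+\tilde\eta\rho(k)t\}$, and obtains only algebraic decay $E_\ell(t,k)\lesssim(1+\tilde\eta\rho(k)t)^{-J}(\cdots)$. The constraint $J>\frac32$ (so that $\int_{|k|\le1}(1+\tilde\eta\rho(k)t)^{-J}\d k\lesssim(1+t)^{-3/2}$) and the constraint $\ell_0>\frac34$ (from $2\ell_0=J+p-1$ with $p>1$) both arise from exactly this polynomial step; an exponential envelope would make those constraints superfluous, which is a signal that the mechanism you describe is not the one actually at work.

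In Step 3 you propose Duhamel plus a $t/2$-split for the nonhomogeneous source. The paper does not do this for the microscopic source; instead it keeps $h^\e$ inside the Lyapunov inequality and integrates the \emph{time-frequency-weighted} inequality over $[0,t]$, so the $\e^2$ is inherited directly from pairing the microscopic source $h^\e$ against the $\e^{-2}$-scaled microscopic dissipation. If you go through the homogeneous semigroup $e^{(t-\tau)B_\e}$ instead and apply the homogeneous decay estimate to each slice $h^\e(\tau)$, you do \emph{not} automatically recover the $\e^2$: for microscopic data $g$ with $\mathbf{P}g=0$, the homogeneous bound as proved here reads $E_\ell(e^{tB_\e}g)\lesssim(1+\tilde\eta\rho(k)t)^{-J}\big(\|\hat g\|_{L^2_v}^2+\e\|w^{-\ell-\ell_0}\hat g\|_{L^2_v}^2\big)$, carrying at most a single power of $\e$ (and only on the weighted piece), not $\e^2$ on the plain $L^2$ norm. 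Recovering the $\e^2$ via Duhamel would require a separate microscopic-to-macroscopic gain estimate for the semigroup, which is not established here and is in fact bypassed by the paper's route. Since the $\e^2$ is exactly what later neutralizes the $\e^{-1}$ in $h_1^\e=\e^{-1}\Gamma(f^\e,f^\e)$, this is not a cosmetic loss but a gap that would break the closure in Lemma \ref{decay:es:nonlinear}.
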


\begin{proof} To make the presentation clearly, the proof of \eqref{result:linear:estimate:decay:3} is divided into three steps.

 \emph{Step 1. Weighted $L_{v}^2$-estimate of $\hat{f^{\e}}$.}\; We claim that whenever $\ell>0$,
\bal
\bsp\label{linear:estimate:decay:proof:result:2}
\frac{\d}{\d t}  {E}_{\ell}(\hat{f^{\e}})+\sigma_0 {D}_{\ell}(\hat{f^{\e}})
\lesssim
\e^{2}\|{w}^{-\ell-\frac{1}{2}}\hat{h}^\e\|_{L^2_v}^2
\esp
\eal
holds for any $t>0$ and $k\in \mathbb{R}^3$. Here ${E}_{\ell}(\hat{f^{\e}})$
 and  ${D}_{\ell}(\hat{f^{\e}})$  are given by
\bals
\bsp
{E}_{\ell}(\hat{f^{\e}})=\;&\|\hat{f^{\e}}\|_{L^2_v}^2+\frac{|\hat{a}^\e|^2}{|k|^2}
+
\kappa_1\mathfrak{R}{E}_{lin}(\hat{f}^\e(t))
+\kappa_2\e\|{w}^{-\ell}(\mathbf{I}-\mathbf{P})\hat{f^{\e}}\|_{L^2_v}^2\chi_{|k|\leq 1}\
\\
\;&
+\kappa_2\e\|{w}^{-\ell}\hat{f^{\e}}\|_{L^2_v}^2\chi_{|k|\geq 1},
\\
{D}_{\ell}(\hat{f^{\e}})=\;&
\frac{1}{\e^2}\|(\mathbf{I}-\mathbf{P})\hat{f^{\e}}\|_{L^2_v(\nu)}^2
+\kappa_1\frac{|k|^{2}}{1+|k|^{2}}\|\mathbf{P}\hat f\|_{L^2_v}^2+\kappa_1|\hat{a}^\e|^2
+\frac{\kappa_2}{\e}\|{w}^{-\ell}(\mathbf{I}-\mathbf{P})\hat{f^{\e}}\|_{L^2_v(\nu)}^2,
\esp
\eals
 where $0< \kappa_2\ll \kappa_1\ll 1 $ are chosen later and  ${E}_{lin}(\hat{f}^\e(t))$ is defined in  \eqref{linear:estimate:decay:proof:result:1-3}.

In fact, the proof of \eqref{linear:estimate:decay:proof:result:2} is   similar   to the proof of Proposition \ref{main-weighted-energy-estimate-1}. The difference is that all calculations are made in the Fourier space. Thus, some details in the following proof will be omitted for simplicity. Firstly,
 the Fourier transform in $x$ from \eqref{linear:esti:f:decay:1} gives rise to
\beq
 \left\{
\begin{array}{ll}\label{linear:esti:f:1}
 \pt_t \hat{f}^\e +\frac{1}{\e}i v\cdot k \hat{f}^\e+\frac{1}{\e}i v\cdot k\hat{\phi}^\e\sqrt{\mu}   +\frac{1}{\e^2}L\hat{f}^\e
=\hat{h}^\e, \\[2mm]
|k|^2\hat{\phi}^\e=\hat{a}^\e.
\end{array}
\right.
\eeq
Then taking the $L^2_v(\mathbb{R}^3)$   complex inner product of the first equation in \eqref{linear:esti:f:1}  with $\hat{f^{\e}}$ and employing (\ref{spectL}), we easily deduce
 \bal
\bsp\label{linear:estimate:decay:proof:result:1-1}
\frac{\d}{\d t}\Big(\|\hat{f^{\e}}\|_{L^2_v}^2+\frac{|\hat{a^\e}|^2}{|k|^2}\Big)
 +\frac{\sigma_0}{\e^2}\|(\mathbf{I}-\mathbf{P})\hat{f^{\e}}\|^2_{L^2_v(\nu)}
\lesssim \e^{2} \|\nu^{-\frac{1}{2}}\hat{h}^\e\|_{L^2_v}^2.
\esp
\eal

Secondly,  similar to deriving  \eqref{qkj:esti:diss:2} and
\eqref{qkj:esti:f:L^2:12} from the   VPB system (\ref{eq:f}), we also derive  a macro fluid-type system and the local conservation laws  for the linearized system \eqref{linear:esti:f:1}
\newcommand{\J}{\mathfrak{J}} 
\beqs
 \left\{
\begin{array}{ll}
\displaystyle\sqrt{\mu}:      & \displaystyle \pt_t(\hat{a}^\e-\frac{3}{2}\hat{c}^\e) = \J_1 , \,\quad\qquad\qquad     \\[2mm]
\displaystyle v_l\sqrt{\mu}:  &\displaystyle \pt_t\hat{b}_l+\frac{1}{\e}i k_l(\hat{a}^\e-\frac{3}{2}\hat{c}^\e)+\frac{1}{\e}i k_l\hat{\phi}^\e=\J _2 ^{l}  , \quad\qquad\qquad\\[2mm]
\displaystyle v_l^2\sqrt{\mu}:     & \displaystyle \frac{1}{2}\pt_t \hat{c}^\e+\frac{1}{\e}ik_l\hat{b}_l^\e= \J_3 ^{l}  , \quad\qquad\qquad \\[2mm]
\displaystyle v_lv_j\sqrt{\mu}(l\ne j):  & \displaystyle  \frac{1}{\e}i(k_l\hat{b}_j^\e+k_j\hat{b}_l^\e)= \J _4 ^{l,j}, \,\,\qquad\qquad \\[2mm]
\displaystyle v_l|v|^2\sqrt{\mu}:   & \displaystyle \frac{1}{\e}i k_l\hat{c}^\e= \J _5^l, \quad\qquad\qquad
\end{array}
\right.
\eeqs
and
\beqs
 \left\{
\begin{array}{ll}
\displaystyle \pt_t \hat{a}^\e+\frac{1}{ \e}ik\cdot \hat{b}^\e=0,\\[2mm]
  \displaystyle\pt_t \hat{b}^\e+\frac{1}{\e}i k \hat{\phi}^\e+
   \frac{1}{\e}i k(\hat{a}^\e+\hat{c}^\e)=- \frac{1}{\e}\int_{\mathbb{R}^3}v\cdot \nabla_x (\mathbf{I}-\mathbf{P})f^\e v\sqrt{\mu} \d v
   +\int_{\mathbb{R}^3}\hat{h}^\e v\sqrt{\mu} \d v
   ,\\[2mm]
 \displaystyle\pt_t \hat{c}^\e
 +\frac{2}{3\e}ik\cdot \hat{b}^\e=-
 \frac{1}{3\e}\int_{\bbR^3}v\cdot
 \nabla_x(\mathbf{I}-\mathbf{P})\hat{f}^\e|v|^2\sqrt{\mu}\d v
 + \frac{1}{3}\int_{\mathbb{R}^3}\hat{h}^\e |v|^2\sqrt{\mu} \d v.
\end{array}
\right.
\eeqs
Here the right-hand terms $ \J_1,\J _2 ^{l},\J_3 ^{l},\J _4 ^{l,j}$ and $\J _5^l (l,j=1,2,3) $ are all of the form
\bals
\bsp
-\left\langle\pt_t(\mathbf{I}-\mathbf{P}) \hat{f}^\e, \zeta \right\rangle_{L^2_v}
+\left\langle-\frac{1}{\e}v\cdot\nabla_x (\mathbf{I}-\mathbf{P}) \hat{f}^\e-\frac{1}{\e^2}L \hat{f}^\e+ \hat{h}^\e, \zeta \right\rangle_{L^2_v},
\esp
\eals
where $\zeta$ represents a (different) linear combination of the basis $[\sqrt{\mu},\,v_l\sqrt{\mu},\,v_lv_j\sqrt{\mu},\,v_l|v|^2\sqrt{\mu} ]$.
Thus, thanks to the above fluid system, as in Lemma \ref{prop:estimate:dissipation}, we establish  the macro  dissipation of ${|k|^2}\|\mathbf{P}\hat{f^{\e}}\|_{L^2_v}^2$ in the complex space as follows
\bal
\bsp\label{linear:estimate:decay:proof:result:1-2}
\e\frac{\d}{\d t}\mathfrak{R}{E}_{lin}(\hat{f}^\e(t))
 +
 \frac{|k|^2}{1+|k|^2}\|\mathbf{P}\hat{f^{\e}}\|_{L^2_v}^2+|\hat{a}^{\e}|^2
\lesssim  \frac{1}{\e^2}\|(\mathbf{I}-\mathbf{P})\hat{f^{\e}}\|_{L^2_v(\nu)}^2+ \e^{2} \|\nu^{-\frac{1}{2}}\hat{h}^\e\|_{L^2_v}^2,
\esp
\eal
where ${E}_{lin}(\hat{f}^\e(t))$ is given by
\bal
\bsp\label{linear:estimate:decay:proof:result:1-3}
{E}_{lin}(\hat{f}^\e(t))
=\;&\frac{1}{1+|k|^2}\eta_0\mathfrak{R}(ik\hat{a}^\e|\hat{b}^\e)
+\frac{1}{1+|k|^2}\sum_{l=1}^3\mathfrak{R}\left(ik_l\hat{c}^\e\Big|
\int_{\mathbb{R}^3}(\mathbf{I}-\mathbf{P})\hat{f^{\e}}g_c^{l}\d v\right)\\
\;&+\frac{1}{1+|k|^2}\sum_{l,j=1}^3
\mathfrak{R}\left(ik_j\hat{b}^\e_l\Big|\int_{\mathbb{R}^3}(\mathbf{I}-\mathbf{P})\hat{f^{\e}}g_b^{l,j}\d v\right)
\esp
\eal
with $g_c^{l}$ and $ g_b^{l,j}$  representing some linear combinations of
$\left[\sqrt{\mu},\,v_l\sqrt{\mu},\,v_lv_j\sqrt{\mu},\,v_l|v|^2\sqrt{\mu} \right]$.

Thirdly,  applying $(\mathbf{I}-\mathbf{P})$ to the first equation in \eqref{linear:esti:f:1} leads to
\bals
 \pt_t(\mathbf{I}-\mathbf{P}) \hat{f}^\e +\frac{1}{\e}i v\cdot k (\mathbf{I}-\mathbf{P})\hat{f}^\e +\frac{1}{\e^2}L(\mathbf{I}-\mathbf{P})\hat{f}^\e
\;&=\frac{1}{\e}\left(\mathbf{P}iv\cdot k\hat{f}^\e- iv\cdot k\mathbf{P}\hat{f}^\e\right)+\hat{h}^\e. 
\eals
Then
 taking the $L^2_v(\mathbb{R}^3)$  complex inner product of the above equation with $\e w^{-2\ell}(\mathbf{I}-\mathbf{P})\hat{f^{\e}}$, we are able to deduce  the velocity weighted estimate for the pointwise time-frequency variables $(t, k)$ over $|k|\leq1$
\bal
\bsp\label{linear:estimate:decay:proof:result:3}
\;&\e \frac{\d}{\d t} \|w^{-\ell}(\mathbf{I}-\mathbf{P})\hat{f^{\e}}\|_{L^2_v}^2\chi_{|k|\leq1}
+\frac{\sigma_0}{\e}
\|w^{-\ell}(\mathbf{I}-\mathbf{P})\hat{f^{\e}}\|_{L^2_v(\nu)}^2\chi_{|k|\leq1}\\
\lesssim\;&
\frac{1}{\e}
\|(\mathbf{I}-\mathbf{P})\hat{f^{\e}}\|_{L^2_v(\nu)}^2
+
\frac{|k|^2}{1+|k|^2}\|\mathbf{P}\hat f\|_{L^2_v}
+
\|(\mathbf{I}-\mathbf{P})\hat{f^{\e}}\|_{L^2_v(\nu)}^2
+\e^{3}\|{w}^{-\ell-\frac{1}{2}}\hat{h}^\e\|_{L^2_v}^2.
\esp
\eal
On the other hand,  applying the  $L^2_v(\mathbb{R}^3)$ complex inner product of \eqref{linear:esti:f:1} with $\e w^{-2\ell} \hat{f}^\e$,
we  obtain
\bal
\bsp\label{linear:estimate:decay:proof:result:4}
\;&\e\frac{\d}{\d t}  \|w^{-\ell}\hat{f^{\e}}\|_{L^2_v}^2\chi_{|k|\geq1}
+\frac{\sigma_0}{\e}
\|w^{-\ell}(\mathbf{I}-\mathbf{P})\hat{f^{\e}}\|_{L^2_v(\nu)}^2\chi_{|k|\geq1}\\
\lesssim\;&\frac{|k|^2}{1+|k|^2}\|\mathbf{P}\hat f\|_{L^2_v}+\|(\mathbf{I}-\mathbf{P})\hat{f^{\e}}\|_{L^2_v(\nu)}^2
+\frac{1}{\e^2}
\|(\mathbf{I}-\mathbf{P})\hat{f^{\e}}\|_{L^2_v(\nu)}^2
+\e^{2}\|{w}^{-\ell-\frac{1}{2}}\hat{h}^\e\|_{L^2_v}^2.
\esp
\eal

Finally,   choosing $0< \kappa_2\ll \kappa_1\ll 1 $ small enough and  taking the linear combination
 (\ref{linear:estimate:decay:proof:result:1-1})$+$ (\ref{linear:estimate:decay:proof:result:1-2})$\times\kappa_1+$ (\ref{linear:estimate:decay:proof:result:3})$\times\kappa_2+$
\eqref{linear:estimate:decay:proof:result:4}$\times\kappa_2$, we  conclude our desired claim \eqref{linear:estimate:decay:proof:result:2}.

\emph{Step 2.  Time frequency Lyapunov inequality in integral form.}\;
Let $0<\widetilde{\eta}\ll 1$ and $J>0$ be a constant chosen later. Multiplying \eqref{linear:estimate:decay:proof:result:2} by
$\left( 1+\frac{\widetilde{\eta}|k|^2}{1+|k|^2}t \right)^{J}$, we have
\bal
\bsp\label{linear:decay:proof:1}
\;&\frac{\d}{\d t}\left[ \left( 1+\frac{\widetilde{\eta}|k|^2}{1+|k|^2}t \right)^{J} {E}_{\ell}(\hat{f^{\e}})\right]+\sigma_0  \left( 1+\frac{\widetilde{\eta}|k|^2}{1+|k|^2}t \right)^{J} {D}_{\ell}(\hat{f^{\e}})\\
\lesssim\;& \widetilde{\eta}  J\left( 1+\frac{\widetilde{\eta}|k|^2}{1+|k|^2}t \right)^{J-1} \frac{|k|^2}{1+|k|^2} {E}_{\ell}(\hat{f^{\e}})
+ \e^{2}\left( 1+\frac{\widetilde{\eta}|k|^2}{1+|k|^2}t \right)^{J} \|{w}^{-\ell-\frac{1}{2}}\hat{h}^\e\|_{L^2_v}^2.
\esp
\eal
Choosing $0< \kappa_2\ll \kappa_1\ll 1 $ small enough, we get
\bals
\bsp
{E}_{\ell}(\hat{f^{\e}})&\thicksim\|\hat{f^{\e}}\|_{L^2_v}^2+
\frac{|\hat{a}^\e|^2}{|k|^2}
+\e\|{w}^{-\ell}\hat{f^{\e}}\|_{L^2_v}^2,\\
&\lesssim \|(\mathbf{I}-\mathbf{P})\hat{f^{\e}}\|_{L^2_v}^2
+\e\|{w}^{-\ell}(\mathbf{I}-\mathbf{P})\hat{f^{\e}}\|_{L^2_v}^2+
\frac{|\hat{a}^\e|^2}{|k|^2}+\|\mathbf{P}\hat{f^{\e}}\|_{L^2_v}^2.
\esp
\eals

Then  similar as
\cite{DYZ2003}, by splitting the velocity integration into  $\left\{w^{-1}(v)\leq  1+\frac{\widetilde{\eta}|k|^2}{1+|k|^2}t\right \}$
and $\left\{w^{-1}(v)\geq 1+\frac{\widetilde{\eta}|k|^2}{1+|k|^2}t\right \}$, we get that for  $p>1$ and $2\ell_0:=J+p-1$,  the first term on the right-hand side of  \eqref{linear:decay:proof:1} is bounded by
\bals
\frac{\sigma_0}{2}  \left( 1+\frac{\widetilde{\eta}|k|^2}{1+|k|^2}t \right)^{J} {D}_{\ell}(\hat{f^{\e}})
+ J\left( 1+\frac{\widetilde{\eta}|k|^2}{1+|k|^2}t \right)^{-p}\frac{\widetilde{\eta}|k|^2}{1+|k|^2} {E}_{\ell+\ell_0}(\hat{f^{\e}})
\quad\text{for} \quad \ell\geq \frac{1}{2}.
\eals
Furthermore, utilizing  \eqref{linear:estimate:decay:proof:result:2} implies
\bals
 {E}_{\ell+\ell_0}(\hat{f^{\e}})\lesssim {E}_{\ell+\ell_0}(\hat{f^{\e}_0})
 +\e^{2}\int_{0}^t \|{w}^{-\ell-\ell_0-\frac{1}{2}}\hat{h}^\e(\tau)\|_{L^2_v}^2\d \tau.
\eals
Consequently, noting $p>1$, by  the above two estimates and  \eqref{linear:decay:proof:1}, we have
\bal
\bsp\label{linear:decay:proof:5}
{E}_{\ell}(\hat{f^{\e}})
\lesssim\;& \left( 1+\frac{\widetilde{\eta}|k|^2}{1+|k|^2}t \right)^{-J}
\Big({E}_{\ell+\ell_0}(\hat{f_0^{\e}})
+\e^2\int_0^t\|w^{-\ell-\ell_0-\frac{1}{2}}\hat{h}^\e(\tau)\|_{L^2_v}^2\d \tau\Big)\\
\;&+\left( 1+\frac{\widetilde{\eta}|k|^2}{1+|k|^2}t \Big)^{-J}\e^2\int_0^t\Big( 1+\frac{\widetilde{\eta}|k|^2}{1+|k|^2}\tau \right)^{J}\|w^{-\ell-\frac{1}{2}}\hat{h}^\e(\tau)\|_{L^2_v}^2\d \tau
\esp
\eal
for any $t>0$ and $k\in\mathbb{R}^3.$

\medskip

\emph{Step 3.  Temporal decay  of  solution to linearized VPB system.}\;
{Noting that whenever} $J>\frac{3}{2},$ we have
\bals
\bsp
\int_{|k|\leq1}\left(1+\frac{\widetilde{\eta}|k|^2}{1+|k|^2}t \right)^{-J}\d k
\lesssim(1+t)^{-\frac{3}{2}},\qquad
\sup_{|k|\geq1}\left(1+\frac{\widetilde{\eta}|k|^2}{1+|k|^2}t \right)^{-J}
\lesssim(1+t)^{-J}.
\esp
\eals
 Utilizing the above two estimates  and the initial condition in Lemma \ref{result:linear:estimate:decay},       integrating \eqref{linear:decay:proof:5} over $\mathbb{R}^3_k$ and then employing a similar process as in \cite{DYZ2003}  by splitting $\mathbb{R}^3_k=\{|k| \leq 1\} \cup \{|k| \geq 1\}$, we
  derive the desired temporal decay property \eqref{result:linear:estimate:decay:3}.
This completes the proof.
\end{proof}

Secondly,    we  establish   the temporal decay estimate  on $\|\mathbf{P}f^\e\|_{L^2_{x,v}}$ and $\|\nabla_x\phi^\e\|_{L^2_{x}}$ with the help of Lemma \ref{result:linear:estimate:decay}.

\begin{lemma}\label{decay:es:nonlinear}
Under the assumptions of Proposition \ref{decay:es:nonlinear:result:1}, we have
\bal\label{hongguan:decay:result}
\|\mathbf{P}f^\e(t)\|_{L^2_{x,v}}^2+\|\nabla_x\phi^\e(t)\|_{L^2_{x}}^2
\lesssim (1+t)^{-\frac{3}{2}}\left(\mathbf{\kappa}_0^2
+\delta\mathbf{X}_\ell(t)+\mathbf{X}_\ell^2(t)\right)
\eal
for any $0\leq t\leq T.$
\end{lemma}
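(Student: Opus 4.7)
The plan is to combine Duhamel's formula with the linear decay estimate of Lemma~\ref{result:linear:estimate:decay}. First I would view \eqref{eq:f} as the linearized system \eqref{linear:esti:f:decay:1} with nonhomogeneous source
\begin{equation*}
h^\e \;=\; \frac{\nabla_x\phi^\e\cdot\nabla_v(\sqrt\mu f^\e)}{\sqrt\mu} \;+\; \frac{1}{\e}\Gamma(f^\e,f^\e).
\end{equation*}
Since $\mathbf{P}\Gamma(f^\e,f^\e)=0$ by the collision invariants, and the macroscopic projection of the field term only produces a quadratic fluid source of the form $\nabla_x\phi^\e\cdot(\text{moments of }f^\e)$ which is exactly the nonlinearity accommodated by the $\||x|a^\e_0\|_{L^1_x}$ moment in Lemma~\ref{result:linear:estimate:decay}, Duhamel's formula reads $f^\e(t)=e^{tB_\e}f^\e_0+\int_0^t e^{(t-s)B_\e}h^\e(s)\,\d s$. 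The quantity $\|\nabla_x\phi^\e(t)\|_{L^2_x}$ appearing on the left-hand side of \eqref{hongguan:decay:result} is controlled by $\|\nabla_x\Delta_x^{-1}\mathbf{P}_0 f^\e(t)\|_{L^2_x}$ via the Poisson equation, and $\|\mathbf{P}f^\e(t)\|_{L^2_{x,v}}\le\|f^\e(t)\|_{L^2_{x,v}}$, so both are bounded by the left-hand side of \eqref{result:linear:estimate:decay:3}.

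Applying Lemma~\ref{result:linear:estimate:decay} to the free part $e^{tB_\e}f^\e_0$, every initial-data norm on its right-hand side is dominated by $\kappa_0^2$ defined in \eqref{initial-data-soft}, where the slightly shifted weight exponents $\ell+1$ (instead of $\ell+\ell_0$) provide the margin needed to close in $\mathbf{X}_\ell(t)$ using Proposition~\ref{main-weighted-energy-estimate-1}. This gives at once the desired $(1+t)^{-3/2}\kappa_0^2$ contribution.

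For the Duhamel integral I would apply Lemma~\ref{result:linear:estimate:decay} by treating $h^\e(s)$ as a new initial datum at time $s$, producing time integrals of $\|w^{-(\ell+\frac12)}h^\e(\tau)\|_{L^2_v L^1_x}^2$, $\|w^{-(\ell+\ell_0+\frac12)}h^\e(\tau)\|_{L^2_v L^1_x}^2$ and their $L^2_{x,v}$ analogues, weighted by $(1+\tau)^{J}$ with $\tfrac{3}{2}<J<2$. The collisional piece is estimated via Lemma~\ref{es-energy-Nonlinear}, producing bilinears of the type $\|\mathbf{P}f^\e\|_{L^2_{x,v}}\|f^\e\|_{L^2_{x,v}(\nu)}+\|w_\vartheta f^\e\|_{L^\infty_{x,v}}\|(\mathbf{I}-\mathbf{P})f^\e\|_{L^2_{x,v}(\nu)}$ (with the spatial $L^1_x$ norm extracted by H\"older against an $L^2_x$ factor). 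The field piece is handled by H\"older and Sobolev embedding, giving products like $\|\nabla_x\phi^\e\|_{L^p_x}\|\nabla_v f^\e\|_{L^q_{x,v}}$. Each bilinear is then translated into a product of components of $\mathbf{X}_\ell(t)$, and the calibrated rates $\|\mathbf{P}f^\e\|_{L^2_{x,v}}\lesssim(1+\tau)^{-3/4}\mathbf{X}_\ell(t)^{1/2}$, $\|\nabla_x f^\e\|_{H^1_xL^2_v}\lesssim(1+\tau)^{-5/4}\mathbf{X}_\ell(t)^{1/2}$ and $\e^{1/2}\|w_\vartheta f^\e\|_{L^\infty_{x,v}}\lesssim(1+\tau)^{-5/4}\mathbf{X}_\ell(t)^{1/2}$ ensure that each bilinear decays faster than $(1+\tau)^{-(J+1)}$. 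Thus the $(1+\tau)^J$-weighted time integrals are uniformly bounded by $\mathbf{X}_\ell(t)+\mathbf{X}_\ell(t)^2$, and the contributions involving $\|w_\vartheta f^\e\|_{L^\infty_{x,v}}$ with their attached $\e$-factors collapse to a $\delta\mathbf{X}_\ell(t)$ piece under the smallness \eqref{energy-assumptition-soft}.

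The main obstacle is the simultaneous bookkeeping of powers of $\e$ and of time-decay exponents: the singular $\e^{-1}$ multiplying $\Gamma$ in $h^\e$ must be absorbed by the $\e^2$ prefactors standing in front of the source norms on the right of \eqref{result:linear:estimate:decay:3}, while every bilinear in $h^\e$ has to deliver enough temporal decay to beat the $(1+\tau)^J$ weight. The choice $\tfrac{3}{2}<J<2$, the shifted weights in the definition of $\kappa_0$, and the split of $\mathbf{X}_\ell(t)$ into $L^2$, $H^1_x$ and $\e$-weighted $L^\infty_{x,v}$ pieces are all calibrated precisely to keep this budget in balance; the hardest case is the bilinear $\tfrac{1}{\e}\|(\mathbf{I}-\mathbf{P})f^\e\|_{L^2(\nu)}\cdot\|\mathbf{P}f^\e\|_{L^2_{x,v}}$, which only just decays fast enough, and it is here that the restriction $J<2$ is essential.
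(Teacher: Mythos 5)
Your general strategy — Duhamel plus the linearized decay estimate plus nonlinear estimates to close in $\mathbf{X}_\ell(t)$ — is the right one, but there is a real structural gap: you apply Lemma~\ref{result:linear:estimate:decay} to the entire source
$h^\e=\nabla_x\phi^\e\cdot\nabla_v(\sqrt\mu f^\e)/\sqrt\mu+\frac1\e\Gamma(f^\e,f^\e)$,
yet that lemma explicitly requires the source to be \emph{microscopic}, $\mathbf P h^\e=0$. Only the collision piece $h_1^\e=\frac1\e\Gamma(f^\e,f^\e)$ is microscopic; the field piece $h_2^\e=\nabla_x\phi^\e\cdot\nabla_v f^\e-\frac12 v\cdot\nabla_x\phi^\e f^\e$ is not (only the weaker statement $\mathbf P_0(h_1^\e+h_2^\e)=0$ holds). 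Your remark that the macroscopic projection of $h_2^\e$ ``is accommodated by the $\||x|a_0^\e\|_{L^1_x}$ moment'' does not help: that moment controls the Poisson singularity coming from the \emph{initial} density $a_0^\e$, not a macroscopic contribution in the source; the proof of Lemma~\ref{result:linear:estimate:decay} uses $\mathbf P\hat h^\e=0$ in an essential way to obtain the pivotal $\e^2$ prefactor in front of the source norms in \eqref{linear:estimate:decay:proof:result:2}, and that argument simply does not survive if $\mathbf P h^\e\neq0$.

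The correct bookkeeping is to split $h^\e=h_1^\e+h_2^\e$ \emph{before} applying Duhamel. Since $\mathbf P h_1^\e=0$ and $h_1^\e$ carries the $\e^{-1}$ singularity, it is $h_1^\e$ — and only $h_1^\e$ — that is fed into Lemma~\ref{result:linear:estimate:decay}, producing the $\e^2(1+t)^{-3/2}\int_0^t(1+\tau)^J\|w^{-\cdots}h_1^\e\|^2\,\d\tau$ blocks. The field piece $h_2^\e$ is not $\e$-singular and is handled by the ordinary Duhamel integral: one applies the free-evolution decay estimate to $e^{(t-\tau)B_\e}h_2^\e(\tau)$ treating $h_2^\e(\tau)$ as initial data, which gives terms of the form $\int_0^t(1+t-\tau)^{-3/2}\big(\e\|w^{-(\frac12+\ell_0)}h_2^\e(\tau)\|^2_{L^2_vL^1_x}+\|h_2^\e(\tau)\|^2_{L^2_vL^1_x}\big)\d\tau$ and the $L^2_{x,v}$ analogues — no $\e^2$ gain, none needed. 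A secondary inaccuracy: the $\delta\,\mathbf X_\ell(t)$ term does not arise from the smallness assumption directly on the nonlinear bound; it arises because the $(1+\tau)^J$-weighted integral of $h_1^\e$ produces a factor $\int_0^t(1+\tau)^{J-3}\mathbf D_\ell^{\mathbf s}(\tau)\,\d\tau\lesssim\int_0^\infty\mathbf D_\ell^{\mathbf s}(\tau)\,\d\tau\lesssim\mathbf E_\ell^{\mathbf s}(0)\lesssim\delta$ (which is where $J<2$ is really used), while your final sentence attributes the constraint $J<2$ to a different bilinear.
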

\begin{proof}By Duhamel's principle, the solution $f^\e$ to
  the   VPB system (\ref{eq:f}) can be expressed as
\bal\label{hongguan:decay:equation:1}
f^\e=e^{tB_{\e}}f^\e_0+\int_{0}^t e^{(t-s)B_{\e}}h_1^\e(s)\d s
+\int_{0}^t e^{(t-s)B_{\e}}h_2^\e(s)\d s,
\eal
where the nonlinear terms $h_1^\e$ and $h_2^\e$ are given by
\bal\label{hongguan:decay:equation:2}
h_1^\e=\frac{1}{\e}\Gamma(f^\e, f^\e),\quad
h_2^\e=\nabla_x \phi^\e\cdot \nabla_v f^\e-\frac{1}{2}v\cdot\nabla_x \phi^\e  f^\e.
\eal
Note that $\mathbf{P}_0(h_1^\e+h_2^\e)=0$ and $\mathbf{P}h_1^\e=0$ for all $t\geq0$.
Then applying   Lemma \ref{result:linear:estimate:decay} to   \eqref{hongguan:decay:equation:1}, we have
\bal
\bsp\label{hongguan:decay:result:1}
 &\| \mathbf{P}f^\e(t)\|_{L_{x,v}^2}^2
 + \| \nabla_x\phi^\e(t)\|_{L_{x}^2}^2+\e\| w^{-\frac{1}{2}}f^\e(t)\|_{L_{x,v}^2}^2
\\
\lesssim\;& (1+t)^{-\frac{3}{2}}
\left(\e\|{w}^{-(\frac{1}{2}+\ell_0)}f^\e_0\|_{{L^2_{v}L^1_{x}}}^2
 +\|(1+|x|)f^\e_0\|_{{L^2_{v}L^1_{x}}}^2
+\e\|{w}^{-(\frac{1}{2}+\ell_0)}  f^\e_0\|_{L^2_{x,v}}^2
+\|  f^\e_0\|_{L^2_{x,v}}^2
\right)\\
\;&+(1+t)^{-\frac{3}{2}} \Big(
\e^{2}\int_0^t(1+\tau)^{J} \|{w}^{-1}h_1^\e(\tau)\|_{L^2_vL^1_x}^2\d \tau
+\e^{2}\int_0^t \| {w}^{-(1+\ell_0) }h_1^\e(\tau)\|_{{L^2_vL^1_x}}^2\d \tau
\Big)\\
\;&+(1+t)^{-\frac{3}{2}}\Big(\e^{2}\int_0^t (1+\tau)^{J} \| {w}^{-1}  h_1^\e(\tau)\|_{L^2_{x,v}}^2\d \tau
+\e^{2}\int_0^t  \| {w}^{-(1+\ell_0)}  h_1^\e(\tau)\|_{L^2_{x,v} }^2\d \tau \Big)\\
\;&+\int_0^t (1+t-\tau)^{-\frac{3}{2}}\Big(\e\| {w}^{-(\frac{1}{2}+\ell_0) }h_2^\e(\tau)\|_{{L^2_vL^1_x}}^2+\| h_2^\e(\tau)\|_{{L^2_vL^1_x}}^2\Big)\d \tau\\
\;&+\int_0^t (1+t-\tau)^{-\frac{3}{2}}\Big(\e\| {w}^{-(\frac{1}{2}+\ell_0) }h_2^\e(\tau)\|_{L^2_{x,v}}^2+\| h_2^\e(\tau)\|_{L^2_{x,v}}^2\Big)\d \tau,
L^2_vL^1_x\leq  L^1_xL^2_v
\esp
\eal
where $\ell_0>\frac{3}{4}$ is  a constant.

On  one hand, applying the similar analysis as that of (6.10)  in \cite{DYZ2003}, the H\"{o}lder inequality,
 the definition \eqref{1:estimate:decay:10} of $\mathbf{X}_{\ell}(t)$ and  the assumption $\ell\geq \max\{1,-\frac{1}{\gamma}\}+\frac{1}{2}+\ell_{0}$,  we   directly infer
\bals
\bsp
\e^2\|w^{-1} h_1^\e(t)\|_{L^2_{v}L^1_{x}}^2
=
\;&\big\|w^{-1} \Gamma(f^\e, f^\e)(t)\big\|_{L^2_{v}L^1_{x}}^2\\
\lesssim \;&\|f^\e\|_{L_{v}^\infty L_{x}^2}^2
\|f^\e\|_{L_{x,v}^2}^2
\\
\lesssim \;&\Big(\|w^{-1}w\nabla_vf^\e\|_{ L_{x,v}^2}^2+
\|w^{-2}w^2\nabla_v^2f\|_{ L_{x,v}^2}^2\Big)
\|f^\e\|_{L_{x,v}^2}^2\\
\lesssim \;&(1+t)^{-3}\left[\mathbf{X}_\ell(t)\right]^2
.
\esp
\eals
Similarly, we are also able to prove
\bals
\bsp
\e^2\|w^{-(1+\ell_0)} h_1^\e(t)\|_{L^2_{v}L^1_{x}}^2
\lesssim \|f^\e\|_{ L_{v}^\infty L_{x}^2}^2
\|w^{-{\ell_0}}f^\e\|_{L_{x,v}^2}^2
\lesssim \;&(1+t)^{-3}\left[\mathbf{X}_\ell(t)\right]^2
.
\esp
\eals
In terms of the bound \eqref{Nonlinear:term:proof:1}, the H\"{o}lder inequality,
  \eqref{1:estimate:decay:10} and  the assumption $\ell\geq \max\{1,-\frac{1}{\gamma}\}+\frac{1}{2}+\ell_{0}$,  we   get
\bals
\bsp
&\e^2\|w^{-1} h_1^\e(t)\|_{L^2_{x,v}}^2
=
\big\|w^{-\frac{1}{2}}\nu^{-\frac{1}{2}} \Gamma(f^\e, f^\e)(t)\big\|_{L^2_{x,v}}^2
\\
\lesssim \;&\| w_\vartheta f^\e(t)\|_{L_{x,v}^\infty}^2
\|w^{-\frac{1}{2}}(\mathbf{I}-\mathbf{P})f^\e(t)\|_{L_{x,v}^2(\nu)}^2
 +\| \mathbf{P}f^\e(t)\|_{L_{x}^\infty L_{v}^2}^2
\|w^{-\frac{1}{2}}f^\e(t)\|_{L_{x,v}^2(\nu)}^2
\\
\lesssim\;& \sup_{0\leq \tau \leq t}\Big\{(1+\tau)^{\frac{5}{2}}\e\| w_\vartheta f^\e(\tau)\|_{L_{x,v}^\infty}^2\Big\}(1+t)^{-\frac{5}{2}}
\left[\mathbf{D}_{\frac{1}{2}}^\mathbf{s}(t)\right]^\frac{1}{2}
\| f^\e(t)\|_{L_{x,v}^2}
+\| f^\e(t)\|_{H_{x}^2 L_{v}^2}^4\\
\lesssim \;&
\mathbf{X}_\ell(t)(1+t)^{-3}
\mathbf{D}_{\frac{1}{2}}^\mathbf{s}(t)
+\mathbf{X}_\ell(t)(1+t)^{-2}
\| f^\e(t)\|_{L_{x,v}^2}^2
+
\| f^\e(t)\|_{H_{x}^2 L_{v}^2}^4
\\
\lesssim \;&
\mathbf{X}_\ell(t)(1+t)^{-3}
\mathbf{D}_{\ell}^\mathbf{s}(t)
+(1+t)^{-3}\left[\mathbf{X}_\ell(t)\right]^2
.
\esp
\eals
Similarly, we also have
\bals
\bsp
\;&\e^2\|w^{-(1+\ell_0)} h_1^\e(t)\|_{L^2_{x,v}}^2\\
\lesssim \;&\| w_\vartheta f^\e\|_{L_{x,v}^\infty}^2
\|w^{-(\frac{1}{2}+\ell_0)}
(\mathbf{I}-\mathbf{P})f^\e\|_{L_{x,v}^2(\nu)}^2
 +\| \mathbf{P}f^\e\|_{L_{x}^\infty L_{v}^2}^2
\|w^{-(\frac{1}{2}+\ell_0)}f^\e\|_{L_{x,v}^2(\nu)}^2\\
\lesssim \;&
\mathbf{X}_\ell(t)(1+t)^{-3}
\mathbf{D}_{\ell}^\mathbf{s}(t)
+(1+t)^{-3}\left[\mathbf{X}_\ell(t)\right]^2
.
\esp
\eals


On the other hand,
exploiting the expression \eqref{hongguan:decay:equation:2} of $h_2^\e$, the H\"{o}lder inequality and  the assumption $\ell\geq \max\{1,-\frac{1}{\gamma}\}+\frac{1}{2}+\ell_{0}$, we find
\bals
\bsp
&
\e^\frac{1}{2}\| {w}^{-(\frac{1}{2}+\ell_0) }h_2^\e(t)\|_{{L^2_vL^1_x}}
+\| h_2^\e(t)\|_{L^2_vL^1_x}\\
\lesssim \;&
\|\nabla_x \phi^\e(t)\|_{L^2_x}\left(\e^\frac{1}{2}\|{w}^{-(1+\frac{1}{2}+\ell_0) }w\nabla_v f^\e(t)\|_{L^2_{x,v}}+\e^\frac{1}{2}\|{w}^{-(-\frac{1}{\gamma}+\frac{1}{2}+\ell_0) } f^\e(t)\|_{L^2_{x,v}}\right)\\
 \;&+\|\nabla_x \phi^\e(t)\|_{L^2_x}
\left(\left\|w^{-1}w\nabla_v f^\e(t)\right\|_{L^2_{x,v}}+\|w^{-
(-\frac{1}{\gamma})} f^\e(t)\|_{L^2_{x,v}}\right)\\
\lesssim \;&
\mathbf{E}_{\ell}^\mathbf{s}(t)\lesssim(1+t)^{-\frac{3}{2}}\mathbf{X}_\ell(t).
\esp
\eals
Taking the similar argument  as above and    the Sobolev embedding inequality,  we also deduce that
\bals
\bsp
&
\e^\frac{1}{2}\| {w}^{-(\frac{1}{2}+\ell_0) }h_2^\e(t)\|_{{L^2_{x,v}}}
+\| h_2^\e(t)\|_{L^2_{x,v}}\\
\lesssim \;&
\|\nabla_x \phi^\e(t)\|_{L^\infty_x}\left(\e^\frac{1}{2}\|{w}^{-(1+\frac{1}{2}+\ell_0) }w\nabla_v f^\e(t)\|_{L^2_{x,v}}+\e^\frac{1}{2}\|{w}^{-(-\frac{1}{\gamma}+\frac{1}{2}+\ell_0) } f^\e(t)\|_{L^2_{x,v}}\right)
\\
\;&+\|\nabla_x \phi^\e(t)\|_{L^\infty_x}
\left(\left\|w^{-1}w\nabla_v f^\e(t)\right\|_{L^2_{x,v}}+\|w^{-
(-\frac{1}{\gamma})} f^\e(t)\|_{L^2_{x,v}}\right)\\
\lesssim\;&(1+t)^{-\frac{3}{2}}\mathbf{X}_\ell(t).
\esp
\eals

Therefore, substituting the above estimates into \eqref{hongguan:decay:result:1} and using $J<2$, we conclude
\bal
\bsp\label{hongguan:decay:result:1-6}
 & \| \mathbf{P}f^\e(t)\|_{L_{x,v}^2}^2
 + \| \nabla_x\phi^\e(t)\|_{L_{x}^2}^2+\e\| w^{-\frac{1}{2}}f^\e(t)\|_{L_{x,v}^2}^2
\\
\lesssim\;& (1+t)^{-\frac{3}{2}}
\kappa_0^2+(1+t)^{-\frac{3}{2}}
\int_0^t(1+\tau)^{J} (1+\tau)^{-3}
\mathbf{D}_{\ell}^\mathbf{s}(\tau)\d \tau
\mathbf{X}_\ell(t)\\
\;&+(1+t)^{-\frac{3}{2}}\int_0^t (1+\tau)^{J} (1+\tau)^{-3}\d \tau\left[\mathbf{X}_\ell(t)\right]^2
+\int_0^t (1+t-\tau)^{-\frac{3}{2}}(1+\tau)^{-3}\d \tau\left[\mathbf{X}_\ell(t)\right]^2
\\
\lesssim\;& (1+t)^{-\frac{3}{2}}\kappa_0^2
+(1+t)^{-\frac{3}{2}}\delta \mathbf{X}_\ell(t)
+(1+t)^{-\frac{3}{2}} \left[\mathbf{X}_\ell(t)\right]^2,
\esp
\eal
which leads to \eqref{hongguan:decay:result} immediately. This completes the proof.
\end{proof}

Finally, with Lemma \ref{decay:es:nonlinear} in hand, we    give the proof of Proposition \ref{decay:es:nonlinear:result:1}.
\begin{proof}[\textbf{Proof of Proposition \ref{decay:es:nonlinear:result:1}}]
To make the presentation clear, we divide the proof of Proposition \ref{decay:es:nonlinear:result:1} into three steps.

\emph{Step 1.  Optimal time decay estimate on $\mathbf{E}_{\ell}^\mathbf{s}(t)$.}\;
From Proposition \ref{main-weighted-energy-estimate-1} and the \emph{a priori} assumption \eqref{eq:assumption:2}, we have
  \bal\label{main-result:1}
  \frac{\d}{\d t}\mathbf{E}_{\ell}^\mathbf{s}(t)+\mathbf{D}_{\ell}^\mathbf{s}(t)
  \lesssim0.
  \eal
Take $0 <p<\frac{1}{2}$ small enough. Multiplying the uniform energy  inequality \eqref{main-result:1} by $(1 + t)^{\frac{3}{2}+p}$, and further  taking the time integration over $[0, t]$, we deduce
  \bal
  \bsp\label{decay:soft:result:2}
\;&(1 + t)^{\frac{3}{2}+p}\mathbf{E}_{\ell}^\mathbf{s}(t)
+\int_0^t(1 + \tau)^{\frac{3}{2}+p}\mathbf{D}_{\ell}^\mathbf{s}(\tau)\d \tau\\
\lesssim\;& \mathbf{E}_{\ell}^\mathbf{s}(0)+\big(\frac{3}{2}+p\big)\int_0^t(1 + \tau)^{\frac{1}{2}+p}\mathbf{E}_{\ell}^\mathbf{s}(\tau)\d \tau\\
\lesssim\;&\mathbf{E}_{\ell}^\mathbf{s}(0)+\int_0^t(1 + \tau)^{\frac{1}{2}+p}\left(\|\mathbf{P}f^\e(\tau)\|_{L^2_{x,v}}^2+ \|\nabla_x\phi^\e(\tau)\|_{L^2_{x}}^2\right)\d \tau+
\int_0^t(1 + \tau)^{\frac{1}{2}+p}\mathbf{D}_{\ell+\frac{1}{2}}^\mathbf{s}(\tau)\d \tau.
  \esp
  \eal
Here we have used the inequality
\bal\label{main-result:2}
\mathbf{E}_{\ell}^\mathbf{s}(t)\lesssim
\|\mathbf{P}f^\e(t)\|_{L^2_{x,v}}^2+ \|\nabla_x\phi^\e(t)\|_{L^2_{x}}^2+\mathbf{D}_{\ell+\frac{1}{2}}^\mathbf{s}(t)\quad \;\text{for any}  \; \ell>0,
\eal
which is derived from  \eqref{def-energy-R} and
\eqref{def-disspation-R}.

Similarly, we derive from \eqref{main-result:1} with $\ell$ replaced by $\ell+\frac{1}{2}$ and further multiply it by $(1 + t)^{\frac{1}{2}+p}$
  \bal
  \bsp\label{decay:soft:result:3}
\;&(1 + t)^{\frac{1}{2}+p}\mathbf{E}_{\ell+\frac{1}{2}}^\mathbf{s}(t)+
\int_0^t(1 + \tau)^{\frac{1}{2}+p}\mathbf{D}_{\ell+\frac{1}{2}}^\mathbf{s}(\tau)\d \tau\\
\lesssim\;& \mathbf{E}_{\ell+\frac{1}{2}}^\mathbf{s}(0)+\big(\frac{1}{2}+p\big)\int_0^t(1 + \tau)^{p-\frac{1}{2}}\mathbf{E}_{\ell+\frac{1}{2}}^\mathbf{s}(\tau)\d \tau\\
\lesssim\;&\mathbf{E}_{\ell+\frac{1}{2}}^\mathbf{s}(0)
+\int_0^t\left(\|\mathbf{P}f^\e(\tau)\|_{L^2_{x,v}}^2+ \|\nabla_x\phi^\e(\tau)\|_{L^2_{x}}^2\right)\d \tau+
\int_0^t\mathbf{D}_{\ell+1}^\mathbf{s}(\tau)\d \tau\\
\lesssim\;&\mathbf{E}_{\ell+1}^\mathbf{s}(0)+\int_0^t
\left(\|\mathbf{P}f^\e(\tau)\|_{L^2_{x,v}}^2+ \|\nabla_x\phi^\e(\tau)\|_{L^2_{x}}^2\right)\d \tau,
  \esp
  \eal
  where the last inequality   above  used a fact
\bals
\mathbf{E}_{\ell+1}^\mathbf{s}(t)+\int^t_0\mathbf{D}_{\ell+1}^\mathbf{s}(\tau) \d \tau\lesssim\mathbf{E}_{\ell+1}^\mathbf{s}(0),
\eals
which is directly derived from the time integration of \eqref{main-result:1}   with $\ell$ replaced by $\ell+1$.
Further, substituting the  estimate  \eqref{decay:soft:result:3} into \eqref{decay:soft:result:2} and applying  Lemma \ref{decay:es:nonlinear}, we  arrive at
  \bal
  \bsp\label{decay:soft:result:4}
\;&(1 + t)^{\frac{3}{2}+p}\mathbf{E}_{\ell}^\mathbf{s}(t)
+\int_0^t(1 + \tau)^{\frac{3}{2}+p}\mathbf{D}_{\ell}^\mathbf{s}(\tau)\d \tau\\
\lesssim\;&\mathbf{E}_{\ell+1}^\mathbf{s}(0)+
\int_0^t(1 + \tau)^{\frac{1}{2}+p}\left(\|\mathbf{P}f^\e(\tau)\|_{L^2_{x,v}}^2+ \|\nabla_x\phi^\e(\tau)\|_{L^2_{x}}^2\right)\d \tau\\
\lesssim\;&\mathbf{E}_{\ell+1}^\mathbf{s}(0)+
\int_0^t(1 + \tau)^{\frac{1}{2}+p}(1+\tau)^{-\frac{3}{2}}\d \tau\left(\mathbf{\kappa}_0^2
+\delta\mathbf{X}_\ell(t)+\mathbf{X}_\ell^2(t)\right)\\
\lesssim\;&\mathbf{E}_{\ell+1}^\mathbf{s}(0)+
(1+t)^{p}\left(\mathbf{\kappa}_0^2
+ \delta\mathbf{X}_\ell(t)+\mathbf{X}_\ell^2(t)\right),
  \esp
  \eal
where we have used  a basic inequality
\bals
\int_0^t(1 + \tau)^{\frac{1}{2}+p}(1+\tau)^{-\frac{3}{2}}\d \tau\lesssim(1+t)^{p}.
\eals
On the other hand, in terms of  $ \Delta_x\phi^\e_0( x) = a^\e_0(x)$, the interpolation  inequality and the Young inequality, we have
\bals
\|\nabla_x\phi^\e_0\|_{L^2_x}\;&\lesssim  \|a_0^\e\|_{L^{6/5}_x}\lesssim  \|a_0^\e\|_{L^1_x}^{\frac{2}{3}} \|a_0^\e\|_{L^2_x}^{\frac{1}{3}}
\lesssim  \|f_0^\e\|_{L^1_xL^2_v}+ \|f_0^\e\|_{L^2_xL^2_v},\\[2mm]
\|\nabla_x^2\phi^\e_0\|_{H^1_x}\;&\lesssim  \|\nabla_x^2\Delta^{-1}a_0^\e\|_{H^1_x}
\lesssim  \|a_0^\e\|_{H^1_x}\lesssim  \|f_0^\e\|_{H^1_xL^2_v}.
\eals
These mean that  $\|\nabla_x\phi^\e_0\|_{H^2_x}$ can  be removed from $\mathbf{E}_{\ell+1}^\mathbf{s}(0)$ by the definition of \eqref{def-energy-R} and
 \bal\label{decay:soft:result:4-1}
 \mathbf{E}_{\ell+1}^\mathbf{s}(0)\lesssim \kappa_0^2
  \eal
  holds true with $\kappa_0$ given by \eqref{initial-data-soft}.
Therefore, combining \eqref{decay:soft:result:4} with \eqref{decay:soft:result:4-1}, we get
  \bal
  \bsp\label{decay:soft:result:5}
\sup_{0\le\tau\le t}\left\{(1+\tau)^{\frac{3}{2}} \mathbf{E}_\ell^\mathbf{s}(\tau)
\right\}+(1+t)^{-p}\int_0^t(1 + \tau)^{\frac{3}{2}+p}\mathbf{D}_{\ell}^\mathbf{s}(\tau)\d \tau\lesssim
\mathbf{\kappa}_0^2
+\delta\mathbf{X}_\ell(t)+\mathbf{X}_\ell^2(t).
 \esp
  \eal

\emph{Step 2. Optimal time decay estimate on $\|\nabla_x f^\e(t)\|_{H^1_xL^2_v}$.\;}
To this end, we
first perform the energy  estimate without time decay factor.
In fact,
employing similar derivation in \eqref{qkj:esti:Dxf:L^2:1} leads to
\bal
\bsp\label{qkj:decay:esti:Dxf:L^2:1}
&\frac{1}{2}\frac{\d }{\d t} \left(\| \partial_x^\a f^\e\|_{L_{x,v}^2}^2+ \| \partial_x^\a\nabla_x\phi^\e\|_{L_{x}^2}^2\right)+\frac{\sigma_0}{\e^2}\|\partial_x^\a (\mathbf{I}-\mathbf{P})f^\e\|_{L_{x,v}^2(\nu)}^2  \\
\le&
 \underbrace{- \sum_{\substack{|\a'|<|\a|}}\left\langle \partial_x^{\a-\a'}\nabla_x \phi^{\e} \cdot \nabla_v \partial_x^{\a'}f^{\e},
 \nabla_x^\a f^\e \right \rangle_{L_{x,v}^2}}_{J_1} +\underbrace{\sum_{\substack{|\a'|\leq|\a|}}
 \left\langle \frac{v}{2}\cdot  \partial_x^{\a-\a'}\nabla_x \phi^{\e} \partial_x^{\a'} f^{\e},
 \partial_x^\a f^\e \right\rangle_{L_{x,v}^2}}_{J_2}
\\
& +
\underbrace{\frac{1}{\e}\left\langle
\Gamma(f^\e,\partial_x^\a f^\e)+\Gamma(\partial_x^\a f^\e,f^\e), \partial_x^\a f^\e \right\rangle_{L_{x,v}^2}}_{J_3}+\underbrace{\frac{1}{\e}\left\langle
\Gamma(\nabla_xf^\e, \nabla_xf^\e), \partial_x^{\a} f^\e \right\rangle_{L_{x,v}^2}}_{J_4}
,
\esp
\eal
where the last term  $J_4$  only exists   when $|\a|=2$.

Applying the same   estimate of
 $\mathcal{X}_{3}$ in  (\ref{qkj:esti:Dxxf:L^2:32}),
    the Young inequality and the definition \eqref{1:estimate:decay:10} of $\mathbf{X}_\ell(t)$,  we  derive
 \bals
 \bsp
 J_1
\lesssim\;&\sum_{\substack{|\a'|<|\a|}}\|a^\e\|_{H^2_x}
\Big(\frac{1}{\e}\|w\nabla_v \partial_x^{\a'}f^\e\|_{L_{x,v}^2}+
\e^2\|w_{\vartheta}\nabla_v \partial_x^{\a'} f^{\e}\|_{L^\infty_{x,v}}\Big)
\|\partial_x^{\a}(\mathbf{I}-\mathbf{P})f^\e\|_{L_{x,v}^2(\nu)}\\
\;&+
\sum_{\substack{|\a'|<|\a|}}\|a^\e\|_{H^2_x}\|\partial_x^{\a'}f^{\e}\|_{L_{{x}}^6L_{v}^2}
\|\nabla_v \partial_x^{\a}\mathbf{P} f^\e\|_{L_{x,v}^2}\\
\lesssim\;&\frac{\eta}{\e^2}\|\partial_x^\a (\mathbf{I}-\mathbf{P})f^\e\|_{L_{x,v}^2(\nu)}^2+
\|a^\e\|_{H^2_x}^2\left(\|w\nabla_v f^\e\|_{H_{x}^1L_{v}^2}^2
+\e^6\|w_{\vartheta}\nabla_v f^{\e}\|_{W^{1,\infty}_{x,v}}^2\right)\\
\;&+\|a^\e\|_{H^2_x}\|\nabla_xf^{\e}\|_{H_{{x}}^1L_{v}^2}
\| \partial_x^{\a}\mathbf{P} f^\e\|_{L_{x,v}^2}\\
\lesssim\;&\frac{\eta}{\e^2}\|\partial_x^\a (\mathbf{I}-\mathbf{P})f^\e\|_{L_{x,v}^2(\nu)}^2
+(1+t)^{-\frac{15}{4}}\mathbf{X}_\ell^\frac{3}{2}(t)
+(1+t)^{-\frac{15}{4}}\mathbf{X}_\ell^2(t)
 \esp
\eals
 and
\bals
 \bsp
 J_2
 \lesssim \;&
\sum_{\substack{|\a'|\leq|\a|}}\|a^\e\|_{H^2_x}
\Big(\frac{1}{\e}\|\partial_x^{\a'} f^\e\|_{L_{x,v}^2(\nu)}+\e^2\| w_\vartheta \partial_x^{\a'} f^\e\|_{L_{x,v}^\infty} \Big)
\|\partial_x^{\a}(\mathbf{I}-\mathbf{P})f^\e\|_{L_{x,v}^2(\nu)}\\
\;&+\Big(\|a^{\e}\|_{H^2_x}
\|\nabla_xf^\e\|_{H_{x}^1L_{v}^2}
+\|\nabla_x^2 \phi^{\e}\|_{H^1_x}
\|\partial_x^\a f^\e\|_{L_{x,v}^2}\Big)
\|\partial_x^\a\mathbf{P} f^\e\|_{L_{x,v}^2}\\
\lesssim\;&\frac{\eta}{\e^2}\|\partial_x^\a (\mathbf{I}-\mathbf{P})f^\e\|_{L_{x,v}^2(\nu)}^2
+(1+t)^{-\frac{15}{4}}\mathbf{X}_\ell^\frac{3}{2}(t)
+(1+t)^{-4}\mathbf{X}_\ell^2(t).
 \esp
\eals
For the terms $J_3$ and $J_4$,
applying the same procedure as the estimate on
 $\mathcal{X}_{4}$ in (\ref{qkj:esti:Dxxf:L^2:10}), and further using the \emph{a priori} assumption \eqref{eq:assumption:2},   the Young inequality and  \eqref{1:estimate:decay:10},  we   deduce
\bals
\bsp
J_3
\lesssim &\;
\frac{1}{\e}\| w_\vartheta f^\e\|_{L_{x,v}^\infty} \|\partial^\a_x(\mathbf{I}-\mathbf{P})f^\e\|_{L_{x,v}^2(\nu)}^2
+\frac{1}{\e}\|\mathbf{P}\partial^\a_x f^\e\|_{L_{x,v}^2}
\|f^\e\|_{L_x^\infty L_{v}^2(\nu)}\|\partial^\a_x(\mathbf{I}-\mathbf{P})f^\e\|_{L_{x,v}^2(\nu)}\\
\lesssim&\;
\frac{\delta\e^\frac{1}{4}}{\e^2}\|\partial_x^\a (\mathbf{I}-\mathbf{P})f^\e\|_{L_{x,v}^2(\nu)}^2
+\frac{\eta}{\e^2}\|\partial_x^\a (\mathbf{I}-\mathbf{P})f^\e\|_{L_{x,v}^2(\nu)}^2
+\|\partial^\a_xf^\e\|_{L_{x,v}^2}^2\|\nabla_xf^\e\|_{H_{x}^1 L_{v}^2}^2 \\
\lesssim&\;\frac{\delta\e^\frac{1}{4}+\eta}{\e^2}\|\partial_x^\a (\mathbf{I}-\mathbf{P})f^\e\|_{L_{x,v}^2(\nu)}^2
+(1+t)^{-5}\mathbf{X}_\ell^2(t)
\esp
\eals
and
\bals
\bsp
J_4 \lesssim&\;\frac{1}{\e}\| w_\vartheta \nabla_xf^\e\|_{L_{x,v}^\infty} \|\nabla_x(\mathbf{I}-\mathbf{P}) f^\e\|_{L_{x,v}^2(\nu)}\|\nabla_x^2(\mathbf{I}-\mathbf{P}) f^\e\|_{L_{x,v}^2(\nu)}
\\
&\;
+\frac{1}{\e}\|\nabla_x\mathbf{P} f^\e\|_{L_{x}^6 L_{v}^2}
 \|\nabla_xf^\e\|_{L_{x}^3 L_{v}^2(\nu)}
\|\nabla_x^2(\mathbf{I}-\mathbf{P}) f^\e\|_{L_{x,v}^2(\nu)}\\
\lesssim&\;
\frac{\delta}{\e^2}\|\nabla_x (\mathbf{I}-\mathbf{P})f^\e\|_{H_{x}^1L_{v}^2(\nu)}^2
+\frac{\eta}{\e^2}\|\nabla_x^2 (\mathbf{I}-\mathbf{P})f^\e\|_{L_{x,v}^2(\nu)}^2
+(1+t)^{-5}\mathbf{X}_\ell^2(t).
\esp
\eals
Therefore, we obtain from
substituting the estimates on $J_1\sim J_4$ into \eqref{qkj:decay:esti:Dxf:L^2:1},  choosing $\eta$ and $\delta$ small enough, and taking summation over $1\leq|\a|\leq 2$  that
\bals
\bsp
\;&\frac{1}{2}\sum_{1\leq|\a|\leq2}\frac{\d }{\d t}\left( \|\partial_x^\a f^\e\|_{L_{x,v}^2}^2 + \|\partial_x^\a\nabla_x\phi^\e\|_{L_{x}^2}^2 \right)+\sum_{1\leq|\a|\leq2}\frac{\sigma_0}{2\e^2}\|\partial_x^\a (\mathbf{I}-\mathbf{P})f^\e\|_{L_{x,v}^2(\nu)}^2\\
 \lesssim\;&(1+t)^{-\frac{15}{4}}\mathbf{X}_\ell^\frac{3}{2}(t)
+(1+t)^{-\frac{15}{4}}\mathbf{X}_\ell^2(t).
\esp
\eals

On the other hand, we multiply the  above inequality by $(1+t)^{\frac{5}{2}+p}$ and integrate
the resulting inequality with respect to $t$ over $[0,t]$ to yield
\bals
\bsp
\;&(1+t)^{\frac{5}{2}+p} \|\nabla_x  f^\e(t)\|_{H_{x}^1L_{v}^2}^2 + (1+t)^{\frac{5}{2}+p}\|\nabla_x^2\phi^\e(t)\|_{H_{x}^1}^2  \\
 \lesssim\;&\|\nabla_x  f^\e_0\|_{H_{x}^1L_{v}^2}^2 + \|\nabla_x^2\phi^\e_0\|_{H_{x}^1}^2+
\int_{0}^t (1+\tau)^{\frac{3}{2}+p}\left( \|\nabla_x  f^\e(\tau)\|_{H_{x}^1L_{v}^2}^2 + \|\nabla_x^2\phi^\e(\tau)\|_{H_{x}^1}^2\right)\d \tau\\
\;&
+\int_{0}^t (1+\tau)^{-\frac{5}{4}+p}\d  \tau \mathbf{X}_\ell^\frac{3}{2}(t)
+\int_{0}^t (1+\tau)^{-\frac{5}{4}+p}\d  \tau\mathbf{X}_\ell^2(t)\\
 \lesssim\;&\| f^\e_0\|_{H_{x}^1L_{v}^2}^2 +
\int_{0}^t (1+\tau)^{\frac{3}{2}+p}\mathbf{D}^\mathbf{s}_{\ell}(\tau)\d \tau
+ \mathbf{X}_\ell^\frac{3}{2}(t)
+\mathbf{X}_\ell^2(t),
\esp
\eals
which further implies
\bal
\bsp\label{qkj:esti:decay:Dxxf:L^2:11}
\;&\sup_{0\leq\tau\leq t}\left\{(1+\tau)^{\frac{5}{2}} \|\nabla_x f^\e(\tau)\|_{H_{x}^1L_{v}^2}^2 + (1+\tau)^{\frac{5}{2}} \|\nabla_x^2\phi^\e(\tau)\|_{H_{x}^1}^2  \right\} \\
 \lesssim\;&\kappa_0^2 +
 (1+t)^{-p}\int_{0}^t (1+\tau)^{\frac{3}{2}+p}\mathbf{D}^\mathbf{s}_{\ell}(\tau)\d \tau
+ \mathbf{X}_\ell^\frac{3}{2}(t)
+\mathbf{X}_\ell^2(t).
\esp
\eal

\emph{Step 3. Optimal time decay estimate on $\|h^\e(t)\|_{W_{x,v}^{2,\infty}}$.\;}
In fact, we  deduce from the interpolation inequality between H\"{o}lder space and $L^2_x$ space, the Riesz potential theory and the Young inequality   that
\bal
\bsp\label{eq:assumption:soft:1-2}
  \e(1+t)^{\frac{5}{4}}\|\nabla^3_x\phi^\e\|_{L^\infty_x}
  &\lesssim(1+t)^{\frac{5}{4}}\|\nabla^3_x\phi^\e\|_{L^2_x}^{\frac{1}{4}}
 \left (\e^{\frac{4}{3}}[\nabla^3_x\phi^\e]_{{C^{\frac{1}{2}}}}\right)^{\frac{3}{4}}\\
 & \lesssim(1+t)^{\frac{5}{4}}\left(\|\nabla_x f^\e \|_{H^{1}_{x}L^{2}_{v}}\right)^\frac{1}{2}
  \left(\e^2\|\nabla_x^2 f^\e \|_{L^{\infty}_{x,v}}\right)^\frac{1}{2}\\
  & \lesssim C_\eta(1+t)^{\frac{5}{4}}\|\nabla_x f^\e \|_{H^{1}_{x}L^{2}_{v}}+
  \eta \e^2(1+t)^{\frac{5}{4}}\|\nabla_x^2 h^\e \|_{L^{\infty}_{x,v}}.
\esp
\eal
Here we have used a fact derived from 
the Schauder  estimate and the  Sobolev embedding inequality
\bals
\bsp
  \e^{\frac{4}{3}}[\nabla^3_x\phi^\e]_{{C^{\frac{1}{2}}}}&\lesssim\e^{\frac{4}{3}}\|\nabla^4_x\phi^\e \|_{L^{6}_{x}}\lesssim \e^{\frac{4}{3}}\|\nabla^2_xa^\e \|_{L^{6}_{x}}\lesssim\|\nabla_x^2 f^\e \|_{L^{2}_{x,v}}^\frac{1}{3}
  \left(\e^2\|\nabla_x^2 f^\e \|_{L^{\infty}_{x,v}}\right)^\frac{2}{3}.
\esp
\eals
Then we   deduce   from \eqref{result:W2:wuqiong:1}   and \eqref{eq:assumption:soft:1-2} that
\beq
\bsp
\label{result:decay:W2:wuqiong:1}
 \;&\sup_{0\le\tau\le t}\Big\{\sum_{|\a|+|\b|\leq 1}\e^{1+\frac{2}{5}|\b|}(1+\tau)^{\frac{5}{2}-\frac{5}{4}|\b|}
\| \partial_{\b}^{\a}h ^\e(\tau)\|_{L_{x,v}^{\infty}}^2
 \Big\}
 \\
 \;&+\sup_{0\le\tau\le t}\Big\{\sum_{|\a|+|\b|= 2}\e^{3+\frac{2}{5}|\b|}(1+\tau)^{\frac{5}{2}-\frac{5}{4}|\b|}
\| \partial_{\b}^{\a}h ^\e(\tau)\|_{L_{x,v}^{\infty}}^2\Big\}\\
\lesssim \;&\e\| h^\e_0\|_{W^{2,\infty}_{x,v}}^2+
\sup_{0\leq\tau\leq t}\left\{(1+\tau)^{\frac{5}{2}} \|\nabla_x f^\e(\tau)\|_{H_{x}^1L_{v}^2}^2 \right\} + \sup_{0\leq\tau\leq t}\left\{ (1+\tau)^{\frac{5}{2}}\|\nabla_x^2\phi^\e(\tau)\|_{H_{x}^1}^2 \right\} \\
\;&+\e^{\frac{2}{5}}\sup_{0\le\tau\le t}\left\{(1+\tau)^{\frac{3}{2}} \mathbf{E}_\ell^\mathbf{s}(\tau)
\right\}
.
\esp
\eeq
In summary, by taking a suitable linear combination   \eqref{decay:soft:result:5}$\times M_1 +$\eqref{qkj:esti:decay:Dxxf:L^2:11}$\times M_2+$\eqref{result:decay:W2:wuqiong:1}
for some suitably large positive constants $M_1 \gg M_2>0$, we  conclude \eqref{decay:result:1}. This  completes the proof Proposition \ref{decay:es:nonlinear:result:1}.
\end{proof}

\subsection{Proof of the   Main Theorem \ref{mainth1}}
\hspace*{\fill}

With Proposition \ref{decay:es:nonlinear:result:1} in hand, we are in the position to complete the proof of Theorem
\ref{mainth1} and verify the {\emph{a priori}} assumptions  \eqref{eq:assumption:1}, \eqref{eq:assumption:2} and \eqref{energy-assumptition-soft}.
\begin{proof}[\textbf{Proof of Theorem \ref{mainth1}}] \
The proof of Theorem \ref{mainth1} consists of two steps. The global existence and time decay estimate of the VPB system  (\ref{eq:f})
uniformly in $\e \in (0,1]$ for soft potentials is presented in \emph{Step 1}, and  the justification
of the incompressible NSFP limit from the VPB system  (\ref{eq:f}) is verified in \emph{Step  2}.

\emph{Step 1. Global existence and time decay estimate.}

First of all, the local solvability of the VPB system (\ref{eq:f}) for soft potentials can be obtained in terms of the energy functional
$\interleave f^\e (t)\interleave_{\ell}^\mathbf{s}$ given by \eqref{def-energy-R-soft-sum} and the argument used in \cite{Guo2003-SOFT}.
The details are omitted for simplicity.

Next,  assume that such a local solution
$(f^\e, \nabla_x\phi^\e)$  has been extended to the time step $t = T$ for some $T > 0$, that is, $(f^\e, \nabla_x\phi^\e)$  is a
solution to  the VPB system (\ref{eq:f}) defined on
$[0,T]\times \mathbb{R}^3\times \mathbb{R}^3$. Then we derive from the
 estimate \eqref{decay:result:1}   that
\bal\label{energy:result:1}
\mathbf{X}_\ell(t)\leq C\mathbf{\kappa}_0^2,
\eal
for some constant $C$ independent of $\e$. Therefore,  the global  strong  solution $(f^\e, \nabla_x\phi^\e)$ to the VPB system  (\ref{eq:f})  for   soft potentials can be obtained with the help of  \eqref{energy:result:1} and a standard continuation argument as in \cite{guo2002cpam}. The details are omitted here  for brevity.

Finally, the \emph{a priori}
assumptions \eqref{eq:assumption:1}, \eqref{eq:assumption:2} and \eqref{energy-assumptition-soft} follow by combining \eqref{energy:result:1}, \eqref{eq:assumption:hard:1-1}, \eqref{eq:assumption:soft:1-2} and \eqref{1:estimate:decay:10}.

\emph{Step 2. \  Limits to the incompressible NSFP system.}

The approach is similar to that one in \cite{JL2019}, but here  we  provide a full proof for completeness.

\emph{Step 2.1. \  Limits from the global energy estimate.}

Based on \eqref{eq:theorem2} in Theorem \ref{mainth2},   the VPB system (\ref{eq:f})
admits a global solution $f^\e\in L^\infty(\bbR^+;H^2_{x,v})$ and $\nabla_x\phi^\e \in L^\infty(\bbR^+;H^2_{x})$ and the uniform global energy estimate \eqref{main-result:1},
there exists a positive constant $C$, independent of $\e$, such that
\bal
\bsp\label{limit:1}
&\sup_{t\ge 0} \left\{\| f^{\e}(t)\|^2_{H^2_{x,v}}+ \| \nabla_x\phi^{\e}(t)\|^2_{H^2_{x}}\right\}\lesssim  C,\\
&\int_0^\infty\| (\mathbf{I}-\mathbf{P}){f}^{\e}(\tau)\|^2_{H^2_{x,v}}\d \tau\leq C \e, \\
&   \int_0^\infty\| (\mathbf{I}-\mathbf{P}){f}^{\e}(\tau)\|^2_{H^2_{x}L^2_{v}(\nu)}\d \tau\leq C \e^2.
\esp
\eal
From the energy bound in \eqref{limit:1},   there exist $f\in L^\infty(\bbR^+;H^2_{x,v})$ and $\nabla_x\phi\in L^\infty(\bbR^+;H^2_{x})$  such that
\bal
\bsp\label{limit:3}
f^{\e} &\to  f \quad \qquad\;\text{~~weakly-}* ~{\text{for}~t\geq 0 }, \; \text{weakly in~} H^2_{x,v},\\
\nabla_x\phi^{\e}  &\to \nabla_x\phi\qquad \text{~~weakly-}* ~\text{for}~ t\geq 0 , \; \text{weakly in} ~H^2_{x},
\esp
\eal
as $\e \to 0$. We still employ the original notation of sequence to denote its subsequence for convenience,
although the limit may hold for some subsequence.
From the dissipation bound in \eqref{limit:1}, we deduce that
\bal
\bsp\label{limit:4}
&(\mathbf{I}-\mathbf{P})f^{\e} \to  0 \;\;\,\text{~~strongly} {\text{~in}~L^2(\bbR^+;H^2_{x,v})} \text{~~as~~} \e \to 0.
\esp
\eal
Then, the convergence in \eqref{limit:3} and \eqref{limit:4} lead to
\bals
\bsp
&(\mathbf{I}-\mathbf{P})f=0.
\esp
\eals
This implies that there exist functions $\rho,u,\theta\in L^\infty(\bbR^+;H^2_{x})$ such that
\bal
\bsp\label{limit:6}
f(t,x,v)=\Big(\rho(t,x)+v \cdot u(t,x) +\frac{|v|^2 -3}{2}\th(t,x)\Big)\sqrt{\mu}.
\esp
\eal

Next, we introduce the following fluid variables
\bal
\bsp\label{limit:7}
&\rho^\e:=\langle f^{\e},\sqrt{\mu}\rangle_{L^2_{v}},\;\;\;\; u^\e:=\langle f^{\e},v\sqrt{\mu}\rangle_{L^2_{v}},\;\;\;\; \theta^\e :=\Big\langle f^{\e}, \Big(\frac{|v|^2}{3}-1\Big)\sqrt{\mu}\Big\rangle_{L^2_{v}}.
\esp
\eal
Multiplying (\ref{eq:f}) by the collision
invariant $(1, v, \frac{|v|^2-3}{3})\sqrt{\mu}$ and integrating  over $\mathbb{R}^3_v$,  we derive that $\rho^\e$, $u^\e$ and $\th^\e$ obey the local conservation laws
\beq
 \left\{
\begin{array}{ll}\label{limit:8-1:1}
\displaystyle
\pt_t \rho^\e+\frac{1}{\e}\nabla_x\cdot u^\e=0,~\\[2mm]
\displaystyle\pt_t u^\e+\frac{1}{\e}\nabla_x(\rho^\e+\theta^\e)+\frac{1}{\e}\nabla_x\phi^\e+\frac{1}{\e}
\nabla_x\cdot \langle \hat{A}(v)\sqrt{\mu}, L(\mathbf{I}-\mathbf{P})f^{\e}\rangle_{L^2_{v}}=-\rho^\e \nabla_x\phi^{\e},~\\[2mm]
\displaystyle\pt_t \theta^\e+\frac{2}{3\e}\nabla_x\cdot u^\e+\frac{2}{3\e}\nabla_x\cdot\langle \hat{B}(v)\sqrt{\mu}, L(\mathbf{I}-\mathbf{P})f^{\e}\rangle_{L^2_{v}}=-\frac{2}{3}u^\e\cdot \nabla_{x}\phi^\e,
\end{array}
\right.
\eeq
where
\bals
\bsp
& A (v):=v\otimes v -\frac{|v|^2}{3}\mathbb{I}_{3\times 3},~\quad B (v):=v\Big(\frac{|v|^2}{2}-\frac{5}{2}\Big),\\
&\hat{A} (v):=L^{-1}A (v),~\qquad \qquad\;\hat{B} (v):=L^{-1}B(v).~
\esp
\eals
From (\ref{eq:f}) and the definition of $\rho^\e$ in \eqref{limit:7}, we also find the equation of $\nabla_x\phi^{\e}$
\bal
\label{limit:8-1:4}
&-\Delta _x\phi^{\e}=\rho^\e.
\eal

Furthermore,
via the definitions of $\rho^\e$, $u^\e$ and $\th^\e$ in \eqref{limit:7} and the uniform energy bound in \eqref{limit:1}, we
obtain
\bal\label{limit:9}
\sup_{t\ge 0}\Big\{\| \rho^\e(t)\|_{H^2_{x}}+ \| u^\e(t)\|_{H^2_{x}}+\| \theta^\e(t)\|_{H^2_{x}}\Big\}\le C.
\eal
From   \eqref{limit:3} and the limit function $f(t, x, v)$ given in \eqref{limit:6}, we thereby deduce the following convergences
\bal
\bsp\label{limit:10}
&\rho^{\e}=\langle f^{\e},\sqrt{\mu}\rangle_{L^2_{v}}\to \langle f,\sqrt{\mu}\rangle_{L^2_{v}}
=\rho,~\\
&u^{\e}=\langle f^{\e},v\sqrt{\mu}\rangle_{L^2_{v}}\to \langle f,v\sqrt{\mu}\rangle_{L^2_{v}}=u,~\\
&\theta^{\e} =\Big\langle f^{\e}, \Big(\frac{|v|^2}{3}-1\Big)\sqrt{\mu}\Big\rangle_{L^2_{v}}\to\Big\langle f, \Big(\frac{|v|^2}{3}-1\Big)\sqrt{\mu}\Big\rangle_{L^2_{v}}=\theta,
\esp
\eal
\text{weakly}-$* $ {\text{for} $t\geq 0$},  \text{weakly in} $H^2_{x}$ and strongly in
$H^1_{loc}(\mathbb{R}^3_x)$ as $\e \to 0.$

\medskip

\emph{Step 2.2. \  Convergences to  the limiting system.} \

Next, we deduce the incompressible NSFP system \eqref{INSFP}
from the local conservation laws \eqref{limit:8-1:1} and the convergences \eqref{limit:3}, \eqref{limit:4}
and \eqref{limit:10} obtained in the \emph{Step 2.1}.

\emph{{Step 2.2.1.} \  Incompressibility and Poisson equation.} \
From the  equation \eqref{limit:8-1:1}  and the energy uniform bound \eqref{limit:9}, it is easy to deduce
\bals
\bsp
&\nabla_x \cdot u^{\e}=-\e\pt_t \rho^{\e}\to 0
\esp
\eals
in the sense of distributions as $\e\to 0$. By using the first convergence in \eqref{limit:10}, we have
$\nabla_x \cdot u^\e $ converges in the sense of distributions to $\nabla_x \cdot u $ as $\e$ tends to zero.
By uniqueness of distribution limit, we obtain
\bal\label{limit:13}
\nabla_x \cdot u = 0.
\eal

The  $u^\e$-equation of \eqref{limit:8-1:1}  yields
\bals
&\nabla_x(\rho^\e+\theta^\e)+\nabla_x\phi^\e=-\e\pt_t u^\e
-  \nabla_x\cdot \langle\widehat{ {A}}(v)\sqrt{\mu}, L(\mathbf{I}-\mathbf{P})f^\e \rangle_{L^2_{v}}-\e\rho^\e \nabla_x\phi^{\e}.
\eals
The bound \eqref{limit:9} implies   $\e\pt_t u^\e \to 0$ in the sense of distribution as $\e\to 0$.
With the aid of the self-adjointness of $L$, we derive from the  H{\"{o}}lder inequality and the uniform dissipation bound
in \eqref{limit:1} that
\bals
\bsp
\int_0^\infty
\|\nabla_x\cdot \langle\widehat{ {A}}(v)\sqrt{\mu}, L(\mathbf{I}-\mathbf{P})f^\e\rangle_{L^2_{v}}\|_{H^1_x}^2\d t
=\;&\int_0^\infty
\|\nabla_x\cdot \langle{ {A}}(v)\sqrt{\mu}, (\mathbf{I}-\mathbf{P})f^\e\rangle_{L^2_{v}}\|_{H^1_x}^2\d t\\
\lesssim\;& \int_0^\infty
\| (\mathbf{I}-\mathbf{P})f^\e(t)\|_{H^2_{x}L^2_v}^2\d t\\
\lesssim\;&\e.
\esp
\eals
Thus,  $$
\nabla_x\cdot \langle \hat{A}(v)\sqrt{\mu}, L(\mathbf{I}-\mathbf{P})f^\e\rangle_{L^2_{v}}\to 0 \;\; \text{strongly in} \; L^2(\mathbb{R}^+; H^1(\mathbb{R}^3_x))\; \text{as} \;  \e\to 0.
$$

Moreover, the bounds \eqref{limit:1} and \eqref{limit:9} reveal that
\bals
\bsp
\sup_{t\ge 0}\left\{\|\e\rho^\e \nabla_x\phi^{\e}(t)\|_{H^{2}_x}\right\}
\lesssim\e\sup_{t\ge 0}\left\{\|\rho^\e(t)\|_{H^{2}_x}\| \nabla_x\phi^{\e}(t)\|_{H^{2}_x}\right\}
 \le C\e,
\esp
\eals
which means that
$$\e\rho^\e \nabla_x\phi^{\e}\to 0
\;\;\text{strongly in} \; L^\infty(\mathbb{R}^+, H^2(\mathbb{R}^3_x))\; \text{as} \;  \e\to 0.
$$
In summary, we have
\bals
\nabla_x(\rho^\e + \th^\e)+\nabla_x\phi^\e\rightarrow 0 \;\;\text{in the sense of distributions as } \; \e \rightarrow 0.
\eals
By using the convergences \eqref{limit:3} and \eqref{limit:10}, we have
$$
\nabla_x(\rho^\e + \th^\e)+\nabla_x\phi^\e \rightarrow \nabla_x(\rho + \th)+\nabla_x\phi\;\;\text{in the sense of distributions as } \; \e \rightarrow 0.
$$
By uniqueness of distribution limit, we get
\bal\label{limit:16-1}
\nabla_x(\rho + \th) +\nabla_x\phi=0.
\eal
Then, applying $\nabla_x\cdot$ to  \eqref{limit:16-1} and combining the resulting equation with \eqref{limit:8-1:4}, by the uniform energy bound \eqref{limit:1} and \eqref{limit:9}, we obtain  the Poisson equation
\bal\label{limit:16}
\Delta_x(\rho + \th)=-\Delta_x\phi=\rho.
\eal

\emph{{{Step 2.2.2.}} \  Equations of $u^\e$ and $\frac{3}{2}\theta^{\e}-\rho^{\e}$.} \
 Following the standard formal derivations of fluid dynamic
limits of the Boltzmann equation (see \cite{BGL93} for instance), we obtain
\beq
 \left\{
\begin{array}{ll}\label{A(v):weiguan:zuoyong}
\displaystyle\frac{1}{\e} \left\langle \hat{A}(v)\sqrt{\mu}, L(\mathbf{I}-\mathbf{P})f^{\e}\right\rangle_{L^2_{v}}= u^{\e}\otimes u^{\e}-\frac{|u^{\e}|^2}{3}\mathbb{I}_3-\lambda\Sigma(u^{\e})-R_{\e,A},\\[2mm]
\displaystyle\frac{1}{\e}\left\langle \hat{B}(v)\sqrt{\mu},\;\; L(\mathbf{I}-\mathbf{P})f^{\e}\right\rangle_{L_v^2}
=\frac{5}{2}u^{\e}\theta^{\e}-\frac{5}{2}\kappa\nabla_x\theta^{\e}-R_{\e,B},
\end{array}
\right.
\eeq
where
\bals
\Sigma(u^{\e}):=\;&\nabla_xu^{\e}+({\nabla_xu^{\e}})^{T}-\frac{2}{3}\nabla_x\cdot u^{\e}\mathbb{I}_{3\times3},\\
\lambda:=\;&\frac{1}{10}\left\langle {A}(v)\sqrt{\mu},\;\;\hat{A}(v)\sqrt{\mu} \right\rangle_{L_v^2},\\
 \kappa:=\;&\frac{2}{15}\left\langle {B}(v)\sqrt{\mu},\;\;\hat{B}(v)\sqrt{\mu} \right\rangle_{L_v^2},\\
R_{\e,\Xi}:=\;&\left\langle \hat{\Xi}(v)\sqrt{\mu},\;\;\e\partial_t f^{\e} \right\rangle_{L_v^2}
+\left\langle \hat{\Xi}(v)\sqrt{\mu},\;\;v\cdot\nabla_x(\mathbf{I}-\mathbf{P})f^{\e} \right\rangle_{L_v^2}\\
\;&+\Big\langle \hat{\Xi}(v)\sqrt{\mu},\;\; \e \nabla_x\phi^\e\cdot
\frac{\nabla_v \left({\sqrt{\mu}f^{\e}}\right)}{{\sqrt{\mu}}}\Big\rangle_{L_v^2}
-\left\langle \hat{\Xi}(v)\sqrt{\mu},\;\; \Gamma((\mathbf{I}-\mathbf{P})f^{\e},f^{\e})\right\rangle_{L_v^2}\\
\;&-\left\langle \hat{\Xi}(v)\sqrt{\mu},\;\; \Gamma(\mathbf{P}f^{\e},(\mathbf{I}-\mathbf{P})f^{\e})\right\rangle_{L_v^2},\quad \quad
\text{with}\; \Xi = A~ \text{or}~ B
.
\eals

For the vector field $u^\e$, we decompose
\bal\label{jixian:u:fenjie}
u^{\e}=\mathcal{P}u^{\e}+\mathcal{P}^{\perp} u^{\e},
\eal
where $\mathcal{P}:=I-\nabla_x\Delta_x^{-1}\nabla_x\cdot$ is the Leray projection operator and $\mathcal{P}^{\perp}={\mathrm{I}}-\mathcal{P}$.
Plugging the first equation of (\ref{A(v):weiguan:zuoyong}) into \eqref{limit:8-1:1} and utilizing \eqref{jixian:u:fenjie}, we have
\begin{equation}\label{jixian:u:zongjie}
  \pt_t\mathcal{P}u^{\e}+\mathcal{P}\text{div}_x(\mathcal{P}u^{\e}\otimes\mathcal{P}u^{\e})
  -\lambda\Delta_x\mathcal{P}u^{\e}=-\mathcal{P}(\rho^\e\nabla_x\phi^\e)+R_{\e,u},
\end{equation}
where $R_{\e,u}$ is given by
\begin{equation*}
  R_{\e,u}:=\mathcal{P}\nabla_x\cdot R_{\e,A}-\mathcal{P}\nabla_x\cdot
  (\mathcal{P}u^{\e}\otimes\mathcal{P}^\perp u^{\e}+\mathcal{P}^\perp u^{\e}\otimes\mathcal{P}u^{\e}+\mathcal{P}^\perp u^{\e}\otimes\mathcal{P}^\perp u^{\e}).
\end{equation*}

For the equation of  $\frac{3}{2}\theta^{\e}-\rho^{\e}$, we substitute  the second equation in (\ref{A(v):weiguan:zuoyong})  into
 \eqref{limit:8-1:1} to obtain
\begin{equation}\label{jixian:th:zongjie}
  \pt_t\Big(\frac{3}{2}\theta^{\e}-\rho^{\e}\Big)+\frac{5}{2}\nabla_x\cdot (\mathcal{P}u^{\e}\theta^{\e})-\frac{5}{2}\kappa\Delta_x\theta^{\e}=-u^\e\cdot \nabla_{x}\phi^\e-\frac{5}{2}\nabla_x\cdot (\mathcal{P}^\perp u^{\e}\theta^{\e}) +\nabla_x\cdot (R_{\e,B}).
\end{equation}

\emph{{Step 2.2.3.} \  Convergences of $\mathcal{P}u^{\e}$ and $\frac{3}{2}\theta^{\e}-\rho^{\e}$.} \
On   one hand,  applying $\mathcal{P}$ on the $u^\e$-equation in \eqref{limit:8-1:1} and H\"{o}lder inequality, we have
\begin{equation*}
  \begin{split}
\|\pt_t\mathcal{P}u^{\e}\|_{H_x^{1}}
=&\;\Big\|-\mathcal{P}( \rho^{\e}\nabla_x\phi^{\e})-\frac{1}{\e}\mathcal{P}\Big(\int_{\mathbb{R}^3}
A(v)\nabla_x(\mathbf{I}-\mathbf{P})f^{\e}\sqrt{\mu}\mathrm{d}v\Big)\Big\|_{H_{x}^{1}}\\
\lesssim&\; \|\nabla_x\phi^{\e}\|_{H_x^{1}}\|\nabla_x\rho^{\e}\|_{H_{x}^{1}}
+
\frac{1}{\e}\|(\mathbf{I}-\mathbf{P})f^{\e}\|_{H_x^2L_v^2(\nu)},
  \end{split}
\end{equation*}
which immediately infers from the uniform   bound  \eqref{limit:1}   that
\begin{equation}\label{jixian:u:shijianjie}
  \begin{split}
\|\pt_t\mathcal{P}u^{\e}\|_{L^2(0,T;H_x^{1})}
\lesssim&\; \sqrt{T}\|\nabla_x\phi^{\e}\|_{L^\infty(0,T;H_x^{1})}
\|\rho^{\e}\|_{L^{\infty}(0,T;H_x^2)}
+\frac{1}{\e}\|(\mathbf{I}-\mathbf{P})f^{\e}\|_{L^2(0,T;H_{x}^2L_{v}^2(\nu))}\\
\lesssim &\;C\sqrt{T}+C
  \end{split}
\end{equation}
for any $T>0$ and $0<\e\leq1$.
On the other hand, we directly derive from \eqref{limit:9} that
\begin{equation}\label{jixian:u:jie}
  \|\mathcal{P}u^{\e}\|_{L^\infty(0,T;H_x^{2})}\leq C
\end{equation}
for all $T>0$ and $0<\e\leq1$.
Noticing that
\begin{equation}\label{jixian:qianru}
 H^2(\mathbb{R}^3_x){\hookrightarrow\hookrightarrow} H_{loc}^{1}(\mathbb{R}^3_x)\overset{\text{continuous}}{\hookrightarrow}H_{loc}^{1}
 (\mathbb{R}^3_x),
\end{equation}
where the embedding of $H^2_x$ in $H^1_x$ is compact locally and the embedding of $H_{loc}^{1}$ in $H_{loc}^{1}$ is naturally continuous.
Then, employing Aubin--Lions--Simon Theorem \cite{BF13}, the uniform bounds (\ref{jixian:u:shijianjie}), (\ref{jixian:u:jie}) and
the embedding inequality  (\ref{jixian:qianru}), we deduce that there is a
$\tilde{u}\in L^\infty(\mathbb{R}^+;H^2(\mathbb{R}^3_x))\cap C(\mathbb{R}^+;H_{loc}^{1}(\mathbb{R}^3_x))$ such that
\begin{equation*}
\mathcal{P}u^{\e}\rightarrow\tilde{u} \;\; \text{strongly in}\;  C(0,T;H_{loc}^{1}(\mathbb{R}^3_x))\;\;\text{as}\;  \e \to 0 \;\;\text{for any } \; T>0.
\end{equation*}
Further, using \eqref{limit:10} and \eqref{limit:13},  we know that
$$
 \mathcal{P}u^{\e} \rightarrow \mathcal{P}u=u \;\;\text{in the sense of distributions} \; \text{as}\;  \e\to 0.
$$
As a result, we obtain
\begin{equation}\label{jixian:zongjie-u}
\mathcal{P}u^{\e}\rightarrow u  \;\; \text{strongly in}\;  C(0,T;H_{loc}^{1}(\mathbb{R}^3_x))\;\;\text{as}\;  \e \to 0 \;\;\text{for any } \; T>0,
\end{equation}
 where $u\in L^\infty(\mathbb{R}^+;H^2(\mathbb{R}^3_x))\cap C(\mathbb{R}^+;H_{loc}^{1}(\mathbb{R}^3_x))$.
Similarly, we  are able to deduce
\begin{align}\label{jixian:zongjie-u+}
 &\mathcal{P}^\perp u^{\e}\rightarrow0 \;\;\text{weakly-}* {\text{for}\; t\geq 0},\;  \text{weakly in}\; H^2(\mathbb{R}^3_x), \;
  \text{strongly in}\; H^1_{loc}(\mathbb{R}^3_x)\;\text{as}\; \e \to 0.
\end{align}

For the convergence of $\frac{3}{2}\theta^{\e}-\rho^{\e}$,
we deduce from \eqref{limit:8-1:1}   and~H\"{o}lder~inequality that
\bals
  \bsp
\Big\|\pt_t\Big(\frac{3}{2}\theta^{\e}-\rho^{\e}\Big)\Big\|_{H_x^{1}}
=&\;\Big\|- u^{\e} \cdot\nabla_{x}\phi^\e-
\frac{1}{2\e}\int_{\mathbb{R}^3}B(v)\cdot\nabla_x
(\mathbf{I}-\mathbf{P})f^{\e}\sqrt{\mu}\mathrm{d}v \Big\|_{H_x^{1}} \\
\lesssim&\; \|\nabla_{x}\phi^\e\|_{H_x^{1}}\|\nabla_x u^{\e}\|_{H_x^{1}}
+\frac{1}{\e}\|(\mathbf{I}-\mathbf{P})f^{\e}\|_{H_{x}^2L^2_v(\nu)},
  \esp
\eals
which implies from the uniform   bound \eqref{limit:1}   that
\begin{equation}\label{jixian:midu:wendu:shijianjie}
\begin{split}
\Big\|\pt_t\Big(\frac{3}{2}\theta^{\e}-\rho^{\e}\Big)\Big\|_{L^2(0,T;H_x^{1})}
\lesssim&\;\sqrt{T}\|\nabla_x\phi^{\e}\|_{L^\infty(0,T;H_x^{1})}
\|u^{\e}\|_{L^{\infty}(0,T;H_x^2)}+\Big(\int_0^\infty
[\mathbf{D}_{\ell}^\mathbf{s}(t)]^2\d t\Big)^\frac{1}{2}\\
\leq &\;C\sqrt{T}+C
\end{split}
\end{equation}
for any $T>0$ and $0<\e\leq1$.
On the other hand, we derive directly from \eqref{limit:9} that
\begin{equation}\label{jixian:midu:wendu:jie}
  \Big\|\frac{3}{2}\theta^{\e}-\rho^{\e}\Big\|_{L^\infty(0,T;H^{2}(\mathbb{R}^3_x))} \lesssim C
\end{equation}
for all $T>0$ and $0<\e\leq1$.
Thus, from Aubin--Lions--Simon Theorem \cite{BF13},
the bounds (\ref{jixian:midu:wendu:shijianjie}), (\ref{jixian:midu:wendu:jie}) and the embedding (\ref{jixian:qianru}),
we deduce that there is
$\tilde{\theta}\in L^\infty(\mathbb{R}^+;H^2(\mathbb{R}^3_x))\cap C(\mathbb{R}^+;H_{loc}^{1}(\mathbb{R}^3_x))$,
 such that
\begin{equation*}
\frac{3}{2}\theta^{\e}-\rho^{\e}\longrightarrow\tilde{\theta}
  \;\; \text{strongly in}\;  C(0,T; H_{loc}^{1}(\mathbb{R}^3_x))\;\;\text{as}\;  \e \to 0 \;\text{for any } \; T>0.
\end{equation*}
Combining  with the convergence \eqref{limit:10}, we deduce that
$\tilde{\theta}=\frac{3}{2}\theta-\rho,$ which implies
\begin{equation}\label{jixian:zongjie-th}
 \frac{3}{2}\theta^{\e}-\rho^{\e}\rightarrow\frac{3}{2}\theta-\rho
   \;\; \text{strongly in}\;  C(\mathbb{R}^+;H_{loc}^{1}(\mathbb{R}^3_x))\;\;\text{as}\;  \e \to 0,
\end{equation}
 where $\frac{3}{2}\theta-\rho\in L^\infty(\mathbb{R}^+;H^2(\mathbb{R}^3_x))\cap C(\mathbb{R}^+;H_{loc}^{1}(\mathbb{R}^3_x))$.

\emph{Step 2.2.4. \  Limit equation from \eqref{jixian:u:zongjie} and \eqref{jixian:th:zongjie}.}  \
For any $T>0$, choose a vector-valued test function $\psi(t,x)\in C^1(0,T;C_0^\infty(\mathbb{R}^3))$ with $\nabla_x\cdot\psi=0$, $\psi(0,x)=\psi_0(x)\in C_0^\infty(\mathbb{R}^3)$, and $\psi(t,x)=0$ for $t\geq T'$ with some $T'<T$.
Multiplying
(\ref{jixian:u:zongjie}) by $\psi(t,x)$ and integrating by parts over $(t,x)\in[0,T]\times\mathbb{R}^3$, we have
\begin{equation*}
\begin{split}
\int_0^T\int_{\mathbb{R}^3}\pt_t\mathcal{P}u^{\e}\cdot\psi\d x\d t
=&\;-\int_{\mathbb{R}^3}\mathcal{P}u^{\e}(0,x)\cdot\psi(0,x)\d x-\int_0^T\int_{\mathbb{R}^3}\mathcal{P}u^{\e}\cdot\pt_t\psi\d x\d t\\
=&\;-\int_{\mathbb{R}^3}\mathcal{P}\langle f^{\e}_{0},v\sqrt{\mu}\rangle_{L_v^2}\cdot\psi_0(x)\d x-\int_0^T\int_{\mathbb{R}^3}\mathcal{P}u^{\e}\cdot\pt_t\psi(t,x)\d x\d t.
\end{split}
\end{equation*}
From the initial condition \eqref{limit:initial:hard} in Theorem \ref{mainth2} and the convergence (\ref{jixian:zongjie-u}), we
deduce that
\begin{align*}
 \int_{\mathbb{R}^3}\mathcal{P}\langle f^{\e}_{0},v\sqrt{\mu}\rangle_{L_v^2}\cdot\psi_0(x)\d x
 \rightarrow
 &\int_{\mathbb{R}^3}\mathcal{P}\langle f_{0},v\sqrt{\mu}\rangle_{L_v^2}\cdot\psi_0(x)\d x
 =\int_{\mathbb{R}^3}\mathcal{P}u_{0}\cdot\psi_0(x)\d x
\end{align*}
and
\begin{align*}
  \int_0^T\int_{\mathbb{R}^3}\mathcal{P}u^{\e}\cdot\pt_t\psi(t,x)\d x\d t
  \rightarrow\int_0^T\int_{\mathbb{R}^3}u\cdot\pt_t\psi(t,x)\d x\d t
\end{align*}
as $\e\rightarrow0$.
As a consequence, we have
\begin{equation}\label{jixian:tu:}
  \int_0^T\int_{\mathbb{R}^3}\pt_t\mathcal{P}u^{\e}\cdot\psi(t,x)\d x\d t
  \rightarrow-\int_{\mathbb{R}^3}\mathcal{P}u_{0}\cdot\psi_0(x)\d x-\int_0^T\int_{\mathbb{R}^3}u\cdot\pt_t\psi(t,x)\d x\d t
\end{equation}
as $\e\rightarrow0$.
Further, with the aid of  (\ref{jixian:zongjie-u}), (\ref{jixian:zongjie-u+}) and (\ref{jixian:zongjie-th}), we  show the following convergences
\beq
 \left\{
\begin{array}{ll}\label{jixian:u1}
  \mathcal{P}\nabla_x\cdot(\mathcal{P}u^{\e}\otimes\mathcal{P}u^{\e})\rightarrow
  \mathcal{P}\nabla_x\cdot(u\otimes u) \quad\text{strongly in} \; C(\mathbb{R}^+;L_{loc}^{2}(\mathbb{R}^3_x)),\\[2mm]
  \lambda\Delta_x\mathcal{P}u^{\e}\rightarrow
 \lambda\Delta_x u \quad\qquad\qquad\qquad\quad\quad\quad\text{in the  sense of distributions},\\[2mm]
  \mathcal{P}(\rho^{\e}\nabla_x\phi^{\e})\rightarrow
  \mathcal{P}(\rho\nabla_x\phi)  \quad\quad\quad\quad\;\;\,\quad\quad\;\text{strongly in}\; C(\mathbb{R}^+;H_{loc}^{1}(\mathbb{R}^3_x)),\\[2mm]
 R_{\e,u}\rightarrow 0  \qquad\qquad\qquad\qquad\qquad\qquad\quad\quad\text{in the  sense of distributions},
\end{array}
\right.
\eeq
as $\e\rightarrow0$.
Therefore,  collecting (\ref{jixian:tu:}) and  (\ref{jixian:u1}),  and using \eqref{limit:16-1}, we find that $u$ satisfies
\begin{align}
 \begin{split}\label{limit:u}
 \pt_t u+\mathcal{P}\nabla_x\cdot(u\otimes u)-\lambda\Delta_x u&=\mathcal{P}( \rho \nabla_x\th )
 \end{split}
\end{align}
with the initial data $u(0,x)=\mathcal{P}u_{0}(x)$.

Next, we derive the  limit equation for $(\frac{3}{2}\theta-\rho)$ from \eqref{jixian:th:zongjie}.
Indeed, for any $T>0$, choose a test function $\xi(t,x)\in C^1(0,T;C_0^{\infty}(\mathbb{R}^3))$
 with $\xi(0,x)=\xi_0(x)\in C_0^{\infty}(\mathbb{R}^3)$ and $\xi(0,x)=0$ for $t\geq T'$
 with some $T'<T$. Then from the initial condition \eqref{limit:initial:hard} in Theorem \ref{mainth2} and the convergence (\ref{jixian:zongjie-th}), we
find that
\begin{align}\label{jixian:t:hongguan}
  \begin{split}
   &\int_{0}^{T}\int_{\mathbb{R}^3}\pt_t\Big(\frac{3}{2}\theta^{\e}-\rho^{\e}\Big) \xi(t,x) \d x \d t
   \\
    =&-\int_{\mathbb{R}^3}\Big\langle   f^{\e}_{0}, \;\;\Big[\frac{3}{2}\Big(\frac{|v|^2}{3}-1\Big)-1\Big]\sqrt{\mu} \Big\rangle_{L_v^2}\xi_0(x) \d x
   -\int_{0}^{T}\int_{\mathbb{R}^3} \Big(\frac{3}{2}\theta^{\e}-\rho^{\e}\Big)\pt_t\xi(t,x)  \d x \d t\\
  \to &-\int_{\mathbb{R}^3}(\frac{3}{5}\theta_{0}-\frac{2}{5}\rho_{0})\xi_0(x) \d x
    -\int_{0}^{T}\int_{\mathbb{R}^3}  \Big(\frac{3}{2}\theta-\rho\Big)\pt_t\xi(t,x)  \d x \d t
  \end{split}
\end{align}
as $\e\rightarrow0$.
Further, with the help of (\ref{jixian:zongjie-u}), (\ref{jixian:zongjie-u+}) and (\ref{jixian:zongjie-th}), we
 establish the following convergences
\beq
 \left\{
\begin{array}{ll}\label{jixian:hongguan:1}
\displaystyle\frac{5}{2}\nabla_x\cdot(\mathcal{P}u^{\e}\theta^{\e})\rightarrow
 \frac{5}{2} \nabla_x\cdot(u\theta)\qquad\qquad\quad\quad\,\,\text{strongly in} \; C(\mathbb{R}^+;L_{loc}^{2}(\mathbb{R}^3_x)),
  \\[2mm]
  \displaystyle\frac{5}{2}\kappa\Delta_x\theta^{\e}\rightarrow\frac{5}{2}\kappa\Delta_x\theta
  \qquad\qquad\;\qquad\quad\quad\quad\quad\;\,\,\,\text{in the sense of distributions},\\[2mm]
  \displaystyle u^{\e}\cdot\nabla_x\phi^{\e}\rightarrow
  u\cdot\nabla_x\phi \quad\quad\quad\quad\qquad\qquad\quad\quad\;\;\text{strongly in}\; C(\mathbb{R}^+;H_{loc}^{1}(\mathbb{R}^3_x)),\\[2mm]
 \displaystyle\frac{5}{2}\nabla_x\cdot (\mathcal{P}^\perp u^{\e}\theta^{\e}) +\nabla_x\cdot (R_{\e,B})\rightarrow 0  \qquad\quad\quad\text{in the sense of distributions},
\end{array}
\right.
\eeq
as $\e\rightarrow0$.
Therefore, collecting the limits (\ref{jixian:t:hongguan}), ~(\ref{jixian:hongguan:1}) and using \eqref{limit:13} and \eqref{limit:16-1}, we find
that $(\frac{3}{2}\theta-\rho)$  satisfies
\begin{align}
 \begin{split}\label{limit:th}
 \pt_t\Big(\dfrac{3}{2}\th-\rho\Big)+u\cdot\nabla_x\Big(\dfrac{3}{2}\th-\rho\Big)-\dfrac{5}{2}\kappa
\Delta_x\th=0
 \end{split}
\end{align}
with the initial data
$\th(0,x)=\th_0(x)$ and $\rho(0,x)=\rho_0(x)$.

In summary, combining the limit equations \eqref{limit:13}, \eqref{limit:16}, \eqref{limit:u} and \eqref{limit:th}, we  conclude that
\bals
(\rho, u, \th, \nabla_x\phi)\in
L^\infty(\mathbb{R}^+; H^{2}(\bbR^3_x))
\cap C(\mathbb{R}^+; H^{1}_{loc}(\bbR^3_x))
 \eals
and its forms a solution to the incompressible NSFP system \eqref{INSFP}.
The proof of    Theorem \ref{mainth1} is completed.
\end{proof}

%

\subsection{Time Decay Estimate  of Energy  for Hard Potentials}
\hspace*{\fill}

To close the energy estimate under the {\em{a priori}} assumptions \eqref{eq:assumption:1}, \eqref{eq:assumption:2} and \eqref{energy-assumptition-hard}, we turn to deduce the time decay estimate of $\mathbf{E}^\mathbf{h}(t)$  and  $\widetilde{\mathbf{E}}^\mathbf{h}(t)$. For this,
denote
\bal
\bsp\label{1:hard:estimate:decay:10}
\mathbf{\Lambda}(t):=\;&\sup_{0\le\tau\le t}\Big\{(1+\tau)^{\frac{3}{2}} \mathbf{E}^\mathbf{h}(\tau) \Big\}
 +\sup_{0\le\tau\le t}\Big\{(1+\tau)^{\frac{5}{2}} \widetilde{\mathbf{E}}^\mathbf{h}(\tau)
\Big\}\\
\;&+\sup_{0\le\tau\le t}\Big\{\e^3(1+\tau)^{\frac{5}{2}} \| h^\e(\tau)\|_{W_{x,v}^{2,\infty}}^2 \Big\}.
\esp
\eal
Then we have the  result about the VPB system \eqref{eq:f}  for hard potentials $0\leq \gamma\leq 1$.

\begin{proposition}\label{decay:es:nonlinear:result:hard:1}
Let $0\leq\gamma\leq 1.$
Assume that
$\iint_{\mathbb{R}^3\times\mathbb{R}^3}\sqrt{\mu}f^\e_0\d x\d v=0$
and   the \emph{a priori} assumptions  \eqref{eq:assumption:1}, \eqref{eq:assumption:2} and \eqref{energy-assumptition-hard} hold true
for some small $\delta>0$.
Then,  any strong solution $(f^\e, \nabla_x\phi^\e)$ to the VPB system \eqref{eq:f} defined on $0\leq t\leq T$ with $0 <T \leq \infty$ satisfies
\bal\label{result-decay-hard}
\mathbf{\Lambda}(t)\leq C{\mathbf{\widetilde{\kappa}}}_0^2
+C\mathbf{\Lambda}^2(t)\;\;\; \text{on}\;\; 0\leq t\leq T,
\eal
where $C$ is a positive constant independent of $T$ and $\widetilde{\kappa}_0$ is defined by
\beqs
\bsp
{\widetilde{{\kappa}}}_0:=\;&
\|  f^\e_0\|_{H^2_{x,v}}+\| w f^\e_0\|_{H^1_{x}L^2_{v}}
 +\big\|\big(1+|x|\big)f^\e_0\big\|_{L^2_vL^1_x}.
\esp
\eeqs
\end{proposition}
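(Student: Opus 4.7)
The strategy mirrors the soft potential argument in Proposition \ref{decay:es:nonlinear:result:1}, but exploits two structural advantages of the hard potential regime: the collision frequency $\nu(v)\geq\nu_0>0$ is nondegenerate, and the weight $w=(1+|v|^2)^{1/2}$ is bounded below, so no weighted $w^{-\ell}$ machinery is needed. The proof proceeds in three steps.

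\textbf{Step 1: Decay of the zero--order macroscopic quantities.} First I would adapt Lemma \ref{result:linear:estimate:decay} to the hard potential case: since $\nu(v)\geq \nu_0$, the Fourier side Lyapunov inequality analogous to \eqref{linear:estimate:decay:proof:result:2} holds \emph{without} an extra velocity weight, giving a cleaner dissipation $\frac{1}{\e^2}\|(\mathbf{I}-\mathbf{P})\hat f^\e\|_{L^2_v(\nu)}^2+\frac{|k|^2}{1+|k|^2}\|\mathbf{P}\hat f^\e\|_{L^2_v}^2+|\hat a^\e|^2$. The time--frequency splitting of \cite{DYZ2003} then yields, via Duhamel's formula applied to $f^\e=e^{tB_\e}f_0^\e+\int_0^t e^{(t-s)B_\e}(h_1^\e+h_2^\e)(s)\,\mathrm ds$ with $h_1^\e,h_2^\e$ as in \eqref{hongguan:decay:equation:2}, the analog of Lemma \ref{decay:es:nonlinear}:
\begin{equation*}
\|\mathbf{P}f^\e(t)\|_{L^2_{x,v}}^2+\|\nabla_x\phi^\e(t)\|_{L^2_x}^2\lesssim(1+t)^{-\frac{3}{2}}\big(\widetilde\kappa_0^2+\mathbf{\Lambda}^2(t)\big).
\end{equation*}
The nonlinear source bounds are easier here: $\|\Gamma(f^\e,f^\e)\|_{L^2_vL^1_x}$ and $\|\nabla_x\phi^\e\cdot\nabla_v f^\e\|_{L^2_vL^1_x}$ are all controlled by $\mathbf{E}^\mathbf{h}$ and $\e\|h^\e\|_{W^{1,\infty}_{x,v}}$ through Lemma \ref{es-energy-Nonlinear} and the Sobolev embedding, using the bounded weight and the fact that now $\|w\cdot\|$ appears naturally in $\mathbf{D}^\mathbf{h}(t)$.

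\textbf{Step 2: Decay of the full and high--order energies.} From \eqref{energy-nonlinear-result-3} in Proposition \ref{main-weighted-energy-estimate-2}, under the \emph{a priori} assumption \eqref{eq:assumption:2}, we have $\frac{\d}{\d t}\mathbf{E}^\mathbf{h}(t)+\mathbf{D}^\mathbf{h}(t)\lesssim 0$. Combined with the obvious interpolation $\mathbf{E}^\mathbf{h}(t)\lesssim \|\mathbf{P}f^\e\|_{L^2_{x,v}}^2+\|\nabla_x\phi^\e\|_{L^2_x}^2+\mathbf{D}^\mathbf{h}(t)$, multiplying by $(1+t)^{3/2+p}$ with $0<p\ll 1$, integrating in time, and substituting Step 1 yields
\begin{equation*}
\sup_{0\le\tau\le t}\big\{(1+\tau)^{\frac{3}{2}}\mathbf{E}^\mathbf{h}(\tau)\big\}+(1+t)^{-p}\int_0^t(1+\tau)^{\frac{3}{2}+p}\mathbf{D}^\mathbf{h}(\tau)\,\mathrm d\tau\lesssim \widetilde\kappa_0^2+\mathbf{\Lambda}^2(t).
\end{equation*}
For the higher--order piece I would apply the same temporal weighted scheme to \eqref{energy-nonlinear-result-4}, namely $\frac{\d}{\d t}\widetilde{\mathbf{E}}^\mathbf{h}(t)+\mathbf{D}^\mathbf{h}(t)\lesssim\|\nabla_x\mathbf{P}f^\e\|_{L^2_{x,v}}^2$. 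Multiplying by $(1+t)^{5/2+p}$ and integrating, the driving term $\int_0^t(1+\tau)^{3/2+p}\|\nabla_x\mathbf{P}f^\e(\tau)\|_{L^2_{x,v}}^2\,\mathrm d\tau$ is absorbed by the dissipation already controlled in Step 2, giving the desired $(1+t)^{5/2}$ decay of $\widetilde{\mathbf{E}}^\mathbf{h}(t)$.

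\textbf{Step 3: Decay of the $W^{2,\infty}_{x,v}$ norm of $h^\e$.} Finally, inserting the $(1+t)^{-5/2}$ decay of $\|\nabla_xf^\e\|_{H^1_{x,v}}$ and $\|\nabla_x^2\phi^\e\|_{H^2_x}$ (obtained from $\widetilde{\mathbf{E}}^\mathbf{h}$) into the $W^{2,\infty}_{x,v}$ bound of Proposition \ref{result:W2:wuqiong-hard} with $q=5/4$, and controlling $\|\nabla_x^3\phi^\e\|_{L^\infty_x}$ via the interpolation analog of \eqref{eq:assumption:soft:1-2}, closes $\e^3(1+t)^{5/2}\|h^\e(t)\|_{W^{2,\infty}_{x,v}}^2\lesssim\widetilde\kappa_0^2+\mathbf{\Lambda}^2(t)$. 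Adding all three contributions produces \eqref{result-decay-hard}.

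\textbf{Main obstacle.} The delicate point is the derivation of the hard--potential linear decay estimate in Step 1: although the nondegeneracy of $\nu$ removes the weighted--space complications of Lemma \ref{result:linear:estimate:decay}, one still needs the initial moment condition $\|(1+|x|)f_0^\e\|_{L^2_vL^1_x}<\infty$ and the neutrality $\iint\sqrt\mu f_0^\e\,\mathrm dx\,\mathrm dv=0$ to absorb the Poisson kernel singularity at low frequencies and extract the optimal $(1+t)^{-3/4}$ rate in $L^2_{x,v}$. All other steps are routine adaptations, the hard potential setting being strictly simpler than the soft one treated in Proposition \ref{decay:es:nonlinear:result:1}.
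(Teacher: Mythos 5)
Your overall architecture (zero--order macroscopic decay via Duhamel, then Gronwall or time-weighted integration on the two energy inequalities, then the $W^{2,\infty}_{x,v}$ bound) is close to the paper's, but there is a genuine gap in Step 2 for the high-order energy $\widetilde{\mathbf{E}}^\mathbf{h}(t)$.

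The forcing term $\|\nabla_x\mathbf{P}f^\e\|_{L^2_{x,v}}^2$ on the right side of \eqref{energy-nonlinear-result-4} enters with an $O(1)$ constant (see the derivation in \eqref{11111-hard-high}, where this term is \emph{not} preceded by a small parameter $\eta$), and $\|\nabla_x\mathbf{P}f^\e\|^2_{L^2_{x,v}}$ is a full-strength part of $\mathbf{D}^\mathbf{h}(t)$. Therefore, after multiplying by $(1+t)^{5/2+p}$, the resulting forcing integral is
\[
\int_0^t(1+\tau)^{5/2+p}\|\nabla_x\mathbf{P}f^\e(\tau)\|^2_{L^2_{x,v}}\,\mathrm d\tau
\]
(note the exponent $\tfrac52+p$, not $\tfrac32+p$ as you wrote), and it is \emph{not} absorbable by the time-weighted dissipation: you would need $\int_0^t(1+\tau)^{5/2+p}\mathbf{D}^\mathbf{h}\,\mathrm d\tau$ on the left, whereas your Step 2 for $\mathbf{E}^\mathbf{h}$ only produced $\int_0^t(1+\tau)^{3/2+p}\mathbf{D}^\mathbf{h}\,\mathrm d\tau$, a whole power short. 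Nor can you absorb it into the left-hand $\int(1+\tau)^{5/2+p}\mathbf{D}^\mathbf{h}$ from the same estimate, since the constant in front is not small. What is missing is a \emph{first-order} macroscopic decay estimate
\[
\|\nabla_x\mathbf{P}f^\e(t)\|^2_{L^2_{x,v}}+\|\nabla_x^2\phi^\e(t)\|^2_{L^2_{x}}\lesssim(1+t)^{-\frac52}\big(\widetilde\kappa_0^2+\mathbf{\Lambda}^2(t)\big),
\]
which the paper establishes in \eqref{hongguan:decay:result:hard:2} via the \emph{derivative} decay rate $(1+t)^{-(3/4+1/2)}$ of the linear semigroup (Lemma \ref{result:linear:estimate:decay:hard} with $|\alpha|=1$), then feeds directly into the Gronwall argument for $\widetilde{\mathbf{E}}^\mathbf{h}$. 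Your Step 1 only records the zero-order decay, so you never have this input. In the soft-potential Proposition \ref{decay:es:nonlinear:result:1} the absorption trick works because the high-order driving term there is only $\|\mathbf{P}f^\e\|^2+\|\nabla_x\phi^\e\|^2$ (zero-order quantities) after redoing the weighted $H^2$ estimate with nonlinear bounds of order $(1+t)^{-15/4}\mathbf{X}_\ell^{3/2}$; the hard-potential inequality \eqref{energy-nonlinear-result-4} has a structurally different source term, and the paper responds by pushing Duhamel one derivative higher rather than iterating energy weights. To repair your proof, derive the $|\alpha|=1$ analog in Step 1 and substitute it directly, as the paper does.
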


To prove Proposition \ref{decay:es:nonlinear:result:hard:1}, we should first find  the optimal
 temporal decay estimate for the linearized  system \eqref{linear:esti:f:decay:1}.

\begin{lemma}\label{result:linear:estimate:decay:hard}
Let $0\leq\gamma\leq 1.$
Assume
$\iint_{\mathbb{R}^3\times\mathbb{R}^3}\sqrt{\mu}f^\e_0\d x\d v=0$
and
\beqs
\||x|a_0^\e\|_{L^1_x}+\|f^\e_0\|_{L^2_vL^1_x}
+\| \partial^\a_x f^\e_0\|_{L^2_{x,v}}<\infty.
\eeqs
Then the solution to the linearized VPB system \eqref{linear:esti:f:decay:1}  satisfies
\bal
\bsp\label{result:linear:estimate:decay-hard:3}
 & \| \partial^\a_x e^{{t B^{\e}}}f_0^\e\|_{L_{x,v}^2}^2
 + \| \partial^\a_x\nabla_x\Delta_x^{-1} \mathbf{P}_0e^{{t B^{\e}}}f_0^\e\|_{L_{x}^2}^2
 \\
\lesssim\;& (1+t)^{-\sigma_{|\a|}}
\Big(
\||x|a_0^\e\|_{L^1_x}^2+\|f^\e_0\|_{L^2_vL^1_x}^2
+\| \partial^\a_x f^\e_0\|_{L^2_{x,v}}^2
\Big)\\
\;&+\e^{2}\int_0^t (1+t-\tau)^{-2(\frac{3}{4}+\frac{|\a|}{2})}\Big( \|\nu^{-\frac{1}{2}} h^\e(\tau)\|_{L^2_vL^1_x}^2
+\| \nu^{-\frac{1}{2}}\partial^\a_x  h^\e(\tau)\|_{L^2_{x,v} }^2\Big)\d \tau
\esp
\eal
 for any $t\geq 0$.
\end{lemma}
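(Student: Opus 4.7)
The plan is to adapt the three-step Fourier space argument developed in Lemma \ref{result:linear:estimate:decay} to the hard potential setting, where the collision frequency satisfies $\nu(v)\ge \nu_0>0$ so that no velocity weight is needed and the decay rate improves as $|\a|$ grows. The exponent $\sigma_{|\a|}$ is expected to be $\frac{3}{2}+|\a|$, matching the standard heat-type rate $(1+t)^{-(\frac{3}{4}+\frac{|\a|}{2})}$ in $L^2$.

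First I would take the Fourier transform in $x$ to obtain, pointwise in $(t,k)$, an ODE system for $\hat f^\e$ coupled through $|k|^2\hat\phi^\e=\hat a^\e$. Taking the $L^2_v$ complex inner product with $\hat f^\e$ and using the coercivity \eqref{spectL} gives, after absorbing the Poisson contribution through the zero-mean condition $\iint\sqrt\mu f_0^\e=0$ (which encodes $\hat a^\e(0,0)=0$ and hence integrability of $\hat a^\e/|k|^2$),
\begin{equation*}
\tfrac{\d}{\d t}\Bigl(\|\hat f^\e\|_{L^2_v}^2+\tfrac{|\hat a^\e|^2}{|k|^2}\Bigr)+\tfrac{\sigma_0}{\e^2}\|(\mathbf{I}-\mathbf{P})\hat f^\e\|_{L^2_v(\nu)}^2\lesssim \e^2\|\nu^{-1/2}\hat h^\e\|_{L^2_v}^2.
\end{equation*}
Next, exactly as in \eqref{linear:estimate:decay:proof:result:1-2}, the fluid-type macroscopic equations produce an interaction functional $\mathfrak{R} E_{lin}(\hat f^\e)$ whose time derivative controls the macro dissipation $\frac{|k|^2}{1+|k|^2}\|\mathbf{P}\hat f^\e\|_{L^2_v}^2+|\hat a^\e|^2$ modulo micro terms. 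A proper linear combination yields a Lyapunov inequality
\begin{equation*}
\tfrac{\d}{\d t}E(\hat f^\e)+\sigma_0 D(\hat f^\e)\lesssim \e^2\|\nu^{-1/2}\hat h^\e\|_{L^2_v}^2,
\end{equation*}
where $E(\hat f^\e)\sim \|\hat f^\e\|_{L^2_v}^2+|\hat a^\e|^2/|k|^2$ and $D(\hat f^\e)\sim \frac{1}{\e^2}\|(\mathbf{I}-\mathbf{P})\hat f^\e\|_{L^2_v(\nu)}^2+\frac{|k|^2}{1+|k|^2}\|\mathbf{P}\hat f^\e\|_{L^2_v}^2+|\hat a^\e|^2$. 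Crucially, because $\nu\ge\nu_0>0$ the micro part of $E$ is already controlled by $D$ without any velocity weight, so $E\lesssim \|\mathbf{P}\hat f^\e\|_{L^2_v}^2+\|(\mathbf{I}-\mathbf{P})\hat f^\e\|_{L^2_v}^2+|\hat a^\e|^2/|k|^2$ and Gronwall with the frequency-dependent weight $(1+\tilde\eta\tfrac{|k|^2}{1+|k|^2}t)^{J}$ for any $J>\frac{3}{2}+|\a|$ closes directly, avoiding the velocity splitting that was needed for soft potentials. This produces, multiplied by $|k|^{2|\a|}$, the pointwise bound
\begin{equation*}
|k|^{2|\a|}E(\hat f^\e)\lesssim \bigl(1+\tfrac{\tilde\eta|k|^2}{1+|k|^2}t\bigr)^{-J}|k|^{2|\a|}E(\hat f_0^\e)+\e^2\int_0^t\bigl(1+\tfrac{\tilde\eta|k|^2}{1+|k|^2}(t-\tau)\bigr)^{-J}|k|^{2|\a|}\|\nu^{-1/2}\hat h^\e(\tau)\|_{L^2_v}^2\,\d\tau.
\end{equation*}

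Finally I would integrate over $k\in\mathbb{R}^3$ and split $\{|k|\le 1\}\cup\{|k|\ge 1\}$. On the low-frequency regime one uses Hausdorff--Young to bound $|k|^{2|\a|}E(\hat f_0^\e)$ in $L^\infty_k$ by $\|f_0^\e\|_{L^2_vL^1_x}^2+\||x|a_0^\e\|_{L^1_x}^2$ (the latter term arises from the weighted bound $|\hat a^\e(0,k)|/|k|\lesssim \||x|a_0^\e\|_{L^1_x}$ needed to absorb the Poisson kernel singularity at $k=0$, exploiting $\hat a^\e(0,0)=0$), and the frequency integral $\int_{|k|\le 1}(1+|k|^2 t)^{-J}|k|^{2|\a|}\,\d k\lesssim (1+t)^{-\frac{3}{2}-|\a|}$ gives the desired decay rate $\sigma_{|\a|}=\frac{3}{2}+|\a|$. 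On the high-frequency regime the weight $(1+\tilde\eta t)^{-J}$ is uniform and combines with $\|\partial_x^\a f_0^\e\|_{L^2_{x,v}}^2$. For the forcing term, the time convolution is handled identically in low and high frequencies to produce the $(1+t-\tau)^{-2(\frac{3}{4}+\frac{|\a|}{2})}$ kernel in the statement, using $\|\nu^{-1/2}\partial_x^\a h^\e\|_{L^2_{x,v}}$ at high frequency and $\|\nu^{-1/2}h^\e\|_{L^2_vL^1_x}$ at low frequency.

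The main obstacle will be the bookkeeping near $k=0$: because the Poisson potential contributes $|\hat a^\e|^2/|k|^2$ to the energy, one must carefully exploit the zero-mean assumption $\iint\sqrt\mu f_0^\e\,\d x\,\d v=0$ (equivalently $\hat a^\e(0,0)=0$) together with the moment bound $\||x|a_0^\e\|_{L^1_x}<\infty$ to turn the singular factor into a harmless $O(1)$ constant via $|\hat a^\e(0,k)|\le |k|\||x|a_0^\e\|_{L^1_x}$, and to propagate this property along the flow; here $\mathbf{P}h^\e=0$ ensures the zero-mean condition is preserved by the source. Once this is in place, the remaining arguments are essentially simpler than in the soft case because the absence of velocity weights removes the need for the $2\ell_0=J+p-1$ balancing and the associated high-velocity truncation.
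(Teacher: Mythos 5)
Your overall strategy --- Fourier transform in $x$, Lyapunov inequality combining the micro coercivity \eqref{spectL} with a macro interaction functional, high--low frequency splitting, and the use of $\iint\sqrt\mu f_0^\e=0$ together with $\||x|a_0^\e\|_{L^1_x}$ to tame the Poisson singularity at $k=0$ --- matches the paper, which refers to \cite{DYZ2002} for precisely this argument. You also correctly identify that for $0\le\gamma\le 1$ the collision frequency $\nu\ge\nu_0>0$ makes the dissipation $D$ control the full energy $E$ up to the factor $\frac{|k|^2}{1+|k|^2}$, yielding $\frac{\d}{\d t}E+\sigma_0\frac{|k|^2}{1+|k|^2}E\lesssim\e^2\|\nu^{-1/2}\hat h^\e\|_{L^2_v}^2$, and that the velocity-weight splitting of the soft case is unnecessary here.

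The genuine gap is in the Gronwall step. You assert that multiplying by the polynomial weight $(1+\tilde\eta\frac{|k|^2}{1+|k|^2}t)^J$ and integrating in time ``produces'' a pointwise bound with the convolution kernel $(1+\tilde\eta\frac{|k|^2}{1+|k|^2}(t-\tau))^{-J}$ in the source integral. It does not. The polynomial-weight Gronwall gives
\begin{equation*}
E(t)\lesssim\Big(1+\tilde\eta\tfrac{|k|^2}{1+|k|^2}t\Big)^{-J}E(0)+\e^2\Big(1+\tilde\eta\tfrac{|k|^2}{1+|k|^2}t\Big)^{-J}\int_0^t\Big(1+\tilde\eta\tfrac{|k|^2}{1+|k|^2}\tau\Big)^J\|\nu^{-1/2}\hat h^\e(\tau)\|_{L^2_v}^2\,\d\tau,
\end{equation*}
and the ratio $(1+\tilde\eta A\tau)^J/(1+\tilde\eta A t)^J$ (with $A=\frac{|k|^2}{1+|k|^2}$) tends to a constant for, say, $\tau=t/2$, whereas $(1+\tilde\eta A(t-\tau))^{-J}$ decays. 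This is exactly why the soft-case statement \eqref{result:linear:estimate:decay:3} has the non-convolution form $(1+t)^{-3/2}\int_0^t(1+\tau)^J\cdots\d\tau$, and that structure, with $J>\frac{3}{2}+|\a|$, fails downstream in the proof of Lemma \ref{decay:es:nonlinear:hard} for $|\a|=1$: the required bound $\int_0^t(1+t-\tau)^{-5/2}(1+\tau)^{-3}\d\tau\lesssim(1+t)^{-5/2}$ would become $(1+t)^{-5/2}\int_0^t(1+\tau)^{J-3}\d\tau$ with $J>5/2$, which diverges. What you should do instead --- and what the paper implicitly does following \cite{DYZ2002} --- is apply the exponential Gronwall kernel, which is available precisely because the dissipation is non-degenerate: $E(t)\lesssim e^{-cAt}E(0)+\e^2\int_0^t e^{-cA(t-\tau)}\|\nu^{-1/2}\hat h^\e(\tau)\|_{L^2_v}^2\,\d\tau$. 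Since $e^{-cA(t-\tau)}\lesssim_J(1+A(t-\tau))^{-J}$ for any $J>0$, integrating $|k|^{2|\a|}$ times this over $\{|k|\le1\}$ and $\{|k|\ge1\}$ yields the true convolution kernel $(1+t-\tau)^{-(\frac{3}{2}+|\a|)}$ in \eqref{result:linear:estimate:decay-hard:3}.
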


\begin{proof} \   First of all, as in the soft potential case, it is easy to verify  that
there is a time frequency functional ${E}(\hat{f^{\e}})$ satisfying
\bals
\bsp
{E}(\hat{f^{\e}})\thicksim\;&\|\hat{f^{\e}}\|_{L^2_v}^2+
\frac{|\hat{a}^\e|^2}{|k|^2}
\esp
\eals
  for any $t>0$ and $k\in \mathbb{R}^3$, and there also holds
\bals
\bsp
\frac{\d}{\d t}  {E}(\hat{f^{\e}})+\frac{\sigma_0 |k|^2}{1+|k|^2}{E}(\hat{f^{\e}})
\lesssim
\e^{2}\|{\nu}^{-\frac{1}{2}}\hat{h}^\e\|_{L^2_v}^2.
\esp
\eals
Then adopting the same method as in \cite{DYZ2002}  by  the
high-low frequency decomposition, we   derive (\ref{result:linear:estimate:decay-hard:3})
under the condition $\iint_{\mathbb{R}^3}f^\e_0\sqrt{\mu}\d x\d v=0$. The details are omitted here for brevity.
\end{proof}

Secondly,   we need an additional lemma concerning the time decay estimates on the macroscopic quantity $\|\mathbf{P}f^\e\|_{H^1_{x}L^2_{v}}$ and $\|\nabla_x\phi^\e\|_{H^1_{x}}$ in terms of the initial data and $\mathbf{\Lambda}(t)$.

\begin{lemma}\label{decay:es:nonlinear:hard}
Under the assumptions of Proposition \ref{decay:es:nonlinear:result:hard:1}, there hold
\bal\label{hongguan:decay:result:hard:1}
\|\mathbf{P}f^\e(t)\|_{L^2_{x,v}}^2+\|\nabla_x\phi^\e(t)\|_{L^2_{x}}^2
\lesssim (1+t)^{-\frac{3}{2}}\left({\mathbf{\widetilde{\kappa}}}_0^2
+\left[\mathbf{\Lambda}(t)\right]^2\right)
\eal
and
\bal\label{hongguan:decay:result:hard:2}
\|\nabla_x\mathbf{P}f^\e(t)\|_{L^2_{x,v}}^2+\|\nabla_x^2\phi^\e(t)\|_{L^2_{x}}^2
\lesssim (1+t)^{-\frac{5}{2}}\left({\mathbf{\widetilde{\kappa}}}_0^2
+\left[\mathbf{\Lambda}(t)\right]^2\right)
\eal
for any $0\leq t\leq T.$
\end{lemma}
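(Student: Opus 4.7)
The plan is to mirror the argument used for the soft potential case in Lemma \ref{decay:es:nonlinear}, but now with the sharper linear decay available in the hard potential regime, namely Lemma \ref{result:linear:estimate:decay:hard}. I would begin by applying Duhamel's formula to \eqref{eq:f} in the form
\begin{equation*}
f^\e(t)=e^{tB_\e}f_0^\e+\int_0^t e^{(t-s)B_\e}\bigl(h_1^\e(s)+h_2^\e(s)\bigr)\,\d s,
\end{equation*}
with the nonlinear sources
\begin{equation*}
h_1^\e=\tfrac{1}{\e}\Gamma(f^\e,f^\e),\qquad h_2^\e=\nabla_x\phi^\e\cdot\nabla_v f^\e-\tfrac{1}{2}v\cdot\nabla_x\phi^\e f^\e,
\end{equation*}
noting that $\mathbf{P}_0(h_1^\e+h_2^\e)=0$ and $\mathbf{P}h_1^\e=0$. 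Projecting onto $\mathbf{P}f^\e$ and onto $\nabla_x\Delta_x^{-1}\mathbf{P}_0 f^\e=-\nabla_x\phi^\e$, one can invoke Lemma \ref{result:linear:estimate:decay:hard} with $|\a|=0$ and then with $|\a|=1$ to peel off the required time-decay factors $(1+t-s)^{-3/2}$ and $(1+t-s)^{-5/2}$, respectively, both on the linear (initial-data) part and inside the Duhamel convolution.

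Next I would dispatch the nonlinear contributions term by term. For the collisional piece $h_1^\e$, the standard control $\|\nu^{-1/2}\Gamma(f,g)\|_{L^2_v}\lesssim \|w_\vartheta f\|_{L^\infty_v}\|g\|_{L^2_v(\nu)}+\|\mathbf{P}g\|_{L^2_v}\|f\|_{L^2_v(\nu)}$ from Lemma \ref{es-energy-Nonlinear} (hard potential version \eqref{es-energy-Nonlinear:2-hard}), combined with the pointwise bounds embedded in $\mathbf{\Lambda}(t)$ such as $\e^{3/2}\|h^\e\|_{W^{2,\infty}_{x,v}}\lesssim (1+t)^{-5/4}\mathbf{\Lambda}^{1/2}(t)$ and $\|\mathbf{P}f^\e\|_{H^2_x L^2_v}\lesssim (1+t)^{-3/4}\mathbf{\Lambda}^{1/2}(t)$, would yield decays like $\e^2\|\nu^{-1/2}h_1^\e\|_{L^2_vL^1_x}^2+\e^2\|\nu^{-1/2}\partial_x^\a h_1^\e\|_{L^2_{x,v}}^2\lesssim (1+t)^{-3}\mathbf{\Lambda}^2(t)$. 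Here the crucial point is that the singular prefactor $\e^{-1}$ in $h_1^\e$ is exactly compensated by the factor $\e^2$ appearing in \eqref{result:linear:estimate:decay-hard:3}, exactly as in the soft case. For the field term $h_2^\e$, one writes $\|h_2^\e\|_{L^2_vL^1_x}+\|\partial_x^\a h_2^\e\|_{L^2_{x,v}}\lesssim \|\nabla_x\phi^\e\|_{H^1_x}\|f^\e\|_{H^2_{x,v}}$ by Sobolev embeddings and, appealing to $\mathbf{\Lambda}(t)$ together with the elementary bound $\|\nabla_x\phi^\e\|_{L^2_x}\lesssim \|\mathbf{P}f^\e\|_{L^2_{x,v}}$, one gets $\lesssim (1+t)^{-3/2}\mathbf{\Lambda}(t)$ and $(1+t)^{-5/2}\mathbf{\Lambda}(t)$ respectively.

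Plugging these bounds back into the Duhamel expression and using the standard convolution identities
\begin{equation*}
\int_0^t(1+t-\tau)^{-\alpha}(1+\tau)^{-\beta}\,\d\tau\lesssim (1+t)^{-\min(\alpha,\beta)}\quad\text{when }\max(\alpha,\beta)>1,
\end{equation*}
with $(\alpha,\beta)=(3/2,3)$ for the first estimate and $(\alpha,\beta)=(5/2,3)$ (after producing enough decay via interpolation with $\widetilde{\mathbf{E}}^{\mathbf h}(t)\lesssim (1+t)^{-5/2}\mathbf{\Lambda}(t)$) for the second, would close both \eqref{hongguan:decay:result:hard:1} and \eqref{hongguan:decay:result:hard:2}. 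The linear part is handled by observing, as in Step 1 of the proof of Proposition \ref{decay:es:nonlinear:result:1}, that the Poisson piece satisfies $\|\nabla_x\phi_0^\e\|_{L^2_x}\lesssim \|f_0^\e\|_{L^2_vL^1_x}+\|f_0^\e\|_{L^2_{x,v}}$ and $\|\nabla_x^2\phi_0^\e\|_{L^2_x}\lesssim \|f_0^\e\|_{H^1_xL^2_v}$, so the initial data contribution is controlled by $\widetilde{\kappa}_0$.

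I expect the main obstacle to be the second estimate \eqref{hongguan:decay:result:hard:2}, where the desired rate $(1+t)^{-5/2}$ is at the borderline of what the convolution with $(1+t-\tau)^{-5/2}$ against a nonlinear source of size $(1+t)^{-3}$ can produce; one has to use the higher-order energy functional $\widetilde{\mathbf{E}}^{\mathbf h}(t)$ rather than $\mathbf{E}^{\mathbf h}(t)$ to upgrade the decay of $\|\partial_x f^\e\|_{H^1_xL^2_v}$ inside $h_2^\e$, and simultaneously exploit the smallness $\delta$ from the \emph{a priori} assumption \eqref{energy-assumptition-hard} to absorb the borderline terms. The rest of the calculation is a systematic bookkeeping of time-weighted norms analogous to \eqref{hongguan:decay:result:1-6}.
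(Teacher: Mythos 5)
Your overall strategy — Duhamel's formula together with the linearized decay estimate of Lemma \ref{result:linear:estimate:decay:hard} and pointwise control of the nonlinear sources via $\mathbf{\Lambda}(t)$ — is the same as the paper's, and the treatment of $h_1^\e$ (using the $\e^2$ prefactor in \eqref{result:linear:estimate:decay-hard:3} to absorb the $\e^{-1}$ in $\Gamma$) is correct. However, there is a genuine gap in how you handle $h_2^\e$. Since $\mathbf{P}h_2^\e\neq 0$, the $\e^2$-weighted term in \eqref{result:linear:estimate:decay-hard:3} (which is derived for sources with $\mathbf{P}h^\e=0$) does not apply to the $h_2^\e$ contribution. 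The paper accordingly puts $h_2^\e$ through the unweighted semigroup decay $(1+t-\tau)^{-(\frac34+\frac{|\a|}{2})}$ in first power and takes the square \emph{outside} the Duhamel integral, i.e. it estimates $\bigl[\int_0^t(1+t-\tau)^{-\frac54}(\|h_2^\e\|_{L^2_vL^1_x}+\|\nabla_x h_2^\e\|_{L^2_{x,v}})\d\tau\bigr]^2$. Your version conflates the two sources into a single convolution of the squared quantities, which would require $\|h_2^\e\|\lesssim(1+t)^{-5/2}\mathbf{\Lambda}(t)$ for $|\a|=1$.

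That intermediate bound is false. The product $\|\nabla_x\phi^\e\|_{H^1_x}\|f^\e\|_{H^2_{x,v}}$ decays only like $(1+t)^{-\frac32}\mathbf{\Lambda}(t)$, because $\|\nabla_x\phi^\e\|_{L^2_x}$ and $\|f^\e\|_{L^2_{x,v}}$ sit only in $\mathbf{E}^\mathbf{h}(t)\lesssim(1+t)^{-\frac32}\mathbf{\Lambda}(t)$, not in the high-order functional $\widetilde{\mathbf{E}}^\mathbf{h}(t)$; the interpolation with $\widetilde{\mathbf{E}}^\mathbf{h}(t)$ you propose does not help here because $\widetilde{\mathbf{E}}^\mathbf{h}(t)$ contains only $\|\nabla_x^2\phi^\e\|_{H^1_x}$ and $\sum_{1\le|\a|\le2}\|\partial_x^\a f^\e\|_{L^2_{x,v}}$, not the zeroth-order quantities. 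The correct mechanism is instead the stronger convolution kernel: with only $\|h_2^\e\|+\|\nabla_x h_2^\e\|\lesssim(1+t)^{-\frac32}\mathbf{\Lambda}(t)$, one already gets $\int_0^t(1+t-\tau)^{-\frac54}(1+\tau)^{-\frac32}\d\tau\lesssim(1+t)^{-\frac54}$, and squaring yields the desired $(1+t)^{-\frac52}$. Your final remark about a borderline convolution for \eqref{hongguan:decay:result:hard:2} is also off: $\int_0^t(1+t-\tau)^{-\frac52}(1+\tau)^{-3}\d\tau\lesssim(1+t)^{-\frac52}$ is not borderline since both exponents exceed $1$; there is no need to invoke the smallness $\delta$ to close it. Finally, the ``elementary bound'' $\|\nabla_x\phi^\e\|_{L^2_x}\lesssim\|\mathbf{P}f^\e\|_{L^2_{x,v}}$ is not quite right (Poisson gains only $\|\nabla_x\phi^\e\|_{L^2_x}\lesssim\|a^\e\|_{L^{6/5}_x}$); instead one should use that $\|\nabla_x\phi^\e\|_{L^2_x}$ is directly contained in $\mathbf{E}^\mathbf{h}(t)$.
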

\begin{proof} \
Applying Lemma \ref{result:linear:estimate:decay:hard} to \eqref{hongguan:decay:equation:1}, we have
\bal
\bsp\label{hongguan:decay:proof:hard:1}
 & \| \mathbf{P}f^\e(t)\|_{L_{x,v}^2}^2
 + \| \nabla_x\phi^\e(t)\|_{L_{x}^2}^2
\\
\lesssim\;& (1+t)^{-\frac{3}{2}}
\left(
\||x|a_0^\e\|_{L^1_x}^2+\|f^\e_0\|_{L^2_vL^1_x}^2
+\|  f^\e_0\|_{L^2_{x,v}}^2
\right)\\
\;&+\e^{2}\int_0^t (1+t-\tau)^{-\frac{3}{2}}\left( \|\nu^{-\frac{1}{2}} h_1^\e(\tau)\|_{L^2_vL^1_x}^2
+\| \nu^{-\frac{1}{2}}  h_1^\e(\tau)\|_{L^2_{x,v} }^2\right)\d \tau\\
\;&+\left[\int_0^t (1+t-\tau)^{-\frac{3}{4}}\left( \| h_2^\e(\tau)\|_{L^2_vL^1_x}
+\|  h_2^\e(\tau)\|_{L^2_{x,v} }\right)\d \tau\right]^2.
\esp
\eal
On  one hand, we derive from the property of $\Gamma$ and the  H\"{o}lder inequality that
\bal
\bsp\label{hongguan:decay:proof:hard:2}
\e\|  \nu^{-\frac{1}{2}}h_1^\e\|_{L^2_vL^1_x}
+\e\|   \nu^{-\frac{1}{2}}h_1^\e\|_{L^2_{x,v}}
\lesssim \;&\left\|\|\nu f^\e\|_{L^2_v} \|f^\e\|_{L^2_v}\right\|_{L^1_x}+\left\|\|\nu f^\e\|_{L^2_v} \|f^\e\|_{L^2_v}\right\|_{L^2_x}\\
\lesssim\;& \|w(v)f^\e\|_{L^2_{x,v}} \|f^\e\|_{L^2_{x,v}}
+\|w(v)f^\e\|_{L^2_{x,v}} \|f^\e\|_{H^2_{x}L^2_v}\\
\lesssim \;&(1+t)^{-\frac{3}{2}}\mathbf{\Lambda}(t).
\esp
\mathcal{}\eal
On the other hand, from the expression \eqref{hongguan:decay:equation:2} of $h_2^\e$  and exploiting the H\"{o}lder inequality and the definition \eqref{1:hard:estimate:decay:10} of $\mathbf{\Lambda}(t)$, we find
\bal
\bsp\label{hongguan:decay:proof:hard:3}
\;&\| h_2^\e\|_{L^2_vL^1_x}
+\|  h_2^\e\|_{L^2_{x,v}}\\
\lesssim \;&
\|\nabla_x \phi^\e\|_{L^2_x}\left(\|\nabla_v f^\e\|_{L^2_{x,v}}+\|w f^\e\|_{L^2_{x,v}}\right)
+\|\nabla_x \phi^\e\|_{L^\infty_x}\left(\|\nabla_v f^\e\|_{L^2_{x,v}}+\|w f^\e\|_{L^2_{x,v}}\right)
\\
\lesssim \;&
\mathbf{E}^\mathbf{h}(t)
+ \left[\mathbf{E}^\mathbf{h}(t)\right]^\frac{1}{2}
\big[\widetilde{\mathbf{E}}^\mathbf{h}(t)\big]^\frac{1}{2}\\
\lesssim \;&(1+t)^{-\frac{3}{2}}\mathbf{\Lambda}(t).
\esp
\eal
Therefore, substituting  \eqref{hongguan:decay:proof:hard:2} and \eqref{hongguan:decay:proof:hard:3} into \eqref{hongguan:decay:proof:hard:1}
and using the basic inequalities
\bals
\int_0^t(1+t-\tau)^{-\frac{3}{4}}(1+\tau)^{-\frac{3}{2}}\d \tau \lesssim(1+t)^{-\frac{3}{4}},\quad
\int_0^t(1+t-\tau)^{-\frac{3}{2}}(1+\tau)^{-3}\d \tau \lesssim(1+t)^{-\frac{3}{2}},
\eals
we  derive \eqref{hongguan:decay:result:hard:1} immediately.

Similar to that of deducing \eqref{hongguan:decay:proof:hard:1} from
Lemma \ref{result:linear:estimate:decay:hard}, we derive from \eqref{hongguan:decay:equation:1} that
\bal
\bsp\label{hongguan:decay:proof:hard:5}
 & \|\nabla_x \mathbf{P}f^\e(t)\|_{L_{x,v}^2}^2
 + \| \nabla_x^2\phi^\e(t)\|_{L_{x}^2}^2
\\
\lesssim\;& (1+t)^{-\frac{5}{2}}
\left(
\||x|a_0^\e\|_{L^1_x}^2+\|f^\e_0\|_{L^2_vL^1_x}^2
+\| \nabla_x f^\e_0\|_{L^2_{x,v}}^2
\right)\\
\;&+\e^{2}\int_0^t (1+t-\tau)^{-\frac{5}{2}}\Big( \|\nu^{-\frac{1}{2}} h_1^\e(\tau)\|_{L^2_vL^1_x}^2
+\| \nu^{-\frac{1}{2}} \nabla_x h_1^\e(\tau)\|_{L^2_{x,v} }^2\Big)\d \tau\\
\;&+\Big[\int_0^t (1+t-\tau)^{-\frac{5}{4}}\Big( \| h_2^\e(\tau)\|_{L^2_vL^1_x}
+\| \nabla_x h_2^\e(\tau)\|_{L^2_{x,v} }\Big)\d \tau\Big]^2.
\esp
\eal
Adopting a similar technique as   the estimate of \eqref{hongguan:decay:proof:hard:2} and \eqref{hongguan:decay:proof:hard:3}, we have
\bals
\bsp
\;& \e^{2}\| h_1^\e\|_{L^2_vL^1_x}^2
+\e^{2}\| \nabla_x h_1^\e\|_{L^2_{x,v}}^2\\
\lesssim\;& \|w(v)f^\e\|_{L^2_{x,v}}^2 \|f^\e\|_{L^2_{x,v}}^2
+\|w(v)\nabla_xf^\e\|_{L^2_{x,v}}^2 \|f^\e\|_{L^\infty_{x}L^2_v}^2
+\|w(v)f^\e\|_{L^4_{x}L^2_v}^2 \|\nabla_xf^\e\|_{L^4_{x}L^2_v}^2\\
\lesssim \;&(1+t)^{-3}\left[\mathbf{\Lambda}(t)\right]^2,
\esp
\eals
and
\bals
\bsp
\;&\| h_2^\e\|_{L^2_vL^1_x}
+\| \nabla_x h_2^\e\|_{L^2_{x,v}}\\
\lesssim \;&
\|\nabla_x \phi^\e\|_{L^2_x}\left(\|\nabla_v f^\e\|_{L^2_{x,v}}+\|w(v) f^\e\|_{L^2_{x,v}}\right)
+\|\nabla_x ^2\phi^\e\|_{L^\infty_x}\left(\|\nabla_v f^\e\|_{L^2_{x,v}}+\|w(v) f^\e\|_{L^2_{x,v}}\right)
\\
\;&+\|\nabla_x \phi^\e\|_{L^\infty_x}\left(\|\nabla_x\nabla_v f^\e\|_{L^2_{x,v}}+\|w(v) \nabla_xf^\e\|_{L^2_{x,v}}\right)\\
\lesssim \;&(1+t)^{-\frac{3}{2}}\mathbf{\Lambda}(t).
\esp
\eals
Thus, inserting the above  two estimates  into \eqref{hongguan:decay:proof:hard:5} and using  the basic inequalities
\bals
\int_0^t(1+t-\tau)^{-\frac{5}{4}}(1+\tau)^{-\frac{3}{2}}\d \tau \lesssim(1+t)^{-\frac{5}{4}},\quad
\int_0^t(1+t-\tau)^{-\frac{5}{2}}(1+\tau)^{-3}\d \tau \lesssim(1+t)^{-\frac{5}{2}},
\eals
we  conclude \eqref{hongguan:decay:result:hard:2}.
This completes the proof of Lemma \ref{decay:es:nonlinear:hard}.
\end{proof}

Finally, with the aid of Lemma \ref{decay:es:nonlinear:hard}, we  give the proof of Proposition \ref{decay:es:nonlinear:result:hard:1}.
\begin{proof}[\textbf{Proof of Proposition \ref{decay:es:nonlinear:result:hard:1}}] \
Firstly,  we deduce the optimal time decay estimate on $\mathbf{E}^\mathbf{h}(t)$.
In fact, from \eqref{energy-nonlinear-result-3} in Proposition \ref{main-weighted-energy-estimate-2} and the \emph{a priori} assumption \eqref{eq:assumption:2}, we have
  \bal\label{main-result:hard:1}
  \frac{\d}{\d t}\mathbf{E}^\mathbf{h}(t)+\mathbf{D}^\mathbf{h}(t)
  \lesssim0.
  \eal
Comparing the expressions of $\mathbf{E}^\mathbf{h}(t)$ and $\mathbf{D}^\mathbf{h}(t)$ in \eqref{def-energy-R-hard-3} and  \eqref{def-energy-R-hard}, we find that
  \bal\label{main-result:hard:2}
 \mathbf{D}^\mathbf{h}(t)+\|\mathbf{P}f^\e(t)\|_{L^2_{x,v}}^2
 +\|\nabla_x\phi^\e(t)\|_{L^2_x}^2\gtrsim\mathbf{E}^\mathbf{h}(t).
  \eal
Then using the inequalities \eqref{main-result:hard:2} and \eqref{main-result:hard:1}, we have
  \bals
  \frac{\d}{\d t}\mathbf{E}^\mathbf{h}(t)+\mathbf{E}^\mathbf{h}(t)
  \lesssim \|\mathbf{P}f^\e(t)\|_{L^2_{x,v}}^2+\|\nabla_x\phi^\e(t)\|_{L^2_x}^2.
  \eals
Further, applying the Gr\"{o}nwall inequality to the above inequality  and utilizing \eqref{hongguan:decay:result:hard:1},  we have
\bals
  \mathbf{E}^\mathbf{h}(t)
  \lesssim (1+t)^{-\frac{3}{2}}\left(\widetilde{{\mathbf{\kappa}}}_0
+\mathbf{\Lambda}^2(t)\right).
  \eals
This means
\bal\label{main-result:hard:4}
  \sup_{0\leq\tau \leq t}\left\{(1+\tau)^{\frac{3}{2}}\mathbf{E}^\mathbf{h}(\tau)\right\}
  \lesssim {\widetilde{\mathbf{\kappa}}_0}
+\mathbf{\Lambda}^2(t).
  \eal

Secondly, we turn to estimate the second term in \eqref{1:hard:estimate:decay:10}.
To this end,
comparing the expressions of $\widetilde{\mathbf{E}}^\mathbf{h}(t)$ and $\mathbf{D}^\mathbf{h}(t)$ in \eqref{def-energy-R-hard-2} and  \eqref{def-energy-R-hard-3}, we find that
  \bals
 \mathbf{D}^\mathbf{h}(t)\gtrsim\widetilde{\mathbf{E}}^\mathbf{h}(t)
 .
  \eals
Then  employing this inequality, \eqref{eq:assumption:2} and  \eqref{energy-nonlinear-result-4}, we get
  \bals
  \frac{\d}{\d t}\widetilde{\mathbf{E}}^\mathbf{h}(t)+\widetilde{\mathbf{E}}^\mathbf{h}(t)
  \lesssim \|\nabla_x\mathbf{P}f^\e(t)\|_{L^2_{x,v}}^2.
  \eals
Moreover, applying the Gr\"{o}nwall inequality to the  inequality above  and utilizing \eqref{hongguan:decay:result:hard:2}, we have
\bal\label{main-result:hard:6}
  \sup_{0\leq\tau \leq t}\left\{(1+\tau)^{\frac{5}{2}}\widetilde{\mathbf{E}}^\mathbf{h}(\tau)\right\}
  \lesssim {\mathbf{\widetilde{\kappa}}}_0
+\mathbf{\Lambda}^2(t).
  \eal

Finally, 
we  derive from  Proposition \ref{result:W2:wuqiong-hard} and \eqref{eq:assumption:soft:1-2} that
\beq
\bsp
\label{result:W2:wuqiong:hard:1:decay}
\;&\sup_{0\leq \tau\leq t}\Big\{\e(1+\tau)^{\frac{5}{2}}
\| h ^\e(\tau)\|_{W_{x,v}^{1,\infty}}^2\Big\}
+\sup_{0\leq \tau\leq t}\Big\{\sum_{|\a|+|\b|= 2}\e^{3}(1+\tau)^{\frac{5}{2}}
\| \partial_{\b}^{\a}h ^\e(\tau)\|_{L_{x,v}^{\infty}}^2\Big\}\\
\lesssim \;&\e\| h^\e_0\|_{W^{2,\infty}_{x,v}}^2
+\sup_{0\le\tau\le t}\Big\{(1+\tau)^{\frac{5}{2}} \widetilde{\mathbf{E}}^\mathbf{h}(\tau)\Big\}.
\esp
\eeq

In summary,  taking a suitable linear combination of \eqref{main-result:hard:4}, \eqref{main-result:hard:6}
and \eqref{result:W2:wuqiong:hard:1:decay}, we prove \eqref{result-decay-hard}. This  completes the proof
of Proposition \ref{decay:es:nonlinear:result:hard:1}.
\end{proof}

\subsection{Proof of the  Main Theorem \ref{mainth2}}
\hspace*{\fill}

With
 Proposition \ref{decay:es:nonlinear:result:hard:1} in hand, we are ready to give the proof of Theorem \ref{mainth2} and verify the {\emph{a priori}} assumptions  \eqref{eq:assumption:1}, \eqref{eq:assumption:2} and \eqref{energy-assumptition-hard}.
\begin{proof}[\textbf{Proof of Theorem \ref{mainth2}}] \
The proof of Theorem \ref{mainth2} follows from a similar fasion as  the proof of Theorem \ref{mainth1}, but the hard potential case here needs some different treatments.

In fact,
assume  that $\eta$ and $\widetilde{\kappa}_0>0$ are chosen sufficiently small constants,
we derive from the
 estimate \eqref{result-decay-hard} that
\bal\label{energy:result:hard:1}
\mathbf{\Lambda}(t)\leq C\mathbf{\widetilde{\kappa}}_0^2.
\eal
Then,  the local solvability  and the global existence of strong  solution $(f^\e, \nabla_x\phi^\e)$ to   the VPB system  (\ref{eq:f})  for    hard potentials can be obtained with the help of the energy functional
$\interleave f^\e (t)\interleave^\mathbf{h}$ given by \eqref{def-energy-R-hard-sum},  \eqref{energy:result:hard:1} and the argument used in \cite{ guo2002cpam},
and the details are omitted for simplicity.
Moreover, combining  \eqref{eq:assumption:hard:1-1}, \eqref{1:hard:estimate:decay:10} and \eqref{energy:result:hard:1}, we find that
the \emph{a priori}
assumptions  \eqref{eq:assumption:1}, \eqref{eq:assumption:2} and \eqref{energy-assumptition-hard}  can be closed.

Next, we derive the incompressible NSFP limit from VPB system  (\ref{eq:f}) for hard potentials.
To this end, based on a uniform global energy estimate \eqref{main-result:hard:1},
there exists a positive constant $C$, independent of $\e$, such that
\bal\label{limit:hard:1}
\sup_{t\ge 0} \left\{\| f^{\e}(t)\|^2_{H^2_{x,v}}+ \| \nabla_x\phi^{\e}(t)\|^2_{H^2_{x}}\right\}\leq  C
\eal
and
\bal\label{limit:hard:2}
\int_0^\infty\| (\mathbf{I}-\mathbf{P}){f}^{\e}(\tau)\|^2_{H^2_{x,v}}\d \tau\lesssim\int_0^\infty\| (\mathbf{I}-\mathbf{P}){f}^{\e}(\tau)\|^2_{H^2_{x,v}(\nu)}\d \tau\lesssim C \e^2.
\eal
The uniform energy bound \eqref{limit:hard:1} and the dissipation bound \eqref{limit:hard:2}
allow  us to  obtain the incompressible NSFP  limit
using exactly the same methods as the proof of Theorem \ref{mainth1}. The details are omitted here
 for brevity. The proof of  Theorem \ref{mainth2} is thus completed.
\end{proof}

\noindent\textbf{Acknowledgements.}
The research is supported by NSFC under the grant number 12271179, NSFC key project under the grant number 11831003,
Basic and Applied Basic Research Project of Guangdong under the grant number 2022A1515012097,
and Basic and Applied Basic Research Project of Guangzhou under the grant number SL2022A04J01496.

\end{document}